\titleformat{\chapter}[display]
{\normalfont\huge\bfseries}{\chaptertitlename\\thechapter}{20pt}{\Huge}
\titleformat{\paragraph}[runin]
{\normalfont\normalsize\bfseries}{\theparagraph}{1em}{}
\titleformat{\subparagraph}[runin]
{\normalfont\normalsize\bfseries}{\thesubparagraph}{1em}{}
\titlespacing*{\chapter} {0pt}{50pt}{40pt}
\titlespacing*{\section} {0pt}{3.5ex plus 1ex minus .2ex}{2.3ex plus .2ex}
\titlespacing*{\subsection} {0pt}{3.25ex plus 1ex minus .2ex}{1.5ex plus .2ex}
\titlespacing*{\subsubsection}{0pt}{3.25ex plus 1ex minus .2ex}{1.5ex plus .2ex}
\titlespacing*{\paragraph} {0pt}{3.25ex plus 1ex minus .2ex}{1em}
\titlespacing*{\subparagraph} {\parindent}{3.25ex plus 1ex minus .2ex}{1em}
\newtheorem{theorem}{Theorem}[section]
\newtheorem{lemma}[theorem]{Lemma}
\newtheorem{proposition}[theorem]{Proposition}
\newtheorem{corollary}[theorem]{Corollary}
\theoremstyle{definition}
\newtheorem{definition}[theorem]{Definition}
\newtheorem{notations}[theorem]{Notations}
\newtheorem{notation}[theorem]{Notation}
\newtheorem{example}[theorem]{Example}
\theoremstyle{remark}
\newtheorem{remark}[theorem]{Remark}
\DeclareMathOperator{\ide}{id}
\DeclareMathOperator{\Ext}{Ext}
\DeclareMathOperator{\ima}{Im}
\DeclareMathOperator{\BC}{BC}
\DeclareMathOperator{\BN}{BN}
\DeclareMathOperator{\BP}{BP}
\DeclareMathOperator{\Ho}{H}
\DeclareMathOperator{\HH}{HH}
\DeclareMathOperator{\HC}{HC}
\DeclareMathOperator{\HP}{HP}
\DeclareMathOperator{\HN}{HN}
\DeclareMathOperator{\Hom}{Hom}
\DeclareMathOperator{\Sh}{Sh}
\DeclareMathOperator{\Tot}{Tot}
\DeclareMathOperator{\Tor}{Tor}
\newcommand{\ov}{\overline}
\newcommand{\ot}{\otimes}
\newcommand{\wh}{\widehat}
\newcommand{\wt}{\widetilde}
\newcommand{\ba}{\mathbf a}
\newcommand{\bx}{\mathbf x}
\newcommand{\brh}{\mathrm h}
\newcommand{\bv}{\mathrm v}
\newcommand{\byy}{\mathbf y}
\newcommand{\bz}{\mathbf z}
\newcommand{\hs}{\hspace{-0.5pt}}
\newcommand{\hsm}{\hspace{-0.2pt}}
\newcommand{\xcdot}{\hsm\cdot\hsm}
\newcommand{\xcirc}{\hsm\circ\hsm}
\newcommand{\ovb}{\bar}
\DeclareMathAlphabet{\mathpzc}{OT1}{pzc}{m}{it}
\numberwithin{equation}{section}
\begin{document}

\title{(Co)homology of Crossed Products in Weak Contexts}

\author{Jorge A. Guccione}
\address{Departamento de Matem\'atica\\ Facultad de Ciencias Exactas y Naturales-UBA, Pabell\'on~1-Ciudad Universitaria\\ Intendente Guiraldes 2160 (C1428EGA) Buenos Aires, Argentina.}
\address{Instituto de Investigaciones Matem\'aticas ``Luis A. Santal\'o"\\ Facultad de Ciencias Exactas y Natu\-ra\-les-UBA, Pabell\'on~1-Ciudad Universitaria\\ Intendente Guiraldes 2160 (C1428EGA) Buenos Aires, Argentina.}
\email{vander@dm.uba.ar}

\author{Juan J. Guccione}
\address{Departamento de Matem\'atica\\ Facultad de Ciencias Exactas y Naturales-UBA\\ Pabell\'on~1-Ciudad Universitaria\\ Intendente Guiraldes 2160 (C1428EGA) Buenos Aires, Argentina.}
\address{Instituto Argentino de Matem\'atica-CONICET\\ Savedra 15 3er piso\\ (C1083ACA) Buenos Aires, Argentina.}
\email{jjgucci@dm.uba.ar}

\author[C. Valqui]{Christian Valqui}
\address{Pontificia Universidad Cat\'olica del Per\'u - Instituto de Matem\'atica y Ciencias Afi\-nes, Secci\'on Matem\'aticas, PUCP, Av. Universitaria 1801, San Miguel, Lima 32, Per\'u.}
\email{cvalqui@pucp.edu.pe}

\subjclass[2010]{primary 16E40; secondary 16T05}
\keywords{Crossed products, Hochschild (co)homology, Cyclic homology, Weak Hopf algebras}

\begin{abstract} We obtain a mixed complex simpler than the canonical one the computes the type cyclic homologies of the weak crossed products $E\coloneqq A\times_{\chi}^{\mathcal{F}} V$ introduced in \cite{AFGR}. This complex is provided with a canonical filtration, whose spectral sequence generalizes the spectral sequence obtained in \cite{CGG}. Under suitable conditions, the above mentioned mixed complex is provided with another filtration, whose spectral sequence generalizes the Feigin-Tsygan spectral sequence. These results apply in particular to the crossed products of algebras by weak bialgebras.
\end{abstract}

\maketitle

\tableofcontents

\section*{Introduction}

Due to the relation of the actions of groups on algebras with non-commutative geometry, the problem of developing tools to compute the cyclic homology of smash products algebras $A\# k[G]$, where $A$ is an algebra and $G$ is a group, was considered in \cites{FT, GJ, N}. For instance, in the first paper the authors obtained a spectral sequence converging to the cyclic homology of $A\# k[G]$, and in \cite{GJ}, this result was derived from the theory of paracyclic modules and cylindrical modules developed by the authors. The main tool for this computation was a version for cylindrical modules of the Eilenberg-Zilber theorem. More recently, and also due to its connections with non-commutative geometry, the cyclic homology of algebras obtained through more general constructions (Hopf crossed products, Hopf Galois extensions, Braided Hopf crossed products, etcetera) has been extensively studied. See for instance \cites{AK, CGG, CGGV, GG, JS, V, ZH}. The method developed in \cites{GG, CGG, CGGV} has the advantage over others that it works for arbitrary cocycles. In this paper we use it to compute the Hochschild (co)homology and the type cyclic homologies of the weak crossed products introduced in \cite{AFGR}. Specifically, for such a crossed product $E\coloneqq A\times_{\chi}^{\mathcal{F}} V$, we construct a mixed complex, simpler than the canonical one, that computes the Hochschild, cyclic, negative and periodic homologies of $E$. The Hochschild and cyclic complexes of this mixed complex are provided with canonical filtrations whose spectral sequences generalize the Hochschild-Serre spectral sequence and the Feigin and Tsygan spectral sequence. We also study the Hochschild cohomology of these algebras. In the subsequent paper \cite{GGV1} we use the main results of this paper in order to compute the Hochschild (co)homology and the type cyclic homologies of crossed products of algebras by weak Hopf algebras with invertible cocycle. We hope that our method works also for partial crossed products (see \cite{ABDP})

\smallskip

The paper is organized as follows:

\smallskip

A crossed product system with preunit is a tuple $(A,V,\chi,\mathcal{F},\nu)$, where $A$ a $k$-algebra, $V$ a $k$-vector space and $\chi\colon V\ot_k A\to A\ot_k V$ and $\mathcal{F}\colon V\ot_k A\to A\ot_k V$ are maps satisfying suitable conditions. Each such a tuple has associated an algebra $E\coloneqq A\times_{\chi}^{\mathcal{F}} V$. If $\mathcal{F}$ is a cocycle that satisfied the twisted module condition (see Definition~\ref{twisted module and cociclo condiciones}), then $E$ is an associative algebra with unit, which is named the unitary crossed product of $A$ by $V$ associated with $\chi$ and $\mathcal{F}$. In Section~1 we give a review of the these crossed products (that includes the crossed products introduced in~\cite{Br}), we also recall the concept of mixed complex and the perturbation lemma. In Section~2 we construct a resolution of $E$ as an $E$-bimodule. In Section~3 and~4 we use this resolution to obtain complexes, simpler than the canonical ones, that compute the Hochschild homology and the Hochschild cohomology of $E$ with coefficients in an $E$-bimodule $M$. Then, in Section~5 we study the cup and the cap products of $E$, and, finally, in Section~6 we obtain a mixed complex, simpler that the canonical one, that computes the type cyclic homologies of $E$.

\section{Preliminaries}
In this article we work in the category of vector spaces over a field $k$. Hence we assume implicitly that all the maps are $k$-linear maps. The tensor product over $k$ is denoted by $\ot_k$. By an algebra we understand an associative algebra over $k$. Given an arbitrary algebra $K$, a $K$-bimodule $V$ and a natural number $n\ge 0$, we let $V^{\ot_{\hs K} n}$ denote the $n$-fold tensor product $V\ot_{\hs K}\cdots\ot_{\hs K} V$, which is considered as a $K$-bimodule via
$$
\lambda\xcdot (v_1\ot_{\hs K}\cdots \ot_{\hs K} v_n)\xcdot \lambda' \coloneqq \lambda\xcdot v_1\ot_{\hs K}\cdots \ot_{\hs K} v_n\xcdot \lambda'.
$$
Given $k$-vector spaces $U$, $V$, $W$ and a map $g\colon V\to W$ we write $U\ot_k g$ for $\ide_U\ot_k g$ and $g\ot_k U$ for $g\ot_k\ide_U$. Given a algebra $C$, we let $\mu\colon C\ot_k C\to C\index{zm@$\mu$|dotfillboldidx}$ denote its multiplication map. When $C$ is unitary we let $\eta\colon k\to C\index{zh@$\eta$|dotfillboldidx}$, denote its unit.

\smallskip

In some parts of this article we use the nowadays well known graphic calculus for monoidal and braided categories. As usual, morphisms will be composed from top to bottom and tensor products will be represented by horizontal concatenation from left to right. The identity map of a $k$-vector space will be represented by a vertical line. Given an algebra $A$, the diagrams
\begin{equation*}\label{eq1}
\begin{tikzpicture}[scale=0.4]
\def\mult(#1,#2)[#3]{\draw (#1,#2) .. controls (#1,#2-0.555*#3/2) and (#1+0.445*#3/2,#2-#3/2) .. (#1+#3/2,#2-#3/2) .. controls (#1+1.555*#3/2,#2-#3/2) and (#1+2*#3/2,#2-0.555*#3/2) .. (#1+2*#3/2,#2) (#1+#3/2,#2-#3/2) -- (#1+#3/2,#2-2*#3/2)}
\mult(0,0)[1];\node at (1.4,-0.5) {,};
\end{tikzpicture}
\qquad
\begin{tikzpicture}[scale=0.4]
\def\unit(#1,#2){\draw (#1,#2) circle[radius=2pt] (#1,#2-0.07) -- (#1,#2-1)}
\unit(0,0);\node at (0.4,-0.5) {,};
\end{tikzpicture}
\qquad
\begin{tikzpicture}[scale=0.4]
\def\laction(#1,#2)[#3,#4]{\draw (#1,#2) .. controls (#1,#2-0.555*#4/2) and (#1+0.445*#4/2,#2-1*#4/2) .. (#1+1*#4/2,#2-1*#4/2) -- (#1+2*#4/2+#3*#4/2,#2-1*#4/2) (#1+2*#4/2+#3*#4/2,#2)--(#1+2*#4/2+#3*#4/2,#2-2*#4/2)}
\laction(0,0)[0,1];
\end{tikzpicture}
\qquad
\begin{tikzpicture}[scale=0.4]
\node at (0,0) {and};
\end{tikzpicture}
\qquad
\begin{tikzpicture}[scale=0.4]
\def\raction(#1,#2)[#3,#4]{\draw (#1,#2) -- (#1,#2-2*#4/2) (#1,#2-1*#4/2)--(#1+1*#4/2+#3*#4/2,#2-1*#4/2) .. controls (#1+1.555*#4/2+#3*#4/2,#2-1*#4/2) and (#1+2*#4/2+#3*#4/2,#2-0.555*#4/2) .. (#1+2*#4/2+#3*#4/2,#2)}
\raction(0,0)[0,1];
\end{tikzpicture}
\end{equation*}
stand for the multiplication map, the unit (when $C$ is unital), the action of $C$ on a left $C$-module and the action of $C$ on a right $C$-module, respectively. We will also use the diagrams
\begin{equation*}
\begin{tikzpicture}[scale=0.45]
\def\braid(#1,#2)[#3]{\draw (#1+1*#3,#2) .. controls (#1+1*#3,#2-0.05*#3)
and (#1+0.96*#3,#2-0.15*#3).. (#1+0.9*#3,#2-0.2*#3)
(#1+0.1*#3,#2-0.8*#3)--(#1+0.9*#3,#2-0.2*#3) (#1,#2-1*#3) .. controls
(#1,#2-0.95*#3) and (#1+0.04*#3,#2-0.85*#3).. (#1+0.1*#3,#2-0.8*#3)
(#1,#2) .. controls (#1,#2-0.05*#3) and (#1+0.04*#3,#2-0.15*#3)..
(#1+0.1*#3,#2-0.2*#3) (#1+0.1*#3,#2-0.2*#3) -- (#1+0.37*#3,#2-0.41*#3)
(#1+0.62*#3,#2-0.59*#3)-- (#1+0.9*#3,#2-0.8*#3) (#1+1*#3,#2-1*#3) ..
controls (#1+1*#3,#2-0.95*#3) and (#1+0.96*#3,#2-0.85*#3)..
(#1+0.9*#3,#2-0.8*#3)}
\def\braidmamed(#1,#2)[#3]{\draw (#1,#2-0.5)  node[name=nodemap,inner
sep=0pt,  minimum size=10pt, shape=circle,draw]{$#3$}
(#1-0.5,#2) .. controls (#1-0.5,#2-0.15) and (#1-0.4,#2-0.2) ..
(#1-0.3,#2-0.3) (#1-0.3,#2-0.3) -- (nodemap)
(#1+0.5,#2) .. controls (#1+0.5,#2-0.15) and (#1+0.4,#2-0.2) ..
(#1+0.3,#2-0.3) (#1+0.3,#2-0.3) -- (nodemap)
(#1+0.5,#2-1) .. controls (#1+0.5,#2-0.85) and (#1+0.4,#2-0.8) ..
(#1+0.3,#2-0.7) (#1+0.3,#2-0.7) -- (nodemap)
(#1-0.5,#2-1) .. controls (#1-0.5,#2-0.85) and (#1-0.4,#2-0.8) ..
(#1-0.3,#2-0.7) (#1-0.3,#2-0.7) -- (nodemap)
}
\def\cocycle(#1,#2)[#3]{\draw (#1,#2) .. controls (#1,#2-0.555*#3/2) and
(#1+0.445*#3/2,#2-#3/2) .. (#1+#3/2,#2-#3/2) .. controls
(#1+1.555*#3/2,#2-#3/2) and (#1+2*#3/2,#2-0.555*#3/2) .. (#1+2*#3/2,#2)
(#1+#3/2,#2-#3/2) -- (#1+#3/2,#2-2*#3/2) (#1+#3/2,#2-#3/2)  node [inner
sep=0pt,minimum size=3pt,shape=circle,fill] {}}
\def\cocyclenamed(#1,#2)[#3][#4]{\draw (#1+#3/2,#2-1*#3/2)
node[name=nodemap,inner sep=0pt,  minimum size=9pt,
shape=circle,draw]{$#4$}
(#1,#2) .. controls (#1,#2-0.555*#3/2) and (#1+0.445*#3/2,#2-#3/2) ..
(nodemap) .. controls (#1+1.555*#3/2,#2-#3/2) and
(#1+2*#3/2,#2-0.555*#3/2) .. (#1+2*#3/2,#2) (nodemap)--
(#1+#3/2,#2-2*#3/2)
}
\def\cocycleTR90(#1,#2)[#3]{\draw (#1,#2) .. controls (#1,#2-0.555*#3/2)
and (#1+0.445*#3/2,#2-#3/2) .. (#1+#3/2,#2-#3/2) .. controls
(#1+1.555*#3/2,#2-#3/2) and (#1+2*#3/2,#2-0.555*#3/2) .. (#1+2*#3/2,#2)
(#1+#3/2,#2-#3/2) -- (#1+#3/2,#2-2*#3/2) (#1+#3/2,#2-#3/2)  node [inner
sep=0pt,minimum size=3pt,shape=isosceles triangle, rotate=90,fill] {}}
\def\comult(#1,#2)[#3,#4]{\draw (#1,#2) -- (#1,#2-0.5*#4) arc (90:0:0.5*#3
and 0.5*#4) (#1,#2-0.5*#4) arc (90:180:0.5*#3 and 0.5*#4)}
\def\counit(#1,#2){\draw (#1,#2) -- (#1,#2-0.93) (#1,#2-1)
circle[radius=2pt]}
\def\cuadrupledoublemap(#1,#2)[#3]{\draw (#1+1.5,#2-0.5) node
[name=doublesinglemapnode,inner xsep=0pt, inner ysep=0pt, minimum
height=9pt, minimum width=40pt,shape=rectangle,draw,rounded corners]
{$#3$} (#1+1,#2) .. controls (#1+1,#2-0.075) .. (doublesinglemapnode)
(#1,#2) .. controls (#1,#2-0.075) .. (doublesinglemapnode) (#1+2,#2) ..
controls (#1+2,#2-0.075) .. (doublesinglemapnode) (#1+3,#2) .. controls
(#1+3,#2-0.075) .. (doublesinglemapnode) (doublesinglemapnode)--
(#1+1,#2-1) (doublesinglemapnode)-- (#1+2,#2-1)
}
\def\doublemap(#1,#2)[#3]{\draw (#1+0.5,#2-0.5) node
[name=doublemapnode,inner xsep=0pt, inner ysep=0pt, minimum height=10pt,
minimum width=22pt,shape=rectangle,draw,rounded corners] {$#3$} (#1,#2) ..
controls (#1,#2-0.075) .. (doublemapnode) (#1+1,#2) .. controls
(#1+1,#2-0.075).. (doublemapnode) (doublemapnode) .. controls
(#1,#2-0.925)..(#1,#2-1) (doublemapnode) .. controls (#1+1,#2-0.925)..
(#1+1,#2-1)}
\def\doublesinglemap(#1,#2)[#3]{\draw (#1+0.5,#2-0.5) node
[name=doublesinglemapnode,inner xsep=0pt, inner ysep=0pt, minimum
height=10pt, minimum width=2pt,shape=rectangle,draw,rounded corners]
{$#3$} (#1,#2) .. controls (#1,#2-0.075) .. (doublesinglemapnode)
(#1+1,#2) .. controls (#1+1,#2-0.075).. (doublesinglemapnode)
(doublesinglemapnode)-- (#1+0.5,#2-1)}
\def\flip(#1,#2)[#3]{\draw (#1,#2) -- (#1+1*#3,#2-1*#3); \draw
(#1+1*#3,#2) -- (#1,#2-1*#3);}
\def\laction(#1,#2)[#3,#4]{\draw (#1,#2) .. controls (#1,#2-0.555*#4/2)
and (#1+0.445*#4/2,#2-1*#4/2) .. (#1+1*#4/2,#2-1*#4/2) --
(#1+2*#4/2+#3*#4/2,#2-1*#4/2)
(#1+2*#4/2+#3*#4/2,#2)--(#1+2*#4/2+#3*#4/2,#2-2*#4/2)}
\def\lactionnamed(#1,#2)[#3,#4][#5]{\draw (#1+2*#4/2+#3*#4/2,#2-#4/2)
node[name=nodemap,inner sep=0pt,  minimum size=10pt,
shape=circle,draw]{$#5$}
(#1,#2) .. controls (#1,#2-0.555*#4/2) and (#1+0.445*#4/2,#2-1*#4/2) ..
(#1+1*#4/2,#2-1*#4/2) -- (nodemap)
(#1+2*#4/2+#3*#4/2,#2)  -- (nodemap)
(nodemap)--(#1+2*#4/2+#3*#4/2,#2-2*#4/2)
}
\def\map(#1,#2)[#3]{\draw (#1,#2-0.5)  node[name=nodemap,inner sep=0pt,
minimum size=10pt, shape=circle,draw]{$#3$} (#1,#2)-- (nodemap)
(nodemap)-- (#1,#2-1)}
\def\mult(#1,#2)[#3,#4]{\draw (#1,#2) arc (180:360:0.5*#3 and 0.5*#4)
(#1+0.5*#3, #2-0.5*#4) -- (#1+0.5*#3,#2-#4)}
\def\raction(#1,#2)[#3,#4]{\draw (#1,#2) -- (#1,#2-2*#4/2)
(#1,#2-1*#4/2)--(#1+1*#4/2+#3*#4/2,#2-1*#4/2) .. controls
(#1+1.555*#4/2+#3*#4/2,#2-1*#4/2) and (#1+2*#4/2+#3*#4/2,#2-0.555*#4/2) ..
(#1+2*#4/2+#3*#4/2,#2)}
\def\mult(#1,#2)[#3,#4]{\draw (#1,#2) arc (180:360:0.5*#3 and 0.5*#4)
(#1+0.5*#3, #2-0.5*#4) -- (#1+0.5*#3,#2-#4)}
\def\rcoaction(#1,#2)[#3,#4]{\draw (#1,#2)-- (#1,#2-2*#4/2) (#1,#2-1*#4/2)
-- (#1+1*#4/2+#3*#4/2,#2-1*#4/2).. controls
(#1+1.555*#4/2+#3*#4/2,#2-1*#4/2) and (#1+2*#4/2+#3*#4/2,#2-1.445*#4/2) ..
(#1+2*#4/2+#3*#4/2,#2-2*#4/2)}
\def\singledoublemap(#1,#2)[#3]{\draw (#1+0.5,#2-0.5) node
[name=doublemapnode,inner xsep=0pt, inner ysep=0pt, minimum height=10pt,
minimum width=22pt,shape=rectangle,draw,rounded corners] {$#3$}
(#1+0.5,#2)--(doublemapnode) (doublemapnode) .. controls
(#1,#2-0.925)..(#1,#2-1) (doublemapnode) .. controls (#1+1,#2-0.925)..
(#1+1,#2-1) }
\def\singletriplemap(#1,#2)[#3]{\draw (#1+0.5,#2-0.5) node
[name=triplemapnode,inner xsep=0pt, inner ysep=0pt, minimum height=9pt,
minimum width=27pt, shape=rectangle,draw,rounded corners] {$#3$}
(#1+0.5,#2)--(triplemapnode) (triplemapnode) .. controls
(#1-0.5,#2-0.925)..(#1-0.5,#2-1) (triplemapnode) .. controls
(#1+0.5,#2-0.925).. (#1+0.5,#2-1) (triplemapnode) .. controls
(#1+1.5,#2-0.925).. (#1+1.5,#2-1) }
\def\transposition(#1,#2)[#3]{\draw (#1+#3,#2) .. controls
(#1+#3,#2-0.05*#3) and (#1+0.96*#3,#2-0.15*#3).. (#1+0.9*#3,#2-0.2*#3)
(#1+0.1*#3,#2-0.8*#3)--(#1+0.9*#3,#2-0.2*#3) (#1+0.1*#3,#2-0.2*#3) ..
controls  (#1+0.3*#3,#2-0.2*#3) and (#1+0.46*#3,#2-0.31*#3) ..
(#1+0.5*#3,#2-0.34*#3) (#1,#2-1*#3) .. controls (#1,#2-0.95*#3) and
(#1+0.04*#3,#2-0.85*#3).. (#1+0.1*#3,#2-0.8*#3) (#1,#2) .. controls
(#1,#2-0.05*#3) and (#1+0.04*#3,#2-0.15*#3).. (#1+0.1*#3,#2-0.2*#3)
(#1+0.1*#3,#2-0.2*#3) .. controls  (#1+0.1*#3,#2-0.38*#3) and
(#1+0.256*#3,#2-0.49*#3) .. (#1+0.275*#3,#2-0.505*#3)
(#1+0.50*#3,#2-0.66*#3) .. controls (#1+0.548*#3,#2-0.686*#3) and
(#1+0.70*#3,#2-0.8*#3)..(#1+0.9*#3,#2-0.8*#3) (#1+0.72*#3,#2-0.50*#3) ..
controls (#1+0.80*#3,#2-0.56*#3) and
(#1+0.9*#3,#2-0.73*#3)..(#1+0.9*#3,#2-0.8*#3) (#1+#3,#2-#3) .. controls
(#1+#3,#2-0.95*#3) and (#1+0.96*#3,#2-0.85*#3).. (#1+0.9*#3,#2-0.8*#3)}
\def\tripledoublemap(#1,#2)[#3]{\draw (#1,#2-0.5) node
[name=doublesinglemapnode,inner xsep=0pt, inner ysep=0pt, minimum
height=10pt, minimum width=30pt, shape=rectangle, draw, rounded corners]
{$#3$} (#1-1,#2) .. controls (#1-1,#2-0.075) .. (doublesinglemapnode)
(#1,#2) .. controls (#1,#2-0.075) .. (doublesinglemapnode) (#1+1,#2) ..
controls (#1+1,#2-0.075) .. (doublesinglemapnode) (doublesinglemapnode)--
(#1-0.5,#2-1) (doublesinglemapnode)-- (#1+0.5,#2-1)
}
\def\triplesinglemap(#1,#2)[#3]{\draw (#1,#2-0.5) node
[name=doublesinglemapnode,inner xsep=0pt, inner ysep=0pt, minimum
height=10pt, minimum width=30pt, shape=rectangle, draw, rounded corners]
{$#3$} (#1-1,#2) .. controls (#1-1,#2-0.075) .. (doublesinglemapnode)
(#1,#2) .. controls (#1,#2-0.075) .. (doublesinglemapnode) (#1+1,#2) ..
controls (#1+1,#2-0.075) .. (doublesinglemapnode) (doublesinglemapnode)--
(#1,#2-1)
}
\def\twisting(#1,#2)[#3]{\draw (#1+#3,#2) .. controls (#1+#3,#2-0.05*#3)
and (#1+0.96*#3,#2-0.15*#3).. (#1+0.9*#3,#2-0.2*#3) (#1,#2-1*#3) ..
controls (#1,#2-0.95*#3) and (#1+0.04*#3,#2-0.85*#3)..
(#1+0.1*#3,#2-0.8*#3) (#1+0.1*#3,#2-0.8*#3) ..controls
(#1+0.25*#3,#2-0.8*#3) and (#1+0.45*#3,#2-0.69*#3) ..
(#1+0.50*#3,#2-0.66*#3) (#1+0.1*#3,#2-0.8*#3) ..controls
(#1+0.1*#3,#2-0.65*#3) and (#1+0.22*#3,#2-0.54*#3) ..
(#1+0.275*#3,#2-0.505*#3) (#1+0.72*#3,#2-0.50*#3) .. controls
(#1+0.75*#3,#2-0.47*#3) and (#1+0.9*#3,#2-0.4*#3).. (#1+0.9*#3,#2-0.2*#3)
(#1,#2) .. controls (#1,#2-0.05*#3) and (#1+0.04*#3,#2-0.15*#3)..
(#1+0.1*#3,#2-0.2*#3) (#1+0.5*#3,#2-0.34*#3) .. controls
(#1+0.6*#3,#2-0.27*#3) and (#1+0.65*#3,#2-0.2*#3).. (#1+0.9*#3,#2-0.2*#3)
(#1+#3,#2-#3) .. controls (#1+#3,#2-0.95*#3) and (#1+0.96*#3,#2-0.85*#3)..
(#1+0.9*#3,#2-0.8*#3) (#1+#3,#2) .. controls (#1+#3,#2-0.05*#3) and
(#1+0.96*#3,#2-0.15*#3).. (#1+0.9*#3,#2-0.2*#3) (#1+0.1*#3,#2-0.2*#3) ..
controls  (#1+0.3*#3,#2-0.2*#3) and (#1+0.46*#3,#2-0.31*#3) ..
(#1+0.5*#3,#2-0.34*#3) (#1+0.1*#3,#2-0.2*#3) .. controls
(#1+0.1*#3,#2-0.38*#3) and (#1+0.256*#3,#2-0.49*#3) ..
(#1+0.275*#3,#2-0.505*#3) (#1+0.50*#3,#2-0.66*#3) .. controls
(#1+0.548*#3,#2-0.686*#3) and
(#1+0.70*#3,#2-0.8*#3)..(#1+0.9*#3,#2-0.8*#3) (#1+#3,#2-1*#3) .. controls
(#1+#3,#2-0.95*#3) and (#1+0.96*#3,#2-0.85*#3).. (#1+0.9*#3,#2-0.8*#3)
(#1+0.72*#3,#2-0.50*#3) .. controls (#1+0.80*#3,#2-0.56*#3) and
(#1+0.9*#3,#2-0.73*#3)..(#1+0.9*#3,#2-0.8*#3)(#1+0.72*#3,#2-0.50*#3) --
(#1+0.50*#3,#2-0.66*#3) -- (#1+0.275*#3,#2-0.505*#3) --
(#1+0.5*#3,#2-0.34*#3) -- (#1+0.72*#3,#2-0.50*#3)}
\def\unit(#1,#2){\draw (#1,#2) circle[radius=2pt] (#1,#2-0.07) -- (#1,#2-1)}
\begin{scope}[xshift=0cm, yshift=0cm]
\flip(0,0)[1];
\end{scope}
\begin{scope}[xshift=1.3cm,yshift=0cm]
\node at (0,-0.6){,};
\end{scope}
\begin{scope}[xshift=3.2cm, yshift=0cm]
\twisting(0,0)[1];
\end{scope}
\begin{scope}[xshift=4.5cm,yshift=0cm]
\node at (0,-0.6){,};
\end{scope}
\begin{scope}[xshift=6.8cm, yshift=0cm]
\doublemap(0,0)[\scriptstyle \mathcal{F}];
\end{scope}
\begin{scope}[xshift=10.6cm,yshift=0cm]
\node at (0,-0.5){and};
\end{scope}
\begin{scope}[xshift=13.4cm, yshift=0.4cm]
\unit(0.5,0); \singledoublemap(0,-1)[\scriptstyle \nu];
\end{scope}

\end{tikzpicture}
\end{equation*}
to denote the flip, a twisting map (See Definition~\ref{no unitarytwisted space}), and the maps $\mathcal{F}$ and $\nu$ of a crossed product system with preunit (See Definition~\ref{def preunit}).

\subsection{Weak Brzezi\'nski's crossed products}\label{subsection: a generalization of Brzezinskis crossed prodcts }
In this subsection we recall a very general notion of crossed product developed in~\cites{AFGR, FGR}, and we review its basic properties. Also we compare this construction with the one given in~\cite{Br}.

\begin{definition}\label{no unitarytwisted space} A triple $(A,V,\chi)\index{aa@$(A,V,\chi)$|dotfillboldidx}$, consisting of an associative algebra $A$, a $k$-vector space $V$ and a map $\chi\colon V\ot_k A\longrightarrow A\ot_k V\index{zw@$\chi$|dotfillboldidx}$, is a {\em twisted space} if
\begin{equation}\label{eqtwistingcond}
\chi\xcirc (V\ot_k\mu_A)= (\mu_A\ot_k V)\xcirc (A\ot_k\chi)\xcirc (\chi\ot_k A).
\end{equation}
In such a case we say that $\chi$ is a {\em twisting map}.
\end{definition}

Throughout this paper we assume that $A$ is a unitary algebra and $(A,V,\chi)$ is a twisted space.~A~di\-rect computation shows that $A\ot_k V$ is a non unitary $A$-bimodule via
$$
a'\xcdot (a\ot_k v) = a'a\ot_k v\qquad\text{and}\qquad (a\ot_k v)\xcdot a' = a\xcdot \chi(v\ot_k a').
$$
Let $\nabla_{\!\chi}\index{zw@$\chi$!a@$\nabla_{\chi}$|dotfillboldidx}$ be the endomorphism of $A\ot_k V$ defined by $\nabla_{\!\chi}(a\ot_k v) \coloneqq  a\xcdot \chi(v\ot_k 1_A)$. It is easy to see that $\nabla_{\!\chi}$ is a left and right $A$-linear idempotent, and that $\chi(V\ot_k A)\subseteq A\times V$, where $A\times V\coloneqq \nabla_{\!\chi}(A\ot_k V)\index{ad@$A\times V$|dotfillboldidx}$. Let $p_{\chi}\index{zw@$\chi$!i@$p_{\chi}$|dotfillboldidx}$ and $\imath_{\chi}\index{zw@$\chi$!p@$\imath_{\chi}$|dotfillboldidx}$ be the corestriction of $\nabla_{\!\chi}$ to $A\times V$ and the canonical inclusion of $A\times V$ in $A\ot_k V$, respectively. By the above discussion $p_{\chi}\xcirc \imath_{\chi}= \ide_{A\times V}$. Moreover $A\times V$ is an unitary $A$-subbimodule of $A\ot_k V$ and both $p_{\chi}$ and $\imath_{\chi}$ are $A$-bi\-mod\-ule morphisms.

\begin{remark}\label{nabla es multiplicar por 1_A} Note that $\cramped{\nabla_{\!\chi}(\bx)=\bx\xcdot 1_A}$ for all $\cramped{\bx\in A\ot_k V}$. So, $A\times V$ is the set of all the $\bx$'s in $\cramped{A\ot_k V}$ such that $\bx\xcdot 1_A=\bx$. The group $X_{\chi}\coloneqq \bigl\{\bx\in A\ot_k V:\bx\xcdot 1_A=0\bigr\}\index{zw@$\chi$!Xa@$X_{\chi}$|dotfillboldidx}$. is an $A$-subbimodule of $A\ot_k V$ and $A\ot_k V = A\times V\oplus X_{\chi}$. Moreover the projection of $A\ot_k V$ onto $A\times V$ along $X_{\chi}$ coincides with $p_{\chi}$.
\end{remark}

Let $C$ be a $k$-vector space and let $\nabla_{\hs C}\colon C\to C$ be an idempotent map. An associative product $\mu_C\colon C\ot_k C\longrightarrow C$ is {\em normalized with respect to $\nabla_{\hs C}$} if $\nabla_{\hs C}(cc') = cc' = \nabla_{\hs C}(c)\nabla_{\hs C}(c')$, for all $c,c'\in C$.

\smallskip

Let $C$ be an algebra. A map $\nu\colon k\to C$ is a {\em preunit} of $\mu_C$ if $\nu(1)$ is a central idempotent of~$C$.

\begin{definition}\label{def preunit} We say that a tuple $(A,V,\chi,\mathcal{F},\nu)\index{ac@$(A,V,\chi,\mathcal{F},\nu)$|dotfillboldidx}$ is a {\em crossed product system with preunit} if

\begin{enumerate}[itemsep=0.7ex, topsep=1.0ex, label=(\arabic*)]

\item $(A,V,\chi)$ is a twisted space,

\item $\mathcal{F}\colon V\ot_k V\longrightarrow A\ot_k V$ is a map with $\ima(\mathcal{F})\subseteq A\times V$,

\item $\nu\colon k\longrightarrow A\ot_k V\index{zn@$\nu$|dotfillboldidx}$ is a map satisfying
\begin{align}
& (\mu_A\ot_k V)\xcirc(A\ot_k \mathcal{F})\xcirc(\chi\ot_k V)\xcirc(V\ot_k \nu) = \nabla_{\!\chi}\xcirc (\eta_A\ot_k V),\label{preunit1}\\
& (\mu_A\ot_k V)\xcirc (A\ot_k \mathcal{F})\xcirc(\nu\ot_k V) = \nabla_{\!\chi}\xcirc (\eta_A\ot_k V),\label{preunit2}\\
& (\mu_A\ot_k V)\xcirc (A\ot_k \chi)\xcirc(\nu\ot_k A) = (\mu_A\ot_k V)\xcirc (A\ot_k \nu).\label{preunit3}
\end{align}
\end{enumerate}

\end{definition}

\begin{notation}\label{mult A times V} Given a crossed product system with preunit $(A,V,\chi,\mathcal{F},\nu)$, we let $A\ot_{\chi}^{\mathcal{F}} V\index{af@$A\ot_{\chi}^{\mathcal{F}} V$|dotfillboldidx}$ denote $A\ot_k V$ endowed with the (non necessarily associative) multiplication map $\mu_{A\ot_{\chi}^{\mathcal{F}} V}$ defined by
$$
\mu_{A\ot_{\chi}^{\mathcal{F}} V}\coloneqq  (\mu_A\ot_k V)\xcirc (\mu_A\ot_k\mathcal{F})\xcirc (A\ot_k\chi\ot_k V),
$$
and we let $A\times_{\chi}^{\mathcal{F}} V\index{ae@$A\times_{\chi}^{\mathcal{F}} V$|dotfillboldidx}$ denote $A\times V$ endowed with the (non necessarily associative) multiplication map $\mu_{A\times_{\chi}^{\mathcal{F}} V}$ induced by $\mu_{A\ot_{\chi}^{\mathcal{F}} V}$ (this is correct since clearly $\ima\bigl(\mu_{A\ot_{\chi}^{\mathcal{F}} V}\bigr)\subseteq A\times V$). In the sequel for simplicity we will write $E\index{ea@$E$|dotfillboldidx}$ instead of $A\times_{\chi}^{\mathcal{F}} V$ and $\mathcal{E}\index{eb@$\mathcal{E}$|dotfillboldidx}$ instead of $A\ot_{\chi}^{\mathcal{F}} V$.
\end{notation}

\begin{definition}\label{twisted module and cociclo condiciones} Let $(A,V,\chi,\mathcal{F},\nu)$ be a crossed product system with preunit. We say that $\mathcal{F}$ is a {\em cocycle} that satisfies the {\em twisted module condition} if
\begin{equation*}
\begin{tikzpicture}[scale=0.43]
\def\mult(#1,#2)[#3]{\draw (#1,#2) .. controls (#1,#2-0.555*#3/2) and (#1+0.445*#3/2,#2-#3/2) .. (#1+#3/2,#2-#3/2) .. controls (#1+1.555*#3/2,#2-#3/2) and (#1+2*#3/2,#2-0.555*#3/2) .. (#1+2*#3/2,#2) (#1+#3/2,#2-#3/2) -- (#1+#3/2,#2-2*#3/2)}
\def\doublemap(#1,#2)[#3]{\draw (#1+0.5,#2-0.5) node [name=doublemapnode,inner xsep=0pt, inner ysep=0pt, minimum height=10pt, minimum width=22pt,shape=rectangle,draw,rounded corners] {$#3$} (#1,#2) .. controls (#1,#2-0.075) .. (doublemapnode) (#1+1,#2) .. controls (#1+1,#2-0.075).. (doublemapnode) (doublemapnode) .. controls (#1,#2-0.925)..(#1,#2-1) (doublemapnode) .. controls (#1+1,#2-0.925).. (#1+1,#2-1)}
\def\doublesinglemap(#1,#2)[#3]{\draw (#1+0.5,#2-0.5) node [name=doublesinglemapnode,inner xsep=0pt, inner ysep=0pt, minimum height=11pt, minimum width=25pt,shape=rectangle,draw,rounded corners] {$#3$} (#1,#2) .. controls (#1,#2-0.075) .. (doublesinglemapnode) (#1+1,#2) .. controls (#1+1,#2-0.075).. (doublesinglemapnode) (doublesinglemapnode)-- (#1+0.5,#2-1)}
\def\twisting(#1,#2)[#3]{\draw (#1+#3,#2) .. controls (#1+#3,#2-0.05*#3) and (#1+0.96*#3,#2-0.15*#3).. (#1+0.9*#3,#2-0.2*#3) (#1,#2-1*#3) .. controls (#1,#2-0.95*#3) and (#1+0.04*#3,#2-0.85*#3).. (#1+0.1*#3,#2-0.8*#3) (#1+0.1*#3,#2-0.8*#3) ..controls (#1+0.25*#3,#2-0.8*#3) and (#1+0.45*#3,#2-0.69*#3) .. (#1+0.50*#3,#2-0.66*#3) (#1+0.1*#3,#2-0.8*#3) ..controls (#1+0.1*#3,#2-0.65*#3) and (#1+0.22*#3,#2-0.54*#3) .. (#1+0.275*#3,#2-0.505*#3) (#1+0.72*#3,#2-0.50*#3) .. controls (#1+0.75*#3,#2-0.47*#3) and (#1+0.9*#3,#2-0.4*#3).. (#1+0.9*#3,#2-0.2*#3) (#1,#2) .. controls (#1,#2-0.05*#3) and (#1+0.04*#3,#2-0.15*#3).. (#1+0.1*#3,#2-0.2*#3) (#1+0.5*#3,#2-0.34*#3) .. controls (#1+0.6*#3,#2-0.27*#3) and (#1+0.65*#3,#2-0.2*#3).. (#1+0.9*#3,#2-0.2*#3) (#1+#3,#2-#3) .. controls (#1+#3,#2-0.95*#3) and (#1+0.96*#3,#2-0.85*#3).. (#1+0.9*#3,#2-0.8*#3) (#1+#3,#2) .. controls (#1+#3,#2-0.05*#3) and (#1+0.96*#3,#2-0.15*#3).. (#1+0.9*#3,#2-0.2*#3) (#1+0.1*#3,#2-0.2*#3) .. controls (#1+0.3*#3,#2-0.2*#3) and (#1+0.46*#3,#2-0.31*#3) .. (#1+0.5*#3,#2-0.34*#3) (#1+0.1*#3,#2-0.2*#3) .. controls (#1+0.1*#3,#2-0.38*#3) and (#1+0.256*#3,#2-0.49*#3) .. (#1+0.275*#3,#2-0.505*#3) (#1+0.50*#3,#2-0.66*#3) .. controls (#1+0.548*#3,#2-0.686*#3) and (#1+0.70*#3,#2-0.8*#3)..(#1+0.9*#3,#2-0.8*#3) (#1+#3,#2-1*#3) .. controls (#1+#3,#2-0.95*#3) and (#1+0.96*#3,#2-0.85*#3).. (#1+0.9*#3,#2-0.8*#3) (#1+0.72*#3,#2-0.50*#3) .. controls (#1+0.80*#3,#2-0.56*#3) and (#1+0.9*#3,#2-0.73*#3)..(#1+0.9*#3,#2-0.8*#3)(#1+0.72*#3,#2-0.50*#3) -- (#1+0.50*#3,#2-0.66*#3) -- (#1+0.275*#3,#2-0.505*#3) -- (#1+0.5*#3,#2-0.34*#3) -- (#1+0.72*#3,#2-0.50*#3)}
\begin{scope}[yshift=-0.5cm]
\doublemap(0,0)[\scriptstyle\mathcal{F}];\draw (2,0) -- (2,-1);\draw (0,-1) -- (0,-2);\twisting(1,-1)[1];\mult(0,-2)[1];\draw (2,-2) -- (1.5,-3);
\end{scope}
\begin{scope}[xshift=0.2cm,yshift=0cm]
\node at (2.5,-1.8){=};
\end{scope}
\begin{scope}[xshift=0.5cm]
\draw (3,0) -- (3,-1);\twisting(4,0)[1];\twisting(3,-1)[1];\draw (5,-1) -- (5,-2);\draw (3,-2) -- (3,-3);\doublemap(4,-2)[\scriptstyle\mathcal{F}];\mult(3,-3)[1];\draw (5,-3) -- (4.5,-4);
\end{scope}
\begin{scope}[xshift=5.3cm,yshift=0cm]
\node at (2.6,-1.8){and};
\end{scope}
\begin{scope}[xshift=10.3cm, yshift=-0.5cm]
\doublemap(0,0)[\scriptstyle\mathcal{F}];\draw (2,0) -- (2,-1);\draw (0,-1) -- (0,-2);\doublemap(1,-1)[\scriptstyle\mathcal{F}];\mult(0,-2)[1];\draw (2,-2) -- (1.5,-3);
\end{scope}
\begin{scope}[xshift=10.8cm,yshift=0cm]
\node at (2.5,-1.8){=};
\end{scope}
\begin{scope}[xshift=11.1cm]
\draw (3,0) -- (3,-1);\doublemap(4,0)[\scriptstyle\mathcal{F}];\twisting(3,-1)[1];\draw (5,-1) -- (5,-2);\draw (3,-2) -- (3,-3);\doublemap(4,-2)[\scriptstyle\mathcal{F}];\mult(3,-3)[1];\draw (5,-3) -- (4.5,-4);
\end{scope}
\end{tikzpicture}
\end{equation*}
\noindent More precisely, the first equality says that $\mathcal{F}$ satisfies the twisted module condition, and the second one says that $\mathcal{F}$ is a cocycle.
\end{definition}

Let $(A,V,\chi,\mathcal{F},\nu)$ be a crossed product system with preunit and let
$$
\nabla_{\!\nu}\colon \mathcal{E}\longrightarrow \mathcal{E}\index{zn@$\nu$!zna@$\nabla_{\nu}$|dotfillboldidx},\quad \jmath'_{\nu}\colon A \to \mathcal{E}\index{zn@$\nu$!znb@$\jmath'_{\nu}$|dotfillboldidx},\quad \jmath_{\nu}\colon A\to E\index{zn@$\nu$!znc@$\jmath_{\nu}$|dotfillboldidx} \quad\text{and}\quad \gamma\colon V\to E\index{zc@$\gamma$|dotfillboldidx}
$$
be the arrows defined by
$$
\nabla_{\!\nu}(a\ot_k v)\coloneqq (a\ot_k v)\nu(1_k),\quad \jmath'_{\nu}(a)\coloneqq a\xcdot \nu(1_k),\quad \jmath_{\nu}(a)\coloneqq \nabla_{\! \chi}(\jmath'_{\nu}(a))\quad\text{and}\quad \gamma(v)\coloneqq \nabla_{\!\chi}(1_A \ot_k v).
$$

\begin{theorem}\label{prodcruz1} Let $(A,V,\chi,\mathcal{F},\nu)$ be a crossed product system with preunit. If $\mathcal{F}$ is a cocycle that satisfies the twisted module condition, then the following facts hold:

\begin{enumerate}[itemsep=0.7ex, topsep=1.0ex, label=\emph{(\arabic*)}]

\item $\mu_{\mathcal{E}}$ is a left and right $A$-linear associative product, that is normalized with respect to~$\nabla_{\!\chi}$.

\item $\nu$ is a preunit of $\mu_{\mathcal{E}}$, $\nabla_{\!\nu} = \nabla_{\!\chi}$ and $\nu(k)\subseteq E$.

\item $\mu_E$ is left and right $A$-linear, associative and has unit $1_E\coloneqq \nu(1_k)$.

\item The maps $\imath_{\chi}$ and $p_{\chi}$ are multiplicative.

\item $\jmath'_{\nu}$ is left and right $A$-linear, multiplicative, and $\jmath'_{\nu}(A) \subseteq E$.

\item $\jmath_{\nu}$ is left and right $A$-linear, multiplicative and unitary.

\item $\jmath_{\nu}(a)\bx = a\xcdot \bx$ and $\bx\jmath_{\nu}(a) = \bx\xcdot a$, for all $a\in A$ and $\bx\in E$.

\item $\chi(v\ot_k a) = (1_A\ot_k v)\jmath'_{\nu}(a)$ and $\mathcal{F}(v\ot_k w) = (1_A\ot_k v)(1_A\ot_k w)$.

\item $\chi(v\ot_k a) = \gamma(v)\jmath_{\nu}(a)$ and $\mathcal{F}(v\ot_k w) = \gamma(v)\gamma(w)$.

\end{enumerate}

\end{theorem}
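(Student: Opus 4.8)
The plan is to establish the nine assertions essentially in the listed order, since each feeds the next, and within item~(1) the associativity of $\mu_{\mathcal{E}}$ is the keystone on which everything rests. I would treat item~(1) first. That $\ima(\mu_{\mathcal{E}})\subseteq A\times V$ is already recorded in Notation~\ref{mult A times V}, so $\nabla_{\!\chi}(cc')=cc'$ for all $c,c'$. Left $A$-linearity of $\mu_{\mathcal{E}}$ is immediate from the outer factor $\mu_A\ot_k V$. That $\mu_{\mathcal{E}}$ is $A$-balanced, i.e.\ $\mu_{\mathcal{E}}\xcirc((c\xcdot a)\ot_k c')=\mu_{\mathcal{E}}\xcirc(c\ot_k(a\xcdot c'))$, follows from the twisting identity~\eqref{eqtwistingcond} together with associativity of $\mu_A$, since sliding a factor of $A$ across $\chi$ is exactly~\eqref{eqtwistingcond}; right $A$-linearity reduces similarly but now also uses the twisted module condition of Definition~\ref{twisted module and cociclo condiciones}. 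The heart of the matter is associativity $(cc')c''=c(c'c'')$, which I would prove with the graphical calculus: expanding both sides yields two planar diagrams built from two copies of $\chi$, two copies of $\mathcal{F}$ and $\mu_A$, and one transforms the first into the second by sliding the top $\chi$ past a product of $A$ via~\eqref{eqtwistingcond}, reorganizing the $\chi$--$\mathcal{F}$ crossings by the twisted module condition, and finally rearranging the two $\mathcal{F}$'s by the cocycle condition. This diagram chase is the main obstacle and essentially the only genuinely computational step. Normalization then drops out: by $A$-balancedness and right $A$-linearity, $\nabla_{\!\chi}(c)\nabla_{\!\chi}(c')=\mu_{\mathcal{E}}\bigl((c\xcdot 1_A)\ot_k(c'\xcdot 1_A)\bigr)=\mu_{\mathcal{E}}(c\ot_k c')\xcdot 1_A=\nabla_{\!\chi}(cc')=cc'$, using $\nabla_{\!\chi}(\bx)=\bx\xcdot 1_A$ (Remark~\ref{nabla es multiplicar por 1_A}) and $cc'\in A\times V$.

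Next, item~(2) is read off from the three preunit identities. Setting $a=1_A$ in~\eqref{preunit3} gives $\nabla_{\!\chi}(\nu(1_k))=\nu(1_k)$, so $\nu(k)\subseteq A\times V=E$; for general $a$, \eqref{preunit3} says $\nu(1_k)\xcdot a=a\xcdot\nu(1_k)$. Comparing~\eqref{preunit1} with the expansion of $\bx\xcdot\nu(1_k)$ gives $\nabla_{\!\nu}=\nabla_{\!\chi}$, hence $\bx\,\nu(1_k)=\nabla_{\!\chi}(\bx)$ for all $\bx\in\mathcal{E}$. Symmetrically, expanding $\nu(1_k)\,\bx$, using~\eqref{preunit3} to turn the inner $\chi$-twist into a left multiplication and then~\eqref{preunit2}, gives $\nu(1_k)\,\bx=\nabla_{\!\chi}(\bx)$ as well. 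Thus $\nu(1_k)$ is central in $\mathcal{E}$, and taking $\bx=\nu(1_k)$ (which lies in $A\times V$, so is fixed by $\nabla_{\!\chi}$) shows it is idempotent; hence $\nu$ is a preunit of $\mu_{\mathcal{E}}$.

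Items~(3) and~(4) are then formal. Because $A\times V$ is closed under $\mu_{\mathcal{E}}$ and is an $A$-subbimodule, $\mu_E$ inherits associativity and left and right $A$-linearity from~(1); and for $\bx\in E$ one has $\nu(1_k)\,\bx=\nabla_{\!\chi}(\bx)=\bx=\bx\,\nu(1_k)$, so $1_E\coloneqq\nu(1_k)$ is a two-sided unit. For~(4), $\imath_{\chi}$ is multiplicative since $\mu_E$ is by definition the restriction and corestriction of $\mu_{\mathcal{E}}$ and $\ima(\mu_{\mathcal{E}})\subseteq A\times V$; and $p_{\chi}$ is multiplicative because normalization gives $\mu_{\mathcal{E}}=\mu_{\mathcal{E}}\xcirc(\nabla_{\!\chi}\ot_k\nabla_{\!\chi})=\mu_{\mathcal{E}}\xcirc(\imath_{\chi}p_{\chi}\ot_k\imath_{\chi}p_{\chi})$, so applying $p_{\chi}$ and using $p_{\chi}\xcirc\imath_{\chi}=\ide$ yields $p_{\chi}(\bx\byy)=p_{\chi}(\bx)\,p_{\chi}(\byy)$.

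Finally, items~(5)–(9) follow by unwinding definitions. For~(5) and~(6): left $A$-linearity of $\jmath'_{\nu}$ and $\jmath_{\nu}$ is clear, right $A$-linearity uses the centrality $\nu(1_k)\xcdot a=a\xcdot\nu(1_k)$, and $\jmath'_{\nu}(A)\subseteq E$ holds because $\nu(1_k)\in E$ and $E$ is an $A$-subbimodule; since $\nabla_{\!\chi}$ fixes $E$ we get $\jmath_{\nu}=\jmath'_{\nu}$ as maps into $E$, whence $\jmath_{\nu}(1_A)=1_E$. Item~(7) is direct: $\jmath_{\nu}(a)\bx=(a\xcdot\nu(1_k))\bx=a\xcdot(\nu(1_k)\bx)=a\xcdot\bx$ by left $A$-linearity and the unit property, while $\bx\,\jmath_{\nu}(a)=\bx(a\xcdot\nu(1_k))=(\bx\xcdot a)\,\nu(1_k)=\bx\xcdot a$ by $A$-balancedness and the unit property; multiplicativity of $\jmath'_{\nu}$ and $\jmath_{\nu}$ in~(5) and~(6) is then immediate from~(7). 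For the $\chi$-equalities in~(8) and~(9), expanding $(1_A\ot_k v)\jmath'_{\nu}(a)$ with $A$-balancedness, $\nabla_{\!\nu}=\nabla_{\!\chi}$ and $\chi(V\ot_k A)\subseteq A\times V$ gives $\chi(v\ot_k a)$, and $\gamma(v)\jmath_{\nu}(a)=\gamma(v)\xcdot a=\chi(v\ot_k a)$ by~(7). By normalization the two $\mathcal{F}$-equalities both reduce to $\mathcal{F}(v\ot_k w)=(1_A\ot_k v)(1_A\ot_k w)$; expanding the right-hand side to $(\mu_A\ot_k V)\xcirc(A\ot_k\mathcal{F})\xcirc\bigl(\chi(v\ot_k 1_A)\ot_k w\bigr)$, this last identity is proved by a short computation invoking the cocycle condition together with~\eqref{preunit2} and~\eqref{preunit3}, and is the only auxiliary computation beyond the associativity diagram chase.
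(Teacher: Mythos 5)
Your proposal is correct, but it takes a genuinely different route from the paper: the paper's proof of Theorem~\ref{prodcruz1} is essentially a citation --- everything except items~(4), (7) and the right $A$-linearity assertions in items~(3), (5) and~(6) is delegated to \cite{FGR}*{Remark~3.10, one implication of Theorem~3.11 and Corollary~3.12}, and those remaining items are explicitly left to the reader --- whereas you reprove all nine items directly from the axioms. Your dependency structure is sound and I verified the key reductions: $A$-balancedness of $\mu_{\mathcal{E}}$ over the middle factor follows from the twisting condition~\eqref{eqtwistingcond} alone, right $A$-linearity needs the twisted module condition in addition, normalization then drops out via $\nabla_{\!\chi}(\bx)=\bx\xcdot 1_A$ exactly as you say; \eqref{preunit3} is precisely the statement $\nu(1_k)\xcdot a = a\xcdot \nu(1_k)$, which at $a=1_A$ gives $\nu(k)\subseteq E$, and \eqref{preunit1}, \eqref{preunit2} give $\bx\,\nu(1_k)=\nu(1_k)\,\bx=\nabla_{\!\chi}(\bx)$, from which items~(3)--(7) unwind formally. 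In particular your treatment of items~(4), (7) and the right-linearity claims --- the very points the paper leaves to the reader --- is correct (the $p_{\chi}$-multiplicativity argument via $\mu_{\mathcal{E}}=\mu_{\mathcal{E}}\xcirc(\nabla_{\!\chi}\ot_k\nabla_{\!\chi})$ and $p_{\chi}\xcirc\imath_{\chi}=\ide$ is exactly right). I also checked that your claimed identity $\mathcal{F}(v\ot_k w)=(1_A\ot_k v)(1_A\ot_k w)$ does follow from the cocycle condition together with \eqref{preunit2} and \eqref{preunit3}: writing $\nu(1_k)=\sum n_A\ot_k n_V$, one rewrites $\gamma(v)=\sum n_A\xcdot\mathcal{F}(n_V\ot_k v)$ by \eqref{preunit2}, applies the cocycle condition to $\mathcal{F}(n_V\ot_k v)\ot_k w$, slides the resulting $\chi$ past $\nu$ by \eqref{preunit3}, and absorbs by \eqref{preunit2} again using $\ima(\mathcal{F})\subseteq A\times V$. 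What your approach buys is a self-contained proof that makes the logical role of each hypothesis visible; what the paper's approach buys is brevity, at the cost of outsourcing the hardest computation. The one place where your text remains a plan rather than a proof is the associativity of $\mu_{\mathcal{E}}$: you name the correct identities in the correct order (twisting condition, twisted module condition, cocycle condition), but the diagram chase itself --- which is the substance of the implication of \cite{FGR}*{Theorem~3.11} that the paper invokes --- is not executed. Since that implication is known and is exactly what the paper cites, this is not a gap in the route, only the step where a fully written-out version would still require a page of diagrams.
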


\begin{proof} Except for items 4),~7) and the assertions about the right $A$-linearity in items 3), 5) and~6), whose proofs we leave to the reader, this follows immediately from \cite{FGR}*{Remark~3.10, one implication of Theorem~3.11 and Corollary~3.12}.
\end{proof}

\begin{remark}\label{jnu es igual a j'nu} By item~5) of the previous theorem, $\jmath_{\nu}(a) = \jmath'_{\nu}(a)$ for all $a\in A$.
\end{remark}

When the hypotheses of Theorem~\ref{prodcruz1} are fulfilled we say that $E$ is {\em the unitary crossed product of $A$ with $V$ associated with $\chi$ and $\mathcal{F}$}.

\begin{example}\label{crossed product system with unit} Let $(A,V,\chi)$ and $\mathcal{F}$ be as in items~1) and~2) of Definition~\ref{def preunit}, and let $1_V\in V$. If $\chi(1_V\ot_k a) = a\xcdot \chi(1_V\ot_k 1_A)$ for all $a\in A$, then we say that $(A,V,\chi,\mathcal{F},1_V)$ is a {\em a crossed product system with unit}; while if $\mathcal{F}(1_V\ot_k v) = \mathcal{F}(v\ot_k 1_V) = \chi(v\ot_k 1_A)$ for all $v\in V$, then we say that $\mathcal{F}$ is {\em normal}. If $(A,V,\chi,\mathcal{F},1_V)$ is a crossed product system with unit and $\mathcal{F}$ is a normal map that satisfies the twisted module condition, then the tuple $(A,V,\chi,\mathcal{F},\nu)$, where $\nu$ is the map defined by $\nu(1_k)\coloneqq \chi(1_V\ot_k 1_A)$, is a crossed product system with preunit.
\end{example}

\begin{example}\label{Productos cruzados de Brzezinki} The crossed product systems introduced by Brzezi\'nski in~\cite{Br} are the crossed product systems with unit $(A,V,\chi,\mathcal{F},1_V)$, such that $\chi(v\ot_k 1_A) = 1_A\ot_k v$ and $\mathcal{F}$ is a normal cocycle that satisfies the twisted module condition. The crossed product constructed from the datum $(A,V,\chi,\mathcal{F},1_V)$ is called {\em the Brzezi\'nski crossed product of $A$ with $V$ associated with $\chi$, $\mathcal{F}$ and $1_V$}. If $(A,V,\chi,\mathcal{F},1_V)$ is a Brzezi\'nski's crossed product system, then
$\chi(1_V\ot_k a) = a\ot_k 1_V$ and $\nabla = \ide_{A\ot V}$,
which implies $E = \mathcal{E}$. Suppose now that $V$ is an algebra with unit $1_V$. A Brzezi\'nski's crossed product in which $\mathcal{F}$ is the {\em trivial cocycle} given by $\mathcal{F}(v\ot_k w) \coloneqq  1_A\ot_k vw$, is called a {\em twisted tensor product}. In this case $\mathcal{F}$ is automatically a normal cocycle and the twisted module condition says that
$$
\chi\xcirc (\mu_V\ot_k A) = (A\ot_k \mu_V)\xcirc (\chi\ot_k V)\xcirc (V\ot_k \chi).
$$
When we deal with twisted tensor products we write $A\ot_{\chi} V\index{ag@$A\ot_{\chi} V$|dotfillboldidx}$ instead $A\ot_{\chi}^{\mathcal{F}} V$.
\end{example}

In the rest of the section we assume that $(A,V,\chi,\mathcal{F},\nu)$ is a crossed product system with preunit, in which $\mathcal{F}$ is a cocycle that satisfies the twisted module condition.

\begin{remark}\label{propiedad de gamma'} By definition $\nabla_{\!\chi}(a\ot_k v) = a\xcdot \gamma(v)$. Consequently, if $a\xcdot\gamma(v) = \sum_l a_{(l)}\ot_k v_{(l)}$, then
\begin{equation}
\jmath_{\nu}(a)\gamma(v) = a\xcdot\gamma(v) = \sum_l a_{(l)}\xcdot\gamma(v_{(l)}) = \sum_l \jmath_{\nu}(a_{(l)}) \gamma(v_{(l)});\label{eq4}
\end{equation}
if $\chi(v\ot_k a) = \sum_l a_{(l)}\ot_k v_{(l)}$, then, by the first equality in Theorem~\ref{prodcruz1}(9),
\begin{equation}
\gamma(v)\jmath_{\nu}(a) = \chi(v\ot_k a) = \nabla_{\!\chi}\bigl(\chi(v\ot_k a)\bigr) = \sum_l \jmath_{\nu}(a_{(l)})\gamma(v_{(l)});\label{eq12}
\end{equation}
and if $\mathcal{F}(v\ot_k w) = \sum_l a_{(l)}\ot_k u_{(l)}$, then, by the second equality in Theorem~\ref{prodcruz1}(9),
\begin{equation}
\gamma(v)\gamma(w) = \mathcal{F}(v\ot_k w) = \nabla_{\!\chi}\bigl(\mathcal{F}(v\ot_k w)\bigr) = \sum_l \jmath_{\nu}(a_{(l)}) \gamma(u_{(l)}).\label{eq12'}
\end{equation}
\end{remark}

\begin{remark}\label{caso jmath_{nu}(K) subseteq jmath_{nu}(K)gamma(V)} If $\nu(1_k) = \sum_l \lambda_{(l)}\ot_k v_{(l)}$, then $1_E = \nu(1) = \nabla_{\!\chi}(\nu(1)) = \sum_l \jmath_{\nu}(\lambda_{(l)})\gamma(v_{(l)})$. So,
$$
\jmath_{\nu}(a) = \sum_l \jmath_{\nu}(a\lambda_{(l)})\gamma(v_{(l)})\qquad\text{for all $a\in A$.}
$$
Consequently, if $\nu(1_k)\in K\ot_kV$, then $\jmath_{\nu}(K)\subseteq \jmath_{\nu}(K)\gamma(V)$.
\end{remark}

\begin{proposition}\label{R tensor V esta incluido} For each subalgebra $R$ of $A$, we have $(R\ot_k V)\cap E\subseteq R \xcdot \gamma(V) = \jmath_{\nu}(R)\gamma(V)$. Moreover, if $\gamma(V)\subseteq R\ot_k V$, then the equality holds.
\end{proposition}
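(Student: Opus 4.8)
The plan is to treat the intermediate equality and the two inclusions separately, since each is short and relies on a different earlier fact.

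First I would dispose of the equality $R\xcdot\gamma(V) = \jmath_{\nu}(R)\gamma(V)$, which is immediate from item~7) of Theorem~\ref{prodcruz1}: for $r\in R$ and $v\in V$ we have $\jmath_{\nu}(r)\gamma(v) = r\xcdot\gamma(v)$, so the two spanning sets $\{r\xcdot\gamma(v)\}$ and $\{\jmath_{\nu}(r)\gamma(v)\}$ coincide element by element, and hence so do their $k$-linear spans inside $E$. No further input is needed here.

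Next I would prove the inclusion $(R\ot_k V)\cap E\subseteq R\xcdot\gamma(V)$. Let $\bx\in (R\ot_k V)\cap E$. Because $\bx\in E = A\times V$, Remark~\ref{nabla es multiplicar por 1_A} gives $\nabla_{\!\chi}(\bx)=\bx\xcdot 1_A=\bx$. Now write $\bx = \sum_i r_i\ot_k v_i$ with $r_i\in R$ and $v_i\in V$; applying $\nabla_{\!\chi}$ termwise (legitimate by $k$-linearity) and using the formula $\nabla_{\!\chi}(a\ot_k v)=a\xcdot\gamma(v)$ from Remark~\ref{propiedad de gamma'}, I get $\bx = \nabla_{\!\chi}(\bx) = \sum_i r_i\xcdot\gamma(v_i)\in R\xcdot\gamma(V)$. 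The genuine content of the whole proposition is exactly this idempotency trick combined with the explicit description of $\nabla_{\!\chi}$.

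Finally, for the \emph{moreover} statement I would assume $\gamma(V)\subseteq R\ot_k V$ and establish the reverse inclusion $R\xcdot\gamma(V)\subseteq (R\ot_k V)\cap E$. Membership in $E$ is automatic: since $E$ is an $A$-subbimodule of $A\ot_k V$ and $\gamma(V)\subseteq E$, the left $A$-action of elements of $R$ keeps $R\xcdot\gamma(V)$ inside $E$. It remains to check membership in $R\ot_k V$: for $r\in R$ and $v\in V$, write $\gamma(v)=\sum_j s_j\ot_k w_j$ with $s_j\in R$ (possible by the hypothesis), so that $r\xcdot\gamma(v)=\sum_j rs_j\ot_k w_j$; here $rs_j\in R$ precisely because $R$ is a subalgebra, hence closed under multiplication. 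Together with the first inclusion this gives the desired equality. I expect no serious obstacle in the argument; the only point requiring attention is this last step, where the subalgebra (not merely subspace) hypothesis on $R$ is genuinely used to keep the products $rs_j$ inside $R$.
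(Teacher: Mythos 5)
Your proposal is correct and takes essentially the same approach as the paper: your elementwise application of $\nabla_{\!\chi}$ is exactly the paper's set-level computation $(R\ot_k V)\cap E=\nabla_{\!\chi}\bigl((R\ot_k V)\cap E\bigr)\subseteq\nabla_{\!\chi}(R\ot_k V)=R\xcdot\gamma(V)=\jmath_{\nu}(R)\gamma(V)$, resting on the same ingredients (that $\nabla_{\!\chi}$ is a projection with image $E$, that $\nabla_{\!\chi}(a\ot_k v)=a\xcdot\gamma(v)$, and Theorem~\ref{prodcruz1}(7)). For the \emph{moreover} part you likewise reproduce the paper's one-line argument $R\xcdot\gamma(V)\subseteq R\xcdot(R\ot_k V)\subseteq R\ot_k V$, merely making explicit the automatic containment $R\xcdot\gamma(V)\subseteq E$ that the paper leaves implicit.
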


\begin{proof} Since $\nabla_{\!\chi}$ is a left $A$-linear projection with image $E$, we have
$$
(R\ot_k V)\cap E = \nabla_{\!\chi}\bigl((R\ot_k V)\cap E\bigr) \subseteq \nabla_{\!\chi}(R\ot_k V) = R \xcdot \gamma(V) = \jmath_{\nu}(R)\gamma(V).
$$
The last assertion holds since $\gamma(V)\subseteq R\ot_k V$ implies $R\xcdot \gamma(V)\subseteq R\xcdot (R\ot_k V) = R\ot_k V$.
\end{proof}

\begin{definition}\label{estable bajo chi} We say that a subalgebra $R$ of $A$ is {\em stable under $\chi$} if $\chi(V\ot_k R)\subseteq R\ot_k V$.
\end{definition}

\begin{lemma}\label{gamma(V)iota(K) C= iota(K)gamma(V)} If $R$ is a stable under $\chi$ subalgebra of $A$, then $\gamma(V)\jmath_{\nu}(R)\subseteq (R\ot_k V)\cap E = \jmath_{\nu}(R)\gamma(V)$.
\end{lemma}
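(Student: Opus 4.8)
The plan is to prove the two assertions separately, using Theorem~\ref{prodcruz1} to rewrite each one-sided product in terms of $\chi$ and then invoking stability together with Proposition~\ref{R tensor V esta incluido}.

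First I would establish the inclusion $\gamma(V)\jmath_{\nu}(R)\subseteq (R\ot_k V)\cap E$. Fix $v\in V$ and $r\in R$. By item~9) of Theorem~\ref{prodcruz1} we have $\gamma(v)\jmath_{\nu}(r)=\chi(v\ot_k r)$, so the task reduces to locating $\chi(v\ot_k r)$. Since $R$ is stable under $\chi$, Definition~\ref{estable bajo chi} gives $\chi(v\ot_k r)\in R\ot_k V$; and since $\chi(V\ot_k A)\subseteq A\times V=E$ (see the discussion following Definition~\ref{no unitarytwisted space}), we also have $\chi(v\ot_k r)\in E$. Hence $\gamma(v)\jmath_{\nu}(r)\in (R\ot_k V)\cap E$, and letting $v$ and $r$ vary yields the desired inclusion.

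For the equality $(R\ot_k V)\cap E=\jmath_{\nu}(R)\gamma(V)$ I would appeal to the ``moreover'' clause of Proposition~\ref{R tensor V esta incluido}, whose hypothesis is $\gamma(V)\subseteq R\ot_k V$. To verify this hypothesis I would unfold the definitions: $\gamma(v)=\nabla_{\!\chi}(1_A\ot_k v)=1_A\xcdot\chi(v\ot_k 1_A)=\chi(v\ot_k 1_A)$, using that $\nabla_{\!\chi}(a\ot_k v)=a\xcdot\chi(v\ot_k 1_A)$ and that $1_A$ acts as the identity on the left of $A\ot_k V$. Because $1_A\in R$ and $R$ is stable under $\chi$, this shows $\gamma(v)=\chi(v\ot_k 1_A)\in R\ot_k V$, i.e. $\gamma(V)\subseteq R\ot_k V$. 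Proposition~\ref{R tensor V esta incluido} then delivers the equality.

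The computation is short, and the step deserving the most care is the hypothesis check for the ``moreover'' clause: one must recognize that $\gamma(v)=\chi(v\ot_k 1_A)$ from the defining formulas of $\gamma$ and $\nabla_{\!\chi}$, and that this forces $\gamma(V)\subseteq R\ot_k V$ precisely because $1_A\in R$ and $R$ is $\chi$-stable. Once the two identifications $\gamma(v)\jmath_{\nu}(r)=\chi(v\ot_k r)$ and $\gamma(v)=\chi(v\ot_k 1_A)$ are in place, stability of $R$ does all the work and there is no substantial further obstacle.
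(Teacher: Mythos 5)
Your proof is correct and follows essentially the same route as the paper's: both identify $\gamma(v)\jmath_{\nu}(r)=\chi(v\ot_k r)$ via Theorem~\ref{prodcruz1}(9), use stability of $R$ under $\chi$ to land in $(R\ot_k V)\cap E$, and obtain the equality from the ``moreover'' clause of Proposition~\ref{R tensor V esta incluido}. The only difference is that you explicitly verify the hypothesis $\gamma(V)\subseteq R\ot_k V$ by computing $\gamma(v)=\chi(v\ot_k 1_A)$ with $1_A\in R$, a step the paper asserts without proof, so your write-up is if anything slightly more complete.
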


\begin{proof} Since $R$ is stable under $\chi$, we know that $\gamma(V)\subseteq R\ot_k V$. So,
$$
\gamma(V)\jmath_{\nu}(R) = \chi(V\ot_k R)\subseteq (R\ot_k V)\cap E =\jmath_{\nu}(R)\gamma(V),
$$
where the first equality holds by Theorem~\ref{prodcruz1}(9); and the last one, by Proposition~\ref{R tensor V esta incluido}.
\end{proof}

\begin{lemma}\label{lema gamma(V)gamma(V)} Let $R$ be a $k$-subalgebra of $A$. If $\mathcal{F}(V\ot_k V) \subseteq R\ot_k V$, then
$$
\gamma(V)\gamma(V) \subseteq (R\ot_k V)\cap E \subseteq \jmath_{\nu}(R)\gamma(V).
$$
\end{lemma}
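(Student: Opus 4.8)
The plan is to assemble this inclusion chain directly from three ingredients already available: the product formula in Theorem~\ref{prodcruz1}(9), the image condition on $\mathcal{F}$ built into Definition~\ref{def preunit}, and Proposition~\ref{R tensor V esta incluido}. The decisive first move is to rewrite $\gamma(V)\gamma(V)$ in terms of $\mathcal{F}$. By Theorem~\ref{prodcruz1}(9) we have $\gamma(v)\gamma(w)=\mathcal{F}(v\ot_k w)$ for all $v,w\in V$, and since $\mathcal{F}$ is $k$-linear on $V\ot_k V$, the $k$-subspace $\gamma(V)\gamma(V)$ spanned by these products coincides exactly with $\mathcal{F}(V\ot_k V)$. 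This identification converts the whole statement into a statement about $\ima(\mathcal{F})$.

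First I would establish the left-hand inclusion. By hypothesis $\mathcal{F}(V\ot_k V)\subseteq R\ot_k V$, while item~(2) of Definition~\ref{def preunit} guarantees that $\ima(\mathcal{F})\subseteq A\times V$, and $A\times V=E$ by Notation~\ref{mult A times V}. Intersecting these two memberships gives $\mathcal{F}(V\ot_k V)\subseteq (R\ot_k V)\cap E$, which combined with the identification above yields $\gamma(V)\gamma(V)\subseteq (R\ot_k V)\cap E$.

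Finally, the right-hand inclusion $(R\ot_k V)\cap E\subseteq \jmath_{\nu}(R)\gamma(V)$ is nothing other than the first assertion of Proposition~\ref{R tensor V esta incluido}, so I would simply invoke it. Chaining the two inclusions then produces the claimed result.

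I do not anticipate a genuine obstacle here: the lemma is a direct consequence of results stated earlier, and the only point requiring a little care is the bookkeeping in the opening step, namely checking that $\gamma(V)\gamma(V)$—read as the $k$-span of the products $\gamma(v)\gamma(w)$—really coincides with the image $\mathcal{F}(V\ot_k V)$ rather than merely being contained in it, so that the hypothesis on $\mathcal{F}$ transfers cleanly. Once that matching is recorded, the remaining two inclusions are immediate citations.
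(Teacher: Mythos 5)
Your proposal is correct and follows exactly the paper's own argument: the identity $\gamma(v)\gamma(w)=\mathcal{F}(v\ot_k w)$ from Theorem~\ref{prodcruz1}(9), the observation that $\mathcal{F}(V\ot_k V)\subseteq (R\ot_k V)\cap E$ (via the hypothesis together with $\ima(\mathcal{F})\subseteq A\times V=E$), and Proposition~\ref{R tensor V esta incluido} for the final inclusion. Your extra care in noting that $\gamma(V)\gamma(V)$, as a span of products, coincides with $\mathcal{F}(V\ot_k V)$ is a harmless refinement of what the paper leaves implicit (only the containment $\gamma(V)\gamma(V)\subseteq\mathcal{F}(V\ot_k V)$ is actually needed).
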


\begin{proof} By Theorem~\ref{prodcruz1}(9), the fact that $\mathcal{F}(V\ot_k V)\subseteq (R\ot_k V)\cap E$ and Proposition~\ref{R tensor V esta incluido}.
\end{proof}

\begin{notation}\label{iteracion de chi} We let $\chi_{_{jl}}\colon V^{\ot_k j}\ot_k A^{\ot_k l}\longrightarrow A^{\ot_k l}\ot_k V^{\ot_k j}\index{zw@$\chi$!zwa@$\chi_{_{jl}}$|dotfillboldidx}$ denote the map defined by:
\begin{align*}
&\chi_{_{11}}\coloneqq \chi,\\
&\chi_{_{j+1,1}}\coloneqq \bigl(\chi_{_{j1}}\ot_k V\bigr)\xcirc\bigl(V^{\ot_k j}\ot_k\chi\bigr) &&\text{for $j\ge 1$},\\
&\chi_{_{j,l+1}}\coloneqq \bigl(A^{\ot_k l}\ot_k\chi_{_{j1}}\bigr)\xcirc\bigl(\chi_{_{jl}}\ot_k A\bigr) &&\text{for $j,l\ge 1$}.
\end{align*}
Furthermore, we set $\chi_{_{j0}}\coloneqq \ide_{V^{\ot_k j}}$ and $\chi_{_{0l}}\coloneqq \ide_{A^{\ot_k l}}$ for all $j,l\ge 1$.
\end{notation}

\begin{proposition}\label{twisting inducido} Let $K$ be a subalgebra of $A$ and let $\ov{A}\coloneqq A/K$. If $K$ is stable under $\chi$, then each~$\chi_{jl}$ induces maps
$$
\bar{\chi}_{jl}\colon V^{\ot_k j}\ot_k A^{\ot_{\hs K} l}\longrightarrow A^{\ot_{\hs K} l}\ot_k V^{\ot_k j} \index{zw@$\chi$!zwb@$\bar{\chi}_{_{jl}}$|dotfillboldidx} \qquad\text{and}\qquad \ov{\chi}_{jl}\colon V^{\ot_{\hs k} j}\ot_k \ov{A}^{\ot_{\hs K} l}\longrightarrow \ov{A}^{\ot_{\hs K} l}\ot_k V^{\ot_{\hs k} j} \index{zw@$\chi$!zwc@$\overline{\chi}_{_{jl}}$|dotfillboldidx}.
$$
\end{proposition}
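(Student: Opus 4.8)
The plan is to deduce both assertions from two properties of the one–block maps $\chi_{_{j1}}$, and then to match the $K$–balancing relations by exploiting the recursive left‑to‑right structure of $\chi_{_{jl}}$. For each $j\ge 1$ I first record two facts, both proved by induction on $j$: first, stability,
\[
\chi_{_{j1}}(V^{\ot_k^j}\ot_k K)\subseteq K\ot_k V^{\ot_k^j},
\]
and second, that $\chi_{_{j1}}$ is again a twisting map for $\mu_A$, i.e.
\[
\chi_{_{j1}}\xcirc(V^{\ot_k^j}\ot_k\mu_A)=(\mu_A\ot_k V^{\ot_k^j})\xcirc(A\ot_k\chi_{_{j1}})\xcirc(\chi_{_{j1}}\ot_k A).
\]
For the first, the case $j=1$ is the hypothesis that $K$ is stable under $\chi$, and the inductive step follows from $\chi_{_{j+1,1}}=(\chi_{_{j1}}\ot_k V)\xcirc(V^{\ot_k^j}\ot_k\chi)$, since $\chi$ carries the last $V\ot_k K$ into $K\ot_k V$ and then $\chi_{_{j1}}$ keeps that entry inside $K$. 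For the second, the case $j=1$ is exactly the twisting condition~\eqref{eqtwistingcond}, and the inductive step is a direct computation from the recursive definition of $\chi_{_{j+1,1}}$ together with~\eqref{eqtwistingcond}.

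Next I would isolate the structural feature used below: by its recursive definition $\chi_{_{jl}}$ processes the factors $a_1,\dots,a_l$ from left to right, the $A$–output in the $m$‑th slot being the $A$–part of $\chi_{_{j1}}$ applied to the $V$–block that has reached slot $m$ against $a_m$, and this slot is left untouched by the later steps. To obtain $\bar\chi_{jl}$ I must check that $(\pi_l\ot_k V^{\ot_k^j})\xcirc\chi_{_{jl}}$ annihilates the kernel of $V^{\ot_k^j}\ot_k\pi_l$, where $\pi_l\colon A^{\ot_k^l}\to A^{\ot_{\hs K}^l}$ is the canonical surjection; this kernel is generated by the balancing relations in two adjacent slots $i,i+1$. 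For inputs $a_i\kappa\ot_k a_{i+1}$ and $a_i\ot_k\kappa a_{i+1}$ with $\kappa\in K$, the slots $1,\dots,i-1$, the $V$–block entering slot $i$, and (once the two inputs are seen to leave the same trailing $V$–block after slot $i+1$) the processing of slots $i+2,\dots,l$ all coincide, so it suffices to compare slots $i,i+1$. Writing $\chi_{_{j1}}$ on the incoming block against $a_i$, then against $\kappa$, then against $a_{i+1}$ as $b\ot_k w'$, then $c\ot_k w''$, then $d\ot_k w'''$, the twisting identity for $\chi_{_{j1}}$ gives $bc\ot_k d$ in these slots for the first input and $b\ot_k cd$ for the second, with the same trailing block $w'''$; by stability $c\in K$, so $bc\ot_k d$ and $b\ot_k cd$ are identified in $A^{\ot_{\hs K}^l}\ot_k V^{\ot_k^j}$. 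Hence $\bar\chi_{jl}$ is well defined.

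Finally, $\ov\chi_{jl}$ is obtained by descending $\bar\chi_{jl}$ along the surjection $A^{\ot_{\hs K}^l}\to\ov{A}^{\ot_{\hs K}^l}$ induced by $A\to\ov{A}=A/K$, whose kernel is generated by the tensors having at least one factor in $K$. If $a_i\in K$, then by the freezing property the $i$‑th $A$–output of $\chi_{_{jl}}$ is the $A$–part of $\chi_{_{j1}}$ applied against $a_i\in K$, hence lies in $K$ by stability; every term of the image therefore still has its $i$‑th $A$–factor in $K$. Thus $\bar\chi_{jl}$ carries this kernel into the corresponding subspace of $A^{\ot_{\hs K}^l}\ot_k V^{\ot_k^j}$ and induces $\ov\chi_{jl}$.

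The main obstacle is the twisting identity for $\chi_{_{j1}}$: the fact that each $\chi_{_{j1}}$ is again a twisting map for $\mu_A$ is precisely what makes the two–slot comparison go through, and its inductive proof is the one genuinely computational point. Once stability, this identity, and the left‑to‑right freezing are in hand, both descents reduce to matching a single $\ot_{\hs K}$–balancing relation.
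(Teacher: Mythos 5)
Your proof is correct, and since the paper disposes of this proposition with the single word ``Straightforward,'' your argument is exactly the verification the authors leave to the reader: the two inductive lemmas for $\chi_{_{j1}}$ (stability under $K$ and the twisting identity inherited from~\eqref{eqtwistingcond}), combined with the left-to-right recursion $\chi_{_{j,l+1}}=(A^{\ot_k^l}\ot_k\chi_{_{j1}})\xcirc(\chi_{_{jl}}\ot_k A)$, reduce both descents to a single $\ot_{\hs K}$-balancing relation in adjacent slots and to the observation that a factor $a_i\in K$ produces an $i$-th output in $K$. I see no gaps.
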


\begin{proof} Straightforward.
\end{proof}

\subsection{Mixed complexes}\label{subsection: Mixed complexes}
In this subsection we recall briefly the notion of mixed complex. For more details about this concept we refer to~\cites{B, K}.

\smallskip

A {\em mixed complex} $\mathcal{X}\coloneqq  (X,b,B)\index{xa@$\mathcal{X}$|dotfillboldidx}$ is a graded $k$-module $(X_n)_{n\ge 0}$, endowed with morphisms
$$
b\colon X_n\longrightarrow X_{n-1}\qquad\text{and}\qquad B\colon X_n\longrightarrow X_{n+1},
$$
such that
$b\xcirc b = 0$, $B\xcirc B = 0$ and $B\xcirc b + b\xcirc B = 0$. A {\em morphism of mixed complexes} $g\colon (X,b,B)\longrightarrow (Y,d,D)$ is a family of maps $g\colon X_n\to Y_n$, such that $d\xcirc g = g\xcirc b$ and $D\xcirc g= g\xcirc B$. Let $u$ be a degree~$2$ variable. A mixed complex $\mathcal{X}\coloneqq  (X,b,B)$ determines a double complex
$$
\begin{tikzpicture}
\begin{scope}[yshift=-0.47cm,xshift=-6cm]
\draw (0.5,0.5) node {$\BP(\mathcal{X})=$};
\end{scope}
\begin{scope}
\matrix(BPcomplex) [matrix of math nodes,row sep=2.5em, text height=1.5ex, text
depth=0.25ex, column sep=2.5em, inner sep=0pt, minimum height=5mm,minimum width =9.5mm]
{&\vdots &\vdots &\vdots &\vdots\\
\cdots & X_3 u^{-1} & X_2 u^0 & X_1 u^{} & X_0 u^2\\
\cdots & X_2 u^{-1} & X_1 u^0 & X_0 u\\
\cdots & X_1 u^{-1} & X_0 u^0\\
\cdots & X_0 u^{-1}\\};
\draw[->] (BPcomplex-1-2) -- node[right=1pt,font=\scriptsize] {$b$} (BPcomplex-2-2);
\draw[->] (BPcomplex-1-3) -- node[right=1pt,font=\scriptsize] {$b$} (BPcomplex-2-3);
\draw[->] (BPcomplex-1-4) -- node[right=1pt, font=\scriptsize] {$b$} (BPcomplex-2-4);
\draw[->] (BPcomplex-1-5) -- node[right=1pt, font=\scriptsize] {$b$} (BPcomplex-2-5);
\draw[<-] (BPcomplex-2-1) -- node[above=1pt,font=\scriptsize] {$B$} (BPcomplex-2-2);
\draw[<-] (BPcomplex-2-2) -- node[above=1pt,font=\scriptsize] {$B$} (BPcomplex-2-3);
\draw[<-] (BPcomplex-2-3) -- node[above=1pt,font=\scriptsize] {$B$} (BPcomplex-2-4);
\draw[<-] (BPcomplex-2-4) -- node[above=1pt,font=\scriptsize] {$B$} (BPcomplex-2-5);
\draw[->] (BPcomplex-2-2) -- node[right=1pt,font=\scriptsize] {$b$} (BPcomplex-3-2);
\draw[->] (BPcomplex-2-3) -- node[right=1pt,font=\scriptsize] {$b$} (BPcomplex-3-3);
\draw[->] (BPcomplex-2-4) -- node[right=1pt, font=\scriptsize] {$b$} (BPcomplex-3-4);
\draw[<-] (BPcomplex-3-1) -- node[above=1pt,font=\scriptsize] {$B$} (BPcomplex-3-2);
\draw[<-] (BPcomplex-3-2) -- node[above=1pt,font=\scriptsize] {$B$} (BPcomplex-3-3);
\draw[<-] (BPcomplex-3-3) -- node[above=1pt,font=\scriptsize] {$B$} (BPcomplex-3-4);
\draw[->] (BPcomplex-3-2) -- node[right=1pt,font=\scriptsize] {$b$} (BPcomplex-4-2);
\draw[->] (BPcomplex-3-3) -- node[right=1pt,font=\scriptsize] {$b$} (BPcomplex-4-3);
\draw[<-] (BPcomplex-4-1) -- node[above=1pt,font=\scriptsize] {$B$} (BPcomplex-4-2);
\draw[<-] (BPcomplex-4-2) -- node[above=1pt,font=\scriptsize] {$B$} (BPcomplex-4-3);
\draw[->] (BPcomplex-4-2) -- node[right=1pt,font=\scriptsize] {$b$} (BPcomplex-5-2);
\draw[<-] (BPcomplex-5-1) -- node[above=1pt,font=\scriptsize] {$B$} (BPcomplex-5-2);
\end{scope}
\begin{scope}[yshift=-0.47cm,xshift=4cm]
\draw (0.5,0.5) node {,};
\end{scope}
\end{tikzpicture}
\index{xa@$\mathcal{X}$!bp@$\BP(\mathcal{X})$|dotfillboldidx}
$$
where $b(\bx u^i)\coloneqq  b(\bx)u^i$ and $B(\bx u^i)\coloneqq  B(\bx)u^{i-1}$. By deleting the positively numbered columns we obtain a subcomplex $\BN(\mathcal{X})\index{xa@$\mathcal{X}$!bn@$\BN(\mathcal{X})$|dotfillboldidx}$ of $\BP(\mathcal{X})$. Let $\BN'(\mathcal{X})\index{xa@$\mathcal{X}$!bn'@$\BN'(\mathcal{X})$|dotfillboldidx}$ be the kernel of the canonical surjection from $\BN(\mathcal{X})$ to $(X,b)$. The quotient double complex $\BP(\mathcal{X})/\BN'(\mathcal{X})$ is denoted by $\BC(\mathcal{X})\index{xa@$\mathcal{X}$!bc@$\BC(\mathcal{X})$|dotfillboldidx}$. The homology groups $\HC_*(\mathcal{X})\index{xa@$\mathcal{X}$!hc@$\HC_*(\mathcal{X})$|dotfillboldidx}$, $\HN_*(\mathcal{X})\index{xa@$\mathcal{X}$!hn@$\HN_*(\mathcal{X})$|dotfillboldidx}$ and $\HP_*(\mathcal{X})\index{xa@$\mathcal{X}$!hb@$\HP_*(\mathcal{X})$|dotfillboldidx}$, of the total complexes of $\BC(\mathcal{X})$, $\BN(\mathcal{X})$ and $\BP(\mathcal{X})$ respectively, are called the {\em cyclic}, {\em negative} and {\em periodic homology groups} of $\mathcal{X}$. The homology $\HH_*(\mathcal{X})\index{xa@$\mathcal{X}$!hh@$\HH_*(\mathcal{X})$|dotfillboldidx}$, of $(X,b)$, is called the {\em Hochschild homology} of $\mathcal{X}$. Finally, it is clear that a morphism $f\colon\mathcal{X}\to\mathcal{Y}$ of mixed complexes induces a morphism from the double complex $\BP(\mathcal{X})$ to the double complex $\BP(\mathcal{Y})$.

\smallskip

Let $C$ be an algebra. If $K$ is a subalgebra of $C$ we will say that $C$ is a $K$-algebra. Given a $K$-bi\-module $M$, we let $M\ot_{\hs K}\index{ma@$M\ot$|dotfillboldidx}$ denote the quotient $M/[M,K]$, where $[M,K]$ is the $k$-submodule of $M$ generated by all the commutators $m\lambda -\lambda m$, with $m\in M$ and $\lambda\in K$. Moreover, for $m\in M$, we let $[m]\index{mb@$[m]$|dotfillboldidx}$ denote the class of $m$ in $M\ot_{\hs K}$.

By definition, the {\em normalized mixed complex of the $K$-algebra $C$} is $(C\ot_{\hs K}\ov{C}^{\ot_{\hs K} *}\ot_{\hs K},b_*,B_*)$, where $\ov{C}\coloneqq C/K$, $b_*\index{ba@$b_*$|dotfillboldidx}$ is the canonical Hochschild boundary map and the Connes operator $B_*\index{bb@$B_*$|dotfillboldidx}$ is given by
$$
B\bigl([c_0\ot_{\hs K}\cdots \ot_{\hs K} c_r]\bigr)\coloneqq \sum_{i=0}^r (-1)^{ir} [1\ot_{\hs K} c_i\ot_{\hs K}\cdots \ot_{\hs K} c_r\ot_{\hs K} c_0\ot_{\hs K} c_1\ot_{\hs K} \cdots \ot_{\hs K} c_{i-1}].
$$
The {\em cyclic}, {\em negative}, {\em periodic} and {\em Hochschild homology groups} $\HC^K_*(C)\index{hcc@$\HC^K_*(C)$|dotfillboldidx}$, $\HN^K_*(C)\index{hd@$\HN^K_*(C)$|dotfillboldidx}$, $\HP^K_*(C)\index{he@$\HP^K_*(C)$|dotfillboldidx}$ and $\HH^K_*(C)\index{hf$@$\HH^K_*(C)$|dotfillboldidx}$, of $C$, are the respective homology groups of $(C\ot_{\hs K}\ov{C}^{\ot_{\hs K} *}\ot_{\hs K},b_*,B_*)$.

\subsection{The perturbation lemma}
Next, we recall the perturbation lemma. We present the version given in~\cite{C}.

\smallskip

A {\em homotopy equivalence data}
\begin{equation}\label{(a)}
\begin{tikzpicture}[baseline=(current bounding box.center)]
\draw (0,0) node[] {$(Y,\partial)$};\draw (2.5,0) node {$(X,d)$};\draw[<-] (0.6,0.12) -- node[above=-2pt,font=\scriptsize] {$p$}(1.9,0.12);\draw[->] (0.6,-0.12) -- node[below=-2pt,font=\scriptsize] {$i$}(1.9,-0.12);\draw (4,0) node {$X_*$};\draw (6.5,0) node {$X_{*+1}$};\draw[->] (4.4,0) -- node[above=-2pt,font=\scriptsize] {$h$}(5.9,0);
\end{tikzpicture}
\end{equation}
consists of the following:

\begin{enumerate}

\smallskip

\item Chain complexes $(Y,\partial)$, $(X,d)$ and quasi-isomorphisms $i$, $p$ between them,

\smallskip

\item A homotopy $h$ from $i\xcirc p$ to $\ide$.
\end{enumerate}

\smallskip

A {\em perturbation} of~\eqref{(a)} is a map $\delta\colon X_*\to X_{*-1}$ such that $(d+\delta)^2 = 0$. We call it {\em small} if $\ide -\delta\xcirc h$ is invertible. In this case we write $A \coloneqq  (\ide -\delta\xcirc h)^{-1}\xcirc\delta$ and we consider the diagram
\begin{equation}\label{(b)}
\begin{tikzpicture}[baseline=(current bounding box.center)]
\draw (0,0) node[] {$(Y,\partial^1)$};\draw (2.5,0) node {$(X,d)$};\draw[<-] (0.6,0.12) -- node[above=-2pt,font=\scriptsize] {$p^1$}(1.9,0.12);\draw[->] (0.6,-0.12) -- node[below=-2pt,font=\scriptsize] {$i^1$}(1.9,-0.12);\draw (4,0) node {$X_*$};\draw (6.5,0) node {$X_{*+1}$,};\draw[->] (4.4,0) -- node[above=-2pt,font=\scriptsize] {$h^1$}(5.9,0);
\end{tikzpicture}
\end{equation}
where $\partial^1\coloneqq \partial + p\xcirc A\xcirc i$, $i^1\coloneqq  i + h\xcirc A\xcirc i$, $p^1\coloneqq  p + p\xcirc A\xcirc h$ and $h^1\coloneqq  h + h\xcirc A\xcirc h$.

\smallskip

A {\em deformation retract} is a homotopy equivalence data such that $p\xcirc i =\ide$. A deformation retract is called {\em special} if $h\xcirc i = 0$, $p\xcirc h = 0$ and $h\xcirc h = 0$.

\smallskip

In all the cases considered in this paper the morphism $\delta\xcirc h$ is locally nilpotent (in other for all $x\in X_*$ there exists $n\in \mathds{N}$ such that $(\delta\xcirc h)^n(x)=0$). Consequently, $(\ide -\delta\xcirc h)^{-1} =\sum_{n=0}^{\infty} (\delta\xcirc h)^n$.

\begin{theorem}[{\cite{C}}]\label{lema de perturbacion} If $\delta$ is a small perturbation of~\eqref{(a)}, then the diagram~\eqref{(b)} is an homotopy equivalence data. Furthermore, if~\eqref{(a)} is a special deformation retract, then so it is~\eqref{(b)}.
\end{theorem}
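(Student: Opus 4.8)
This is the Basic Perturbation Lemma, and I would prove it by direct algebraic manipulation of the defining identities, in the spirit of \cite{C}. First I would fix notation for the relations at hand: the differentials satisfy $d\circ d=0$ and $\partial\circ\partial=0$, the maps $i,p$ are chain maps ($d\circ i=i\circ\partial$ and $p\circ d=\partial\circ p$), the homotopy condition reads $d\circ h+h\circ d=i\circ p-\ide$ (the sign being the one that makes the formulas in~\eqref{(b)} consistent), and the perturbation hypothesis $(d+\delta)\circ(d+\delta)=0$ unwinds to $d\circ\delta+\delta\circ d+\delta\circ\delta=0$. The workhorse will be the two resolvent identities for $A=(\ide-\delta\circ h)^{-1}\circ\delta$: namely $A=\delta+\delta\circ h\circ A$ and $A=\delta+A\circ h\circ\delta$, the second coming from $A=\delta\circ(\ide-h\circ\delta)^{-1}$ (which holds because $\delta\circ(\ide-h\circ\delta)=(\ide-\delta\circ h)\circ\delta$). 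Both inverses exist and are given by the convergent series $\sum_{n\ge 0}(\delta\circ h)^n$ thanks to the local nilpotence of $\delta\circ h$ assumed throughout.

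The crux of the argument is a single auxiliary identity for $A$, from which everything else drops out almost mechanically:
\[
d\circ A+A\circ d=-A\circ i\circ p\circ A.
\]
I would establish it by substituting $A=\delta+\delta\circ h\circ A$, using $d\circ\delta+\delta\circ d=-\delta\circ\delta$ to trade $d\circ\delta$ for $-\delta\circ d-\delta\circ\delta$, and then replacing $d\circ h$ via the homotopy relation; the $\delta$-quadratic terms cancel in pairs after one more appeal to $A=\delta+\delta\circ h\circ A$, leaving $(\ide-\delta\circ h)\circ(d\circ A+A\circ d)=-\delta\circ i\circ p\circ A$, which gives the identity upon applying $(\ide-\delta\circ h)^{-1}$.

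Granting this, the four structural requirements of~\eqref{(b)} follow quickly. For $(\partial^1)^2=0$, expanding $\partial^1=\partial+p\circ A\circ i$ and using $\partial\circ p=p\circ d$, $i\circ\partial=d\circ i$ turns the cross terms into $p\circ(d\circ A+A\circ d)\circ i=-p\circ A\circ i\circ p\circ A\circ i$, which cancels the remaining quadratic term. That $i^1=i+h\circ A\circ i$ and $p^1=p+p\circ A\circ h$ are chain maps into and out of $(X,d+\delta)$, and that $h^1=h+h\circ A\circ h$ is a homotopy from $i^1\circ p^1$ to $\ide$ for $d+\delta$, are checked in the same way: after expansion, each term is collapsed using a resolvent identity (e.g. $\delta+\delta\circ h\circ A$ becomes $A$) and the surviving combinations are killed by the structure relation together with $d\circ i=i\circ\partial$, $p\circ d=\partial\circ p$ and the homotopy relation.

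It remains to address the quasi-isomorphism clause and the special refinement. The homotopy $h^1$ already yields $i^1\circ p^1\simeq\ide_X$, so $(p^1)_*$ is injective and $(i^1)_*$ surjective on homology; comparing $i^1,p^1$ with the quasi-isomorphisms $i,p$ through the convergent correction series (permitted by local nilpotence of $\delta\circ h$) upgrades both $i^1$ and $p^1$ to quasi-isomorphisms, so~\eqref{(b)} is again a homotopy equivalence data. If~\eqref{(a)} is a special deformation retract, I would verify $p^1\circ i^1=\ide$ and $h^1\circ i^1=0$, $p^1\circ h^1=0$, $h^1\circ h^1=0$ directly: under $p\circ i=\ide$, $h\circ i=0$, $p\circ h=0$, $h\circ h=0$, every correction term in each of these composites contains one of the vanishing factors and so drops out. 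I expect the genuine obstacle to be the derivation of the structure relation $d\circ A+A\circ d=-A\circ i\circ p\circ A$ and, secondarily, the bookkeeping in the $h^1$ identity, which is quadratic in $A$ and demands both resolvent identities and careful sign tracking; the remaining checks are linear in $A$ and essentially routine.
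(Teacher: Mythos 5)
The paper contains no proof of this theorem at all --- it is quoted from \cite{C} --- so the only benchmark is the standard argument of that reference, and your computational skeleton coincides with it. Your key identity is correct: with your convention $d\circ h+h\circ d=i\circ p-\ide$, substituting $A=\delta+\delta\circ h\circ A$, using $\delta\circ h\circ A=A-\delta$ together with $d\circ\delta+\delta\circ d=-\delta\circ\delta$, does yield $(\ide-\delta\circ h)\circ(d\circ A+A\circ d)=-\delta\circ i\circ p\circ A$, hence $d\circ A+A\circ d=-A\circ i\circ p\circ A$. From this the verifications of $\partial^1\circ\partial^1=0$, of the chain-map property of $i^1$ and $p^1$ with respect to $d+\delta$, and of the perturbed homotopy relation $(d+\delta)\circ h^1+h^1\circ(d+\delta)=i^1\circ p^1-\ide$ all go through exactly as you sketch (I checked the quadratic $h^1$ computation: the cross terms reassemble into $i^1\circ p^1-\ide$ using both resolvent identities). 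Your special-deformation-retract clause is airtight: every correction term in $p^1\circ i^1$, $h^1\circ i^1$, $p^1\circ h^1$ and $h^1\circ h^1$ literally contains one of the factors $h\circ i$, $p\circ h$, $h\circ h$ at its outer ends, regardless of what $A$ is, and so vanishes.

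The one genuine gap is your quasi-isomorphism clause for general homotopy equivalence data. ``Comparing $i^1,p^1$ with the quasi-isomorphisms $i,p$ through the convergent correction series'' is not an argument: $i^1$ is a chain map $(Y,\partial^1)\to(X,d+\delta)$ and \emph{both} differentials have changed, so the fact that $i^1-i=h\circ A\circ i$ is given by a convergent series says nothing about homology --- there is no map between $\Ho_*(Y,\partial)$ and $\Ho_*(Y,\partial^1)$ along which to compare. What $h^1$ gives is only $(i^1)_*\circ(p^1)_*=\ide$, i.e.\ $(p^1)_*$ split injective and $(i^1)_*$ split surjective; the missing half (surjectivity of $(p^1)_*$, equivalently injectivity of $(i^1)_*$) requires a separate device --- for instance an acyclicity-transfer argument on mapping cones, or the treatment in \cite{C} itself --- and cannot be waved through. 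Note, however, that the present paper only ever invokes the theorem for a special deformation retract (in the proof of Theorem~\ref{complejo mezclado que da la homologia ciclica}), and in that case your proof is already complete: $p^1\circ i^1=\ide$ together with the homotopy $h^1$ exhibits $i^1$ and $p^1$ as mutually inverse homotopy equivalences, hence quasi-isomorphisms, with nothing left to check.
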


\section{A resolution for a weak Brzezi\'nski's crossed product}\label{section: A resolution for a general crossed product}
Let $A$ be an algebra, $V$ a $k$-vector space, $K$ a sub\-algebra of $A$ and $(A,V,\chi,\mathcal{F},\nu)$ a crossed product system with preunit. Assume that $\mathcal{F}$ is a cocycle that satisfies the twisted module condition, that $K$ is stable under $\chi$ and that $1_E\in K\ot_k V$, where $E$ is as in Notation~\ref{mult A times V}. Let $\jmath_{\nu}\colon A\to E$ and $\gamma\colon V\to E$ be as above of Theorem~\ref{prodcruz1}. By Remark~\ref{jnu es igual a j'nu} we know that $\jmath_{\nu}(a) = a\cdot\nu(1)$, for all $a\in A$; while, by the discussions above Remark~\ref{nabla es multiplicar por 1_A} we know that $E$ is an $A$-bimodule, which implies that it is also a $K$-bimodule. Moreover, by Theorem~\ref{prodcruz1}(7) the left and right actions of $A$ on $E$ coincide with those obtained through the morphism $\jmath_{\nu}$. Let $\Upsilon\index{zu@$\Upsilon$|dotfillboldidx}$ denote the family of all the epimorphisms of $E$-bimodules which split as $(E,K)$-bimodule maps. In this section we construct a $\Upsilon$-relative projective resolution $(X_*,d_*)$, of $E$ as an $E$-bimodule, simpler than the normalized bar resolution of $E$. Also we will construct comparison maps between both resolutions. We let $\ot$ denote $\ot_{\hs K}$ and we set
$$
\ov{A}\coloneqq \frac{A}{K}\index{ak@$\ov{A}$|dotfillboldidx},\qquad\ov{E}\coloneqq \frac{E}{\jmath_{\nu}(K)}\index{ec@$\ov{E}$|dotfillboldidx} \qquad\text{and}\qquad \wt{E}\coloneqq \frac{E}{\jmath_{\nu}(A)}\index{ed@$\wt{E}$|dotfillboldidx}.
$$

\begin{notations}\label{notaciones basicas} We will use the following notations:

\begin{enumerate}[itemsep=0.7ex, topsep=1.0ex, label=(\arabic*)]

\item For each $x\in E$, we let $\ov{x}\index{eg@$\ov{x}$|dotfillboldidx}$ and $\wt{x}\index{eh@$\wt{x}$|dotfillboldidx}$ denote its class in $\ov{E}$ and $\wt{E}$, respectively. Similarly, for $a\in A$, we let $\ov{a}\index{aq@$\ov{a}$|dotfillboldidx}$ denote its class in $\ov{A}$.

\item Given $x_1,\dots, x_s\in E$ and $1\le i\le j\le s$, we set $\ov{\bx}_{ij}\coloneqq \ov{x}_i \ot \cdots \ot \ov{x}_j\index{xu@$\ov{\bx}_{ij}$|dotfillboldidx}$ and $\wt{\bx}_{ij}\coloneqq \wt{x}_i \ot_{\hs A}\cdots \ot_{\hs A} \wt{x}_j\index{xv@$\wt{\bx}_{ij}$|dotfillboldidx}$.

\item Given $v_1,\dots,v_s\in V$ and $1\le i\le j\le s$, we set $\bv_{ij}\coloneqq v_i \ot_{\hs k} \cdots \ot_{\hs k} v_j\index{vb@$\bv_{ij}$|dotfillboldidx}$.

\item Given $a_1,\dots, a_r\in A$ and $1\le i\le j\le r$, we set $\ba_{ij}\coloneqq  a_i\ot\cdots\ot a_j\index{as@$\ba_{ij}$|dotfillboldidx}$, and we let $\ov{\ba}_{ij}\index{at@$\ov{\ba}_{ij}$|dotfillboldidx}$ denote the class of $\ba_{ij}$ in $\ov{A}^{\ot {j-i+1}}$.

\item We let $\ov{\gamma}\colon V\to\ov{E}\index{zc@$\gamma$!zc1@$\ov{\gamma}$|dotfillboldidx}$ and $\wt{\gamma}\colon V\to \wt{E}\index{zc@$\gamma$!zc2@$\wt{\gamma}$|dotfillboldidx}$ denote the maps induced by $\gamma$.

\item Given $v_1,\dots,v_s\in V$ and $1\le i\le j\le s$, we write $\ov{\gamma}(\bv_{ij})\index{zc@$\gamma$!zc3@$\ov{\gamma}(\bv_{ij})$|dotfillboldidx}$, $\gamma_{\hs A}(\bv_{ij})\index{zc@$\gamma$!zc4@$\gamma_{A}(\bv_{ij})$|dotfillboldidx}$ and $\wt{\gamma}_{\hs A}(\bv_{ij})\index{zc@$\gamma$!zc5@$\wt{\gamma}_{\hs A}(\bv_{ij})$|dotfillboldidx}$ to mean
$$
\qquad \ov{\gamma}(v_i)\ot \cdots\ot \ov{\gamma}(v_j),\quad \gamma(v_i)\ot_{\hs A}\cdots\ot_{\hs A}\gamma(v_j) \quad \text{and}\quad \wt{\gamma}(v_i)\ot_{\hs A}\cdots \ot_{\hs A}\wt{\gamma}(v_j),
$$
respectively.

\item We let $\ov{\jmath}_{\nu}\colon \ov{A}\to \ov{E}\index{zn@$\nu$!znd@$\ov{\jmath}_{\nu}$|dotfillboldidx}$
 denote the map induced by $\jmath_{\nu}$.

\item  Given $a_1,\dots,a_r\in A$ and $1\le i<j\le r$, we set $\ov{\jmath}_{\nu}(\ba_{ij})\coloneqq \ov{\jmath}_{\nu}(\ov{a}_i)\ot\cdots\ot \ov{\jmath}_{\nu}(\ov{a}_j)\index{zn@$\nu$!zne@$\ov{\jmath}_{\nu|}(\ba_{ij})$|dotfillboldidx}$.

\item We let $\wt{\mu}_E\colon E\ot_{\hs A} E\to E\index{zma@$\wt{\mu}_E$|dotfillboldidx}$, $\ov{\mu}_E\colon E\ot E\to E\index{zmb@$\ov{\mu}_E$|dotfillboldidx}$ and $\ov{\mu}_A\colon A\ot A\to A\index{zmac@$\ov{\mu}_A$|dotfillboldidx}$ denote the maps induced by the multiplication maps $\mu_E$ and $\mu_A$.

\end{enumerate}

\end{notations}

For all $s\ge 0$, we set
$Y'_s\coloneqq  E\ot_{\hs A} E^{\ot_{\hs A} s}\ot_{\hs A} E$\index{ya@$Y'_s$|dotfillboldidx} and $Y_s\coloneqq  E\ot_{\hs A} \wt{E}^{\ot_{\hs A} s}\ot_{\hs A} E$\index{yb@$Y_s$|dotfillboldidx},
while, for all $r,s\ge 0$, we set
$X'_{rs}\coloneqq  E\ot_{\hs A} E^{\ot_{\hs A} s} \ot \ov{A}^{\ot r}\ot E$ and $X_{rs}\coloneqq  E\ot_{\hs A} \wt{E}^{\ot_{\hs A} s} \ot \ov{A}^{\ot r}\ot E\index{xb@$X'_{rs}$|dotfillboldidx}$\index{xc@$X_{rs}$|dotfillboldidx}.

\begin{remark} For each $s\ge 0$, we consider $E\ot_{\hs k} V^{\ot_{\hs k} s}$ as a right $A$-module via
$
(x\ot_{\hs k} \bv_{1s})\xcdot a\coloneqq \sum x\xcdot a^{(l)}\ot_{\hs k} \bv_{1s}^{(l)},
$
where $\sum_l a^{(l)} \ot_k \bv_{1s}^{(l)} \coloneqq  \chi(\bv_{1s}\ot_k a)$.
\end{remark}

\begin{proposition}\label{algu ident} The following assertions hold:

\begin{enumerate}[itemsep=0.7ex, topsep=1.0ex, label=\emph{(\arabic*)}]

\item For each $s\ge 0$, the map $\theta_s\colon \bigl(E\ot_{\hs k} V^{\ot_{\hs k} s}\bigr) \ot_{\hs A} E\longrightarrow Y'_s$, given by
$$
\theta_s\bigl((1_E\ot_{\hs k} \bv_{1s})\ot_{\hs A} 1_E \bigr)\coloneqq  1_E \ot_{\hs A} \gamma_{\hs A}(\bv_{1s})\ot_{\hs A} 1_E,
$$
is an isomorphism of $E$-bimodules.

\item For each $r,s\ge 0$, the map $\theta_{rs}\colon \bigl(E\ot_{\hs k} V^{\ot_{\hs k} s}\bigr) \ot \ov{A}^{\ot r}\ot E \longrightarrow X'_{rs}$, given by
$$
\theta_{rs}\bigl((1_E\ot_{\hs k}\bv_{1s}) \ot \ov{\ba}_{1r}\ot 1_E\bigr)\coloneqq  1_E \ot_{\hs A} \gamma_{\hs A}(\bv_{1s}) \ot\ov{\ba}_{1r}\ot 1_E,
$$
is an isomorphism of $E$-bimodules.

\end{enumerate}
\end{proposition}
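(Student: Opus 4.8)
The plan is to prove both statements by exhibiting explicit two-sided inverses, the essential mechanism being that the trailing tensorand $E$ present in $Y'_s$ and $X'_{rs}$ lets one undo the idempotent $\nabla_{\!\chi}$ via the relation $1_A\xcdot y=y$ across $\ot_{\hs A}$ together with $\jmath_{\nu}(1_A)=1_E$. First I would check that $\theta_s$ is a well-defined morphism of $E$-bimodules. As it is prescribed to be left and right $E$-linear, the only point is compatibility with the relation defining $\ot_{\hs A}$ in the source: for $a\in A$ one must have $\theta_s\bigl(((1_E\ot_{\hs k}\bv_{1s})\xcdot a)\ot_{\hs A} 1_E\bigr)=\theta_s\bigl((1_E\ot_{\hs k}\bv_{1s})\ot_{\hs A}\jmath_{\nu}(a)\bigr)$. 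Writing out the right $A$-action of the source (which is the iterated twisting map $\chi_{_{s1}}$ of Notation~\ref{iteracion de chi}) and sliding $\jmath_{\nu}(a)$ leftwards across the factors $\gamma(v_i)$ by means of the identity $\gamma(v)\jmath_{\nu}(a)=\chi(v\ot_k a)$ from Remark~\ref{propiedad de gamma'}, both sides are seen to equal $1_E\ot_{\hs A}\gamma(v_1)\ot_{\hs A}\cdots\ot_{\hs A}\gamma(v_s)\jmath_{\nu}(a)\ot_{\hs A} 1_E$; this is a short induction on $s$ resting on the recursive definition of $\chi_{_{s1}}$.

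Next I would build the inverse, first for $s=1$. Given $x_0\ot_{\hs A} x_1\ot_{\hs A} x_2\in Y'_1$, I would write $\imath_{\chi}(x_1)=\sum_l a_l\ot_k v_l\in A\ot_k V$ and set
\[
\psi_1(x_0\ot_{\hs A} x_1\ot_{\hs A} x_2)\coloneqq \sum_l\bigl((x_0\xcdot a_l)\ot_k v_l\bigr)\ot_{\hs A} x_2 .
\]
That $\psi_1$ factors through $\ot_{\hs A}$ on the left uses the left $A$-linearity of $\imath_{\chi}$ and the multiplicativity and unitality of $\jmath_{\nu}$ (Theorem~\ref{prodcruz1}(6)), while no choice is involved on the right since $\imath_{\chi}(x_1)$ is a fixed element. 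For $\theta_1\xcirc\psi_1=\ide$ one uses $x_1=\nabla_{\!\chi}(\imath_{\chi}(x_1))=\sum_l\jmath_{\nu}(a_l)\gamma(v_l)$. For $\psi_1\xcirc\theta_1=\ide$ one computes $\imath_{\chi}(\gamma(v))=\chi(v\ot_k 1_A)$, so that $\psi_1\xcirc\theta_1\bigl((x_0\ot_k v)\ot_{\hs A} x_2\bigr)=\bigl((x_0\ot_k v)\xcdot 1_A\bigr)\ot_{\hs A} x_2$, and this equals $(x_0\ot_k v)\ot_{\hs A} x_2$ precisely because of the absorption $((x_0\ot_k v)\xcdot 1_A)\ot_{\hs A} x_2=(x_0\ot_k v)\ot_{\hs A}(1_A\xcdot x_2)$ noted above. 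Surjectivity, hence the identification of the image, also follows from $E=\jmath_{\nu}(A)\gamma(V)$, which is Proposition~\ref{R tensor V esta incluido} with $R=A$.

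For general $s$ I would argue by induction, peeling off the rightmost copy of $V$: the step reduces $(E\ot_{\hs k} V^{\ot_{\hs k}^s})\ot_{\hs A} E$ to $(E\ot_{\hs k} V^{\ot_{\hs k}^{s-1}})\ot_{\hs A} E\ot_{\hs A} E$ by the $s=1$ computation applied to the last $V$-slot against the adjacent $E$ (the remaining left factor $E\ot_{\hs k} V^{\ot_{\hs k}^{s-1}}$ being carried along, and the trailing $E$ again furnishing the absorption), after which the inductive hypothesis and tensoring by $\ot_{\hs A} E$ on the right produce $Y'_s$; the only bookkeeping is the compatibility of $\chi_{_{s1}}$ with $\chi_{_{s-1,1}}$ and $\chi$, which is the very recursion of Notation~\ref{iteracion de chi}. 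The map $\theta_{rs}$ is then handled identically, the factor $\ov{A}^{\ot^r}\ot E$ lying to the right of the part being reorganized and being simply transported, so that $\theta_{rs}$ is an isomorphism by the same inverse with $x_2$ replaced throughout by an element of $\ov{A}^{\ot^r}\ot E$ (the trailing $E$ still performing the absorption). The main obstacle is exactly the well-definedness of the inverse for general $s$: sliding the $A$-components of the middle factors leftward across several $\gamma$'s invokes the iterated twisting $\chi_{_{s1}}$, so one must ensure the resulting assignment is independent of representatives and compatible with every intermediate $\ot_{\hs A}$ identification; organizing the induction from the right, so that each discharged $\nabla_{\!\chi}$ is immediately absorbed by a neighbouring $E$, is what keeps this under control.
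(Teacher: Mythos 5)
Your proposal is correct and follows essentially the same route as the paper: the inverse you build---expanding each middle $E$-factor inside $A\ot_k V$ via $\imath_{\chi}$ and sliding the $A$-coefficients leftward across the balanced tensor product, with the trailing unital factor absorbing the discharged $\nabla_{\!\chi}$ because $\gamma(v)=(1_A\ot_k v)\xcdot 1_A\equiv 1_A\ot_k v \pmod{X_{\chi}}$---is exactly the canonical isomorphism the paper obtains in one stroke from Remark~\ref{nabla es multiplicar por 1_A} together with the freeness of $A\ot_k V$ as a left $A$-module, your induction on $s$ and explicit balancedness checks being a spelled-out version of what the paper simply calls canonical. The one wording to adjust: for $\theta_{rs}$ with $r\ge 1$ the absorption takes place across the $\ot_{\hs K}$-junction against the unital left $K$-module $\ov{A}^{\ot^r}\ot E$ (using $1_A=1_K\in K$), not literally against the trailing copy of $E$; the mechanism, and hence your argument, is unchanged.
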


\begin{proof} We prove item~(2) and leave the proof of item~(1), which is similar, to the reader. By~Re\-mark~\ref{nabla es multiplicar por 1_A} we know that $X'_{rs} = E\ot_{\hs A}(A\ot_k V)^{\ot_{\hs A} s}\ot \ov{A}^{\ot r}\ot E$. So, we have a canonical isomorphism
\begin{equation}\label{iso}
X'_{rs} \simeq\bigl(E\ot_{\hs k} V^{\ot_{\hs k} s}\bigr) \ot \ov{A}^{\ot r}\ot E.
\end{equation}
Let $a_1,\dots,a_r\in A$ and $v_1,\dots,v_s\in V$. Since, $\gamma(v) = (1\ot_k v)\xcdot 1_A \equiv 1\ot_k v\pmod{X_{\chi}}$ for all $v\in V$, the iso\-morphism~\eqref{iso} maps $1_E \ot_{\hs A} \gamma_{\hs A}(\bv_{1s})\ot \ov{\ba}_{1r}\ot 1_E$ to $(1_E\ot_{\hs k} \bv_{1s})\ot \ov{\ba}_{1r}\ot 1_E$, and so it is the inverse of the map $\theta_{rs}$.
\end{proof}

\begin{remark} The following assertions hold:

\begin{enumerate}[itemsep=0.7ex, topsep=1.0ex, label=(\arabic*)]

\item $Y'_0=Y_0\simeq E\ot_{\hs A} E$ and $X'_{r0}=X_{r0}\simeq E\ot\ov{A}^{\ot r}\ot E$ as $E$-bimodules.

\item $X'_{0s}\simeq E\ot_{\hs A} E^{\ot_{\hs A} s}\ot E$  and $X_{0s}\simeq E\ot_{\hs A}\wt{E}^{\ot_{\hs A} s}\ot E$ as $E$-bimodules.

\end{enumerate}
\end{remark}

\medskip

Consider the diagram of $E$-bimodules and $E$-bimodule maps
$$
\begin{tikzpicture}
\begin{scope}[yshift=0cm,xshift=0cm]
\matrix(BPcomplex) [matrix of math nodes,row sep=2.5em, text height=1.5ex, text
depth=0.25ex, column sep=2.5em, inner sep=0pt, minimum height=5mm,minimum width =8mm]
{\vdots\\
Y_2 & X_{02} & X_{12} &\cdots\\
Y_1 & X_{01} & X_{11} &\cdots\\
Y_0 & X_{00} & X_{10} &\cdots,\\};
\draw[->] (BPcomplex-1-1) -- node[right=1pt,font=\scriptsize] {$-\partial_3$} (BPcomplex-2-1);
\draw[->] (BPcomplex-2-1) -- node[right=1pt,font=\scriptsize] {$-\partial_2$} (BPcomplex-3-1);
\draw[->] (BPcomplex-3-1) -- node[right=1pt, font=\scriptsize] {$-\partial_1$} (BPcomplex-4-1);
\draw[<-] (BPcomplex-2-1) -- node[above=1pt,font=\scriptsize] {$\upsilon_2$} (BPcomplex-2-2);
\draw[<-] (BPcomplex-2-2) -- node[above=1pt,font=\scriptsize] {$d^0_{12}$} (BPcomplex-2-3);
\draw[<-] (BPcomplex-2-3) -- node[above=1pt,font=\scriptsize] {$d^0_{22}$} (BPcomplex-2-4);
\draw[<-] (BPcomplex-3-1) -- node[above=1pt,font=\scriptsize] {$\upsilon_1$} (BPcomplex-3-2);
\draw[<-] (BPcomplex-3-2) -- node[above=1pt,font=\scriptsize] {$d^0_{11}$} (BPcomplex-3-3);
\draw[<-] (BPcomplex-3-3) -- node[above=1pt,font=\scriptsize] {$d^0_{21}$} (BPcomplex-3-4);
\draw[<-] (BPcomplex-4-1) -- node[above=1pt,font=\scriptsize] {$\upsilon_0$} (BPcomplex-4-2);
\draw[<-] (BPcomplex-4-2) -- node[above=1pt,font=\scriptsize] {$d^0_{10}$} (BPcomplex-4-3);
\draw[<-] (BPcomplex-4-3) -- node[above=1pt,font=\scriptsize] {$d^0_{20}$} (BPcomplex-4-4);
\end{scope}
\end{tikzpicture}
$$
where $(Y_*,\partial_*)\index{zzf@$\partial_n$|dotfillboldidx}$ is the normalized bar resolution of the $A$-algebra $E$, introduced in~\cite{GS}; for each $s\ge 0$, the complex $(X_{*s},d^0_{*s})\index{da@$d^0_{rs}$|dotfillboldidx}$ is $(-1)^s$-times the normalized bar resolution of the $K$-algebra $A$, tensored on the left over $A$ with $E\ot_{\hs A}\wt{E}^{\ot_{\hs A} s}$, and on the right over $A$ with $E$; and for each $s\ge 0$, the map $\upsilon_s\index{zu@$\upsilon_s$|dotfillboldidx}$ is the canonical surjection.

\begin{proposition}\label{las filas son contractiles} Each one of the rows of the above diagram is contractible as an $(E,K)$-bi\-mo\-du\-le complex. A contracting homotopy $\sigma^0_{0s}\colon Y_s\to X_{0s}$ and $\sigma^0_{r+1,s}\colon X_{rs}\to X_{r+1,s}$\index{zsa@$\sigma^0_{rs}$|dotfillboldidx} (for $r\ge 0$), of the $s$-th row, is given by
\begin{align*}
&\sigma^0_{0s}(x_0\ot_{\hs A} \wt{\bx}_{1s}\ot_{\hs A} x_{s+1})\coloneqq \sum_l x_0\ot_{\hs A}\wt{\bx}_{1s}\xcdot a_{(l)} \ot \gamma(v_{(l)})\\
\shortintertext{and}
&\sigma^0_{r+1,s}(x_0\ot_{\hs A}\wt{\bx}_{1s}\ot \ov{\ba}_{1r} \ot x_{s+1}) \coloneqq (-1)^{r+s+1} \sum_l x_0\ot_{\hs A}\wt{\bx}_{1s}\ot \ov{\ba}_{1r} \ot \ov{a_{(l)}}\ot \gamma(v_{(l)}),
\end{align*}
where $\sum_l a_{(l)} \ot_k v_{(l)} = x_{s+1}$.
\end{proposition}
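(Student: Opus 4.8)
The plan is to exhibit the proposed maps $\sigma^0_{*s}$ as a contracting homotopy of the $s$-th row, viewed as the augmented complex $Y_s\xleftarrow{\upsilon_s}X_{0s}\xleftarrow{d^0_{1s}}X_{1s}\leftarrow\cdots$. Since, up to the global sign $(-1)^s$, this row is the normalized bar resolution $A\ot\ov{A}^{\ot^*}\ot A\to A$ of the $K$-algebra $A$, tensored on the left over $A$ with $E\ot_{\hs A}\wt{E}^{\ot_{\hs A}^s}$ and on the right over $A$ with $E$, the family $\sigma^0_{*s}$ is precisely the crossed-product incarnation of the standard contracting homotopy that appends the unit at the right end of the bar resolution; here the role of the ``unit'' is played by the $\gamma(V)$-component of the rightmost tensorand. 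Concretely, it suffices to verify that $\sigma^0_{*s}$ is well defined and $(E,K)$-bilinear, and that it satisfies the three identities $\upsilon_s\xcirc\sigma^0_{0s}=\ide_{Y_s}$, $\sigma^0_{0s}\xcirc\upsilon_s+d^0_{1s}\xcirc\sigma^0_{1s}=\ide_{X_{0s}}$ and $\sigma^0_{rs}\xcirc d^0_{rs}+d^0_{r+1,s}\xcirc\sigma^0_{r+1,s}=\ide_{X_{rs}}$ for $r\ge 1$.

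First I would settle well-definedness and bilinearity. Both formulas are built from the canonical decomposition $x_{s+1}=\sum_l\jmath_{\nu}(a_{(l)})\gamma(v_{(l)})$ associated with $\sum_l a_{(l)}\ot_k v_{(l)}=x_{s+1}$, which is legitimate because $E=A\times V=\nabla_{\!\chi}(A\ot_k V)\subseteq A\ot_k V$ (Remark~\ref{nabla es multiplicar por 1_A}) and $\nabla_{\!\chi}(a\ot_k v)=a\xcdot\gamma(v)=\jmath_{\nu}(a)\gamma(v)$ (Remark~\ref{propiedad de gamma'}). Since $\gamma(v_{(l)})$ depends only on $v_{(l)}$ and since $A$-elements move freely across $\ot_{\hs A}$, one checks directly that the expressions respect the defining relations of all the tensor products involved. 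Left $E$-linearity is immediate, as only the rightmost tensorands are altered. The one nontrivial point is right $K$-linearity, and this is exactly where the hypothesis that $K$ is stable under $\chi$ enters: for $\lambda\in K$, the decomposition of $x_{s+1}\xcdot\lambda=\sum_l a_{(l)}\xcdot\chi(v_{(l)}\ot_k\lambda)$ has all of its $A$-entries in $K$ because $\chi(V\ot_k K)\subseteq K\ot_k V$, so these entries can be transported leftward across $\ot=\ot_{\hs K}$, which is what matches $\sigma^0_{*s}(\xi)\xcdot\lambda$ with $\sigma^0_{*s}(\xi\xcdot\lambda)$.

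For the homotopy identities I would argue as follows. The identity $\upsilon_s\xcirc\sigma^0_{0s}=\ide_{Y_s}$ follows by recombining $\wt{\bx}_{1s}\xcdot a_{(l)}\ot_{\hs A}\gamma(v_{(l)})=\wt{\bx}_{1s}\ot_{\hs A}\jmath_{\nu}(a_{(l)})\gamma(v_{(l)})$ and summing over $l$ to recover $x_{s+1}$. The remaining two identities reduce to the simplicial identities of the bar differential $d^0_{*s}$, that is, the $(-1)^s$-scaled bar boundary tensored on both sides: writing out $d^0_{r+1,s}\xcirc\sigma^0_{r+1,s}$ and $\sigma^0_{rs}\xcirc d^0_{rs}$, every face except the extremal one cancels telescopically exactly as in the contraction of the ordinary bar resolution, while the face multiplying the two last tensorands recombines $\ov{a_{(l)}}$ and $\gamma(v_{(l)})$ into $x_{s+1}$ through $\jmath_{\nu}(a_{(l)})\gamma(v_{(l)})$, leaving precisely the identity. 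The signs $(-1)^{r+s+1}$ and $(-1)^s$ are arranged so that all these cancellations carry the correct sign.

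I expect the main obstacle to be the simultaneous bookkeeping of the three different tensor products ($\ot_{\hs A}$ among the $E$-factors, and $\ot=\ot_{\hs K}$ among the $\ov{A}$- and $E$-factors) together with the right $K$-linearity. The naive idea of obtaining the homotopy by merely tensoring a contracting homotopy of the bar resolution fails, because the standard homotopies of $A\ot\ov{A}^{\ot^*}\ot A$ are only left- or right-$A$-linear and therefore do not descend through both of the outer tensor products over $A$; it is precisely the crossed-product decomposition $x_{s+1}=\sum_l\jmath_{\nu}(a_{(l)})\gamma(v_{(l)})$ that repairs this defect and produces a genuinely $(E,K)$-bilinear homotopy, with the stability of $K$ under $\chi$ supplying the right $K$-linearity.
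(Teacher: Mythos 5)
Your proposal is correct and follows essentially the same route as the paper's proof: after settling well-definedness you verify exactly the three identities $\upsilon_s\xcirc\sigma^0_{0s}=\ide_{Y_s}$, $\sigma^0_{0s}\xcirc\upsilon_s+d^0_{1s}\xcirc\sigma^0_{1s}=\ide_{X_{0s}}$ and $\sigma^0_{rs}\xcirc d^0_{rs}+d^0_{r+1,s}\xcirc\sigma^0_{r+1,s}=\ide_{X_{rs}}$, with the telescoping carried by the bar differential of the $K$-algebra $A$ and the recombination $\sum_l a_{(l)}\xcdot\gamma(v_{(l)})=x_{s+1}$ of equality~\eqref{eq4}, which is precisely how the paper argues. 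Your explicit check of $(E,K)$-bilinearity --- in particular that right $K$-linearity rests on the stability of $K$ under $\chi$, while the failure of the naive right-$A$-linear bar contraction to descend through $-\ot_{\hs A}E$ is what forces the decomposition $x_{s+1}=\sum_l\jmath_{\nu}(a_{(l)})\gamma(v_{(l)})$ --- is a correct elaboration of a point the paper leaves implicit, not a different method.
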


\begin{proof} To begin note that the morphism $\sigma^0_{0s}$ is well defined, since
$$
\tilde{\sigma}^0_{0s}(x_0\ot_{\hs A} \wt{\bx}_{1s}\ot_k a\xcdot x_{s+1}) = \sum_l x_0\ot_{\hs A} \wt{\bx}_{1s}\xcdot aa_{(l)}\ot \gamma(v_{(l)}) = \tilde{\sigma}^0_{0s}(x_0\ot_{\hs A} \wt{\bx}_{1s}\xcdot a \ot_k x_{s+1}).
$$
In order to finish the proof of the proposition we must check that
\begin{align}
&\upsilon_s\xcirc\sigma^0_{0s}=\ide_{Y_s},\label{eqhomot1}\\
&\sigma^0_{0s}\xcirc\upsilon_s + d^0_{1s}\xcirc\sigma^0_{1s}=\ide_{X_{0s}} \label{eqhomot2}
\shortintertext{and}
&\sigma^0_{rs}\xcirc d^0_{rs} + d^0_{r+1,s}\xcirc\sigma^0_{r+1,s}=\ide_{X_{rs}} \qquad \text{for $r>0$.}\label{eqhomot3}
\end{align}
A direct computation shows that
\begin{align*}
&\upsilon\bigl(\sigma^0(x_0\ot_{\hs A}\wt{\bx}_{1s}\ot_{\hs A} x_{s+1})\bigr) = \sum_l x_0\ot_{\hs A}\wt{\bx}_{1s} \ot_{\hs A} a_{(l)}\xcdot \gamma(v_{(l)}),\\
&\sigma^0\bigl(\upsilon(x_0\ot_{\hs A}\wt{\bx}_{1s}\ot x_{s+1})\bigr)=\sum_l x_0\ot_{\hs A}\wt{\bx}_{1s}\xcdot a_{(l)}\ot\gamma(v_{(l)}),\\
&\sigma^0\bigl(d^0(x_0\!\ot_{\hs A}\wt{\bx}_{1s}\!\ot\ov{\ba}_{1r}\!\ot x_{s+1})\bigr) = \sum_l (-1)^r x_0\ot_{\hs A} \wt{\bx}_{1s}\!\ot_{\hs A} b'\bigl(1_{\hs A}\ot  \ov{\ba}_{1r}\!\ot 1_{\hs A}\bigr)\ot_{\hs A} \ov{a_{(l)}}\ot \gamma(v_{(l)})
\shortintertext{and}
& d^0\bigl(\sigma^0(x_0\!\ot_{\hs A}\wt{\bx}_{1s}\!\ot\ov{\ba}_{1r}\!\ot x_{s+1})\bigr)\! =\! \sum_l (-1)^{r+1} x_0\!\ot_{\hs A} \wt{\bx}_{1s}\!\ot_{\hs A} b'\bigl(1_{\hs A}\ot \ov{\ba}_{1r}\! \ot \ov{a_{(l)}}\ot 1_{\hs A}\bigr) \ot_{\hs A} \gamma(v_{(l)}),
\end{align*}
where $b'_*$ is the boundary map of the normalized Hochschild resolution of the $K$-algebra $A$. So, equalities~\eqref{eqhomot1}, \eqref{eqhomot2} and~\eqref{eqhomot3} follow immediately from equality~\eqref{eq4}.
\end{proof}

\begin{remark}\label{previo} Using equality~\eqref{eq4} it is easy to see that if $x_{s+1}\!\in\! (K\ot_k V)\cap E$, then, for all $a\!\in\! A$,
\begin{align}
& \sigma^0_{0s}(x_0\ot_{\hs A} \wt{\bx}_{1s}\ot_{\hs A} a\xcdot x_{s+1}) = x_0\ot_{\hs A}\wt{\bx}_{1s}\xcdot a \ot x_{s+1}\label{primera cond}
\shortintertext{and}
&\sigma^0_{r+1,s}(x_0\ot_{\hs A}\wt{\bx}_{1s}\ot \ov{\ba}_{1r} \ot a\xcdot x_{s+1})=(-1)^{r+s+1}  x_0\ot_{\hs A}\wt{\bx}_{1s}\ot \ov{\ba}_{1r}\ot \ov{a} \ot x_{s+1}.\label{segunda cond}
\end{align}
\end{remark}

\begin{remark} The complex of $E$-bimodules
$$
\begin{tikzpicture}
\begin{scope}[yshift=0cm,xshift=0cm]
\matrix(BPcomplex) [matrix of math nodes,row sep=0em, text height=1.5ex, text
depth=0.25ex, column sep=2.5em, inner sep=0pt, minimum height=5mm,minimum width =6mm]
{E & Y_0 & Y_1 & Y_2 & Y_3 & Y_4 & Y_5 &\cdots\\};
\draw[<-] (BPcomplex-1-1) -- node[above=1pt,font=\scriptsize] {$-\wt{\mu}_E$} (BPcomplex-1-2);
\draw[<-] (BPcomplex-1-2) -- node[above=1pt,font=\scriptsize] {$-\partial_1$} (BPcomplex-1-3);
\draw[<-] (BPcomplex-1-3) -- node[above=1pt,font=\scriptsize] {$-\partial_2$} (BPcomplex-1-4);
\draw[<-] (BPcomplex-1-4) -- node[above=1pt,font=\scriptsize] {$-\partial_3$} (BPcomplex-1-5);
\draw[<-] (BPcomplex-1-5) -- node[above=1pt,font=\scriptsize] {$-\partial_4$} (BPcomplex-1-6);
\draw[<-] (BPcomplex-1-6) -- node[above=1pt,font=\scriptsize] {$-\partial_5$} (BPcomplex-1-7);
\draw[<-] (BPcomplex-1-7) -- node[above=1pt,font=\scriptsize] {$-\partial_6$} (BPcomplex-1-8);
\end{scope}
\end{tikzpicture}
$$
is contractible as a complex of $(E,A)$-bimodules. A chain contracting homotopy
$$
\sigma_0^{-1}\colon E\to Y_0\qquad\text{and}\qquad\sigma^{-1}_{s+1}\colon Y_s\to Y_{s+1}\index{zsb@$\sigma^{-1}_s$|dotfillboldidx}
\quad\text{for $s\ge 0$,}
$$
is given by $\sigma^{-1}_{s+1}(x_0\ot_{\hs A}\wt{\bx}_{1s} \ot_{\hs A} x_{s+1})\coloneqq  (-1)^sx_0\ot_{\hs A}\wt{\bx}_{1,s+1} \ot_{\hs A} 1_E$.
\end{remark}

\begin{notation}\label{notacion X supra u sub rs (R)} In the sequel we will use the following notations:

\begin{enumerate}[itemsep=0.7ex, topsep=1.0ex, label=(\arabic*)]

\item Let $r,s\ge 0$ and $0\le u\le r$. For each $k$-subalgebra $R$ of $A$, we let $L^u_{rs}(R)\index{lb@$L^u_{rs}(R)$|dotfillboldidx}$ denote the $(A,K)$-sub\-bimodule of $X_{rs}$ generated by the simple tensors of the form $1_E\ot_{\hs A}\wt{\gamma}_{\hs A}(\bv_{1s})\ot\ov{\ba}_{1r}\ot 1_E$, where $v_1,\dots,v_s\in V$, $a_1,\dots,a_r\in A$ and at least $u$ of the $a_j$'s are in $R$.

\item Let $r,s\ge 0$ and $0\le u\le r$. For each $k$-subalgebra $R$ of $A$, we let $\mathfrak{L}^u_{rs}(R)\index{lc@$\mathfrak{L}^u_{rs}(R)$|dotfillboldidx}$ denote the $(A,K)$-sub\-bimodule of $X'_{rs}$ generated by the simple tensors of the form $1_E\ot_{\hs A}\gamma_{\hs A}(\bv_{1s})\ot\ov{\ba}_{1r}\ot 1_E$, where $v_1,\dots,v_s\in V$, $a_1,\dots,a_r\in A$ and at least $u$ of the $a_j$'s are in $R$.

\item Let $r,s\ge 0$ and $0\le u\le r$. For each $k$-subalgebra $R$ of $A$ we set
$$
\qquad U^u_{rs}(R)\coloneqq  L^u_{rs}(R)\xcdot \gamma(V)\index{ub@$U^u_{rs}(R)$|dotfillboldidx},\quad W^u_{rs}(R)\coloneqq L^u_{rs}(R)\xcdot A \index{wa@$W^u_{rs}(R)$|dotfillboldidx}\quad\text{and}\quad\mathfrak{U}^u_{rs}(R)\coloneqq \mathfrak{L}^u_{rs}(R)\xcdot \gamma(V)\index{uc@$\mathfrak{U}^u_{rs}(R)$|dotfillboldidx}.
$$
\end{enumerate}

\end{notation}

\begin{remark}\label{L u {rs}(A)} Note that for $0\le u\le r$ and all $R\subseteq A$, we have $L^u_{rs}(A) = L^0_{rs}(R)$, $U^u_{rs}(A) = U^0_{rs}(R)$ and $W^u_{rs}(A) = W^0_{rs}(R)$. To abbreviate expressions we will write $L_{rs}\index{la@$L_{rs}$|dotfillboldidx}$, $U_{rs}\index{ua@$U_{rs}$|dotfillboldidx}$ and $W_{rs}\index{wa@$W_{rs}$|dotfillboldidx}$, instead of $L^0_{rs}(A)$, $U^0_{rs}(A)$ and $W^0_{rs}(A)$, respectively. Clearly $L_{rs}$, $U_{rs}$ and $W_{rs}$ are the image of the canonical maps
\begin{align*}
&\jmath_{\nu}(A)\ot_{\hs A} \wt{E}^{\ot_{\hs A} s} \ot \ov{A}^{\ot r}\ot \jmath_{\nu}(K)\longrightarrow E\ot_{\hs A} \wt{E}^{\ot_{\hs A} s} \ot \ov{A}^{\ot r}\ot E,\\
&\jmath_{\nu}(A)\ot_{\hs A} \wt{E}^{\ot_{\hs A} s} \ot \ov{A}^{\ot r}\ot \jmath_{\nu}(K)\gamma(V)\longrightarrow E\ot_{\hs A} \wt{E}^{\ot_{\hs A} s} \ot \ov{A}^{\ot r}\ot E
\shortintertext{and}
&\jmath_{\nu}(A)\ot_{\hs A} \wt{E}^{\ot_{\hs A} s} \ot \ov{A}^{\ot r}\ot \jmath_{\nu}(A)\longrightarrow E\ot_{\hs A} \wt{E}^{\ot_{\hs A} s} \ot \ov{A}^{\ot r}\ot E,
\end{align*}
respectively. By Remark~\ref{caso jmath_{nu}(K) subseteq jmath_{nu}(K)gamma(V)} if $1_E\in K\ot_k V$, then $L_{rs}\subseteq U_{rs}$.
\end{remark}

For $r\ge 0$ and $1\le l\le s$, we define $E$-bimodule maps $d^l_{rs}\colon X_{rs}\to X_{r+l-1,s-l}\index{db@$d^l_{rs}$|dotfillboldidx}$ recursively by:
$$
d^l(\bz)\coloneqq \begin{cases}
\phantom{-}\sigma^0\xcirc\partial\xcirc\upsilon(\bz) &\text{if $l=1$ and $r=0$,}\\
-\sigma^0\xcirc d^1\xcirc d^0(\bz) &\text{if $l=1$ and $r>0$,}\\
-\sum_{j=1}^{l-1}\sigma^0\xcirc d^{l-j}\xcirc d^j(\bz) &\text{if $1<l$ and $r=0$,}\\
-\sum_{j=0}^{l-1}\sigma^0\xcirc d^{l-j}\xcirc d^j(\bz) &\text{if $1<l$ and $r>0$,}
\end{cases}
$$
for $\bz\in E\xcdot L_{rs}$.

\begin{theorem}\label{res nuestra} The chain complex
$$
\begin{tikzpicture}
\begin{scope}[yshift=0cm,xshift=0cm]
\matrix(BPcomplex) [matrix of math nodes, row sep=0em, text height=1.5ex, text
depth=0.25ex, column sep=2.5em, inner sep=0pt, minimum height=5mm, minimum width =6mm]
{E & X_0 & X_1 & X_2 & X_3 & X_4 &\cdots,\\};
\draw[<-] (BPcomplex-1-1) -- node[above=1pt,font=\scriptsize] {$\ov{\mu}_E$} (BPcomplex-1-2);
\draw[<-] (BPcomplex-1-2) -- node[above=1pt,font=\scriptsize] {$d_1$} (BPcomplex-1-3);
\draw[<-] (BPcomplex-1-3) -- node[above=1pt,font=\scriptsize] {$d_2$} (BPcomplex-1-4);
\draw[<-] (BPcomplex-1-4) -- node[above=1pt,font=\scriptsize] {$d_3$} (BPcomplex-1-5);
\draw[<-] (BPcomplex-1-5) -- node[above=1pt,font=\scriptsize] {$d_4$} (BPcomplex-1-6);
\draw[<-] (BPcomplex-1-6) -- node[above=1pt,font=\scriptsize] {$d_5$} (BPcomplex-1-7);
\end{scope}
\end{tikzpicture}
$$
where
$$
X_n\coloneqq \bigoplus_{r+s=n} X_{rs}\index{xd@$X_n$|dotfillboldidx}\qquad\text{and}\qquad d_n(\bx)\coloneqq \begin{cases} \displaystyle{\sum_{l=1}^n d^l_{0n}(\bx)} &\text{if $\bx\in X_{0n}$,}\\ \displaystyle{\sum^{n-r}_{l=0} d^l_{r,n-r}(\bx)} &\text{if $\bx\in X_{r,n-r}$ with $r>0$.}\end{cases}\index{dc@$d_n$|dotfillboldidx}
$$
is a $\Upsilon$-relative projective resolution of $E$.
\end{theorem}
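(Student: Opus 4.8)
The plan is to compare the complex $(X_*,d_*)$ with the normalized bar resolution $(Y_*,\partial_*)$ of the $A$-algebra $E$, which is already a $\Upsilon$-relative projective resolution of $E$, and to transport that resolution across the row contractions of Proposition~\ref{las filas son contractiles} by means of the perturbation lemma. Concretely, I would establish three things: (i) each $X_{rs}$, hence each $X_n$, is $\Upsilon$-relative projective; (ii) $d_n\xcirc d_{n+1}=0$, so that $(X_*,d_*)$ is a genuine complex augmented by $\ov{\mu}_E$; and (iii) the augmented complex $E\xleftarrow{\ov{\mu}_E}X_*$ is exact and splits as a complex of $(E,K)$-bimodules. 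Granting (i)--(iii), the statement follows from the definition of a $\Upsilon$-relative projective resolution.

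For (i), Proposition~\ref{algu ident}(2) gives $X_{rs}\cong (E\ot_{\hs k}V^{\ot_{\hs k}^s})\ot\ov{A}^{\ot^r}\ot E$ as $E$-bimodules, which is induced up to $E$-bimodules from a $K$-bimodule and therefore $\Upsilon$-relative projective; a finite direct sum over $r+s=n$ is then $\Upsilon$-relative projective as well. That $(Y_*,\partial_*)$ is a $\Upsilon$-relative projective resolution is standard: the $Y_s$ are $(E,A)$-relative projective, hence $(E,K)$-relative projective since $K\subseteq A$, and the augmented complex $E\xleftarrow{-\wt{\mu}_E}Y_*$ is split over $(E,A)$ by the contraction $\sigma^{-1}$, hence split over $(E,K)$.

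Then (ii), $d^2=0$. Here I would note that the rows, together with the maps $\upsilon$, $\sigma^0_{0*}$ and $\sigma^0$, form a \emph{special} deformation retract of the horizontal total complex $(X_*,d^0)$ onto $(Y_*,0)$: the retraction and homotopy identities are exactly \eqref{eqhomot1}, \eqref{eqhomot2} and \eqref{eqhomot3}, while the special conditions $\upsilon\xcirc\sigma^0=0$, $\sigma^0\xcirc\sigma^0_{0*}=0$ and $\sigma^0\xcirc\sigma^0=0$ follow from Remark~\ref{previo} together with $\ov{1_A}=0$ in $\ov{A}$ (the last factor produced by $\sigma^0_{0*}$ is $\gamma(v)\in(K\ot_{\hs k}V)\cap E$, so a further application of $\sigma^0$ inserts $\ov{1_A}=0$). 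The recursive clauses defining $d^l$ are precisely the obstruction-killing step for this retract: writing $d=\sum_l d^l$ and expanding $d^2$ according to the total drop $m$ in the $s$-degree, the term $\sum_{j=0}^m d^{m-j}\xcirc d^j$ splits as $d^0\xcirc d^m+d^m\xcirc d^0$ plus a lower-order expression $c_m\coloneqq\sum_{j=1}^{m-1}d^{m-j}\xcirc d^j$ already controlled by the inductive hypothesis, and the homotopy identities let one solve $d^0\xcirc d^m+d^m\xcirc d^0=-c_m$ by $d^m=-\sigma^0\xcirc c_m$ (with the $r=0$ clause absorbing the $\upsilon$-component through \eqref{eqhomot2}). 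I expect \textbf{this verification to be the main obstacle}: one must check that each obstruction $c_m$ is a horizontal cycle with vanishing augmentation component, so that applying $\sigma^0$ genuinely inverts $d^0$ on it; this is the delicate bookkeeping, and it relies on \eqref{eq4} and on Theorem~\ref{prodcruz1}(9) to move $\gamma$, $\jmath_{\nu}$ and $\chi$ past one another.

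Finally (iii). Once $d^2=0$ and $\upsilon$ is checked to be a chain map $(X_*,d_*)\to(Y_*,\partial_*)$ (which holds because $d^1_{0s}=\sigma^0\xcirc\partial\xcirc\upsilon$ lifts $\partial$), I would obtain exactness and the $(E,K)$-splitting either from the perturbation lemma (Theorem~\ref{lema de perturbacion}), perturbing the special deformation retract above by $\partial$ to produce a contraction of $(X_*,d_*)$ onto $(Y_*,\partial_*)$, or directly by filtering $X_*$ by the $s$-degree: the associated graded differential is $d^0$, whose homology is $(Y_*,0)$ by Proposition~\ref{las filas son contractiles}, and the induced first differential is $\partial$, so the spectral sequence collapses to $E$ in degree $0$ via $\ov{\mu}_E$ and to $0$ in positive degrees. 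Since all the contractions involved are $(E,K)$-linear, the resulting contracting homotopy of $E\xleftarrow{\ov{\mu}_E}X_*$ is $(E,K)$-linear, giving the required $\Upsilon$-relative exactness. Together with (i) this shows that $(X_*,d_*)$ is a $\Upsilon$-relative projective resolution of $E$.
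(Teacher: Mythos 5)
Your proposal is sound, but you should know that the paper's own ``proof'' of Theorem~\ref{res nuestra} is a one-line citation: both the theorem and the contracting homotopy of Proposition~\ref{cont nuestra} are quoted as immediate consequences of Corollary~A2 of \cite{GG}, a general result asserting that a diagram with contractible rows of the displayed kind, with the $d^l$ defined by exactly this recursion, totalizes to a relative projective resolution with an explicit contraction. What you have written is, in effect, a reconstruction of that corollary in the case at hand: your steps (i)--(iii) --- relative projectivity of the $X_{rs}$, the obstruction-theoretic verification that $d\xcirc d=0$, and the transfer of the contraction of the augmented bar complex $(Y_*,\partial_*)$ through the row contractions via Theorem~\ref{lema de perturbacion} --- are precisely the ingredients of the cited appendix. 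Your route buys self-containedness; the paper's buys brevity. Your observation that the row retracts are \emph{special} is correct and consistent with Remark~\ref{propiedades de sigma^0}: since $K$ is stable under $\chi$ one has $\gamma(V)\subseteq K\ot_k V$, so a second application of $\sigma^0$ inserts $\ov{1_A}=0$, whence $\sigma^0\xcirc\sigma^0=0$ (note $1_E\in K\ot_kV$ is \emph{not} needed here, which matters because the theorem is stated before that hypothesis is imposed). The step you leave as a sketch --- that each obstruction is a horizontal cycle genuinely inverted by $\sigma^0$, so the recursion closes --- is exactly the inductive bookkeeping done in \cite{GG}; observe that for $r>0$ the paper's recursion includes the $j=0$ term $\sigma^0\xcirc d^l\xcirc d^0$, so the induction must run on $l$ and $r$ simultaneously, as you implicitly do. Also, in applying the perturbation lemma in (iii) with $\delta\coloneqq\sum_{l\ge 1}d^l$ there is no circularity, since you establish $d\xcirc d=0$ first; smallness holds because $\delta\xcirc\sigma^0$ strictly lowers the $s$-degree, and a degree count (everything after the first $\sigma^0$ has $r\ge 1$, where $\upsilon$ vanishes) shows the induced differential on $Y_*$ is $\partial_*$ on the nose.

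Two local slips, neither fatal. First, you invoke Proposition~\ref{algu ident}(2) for $X_{rs}$, but that proposition identifies $X'_{rs}$ (built from $E^{\ot_{\hs A}^s}$), not $X_{rs}$; no identification is needed, since $X_{rs}=\bigl(E\ot_{\hs A}\wt{E}^{\ot_{\hs A}^s}\ot\ov{A}^{\ot^r}\bigr)\ot E$ already has the form $P\ot_{\hs K}E$ with $P$ an $(E,K)$-bimodule, and such a module lifts against any epimorphism $f$ in $\Upsilon$ with splitting $g$ via $\tilde h(p\ot x)\coloneqq g(h(p\ot 1_E))\xcdot x$, which is well defined over $\ot_{\hs K}$ precisely because $g$ is right $K$-linear. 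Second, your implication ``the $Y_s$ are $(E,A)$-relative projective, hence $(E,K)$-relative projective since $K\subseteq A$'' runs backwards: the $(E,K)$-split epimorphisms form a \emph{larger} class than the $(E,A)$-split ones, so projectivity relative to $\Upsilon$ is the stronger condition, and $Y_s=(\,\cdot\,)\ot_{\hs A}E$ need not be $\Upsilon$-relative projective (the canonical lift requires a right $A$-linear splitting, which $\Upsilon$ does not supply). Fortunately your argument never uses this: all you need from the bar side is that the augmented complex $(Y_*,\partial_*)$ is contractible by the $(E,A)$-linear, hence $(E,K)$-linear, homotopy $\sigma^{-1}$, after which the perturbed special deformation retract transfers this to an $(E,K)$-linear contraction of $E\xleftarrow{\ov{\mu}_E}X_*$, which together with (i) gives $\Upsilon$-relative exactness. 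Finally, a typo in your list of special identities: $\upsilon_s\xcirc\sigma^0_{0s}=\ide$ by \eqref{eqhomot1}; the conditions actually needed are $\sigma^0\xcirc\sigma^0_{0s}=0$ and $\sigma^0\xcirc\sigma^0=0$, both of which hold by the $\ov{1_A}=0$ mechanism you describe.
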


\begin{proof} This is an immediate consequence of~\cite{GG}*{Corollary~A2}.
\end{proof}

\begin{remark} In Theorem~\ref{res nuestra} and in the rest of this work we identify $X_{rs}$ with its image inside $X_{r+s}$.
\end{remark}

In order to carry out our computations we also need to give an explicit contracting homotopy of this resolution. For this we define $(E,K)$-bimodule maps
$$
\sigma^l_{l,s-l}\colon Y_s\longrightarrow X_{l,s-l}\qquad\text{and}\qquad\sigma^l_{r+l+1,s-l}\colon X_{rs}\longrightarrow X_{r+l+1,s-l}
\index{zsc@$\sigma^l_{rs}$|dotfillboldidx}
$$
recursively on $l$, by $\sigma^l_{r+l+1,s-l}\coloneqq  -\sum_{i=0}^{l-1}\sigma^0\xcirc d^{l-i}\xcirc\sigma^i$  for $0<l\le s$ and $r\ge -1$.

\begin{proposition}\label{cont nuestra} A contracting homotopy $\ov{\sigma}_0\colon E\to X_0$ and $\ov{\sigma}_{n+1}\colon X_n\to X_{n+1}$ for $n\ge 0$,\index{zsd@$\ov{\sigma}_n$|dotfillboldidx} of the~res\-olution introduced in Theorem~\ref{res nuestra}, is given by
\begin{align*}
& \ov{\sigma}_0(\bx)\coloneqq -\sigma_{00}^0\xcirc\sigma_0^{-1}(\bx)
\shortintertext{and}
& \ov{\sigma}_{n+1}(\bx)\coloneqq \begin{cases} \displaystyle{-\sum_{l=0}^{n+1}\sigma_{l,n-l+1}^l\xcirc\sigma_{n+1}^{-1}\xcirc \upsilon_n(\bx) +  \sum_{l=0}^n\sigma_{l+1,n-l}^l(\bx)} &\text{if $\bx\in X_{0n}$,}\\ \displaystyle{\phantom{-} \sum_{l=0}^{n-r}\sigma_{r+l+1,n-r-l}^l(\bx)} &\text{if $\bx\in X_{r,n-r}$ with $r>0$.}\end{cases}
\end{align*}
\end{proposition}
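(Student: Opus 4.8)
The plan is to recognize $\ov{\sigma}$ as the contracting homotopy produced by a two-stage application of the perturbation lemma (Theorem~\ref{lema de perturbacion}), and then to verify the homotopy identities by induction on the degree $l$, leaning on the defining recursions of $d^l$ and $\sigma^l$ together with the row and column contractions already at hand. Concretely, the resolution $(X_*,d_*)$ is a perturbation of the bigraded object $\bigoplus_{r,s}X_{rs}$ equipped only with the horizontal differential $d^0$: by Proposition~\ref{las filas son contractiles} each row is contractible via $\sigma^0$, so $(X_{**},d^0)$ deformation retracts onto $(Y_*,\upsilon_*)$, and $(Y_*,\partial_*)$ in turn retracts onto $E$ via the homotopy $\sigma^{-1}$ of the preceding remark. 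The perturbation is $\delta\coloneqq\sum_{l\ge 1}d^l$, and the higher homotopies $\sigma^l$ are built by the very recursion $\sigma^l=-\sum_{i=0}^{l-1}\sigma^0\xcirc d^{l-i}\xcirc\sigma^i$ that the perturbation lemma dictates for $h^1=h+h\xcirc A\xcirc h$; local nilpotency of $\delta\xcirc\sigma^0$ (each $d^l$ strictly lowers $s$) guarantees the perturbation is small.

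The core computation I would carry out is to establish, by induction on $l$, the perturbed homotopy relation on the rows, namely that for $\bx\in X_{rs}$ with $r>0$ one has $\bigl(\ov{\sigma}\xcirc d + d\xcirc\ov{\sigma}\bigr)(\bx)=\bx$. The base case $l=0$ is exactly identities~\eqref{eqhomot2} and~\eqref{eqhomot3}. For the inductive step one expands $d=\sum_j d^j$ and $\ov{\sigma}=\sum_i\sigma^i$, collects terms of fixed total degree, and substitutes the recursions for $d^l$ and $\sigma^l$; the cross terms $\sigma^0\xcirc d^{l-i}\xcirc\sigma^i$ telescope against $\sigma^0\xcirc d^{l-j}\xcirc d^j$, exactly as in the proof that $(d^0+\delta)^2=0$, leaving only $\ide$. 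Throughout one must track the sign $(-1)^{r+s+1}$ carried by $\sigma^0$ and the shift $X_{rs}\to X_{r+l-1,s-l}$ of $d^l$; this is the routine but delicate bookkeeping.

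Next I would handle the interface $r=0$, where the column contraction $\sigma^{-1}$ enters. Here the term $-\sum_{l}\sigma^l\xcirc\sigma^{-1}\xcirc\upsilon$ in $\ov{\sigma}_{n+1}$ must absorb the vertical part of $d^1=\sigma^0\xcirc\partial\xcirc\upsilon$. Using $\upsilon_s\xcirc\sigma^0_{0s}=\ide$ from~\eqref{eqhomot1} to pass $\upsilon$ through the $\sigma^l$, and then the defining relation $\sigma^{-1}\xcirc\partial+\partial\xcirc\sigma^{-1}=\pm\ide$ of the column homotopy, the vertical pieces reassemble so that $(\ov{\sigma}\xcirc d + d\xcirc\ov{\sigma})(\bx)=\bx$ holds for $\bx\in X_{0n}$ as well. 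The augmentation identity $\ov{\mu}_E\xcirc\ov{\sigma}_0=\ide_E$ then reduces, through $\upsilon_0\xcirc\sigma^0_{00}=\ide$ and the compatibility of the two augmentations $\ov{\mu}_E$ and $\wt{\mu}_E$, to the contracting property of $\sigma^{-1}$ in the bottom degree.

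The hard part will be exactly this $r=0$ interface: reconciling the perturbed row homotopy, which lives inside the $X$-bicomplex, with the column homotopy $\sigma^{-1}$, which exits through $\upsilon$ and re-enters through $\sigma^0$, all while keeping the two recursions synchronized and the signs consistent. An alternative that sidesteps most of the bookkeeping is to invoke Theorem~\ref{lema de perturbacion} directly: once the special deformation retract built from $\sigma^0$ and $\sigma^{-1}$ and the small perturbation $\delta=\sum_{l\ge1}d^l$ are identified as above, the formulas for $d_n$ and $\ov{\sigma}_n$ reproduce verbatim the perturbed differential and the perturbed contracting homotopy delivered by the lemma, and the statement is immediate --- in the same spirit as the derivation of Theorem~\ref{res nuestra} from \cite{GG}*{Corollary~A2}.
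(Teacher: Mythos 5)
Your proposal is correct and takes essentially the same approach as the paper: the paper's entire proof of Proposition~\ref{cont nuestra} is the one-line observation that it is a direct consequence of \cite{GG}*{Corollary~A2}, which is precisely the shortcut you identify in your closing paragraph. The inductive perturbation-lemma verification you sketch --- rows contracted by $\sigma^0$ with the section $\sigma^0_{0s}$ satisfying $\upsilon_s\xcirc\sigma^0_{0s}=\ide$, the column contraction $\sigma^{-1}$ entering only at the $r=0$ edge, and smallness of the perturbation because each $d^l$ with $l\ge 1$ strictly lowers $s$ --- is a faithful reconstruction of the content of that corollary, so nothing in your plan would fail.
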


\begin{proof} This is a direct consequence of~\cite{GG}*{Corollary~A2}.
\end{proof}

\begin{notation} For a right $A$-submodule $E_1$ of $E$ and a left $A$-submodule $E_2$ of $E$, we let $E_1 \bar{\ot}_{\hs A} \wt{E}^{\ot_{\hs A} s} \bar{\ot}_{\hs A} E_2\index{fdc@$E_1 \bar{\ot}_{\hs A}\wt{E}^{\ot_{\hs A} s} \bar{\ot}_{\hs A} E_2$|dotfillboldidx}$ denote the image of the canonical map $E_1 \ot_{\hs A}\wt{E}^{\ot_{\hs A} s} \ot_{\hs A} E_2\longrightarrow E\ot_{\hs A}\wt{E}^{\ot_{\hs A} s} \ot_{\hs A} E$.
\end{notation}

\begin{remark}\label{cond de sigma} By its very definition,
\begin{equation}
\sigma^0(\jmath_{\nu}(A)\bar{\ot}_{\hs A}\wt{E}^{\ot_{\hs A} s}\bar{\ot}_{\hs A} E)\subseteq U_{0s} \quad\text{and}\quad \sigma^0(L_{rs}\xcdot E)\subseteq U_{r+1,s}\qquad\text{for all $r,s\ge 0$.}\label{ec5p}
\end{equation}
Since $\sigma^0$ is left $E$-linear, from this and the definition of $d^l$ it follows that,
\begin{equation}
d^l(E\xcdot L_{rs})\subseteq  E\xcdot U_{r+l-1,s-l}\qquad\text{for all $r\ge 0$ and $1\le l\le s$.}\label{ec5}
\end{equation}

\end{remark}

\begin{remark}\label{ima of sigma^l} By inclusions~\eqref{ec5p} and the definition of $\sigma^l$, we have
$$
\sigma^l(Y_s)\subseteq E\xcdot U_{l,s-l}\quad\text{and}\quad \sigma^l(X_{rs})\subseteq E\xcdot U_{r+l+1,s-l}\qquad\text{for all $r\ge 0$ and $0\le l\le s$.}
$$
\end{remark}

\smallskip

Since $K$ is stable under $\chi$ we have $\gamma(V)\subseteq K\ot_k V$. So, by equality~\eqref{segunda cond},
\begin{equation}
\sigma^0(E\xcdot U_{rs}) = 0\qquad\text{for all $r,s\ge 0$.}\label{ec2}
\end{equation}
On the other hand, using that $1_E\in K\ot_k V$ and equalities~\eqref{primera cond} and~\eqref{segunda cond}, we get that
\begin{align}
&\sigma^0\bigl(\jmath_{\nu}(A)\bar{\ot}_{\hs A}\wt{E}^{\ot_{\hs A} s}\bar{\ot}_{\hs A} \jmath_{\nu}(A)\bigr)\subseteq L_{0s}&&\text{for all $s\ge 0$,}\label{ec3}
\shortintertext{and}
&\sigma^0(W_{rs}) \subseteq L_{r+1,s}&&\text{for all $r,s\ge 0$.}\label{ec4}
\end{align}

\begin{remark}\label{propiedades de sigma^0} By Remark~\ref{ima of sigma^l} and equality~\eqref{ec2}, we have $\sigma^0\xcirc\sigma^0=0$.
\end{remark}

\begin{proposition}\label{prim prop of ov sigma} The homotopy $\ov{\sigma}$ found in Proposition~\ref{cont nuestra} satisfies
$$
\ov{\sigma}_{n+1}(\bx) = -\sigma_{0,n+1}^0\xcirc\sigma_{n+1}^{-1}\xcirc\upsilon_n(\bx) + \sum_{l=0}^n \sigma_{l+1,n-l}^l(\bx)\qquad\text{for all $\bx\in X_{0n}$.}
$$
\end{proposition}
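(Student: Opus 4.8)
The plan is to compare the asserted formula with the definition of $\ov{\sigma}_{n+1}$ on $X_{0n}$ recorded in Proposition~\ref{cont nuestra}. There the first summand is $-\sum_{l=0}^{n+1}\sigma^l_{l,n-l+1}\xcirc\sigma^{-1}_{n+1}\xcirc\upsilon_n$, whereas in the present statement only the $l=0$ term survives. Hence the proposition is equivalent to the vanishing
$$
\sigma^l_{l,n-l+1}\xcirc\sigma^{-1}_{n+1}\xcirc\upsilon_n(\bx)=0\qquad\text{for }1\le l\le n+1\text{ and }\bx\in X_{0n},
$$
and I would establish exactly this.

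First I would isolate the $l=0$ image, which drives everything else. For $\bx=x_0\ot_{\hs A}\wt{\bx}_{1n}\ot x_{n+1}\in X_{0n}$ the surjection $\upsilon_n$ turns the final $\ot$ into $\ot_{\hs A}$, and then $\sigma^{-1}_{n+1}$ produces $(-1)^n x_0\ot_{\hs A}\wt{\bx}_{1,n+1}\ot_{\hs A} 1_E$, whose last tensorand is $1_E$. Since we are assuming $1_E\in K\ot_k V$, we have $1_E\in (K\ot_k V)\cap E$, so applying $\sigma^0_{0,n+1}$ and invoking \eqref{primera cond} (with $a=1_A$) collapses the last $\ot_{\hs A}$ back to $\ot$, yielding $\pm\, x_0\ot_{\hs A}\wt{\bx}_{1,n+1}\ot 1_E$. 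Because $E=\jmath_{\nu}(A)\gamma(V)$ (Proposition~\ref{R tensor V esta incluido} with $R=A$), every entry of $\wt{\bx}_{1,n+1}$ is an $A$-combination of classes $\wt{\gamma}(v)$; absorbing these $A$-factors leftward across the $\ot_{\hs A}$'s exhibits this element as lying in $E\xcdot L_{0,n+1}$. Thus $\sigma^0_{0,n+1}\xcirc\sigma^{-1}_{n+1}\xcirc\upsilon_n(\bx)\in E\xcdot L_{0,n+1}$.

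I would then prove the vanishing by induction on $l$. For $l\ge 1$ the recursion $\sigma^l_{l,n-l+1}=-\sum_{i=0}^{l-1}\sigma^0\xcirc d^{l-i}\xcirc\sigma^i_{i,n-i+1}$ exhibits $\sigma^l$ as beginning (on the left) with $\sigma^0$. Composing with $\sigma^{-1}_{n+1}\xcirc\upsilon_n(\bx)$, the terms with $1\le i\le l-1$ vanish by the inductive hypothesis, leaving only $-\sigma^0\xcirc d^l\bigl(\sigma^0_{0,n+1}\xcirc\sigma^{-1}_{n+1}\xcirc\upsilon_n(\bx)\bigr)$. By the preceding paragraph the argument of $d^l$ lies in $E\xcdot L_{0,n+1}$, so \eqref{ec5} places $d^l$ of it in $E\xcdot U_{l-1,n-l+1}$, and finally \eqref{ec2} gives $\sigma^0(E\xcdot U_{l-1,n-l+1})=0$. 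This closes the induction, the base case $l=1$ being the same computation with no inductive terms, and the proposition follows.

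The one genuinely substantive step is the membership $\sigma^0_{0,n+1}\xcirc\sigma^{-1}_{n+1}\xcirc\upsilon_n(\bx)\in E\xcdot L_{0,n+1}$: it is precisely where the standing hypothesis $1_E\in K\ot_k V$ enters, through \eqref{primera cond}, and where one must recognize via $E=\jmath_{\nu}(A)\gamma(V)$ that an arbitrary tensor over $\wt{E}$ with final entry $1_E$ already sits in the controlled submodule $E\xcdot L$. Once that is secured, the rest is a formal induction driven by the recursion for $\sigma^l$ together with the inclusions \eqref{ec5} and \eqref{ec2}; the only thing to watch carefully is the bookkeeping of the bidegrees $(r,s)$ under successive applications of $d^l$ and $\sigma^0$.
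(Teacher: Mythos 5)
Your proof is correct and follows essentially the same route as the paper: both reduce the statement to the vanishing of $\sigma^l_{l,n-l+1}\xcirc\sigma^{-1}_{n+1}\xcirc\upsilon_n$ for $l\ge 1$, collapse the recursion for $\sigma^l$ to $-\sigma^0\xcirc d^l\xcirc\sigma^0$ (you by induction, the paper by a minimal-counterexample argument, which is the same thing), and then kill the result via the chain $E\xcdot L_{0,n+1}\xrightarrow{d^l}E\xcdot U_{l-1,n+1-l}\xrightarrow{\sigma^0}0$ using~\eqref{ec5} and~\eqref{ec2}. The only cosmetic difference is that where the paper cites the inclusion~\eqref{ec3} for the key membership $\sigma^0\xcirc\sigma^{-1}\xcirc\upsilon(\bx)\in E\xcdot L_{0,n+1}$, you re-derive it directly from~\eqref{primera cond} and $E=\jmath_{\nu}(A)\gamma(V)$, which is precisely how~\eqref{ec3} is established in the paper.
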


\begin{proof} By the definitions of $\ov{\sigma}$, $\upsilon$ and $\sigma^{-1}$, it suffices to prove that $\sigma^l\bigl(E\bar{\ot}_{\hs A}\wt{E}^{\ot_{\hs A} {n+1}}\bar{\ot}_{\hs A} \jmath_{\nu}(A)\bigr) = 0$ for $l\ge 1$. Suppose this is false, and let $l\ge 1$ be the least upper index for which the above equality is wrong. Hence
$$
\sigma^l(\bx) = -\sum_{i=0}^{l-1}\sigma^0\xcirc d^{l-i}\xcirc\sigma^i(\bx) = -\sigma^0\xcirc d^l\xcirc\sigma^0(\bx)\qquad\text{for each $\bx\in E\bar{\ot}_{\hs A}\wt{E}^{\ot_{\hs A} {n+1}}\bar{\ot}_{\hs A}\jmath_{\nu}(A)$.}
$$
But, by the inclusions in~\eqref{ec5} and~\eqref{ec3} we have $\sigma^0\xcirc d^l\xcirc\sigma^0(\bx)\in \sigma^0\xcirc d^l(E\xcdot L_{0n})\subseteq \sigma^0\bigl(E\xcdot U_{l+1,n-l})$, which, by equality~\eqref{ec2} is zero. This contradiction shows that the proposition is true.
\end{proof}

\begin{corollary}\label{ima of ov sigma caso p} For all $n\ge 0$ and $0\le r\le n$, we have
$$
\ov{\sigma}(X_{r,n-r})\subseteq \begin{cases} E\xcdot L_{0,n+1}\oplus \bigoplus_{l=0}^n E\xcdot U_{l+1,n-l}& \text{if $r=0$,}\\\bigoplus_{l=0}^{n-r} E\xcdot U_{r+l+1,n-r-l} & \text{if $r>0$.}
\end{cases}
$$
\end{corollary}

\begin{proof} By inclusion~\eqref{ec3} and the definitions of $\upsilon$ and $\sigma^{-1}$, we have
$$
\sigma_{0,n+1}^0\xcirc\sigma_{n+1}^{-1}\xcirc\upsilon_n(X_{0n})\subseteq \sigma^0\bigl(E\bar{\ot}_{\hs A}\wt{E}^{\ot_{\hs A} {n+1}}\bar{\ot}_{\hs A} \jmath_{\nu}(A)\bigr) \subseteq E\xcdot L_{0,n+1}.
$$
By the definition of $\ov{\sigma}$, Proposition~\ref{prim prop of ov sigma} and Remark~\ref{ima of sigma^l} this finishes the proof.
\end{proof}

\begin{proposition}\label{sigma barra al cuadrado es cero} The contracting homotopy $\ov{\sigma}$ satisfies $\ov{\sigma}\xcirc\ov{\sigma} = 0$.
\end{proposition}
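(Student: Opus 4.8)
The plan is to reduce everything to a single \emph{vanishing principle} for the auxiliary homotopies $\sigma^l$, and then to read off the image of $\ov{\sigma}$ from the remarks already established. The principle I would isolate first is: if $\bz$ is a homogeneous element with $\sigma^0(\bz)=0$, then $\sigma^l(\bz)=0$ for every admissible $l$. This is proved by a one-line strong induction on $l$: the case $l=0$ is the hypothesis, and for $l\ge 1$ the recursive definition $\sigma^l=-\sum_{i=0}^{l-1}\sigma^0\xcirc d^{l-i}\xcirc\sigma^i$ writes $\sigma^l(\bz)$ as a sum of terms $\sigma^0\bigl(d^{l-i}(\sigma^i(\bz))\bigr)$ with $i<l$, each of which is $0$ by the inductive hypothesis. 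The only two ways $\sigma^0(\bz)=0$ will be verified are equality~\eqref{ec2} (when $\bz$ lies in some $E\xcdot U_{pq}$) and Remark~\ref{propiedades de sigma^0}, i.e. $\sigma^0\xcirc\sigma^0=0$ (when $\bz$ lies in $\ima(\sigma^0)$).

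Next I would describe the image of $\ov{\sigma}$ using Remarks~\ref{ima of ov sigma} and~\ref{ima of ov sigma caso p}. For $\bx\in X_{r,n-r}$ with $r>0$ the whole image sits inside a direct sum of submodules $E\xcdot U_{pq}$ with first index $p\ge 1$. For $\bx\in X_{0n}$ the same is true of all but one summand: besides terms in $E\xcdot U_{l+1,n-l}$ (again with $l+1\ge 1$), there is one exceptional summand in $E\xcdot L_{0,n+1}$, namely $\mathbf{w}\coloneqq -\sigma^0\xcirc\sigma^{-1}\xcirc\upsilon(\bx)$, coming from the first term in Proposition~\ref{prim prop of ov sigma}. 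Thus by linearity it suffices to show that $\ov{\sigma}$ annihilates every component of the form $E\xcdot U_{pq}$ with $p\ge 1$ and that it annihilates $\mathbf{w}$.

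For a component $\bz\in E\xcdot U_{pq}$ with $p\ge 1$, equality~\eqref{ec2} gives $\sigma^0(\bz)=0$, so the vanishing principle yields $\sigma^l(\bz)=0$ for all $l$; since $p>0$, Proposition~\ref{cont nuestra} expresses $\ov{\sigma}(\bz)$ as the single sum $\sum_l\sigma^l(\bz)$, which is therefore $0$. For the exceptional summand $\mathbf{w}$, I would first note $\mathbf{w}\in\ima(\sigma^0)$, so $\sigma^0(\mathbf{w})=0$ by Remark~\ref{propiedades de sigma^0}, and the vanishing principle again gives $\sum_l\sigma^l(\mathbf{w})=0$. Because $\mathbf{w}\in X_{0,n+1}$, Proposition~\ref{prim prop of ov sigma} leaves one further term, $-\sigma^0\xcirc\sigma^{-1}\xcirc\upsilon(\mathbf{w})$; here I would use $\upsilon\xcirc\sigma^0=\ide$ from~\eqref{eqhomot1} to rewrite $\upsilon(\mathbf{w})=-\sigma^{-1}\xcirc\upsilon(\bx)$, so that this term becomes $\sigma^0\xcirc\sigma^{-1}\xcirc\sigma^{-1}\xcirc\upsilon(\bx)$, which vanishes because $\sigma^{-1}\xcirc\sigma^{-1}=0$. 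The last identity is itself immediate from the explicit formula for $\sigma^{-1}$ together with $\wt{1_E}=0$ in $\wt{E}$: a second application of $\sigma^{-1}$ inserts a tilde class of $1_E$. Assembling the two cases gives $\ov{\sigma}\xcirc\ov{\sigma}=0$.

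The two inductions and the tracking of bidegrees are routine; the genuinely delicate point, and the main obstacle, is the exceptional $E\xcdot L_{0,n+1}$-summand $\mathbf{w}$. Unlike the $E\xcdot U$-components, it is not killed by $\ov{\sigma}$ for the same purely formal reason, and handling it forces the use of the auxiliary identities $\upsilon\xcirc\sigma^0=\ide$ and $\sigma^{-1}\xcirc\sigma^{-1}=0$ rather than the uniform vanishing principle alone.
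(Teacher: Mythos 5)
Your proof is correct and takes essentially the same route as the paper's: the exceptional summand $\mathbf{w}$ is dispatched exactly as in the paper, via $\upsilon\xcirc\sigma^0=\ide$ and $\sigma^{-1}\xcirc\sigma^{-1}=0$, while your kernel-propagation induction ($\sigma^0(\bz)=0$ implies $\sigma^l(\bz)=0$ for all $l$) is an equivalent repackaging of the paper's factorization $\sigma^l=\sigma^0\xcirc\varsigma^l\xcirc\sigma^0$, which together with $\sigma^0\xcirc\sigma^0=0$ yields $\sigma^l\xcirc\sigma^{l'}=0$ for all $l,l'\ge 0$. The only cosmetic difference is that you kill the $E\xcdot U_{pq}$-components of $\ima(\ov{\sigma})$ pointwise using~\eqref{ec2}, where the paper records the corresponding operator identities once and for all.
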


\begin{proof} By the definition of $\ov{\sigma}$ and Proposition~\ref{prim prop of ov sigma} it will be sufficient to see that
$$
\sigma^0 \xcirc \sigma^{-1}\xcirc \upsilon\xcirc \sigma^0\xcirc \sigma^{-1}\xcirc \upsilon = 0\qquad\text{and}\qquad \sigma^l \xcirc \sigma^{l'} = 0\quad\text{for all $l,l'\ge 0$.}
$$
The first equality is true because $\upsilon\xcirc \sigma^0 = \ide$ and $\sigma^{-1}\xcirc \sigma^{-1} = 0$. Since $\sigma^0\xcirc\sigma^0 = 0$, in order to prove the second one it suffices to show that there exists a map $\varsigma^l$ such that $\sigma^l = \sigma^0\xcirc \varsigma^l \xcirc\sigma^0$ for all $l\ge 1$. But this follows immediately from the definition of $\sigma^l$ and an easy inductive argument.
\end{proof}

\begin{proposition}\label{sigma l=0} For all $r\ge 0$ and $1\le l \le s$, we have $\sigma^l\bigl(E\xcdot W_{rs}) = 0$.
\end{proposition}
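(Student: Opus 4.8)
The plan is to argue by induction on $l$, using the recursive definition $\sigma^l = -\sum_{i=0}^{l-1}\sigma^0\xcirc d^{l-i}\xcirc\sigma^i$ together with the three inclusions \eqref{ec2}, \eqref{ec4} and \eqref{ec5}. The guiding observation is that, although $\sigma^0$ itself does not annihilate $E\xcdot W_{rs}$, the composite $\sigma^0\xcirc d^l\xcirc\sigma^0$ does, because each of the three maps pushes its argument one step further along the chain of submodules of type $W$, then $L$, then $U$, and $\sigma^0$ kills the last one.

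First I would dispose of the term $i=0$, which is the only genuinely nontrivial contribution. Since $\sigma^0$ is left $E$-linear and $\sigma^0(W_{rs})\subseteq L_{r+1,s}$ by \eqref{ec4}, we obtain $\sigma^0(E\xcdot W_{rs}) = E\xcdot\sigma^0(W_{rs})\subseteq E\xcdot L_{r+1,s}$. Applying \eqref{ec5} with first index $r+1$ gives $d^l(E\xcdot L_{r+1,s})\subseteq E\xcdot U_{r+l,s-l}$, and then \eqref{ec2} yields $\sigma^0(E\xcdot U_{r+l,s-l}) = 0$. Hence $\sigma^0\xcirc d^l\xcirc\sigma^0(E\xcdot W_{rs}) = 0$. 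This already establishes the base case $l=1$, in which the defining sum reduces to the single term $i=0$, so that $\sigma^1(E\xcdot W_{rs}) = -\sigma^0\xcirc d^1\xcirc\sigma^0(E\xcdot W_{rs}) = 0$.

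For the inductive step, I would assume the assertion holds for every index strictly smaller than $l$ (and all $r$, all $s\ge i$). Expanding $\sigma^l(E\xcdot W_{rs}) = -\sum_{i=0}^{l-1}\sigma^0\xcirc d^{l-i}\xcirc\sigma^i(E\xcdot W_{rs})$, each term with $1\le i\le l-1$ vanishes because $\sigma^i(E\xcdot W_{rs}) = 0$ by the inductive hypothesis (note $i<l\le s$, so the hypothesis applies), while the term $i=0$ vanishes by the computation in the previous paragraph. Therefore $\sigma^l(E\xcdot W_{rs}) = 0$, completing the induction.

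The only point demanding care is keeping the indices of $L$ and $U$ consistent while chaining the three inclusions, and confirming that $1\le l\le s$ holds throughout so that $d^l\colon X_{r+1,s}\to X_{r+l,s-l}$ and the restrictions of $\sigma^0$ are defined on the relevant modules. I do not anticipate any real obstacle here: the statement is essentially a bookkeeping consequence of the filtration-type inclusions already secured, and the recursion reduces every higher $\sigma^l$ to iterated applications of $\sigma^0$ and $d^{\bullet}$, whose combined effect on $E\xcdot W_{rs}$ is transparent.
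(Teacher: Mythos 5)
Your proof is correct and follows essentially the same route as the paper's: induction on $l$, with the inductive hypothesis annihilating every term $1\le i\le l-1$ of the recursive sum, and the remaining term $\sigma^0\xcirc d^{l}\xcirc\sigma^0$ killed by chaining the inclusions~\eqref{ec4} and~\eqref{ec5} with equality~\eqref{ec2}. Your only additions — spelling out the base case $l=1$ and the left $E$-linearity of $\sigma^0$ needed to pass from $\sigma^0(W_{rs})\subseteq L_{r+1,s}$ to $\sigma^0(E\xcdot W_{rs})\subseteq E\xcdot L_{r+1,s}$ — merely make explicit what the paper leaves implicit.
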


\begin{proof} We proceed by induction on $l$. Suppose that this assertion is true for all $l<l_0$. By the definition of $\sigma^{l_0}$ and the inductive hypothesis,
$$
\sigma^{l_0}(\bx)=-\sum_{i=0}^{l_0-1}\sigma^0\xcirc d^{l_0-i}\xcirc\sigma^i(\bx) = \sigma^0\xcirc d^{l_0}\xcirc\sigma^0(\bx)\qquad\text{for each $\bx\in E\xcdot W_{rs}$.}
$$
But, by the inclusions in~\eqref{ec5} and~\eqref{ec4}
$$
\sigma^0\xcirc d^{l_0}\xcirc\sigma^0(\bx)\in \sigma^0\xcirc d^{l_0}(E\xcdot L_{r+1,s})\subseteq \sigma^0(E\xcdot U_{r+l_0,s-l_0}),
$$
which, by equality~\eqref{ec2}, is zero.
\end{proof}

\begin{remark}\label{ov sigma reducido} Clearly $\sigma^{-1}\xcirc\upsilon(E\xcdot W_{0n})=0$. So, by the definition of $\ov{\sigma}$ and Proposition~\ref{sigma l=0},
$$
\ov{\sigma}(\bx) = \sigma^0(\bx) \qquad\text{for all $0\le r\le n$ and all $\bx\in E\xcdot W_{r,n-r}$}.
$$
Consequently, by the inclusion in~\eqref{ec4},
\begin{equation}
\overline{\sigma}(W_{r,n-r})\subseteq L_{r+1,n-r}\qquad\text{for all $0\le r\le n$.}\label{ec6}
\end{equation}
\end{remark}

\begin{definition} For $r\ge 0$ and $s>0$, we define $E$-bimodule maps $\mu'_i\colon X'_{rs}\to X'_{r,s-1}$\index{zmc@$\mu'_i$|dotfillboldidx} for $0\le i\le s$, by
$$
\mu'_i(\bx_{0s}\ot\ov{\ba}_{1r}\ot 1_E) \coloneqq  \begin{cases} x_0x_1\ot_{\hs A} \bx_{2s}\ot\ov{\ba}_{1r}\ot 1_E &\text{if $i=0$,}\\
\bx_{0,i-1}\ot_{\hs A} x_ix_{i+1} \ot_{\hs A} \bx_{i+2,s}\ot \ov{\ba}_{1r}\ot 1_E & \text{if $0<i<s$,}\\
\sum_l \bx_{0,s-2}\ot_{\hs A}x_{s-1}\jmath_{\nu}(a) \ot \ov{\ba}_{1r}^{(l)}\ot \gamma(v^{(l)}) & \text{if $i=s$,}
\end{cases}
$$
where $x_0,\dots,x_{s-1}\in E$, $a_1,\dots,a_r\in A$, $x_s\coloneqq \jmath_{\nu}(a)\gamma(v)$ with $a\in A$ and $v\in V$, and
$$
\sum_l \ov{\ba}_{1r}^{(l)}\ot_k v^{(l)} \coloneqq  \ov{\chi}\bigl(v\ot_k \ov{\ba}_{1r}\bigr).
$$
\end{definition}

\begin{theorem}\label{formula para d^1} The map $d^1\colon X_{rs}\to X_{r,s-1}$ is induced by $\sum_{i=0}^s (-1)^i \mu'_i$.
\end{theorem}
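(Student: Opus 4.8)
The plan is to prove the identity by induction on $r$, exploiting the recursive definition of $d^1$ together with the fact that every map involved is an $E$-bimodule morphism. Thus it suffices to test the asserted equality on the $E$-bimodule generators $\bz\coloneqq 1_E\ot_{\hs A}\wt{\gamma}_{\hs A}(\bv_{1s})\ot\ov{\ba}_{1r}\ot 1_E$ of $X_{rs}$, which in particular span the subspace $E\xcdot L_{rs}$ on which $d^1$ is defined. Writing $p$ for the canonical projection $X'_{rs}\to X_{rs}$ induced by $E\to\wt{E}$, I will establish $p\xcirc\bigl(\sum_{i=0}^s(-1)^i\mu'_i\bigr)=d^1\xcirc p$; since $p$ is surjective, this both shows that $\sum_{i=0}^s(-1)^i\mu'_i$ descends along $p$ (the individual $\mu'_i$ need not) and identifies the induced map with $d^1$.

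For the base case $r=0$, recall from the recursion that $d^1_{0s}=\sigma^0\xcirc\partial\xcirc\upsilon$, where $\partial$ is the normalized bar differential of the $A$-algebra $E$. Applied to $\bz$, the element $\partial\xcirc\upsilon(\bz)$ is the alternating sum of the $s+1$ tensors obtained by multiplying two adjacent factors. The key simplification is that in each summand the trailing $E$-factor lies in $(K\ot V)\cap E$: it is $1_E$ for the merges at positions $0,\dots,s-1$, and it is $\gamma(v_s)$ for the merge at position $s$ (here using $\gamma(V)\subseteq K\ot V$, which follows from stability of $K$ under $\chi$). Hence, by~\eqref{primera cond}, the homotopy $\sigma^0$ does nothing but restore the $\ot_{\hs K}$ in the last slot. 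Matching term by term, the merges at positions $0,\dots,s-1$ reproduce $\mu'_0,\dots,\mu'_{s-1}$, while the merge at position $s$ reproduces $\mu'_s$ (in which $\ov{\chi}$ is absent, there being no $\ov{A}$-factors), all with the required signs $(-1)^i$.

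For the inductive step $r>0$ I use $d^1_{rs}=-\sigma^0\xcirc d^1_{r-1,s}\xcirc d^0_{rs}$ and substitute the inductive hypothesis $d^1_{r-1,s}=\sum_i(-1)^i\mu'_i$. Evaluating $d^0_{rs}$ on $\bz$ produces, up to the global sign $(-1)^s$, three kinds of terms: a \emph{left} term, in which $a_1$ acts on the right on the last factor $\wt{\gamma}(v_s)$ through $\chi$; the \emph{middle} terms, which merge adjacent $\ov{A}$-entries via $\ov{\mu}_A$; and a \emph{right} term, in which $a_r$ acts on the left on the final $1_E$, yielding $\jmath_{\nu}(a_r)$. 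After applying $\sum_i(-1)^i\mu'_i$ and then $-\sigma^0$, the summands $\mu'_0,\dots,\mu'_{s-1}$, which touch only the inner $\wt{E}$-factors, match the corresponding terms of the target formula for $d^1_{rs}$ directly, the outer $\sigma^0$ again serving only to re-tensor a trailing factor over $K$ via~\eqref{segunda cond}.

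The genuine obstacle is the $\mu'_s$-term. On $X_{r-1,s}$ it carries the twisting $\ov{\chi}(v_s\ot_k\ov{\ba})$ with only $r-1$ arguments in $\ov{A}$, whereas the target formula for $d^1_{rs}$ requires the full iterated twisting $\ov{\chi}(v_s\ot_k\ov{\ba}_{s+1,s+r})$ with $r$ arguments. The missing argument is supplied by the right term of $d^0_{rs}$: once $a_r$ lands on the final slot and $\sigma^0$ re-splits it through~\eqref{segunda cond}, a fresh factor $\ov{a_r}$ reappears at the end, while the left term of $d^0_{rs}$ feeds $v_s$ past $a_1$ through $\chi$. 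Showing that these contributions assemble into the single map $\ov{\chi}_{1r}$ is precisely the content of the defining recursion of the iterated twisting in Notation~\ref{iteracion de chi}, i.e. the associativity~\eqref{eqtwistingcond}, passed to the quotient $\ov{A}$ by Proposition~\ref{twisting inducido}. This sign-and-twisting bookkeeping, together with checking that no spurious terms survive after the homotopy (using $\nabla_{\!\chi}|_E=\ide$ and the vanishing $\sigma^0\xcirc\sigma^0=0$ of Remark~\ref{propiedades de sigma^0}), is where the real work lies; the remaining matchings are routine.
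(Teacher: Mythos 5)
Your skeleton coincides with the paper's own proof: induction on $r$, base case $d^1_{0s}=\sigma^0\xcirc\partial\xcirc\upsilon$ handled via~\eqref{primera cond} exactly as the paper does (and your base case is correct), inductive step through $d^1=-\sigma^0\xcirc d^1\xcirc d^0$. But the term bookkeeping in your inductive step contains a genuine error. You claim the iterated twisting $\ov{\chi}(v_s\ot_k\ov{\ba}_{1r})$ is assembled from two sources: the right term of $d^0$ (supplying $\ov{a_r}$) and the left term of $d^0$ (``feeding $v_s$ past $a_1$ through $\chi$''). This is false: the left term, and likewise all the middle terms, contribute \emph{nothing}. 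Writing $T$ for their sum, observe that $T$ ends in $\ot 1_E$, so $T\in E\xcdot L_{r-1,s}$ and the recursive definition of $d^1$ applies to it verbatim, giving $d^1(T)\in\ima(\sigma^0)$; hence $\sigma^0\xcirc d^1(T)=0$ by Remark~\ref{propiedades de sigma^0}. (Concretely: by the inductive hypothesis every summand of $d^1(T)$ ends in $\ot 1_E$ or $\ot\gamma(v)$, and $\sigma^0$ annihilates such elements by~\eqref{segunda cond} and~\eqref{ec2}, since $\ov{1_A}=0$ in $\ov{A}$.) Only the right term $R=\bz'\xcdot \jmath_{\nu}(a_r)$, with $\bz'\coloneqq x_0\ot_{\hs A}\wt{\bx}_{1s}\ot\ov{\ba}_{1,r-1}\ot 1_E$, survives --- right multiplication by $\jmath_{\nu}(a_r)$ takes $d^1(\bz')$ out of $\ima(\sigma^0)$, which is only a right $K$-submodule --- and this is precisely the paper's reduction $d^1(\bz)=(-1)^{s+r+1}\sigma^0\xcirc d^1\bigl(\bz'\xcdot \jmath_{\nu}(a_r)\bigr)$.

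Consequently the whole formula, twist included, must be extracted from the right term alone, and it is: by right $E$-linearity and the inductive hypothesis, $d^1\bigl(\bz'\xcdot\jmath_{\nu}(a_r)\bigr)=\sum_i(-1)^i\mu'_i(\bz')\xcdot\jmath_{\nu}(a_r)$; for $i<s$ the trailing factor becomes $\jmath_{\nu}(a_r)$ and $\sigma^0$ appends $\ov{a_r}\ot 1_E$ by~\eqref{segunda cond}, while for $i=s$ the trailing factor becomes $\gamma\bigl(v_s^{(l)}\bigr)\jmath_{\nu}(a_r)=\chi\bigl(v_s^{(l)}\ot_k a_r\bigr)$ by the first equality of Theorem~\ref{prodcruz1}(9), and $\sigma^0$ re-splits it. The recursion of Notation~\ref{iteracion de chi} is thus realized entirely inside this single term: the inductive hypothesis already carries the twist of $v_s$ past $a_1,\dots,a_{r-1}$, and the step appends the \emph{last} twist past $a_r$; no ``first twist past $a_1$'' ever comes from the left term of $d^0$. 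As written, your assembly would double-count the twist past $a_1$ (it already sits inside the inductive hypothesis) and would also clash with the relative sign $(-1)^r$ separating the two $d^0$-terms you try to combine, so the proposed matching cannot close. Relatedly, your middle paragraph's claim that the summands $\mu'_0,\dots,\mu'_{s-1}$ ``match the target directly'' is true only when they are applied to $R$; applied to the left and middle terms they vanish. Your closing parenthesis about $\sigma^0\xcirc\sigma^0=0$ names exactly the right tool, but it must be deployed to annihilate the entire block $T$ up front, not merely to clean up ``spurious terms'' at the end.
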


\begin{proof} Assume $r=0$ and let $x_0,\dots,x_{s-1}\in E$, $a_s\in A$, $v_s\in V$ and $x_s\coloneqq \jmath_{\nu}(a_s)\gamma(v_s)$.
Since $1_E\in K\ot_k V$ and $\gamma(v_s)\subseteq K\ot_k V$, from equality~\eqref{primera cond} it follows that
\begin{align*}
d^1(x_0\ot_{\hs A}\wt{\bx}_{1s}\ot 1_E) & = \sigma^0\xcirc\partial\xcirc \upsilon(x_0\ot_{\hs A}\wt{\bx}_{1s}\ot 1_E)\\
%
%
& = x_0x_1 \ot_{\hs A} \wt{\bx}_{2s}\ot 1_E\\
& + \sum_{i=1}^{s-1} (-1)^i x_0 \ot_{\hs A} \wt{\bx}_{1,i-1}\ot_{\hs A} \stackon[-8pt]{$x_ix_{i+1}$}{\vstretch{1.5}{\hstretch{2.6} {\widetilde{\phantom{\;\;\;\;}}}}} \ot_{\hs A} \wt{\bx}_{i+2,s} \ot 1_E\\
& + (-1)^s x_0\ot_{\hs A}\wt{\bx}_{1,s-2} \ot_{\hs A} \stackon[-8pt]{$x_{s-1}\jmath_{\nu}(a_s)$}{\vstretch{1.5}{\hstretch{3.9} {\widetilde{\phantom{\;\;\;\;}}}}}\ot \gamma(v_s).
\end{align*}
Assume now that $r>0$ and that the theorem holds for $r-1$ and $s$. Let $T\in E\ot_{\hs A} \wt{E}^{\ot_{\hs A} s} \ot \ov{ A}^{\ot {r-1}}\ot E$ be defined by
\begin{align*}
T &\coloneqq  (-1)^s x_0\ot_{\hs A}\wt{\bx}_{1,s-1}\ot_{\hs A} \stackon[-8pt]{$x_s\jmath_{\nu}(a_{s+1})$}{\vstretch{1.5}{\hstretch{3.9} {\widetilde{\phantom{\;\;\;\;}}}}} \ot\ov{\ba}_{s+2,s+r}\ot 1_E\\
& + \sum_{i=1}^{r-1} (-1)^{s+i} x_0\ot_{\hs A} \wt{\bx}_{1s} \ot \ov{\ba}_{s+1,s+i-1}\ot \ov{a_{s+i}a_{s+i+1}}\ot\ov{\ba}_{i+2,r}\ot 1_E,
\end{align*}
where $x_0,\dots,x_{s-1}\in E$, $a_s,\dots,a_{s+r}\in A$, $v_s\in V$ and $x_s\coloneqq \jmath_{\nu}(a_s)\gamma(v_s)$. By the definition of $d^1$,
$$
\sigma^0\xcirc d^1(T) = \begin{cases} \phantom{-} \sigma^0\xcirc\sigma^0\xcirc \partial\xcirc \upsilon(T) &\text{if $r=1$,}\\ -\sigma^0\xcirc \sigma^0\xcirc d^1\xcirc d^0(T) &\text{if $r>1$,}\end{cases}
$$
which, by Remark~\ref{propiedades de sigma^0} equals zero. So, by the definition of $d^1$, we have
\begin{equation}\label{qqq}
d^1\bigl(x_0\ot_{\hs A} \wt{\bx}_{1s}\ot\ov{\ba}_{s+1,s+r}\ot 1_E\bigr) = (-1)^{s+r+1} \sigma^0\xcirc d^1\bigl(x_0\ot_{\hs A} \wt{\bx}_{1s}\ot\ov{\ba}_{s+1,s+r-1}\ot \jmath_{\nu}(a_{s+r})\bigr).
\end{equation}
Now by the inductive hypothesis
\begin{align*}
d^1\bigl(x_0\ot_{\hs A} \wt{\bx}_{1s}&\ot\ov{\ba}_{s+1,s+r-1}\ot \jmath_{\nu}(a_{s+r})\bigr) = x_0x_1\ot_{\hs A} \wt{\bx}_{2s}\ot\ov{\ba}_{s+1,s+r-1}\ot \jmath_{\nu}(a_{s+r})\\
& + \sum_{i=1}^{s-1}(-1)^i x_0\ot_{\hs A} \wt{\bx}_{0,i-1}\ot_{\hs A} \stackon[-8pt]{$x_ix_{i+1}$}{\vstretch{1.5}{\hstretch{2.6} {\widetilde{\phantom{\;\;\;\;}}}}} \ot_{\hs A} \wt{\bx}_{i+2,s}\ot \ov{\ba}_{s+1,s+r-1}\ot \jmath_{\nu}(a_{s+r})\\
& + \sum_l x_0\ot_{\hs A} \wt{\bx}_{1,s-2}\ot_{\hs A} \stackon[-8pt]{$x_{s-1}\jmath_{\nu}(a_s)$}{\vstretch{1.5}{\hstretch{3.9} {\widetilde{\phantom{\;\;\;\;}}}}} \ot \ov{\ba}_{s+1,s+r-1}^{(l)}\ot \gamma(v_s^{(l)})\jmath_{\nu}(a_{s+r}),
\end{align*}
where $\sum_l \ov{\ba}_{s+1,s+r-1}^{(l)}\ot_k v_s^{(l)} \coloneqq  \ov{\chi}\bigl(v_s\ot_k \ov{\ba}_{s+1,s+r-1}\bigr)$. The result for $d_{rs}^1$ follows combining this with~\eqref{qqq}, using equalities~\eqref{eq4} and~\eqref{segunda cond}, and that $\jmath_{\nu}(a_{s+r}) = a_{s+r}\xcdot 1_E$, $1_E\in K\ot_kV$ and $\gamma(V)\subseteq K\ot_kV$.
\end{proof}

In the following theorem we assume that $V$ is a non unitary associative algebra and a non~co\-unitary coassociative coalgebra such that
$(vw)^{(1)}\ot_k (vw)^{(2)} = v^{(1)}w^{(1)}\ot_k v^{(2)}w^{(2)}$ for all $v,w\in V$, where we are using the Sweedler notation. We assume also that there exist maps $\rho\colon V\ot_k A\to A$ and $f\colon V\ot_k V\to A$ such that
\begin{equation}\label{eqq}
\gamma(v)\jmath_{\nu}(a) = v^{(1)}\xcdot a\ot_k v^{(2)}\quad\text{and}\quad \gamma(v)\gamma(w) = f(v^{(1)}\ot_k w^{(1)})\ot_k v^{(2)}w^{(2)},
\end{equation}
where $v\xcdot a\coloneqq \rho(v\ot_k a)$. Moreover, given $v\in V$ and $a_1,\dots,a_r\in A$, we~set
$$
v\xcdot {\ov{\ba}}_{1r}\coloneqq  \ov{v^{(1)}\xcdot a_1}\ot\cdots\ot \ov{v^{(r)}\xcdot a_r}\in \ov{A}^{\ot r}\index{vk@$v\cdot {\ov{\ba}}_{1r}$|dotfillboldidx}.
$$
These conditions are satisfied for the crossed products of algebras by weak bialgebras. So Theorems~\ref{formula para wh{d}^2} and~\ref{formula para wh{d}_2} (which are immediate consequences of the following result) apply in this context. We will use these results in~\cite{GGV1}.

\begin{theorem}\label{formula para d^2} The map $d^2\colon X_{rs}\to X_{r+1,s-2}$ is given by
$$
d^2(\bz_{rs}) = (-1)^{s+1} 1_E\ot_{\hs A}\wt{\gamma}_{\hs A}(\bv_{1,s-2})\ot \mathfrak{T}(v^{(1)}_{s-1},v^{(1)}_s,\ov{\ba}_{1r}) \ot \gamma(v_{s-1}^{(2)}v_s^{(2)}),
$$
where $\bz_{rs}\coloneqq 1_E\ot_{\hs A} \wt{\gamma}_{\hs A}(\bv_{1s})\ot \ov{\ba}_{1r}\ot 1_E$, with $v_1,\dots,v_s\in V$ and $a_1,\dots,a_r\in A$, and
$$
\mathfrak{T}(v_{s-1},v_s,\ov{\ba}_{1r})\coloneqq \sum_{i=0}^r (-1)^i  v_{s-1}^{(1)}\xcdot (v_s^{(1)}\xcdot \ov{\ba}_{1i})\ot f(v_{s-1}^{(2)}\ot_k v_s^{(2)}) \ot v_{s-1}^{(3)}v_s^{(3)}\xcdot \ov{\ba}_{i+1,r}.
$$
\end{theorem}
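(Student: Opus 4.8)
\emph{Proof strategy.} Since $d^2$ is a morphism of $E$-bimodules it suffices to evaluate it on the generators $\bz_{rs}=1_E\ot_{\hs A}\wt{\gamma}_{\hs A}(\bv_{1s})\ot\ov{\ba}_{1r}\ot 1_E$, and the plan is to induct on $r$ using the recursive definition of the $d^l$'s exactly as in the proof of Theorem~\ref{formula para d^1}. Thus $d^2=-\sigma^0\xcirc d^1\xcirc d^1$ when $r=0$, while $d^2=-\sigma^0\xcirc d^2\xcirc d^0-\sigma^0\xcirc d^1\xcirc d^1$ when $r>0$. Two properties of the projector drive all the cancellations: $\sigma^0$ annihilates $E\xcdot U_{rs}$ by equality~\eqref{ec2}, and $\sigma^0\xcirc\sigma^0=0$ by Remark~\ref{propiedades de sigma^0}. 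Before starting I would rephrase the hypotheses~\eqref{eqq} in the language of the face maps $\mu'_i$: since $\chi(v\ot_k a)=\gamma(v)\jmath_{\nu}(a)$ and $\mathcal{F}(v\ot_k w)=\gamma(v)\gamma(w)$ by Theorem~\ref{prodcruz1}(9), equalities~\eqref{eqq} say that the induced twisting reads $\ov{\chi}(v\ot_k\ov{a})=\ov{v^{(1)}\xcdot a}\ot v^{(2)}$ and that $\gamma(v)\gamma(w)=f(v^{(1)}\ot_k w^{(1)})\ot_k v^{(2)}w^{(2)}$. This is precisely what makes the action $\rho$, the map $f$, and the symbol $v\xcdot\ov{\ba}_{1r}$ appear in the answer.

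\emph{Base case $r=0$.} Here I would substitute the closed form $d^1=\sum_{i=0}^{s}(-1)^i\mu'_i$ of Theorem~\ref{formula para d^1} into $d^2=-\sigma^0\xcirc d^1\xcirc d^1$ and expand the double composite. Almost every term dies once $\sigma^0$ is applied: whenever the inner face map leaves a factor $\gamma(v)\in\gamma(V)\subseteq K\ot_k V$ in the rightmost $E$-slot, the term lands in $E\xcdot U$ and vanishes by~\eqref{ec2}, while the terms coming from two consecutive applications of the last face $\mu'_s$ vanish because $\sigma^0\xcirc\sigma^0=0$. The only surviving configuration is the one in which the last two slots $\gamma(v_{s-1})$ and $\gamma(v_s)$ get multiplied; by the second identity in~\eqref{eqq} this product contributes the cocycle entry $f(v_{s-1}^{(2)}\ot_k v_s^{(2)})$ and the tail $\gamma(v_{s-1}^{(3)}v_s^{(3)})$. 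Since $\ov{\ba}_{1r}$ is empty, $\mathfrak{T}(v_{s-1}^{(1)},v_s^{(1)},\ov{\ba}_{1r})$ reduces to this single entry, and matching the signs produced by the two face sums and by $\sigma^0$ gives the claimed formula.

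\emph{Inductive step $r>0$.} I would treat the two summands of the recursion separately. The map $d^0$ is, up to the sign $(-1)^s$, the normalized bar differential on the $\ov{A}^{\ot^r}$-factor, so $d^0(\bz_{rs})$ is an alternating sum of terms in which two consecutive entries are merged into $\ov{a_ia_{i+1}}$, plus the two end terms absorbing $a_1$ and $a_r$ into the adjacent $E$-modules; feeding these into the inductive formula for $d^2$ and applying $\sigma^0$ rebuilds terms of the alternating sum $\mathfrak{T}(v_{s-1}^{(1)},v_s^{(1)},\ov{\ba}_{1r})$, with $\ov{\chi}$ distributing $\rho$ over the surviving $a_j$'s. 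The second summand $-\sigma^0\xcirc d^1\xcirc d^1$, computed as in the base case but now with a non-trivial $\ov{A}$-string, supplies the contributions the first summand misses. The main obstacle is exactly this bookkeeping: one must show that the two contributions telescope into the single alternating sum $\mathfrak{T}$ with the signs $(-1)^i$ in place, while keeping the three-fold coproduct factors $v_{s-1}^{(1)},v_{s-1}^{(2)},v_{s-1}^{(3)}$ (and likewise for $v_s$) aligned by coassociativity and recovering the global sign $(-1)^{s+1}$. The cancellations that collapse the double composite to a single alternating sum are delicate, but the argument is mechanical once the two properties of $\sigma^0$ above are used to discard the spurious terms.
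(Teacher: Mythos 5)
Your overall strategy coincides with the paper's: induction on $r$ via the recursive definition ($d^2=-\sigma^0\xcirc d^1\xcirc d^1$ for $r=0$, $d^2=-\sigma^0\xcirc(d^2\xcirc d^0+d^1\xcirc d^1)$ for $r>0$), the closed formula of Theorem~\ref{formula para d^1}, and equalities~\eqref{eqq}, \eqref{segunda cond}, \eqref{ec2} to discard spurious terms; the inductive step you describe, with $\ov{\chi}$ (via \eqref{eq12}) pushing $a_{r+1}$ into the tail, is exactly the paper's use of~\eqref{ec8}. However, your base-case cancellation analysis contains a concrete error at the decisive step. You assert that ``the terms coming from two consecutive applications of the last face $\mu'_s$ vanish because $\sigma^0\xcirc\sigma^0=0$,'' and that the survivor is a \emph{different} configuration ``in which the last two slots get multiplied.'' These are the same configuration, and it is the unique survivor: the first last face yields $(-1)^s\,1_E\ot_{\hs A}\wt{\gamma}_{\hs A}(\bv_{1,s-1})\ot\gamma(v_s)$, and the last face of the second application, computed by right $E$-linearity, produces $1_E\ot_{\hs A}\wt{\gamma}_{\hs A}(\bv_{1,s-2})\ot\gamma(v_{s-1})\gamma(v_s)$, on which $\sigma^0$ acts nontrivially via the second equality in~\eqref{eqq} and~\eqref{segunda cond}, giving precisely the $f$-entry and the tail $\gamma(v_{s-1}^{(2)}v_s^{(2)})$ of the statement. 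Your candidate survivor (the product $\gamma(v_{s-1})\gamma(v_{s})$ merged \emph{inside} the $\wt{E}$-string by an inner face) in fact dies, as explained next.

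The correct role of $\sigma^0\xcirc\sigma^0=0$ is the opposite of the one you assign to it, and invoking it requires a care your sketch omits: $\sigma^0$ is only an $(E,K)$-bimodule map, so the composite $\sigma^0\xcirc d^1$ collapses to $\sigma^0\xcirc\sigma^0\xcirc\partial\xcirc\upsilon$ only when no nontrivial right $E$-factor intervenes. For the terms $T_i$ ($i<s$) of $d^1(\bz_{0s})$, which end in $1_E$ (right multiplication by $1_E$ being trivial), one genuinely gets $\sigma^0\xcirc d^1(T_i)=\sigma^0\xcirc\sigma^0\xcirc\partial\xcirc\upsilon(T_i)=0$ by Remark~\ref{propiedades de sigma^0}; it is these composites — first application an inner face or $\mu'_0$, including your claimed survivor — that vanish. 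For $T_s$, ending in $\gamma(v_s)\notin\jmath_{\nu}(K)$, the factorization $d^1(T_s)=\bigl[\sigma^0\xcirc\partial\xcirc\upsilon(\text{generator})\bigr]\xcdot\gamma(v_s)$ blocks the $\sigma^0\xcirc\sigma^0$ argument; had it applied there as well, you would conclude $d^2_{0s}=0$, contradicting the theorem. Your other mechanism (single $\gamma(v)$ in the rightmost slot landing in $E\xcdot U$ and dying by~\eqref{ec2}) is correctly invoked and handles the remaining terms. With the two mechanisms swapped back into their proper places, your plan reproduces the paper's proof; as written, though, the bookkeeping at the crux assigns the surviving term to the vanishing class and vice versa, so the argument does not go through literally.
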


\begin{proof} We proceed by induction on $r$. Assume $r=0$. By definition $d^2(\bz_{0s}) = - \sigma^0\xcirc d^1\xcirc d^1(\bz_{0s})$. Since $1_E\in K\ot_k V$ and $\gamma(V)\subseteq K\ot_k V$, from Theorem~\ref{formula para d^1} and equality~\eqref{segunda cond} it follows that
$$
d^2(\bz_{0s}) = \sigma^0\bigl(1_E\ot_{\hs A} \wt{\gamma}_{\hs A}(\bv_{1,s-2})\ot \gamma(v_{s-1})\gamma(v_s)\bigr).
$$
Consequently, by equality~\eqref{segunda cond} and the second equality in~\eqref{eqq},
$$
d^2(\bz_{0s}) = (-1)^{s+1} 1_E\ot_{\hs A} \wt{\gamma}_{\hs A}(\bv_{1,s-2})\ot f(v_{s-1}^{(1)}\ot_k v_s^{(1)}) \ot \gamma\bigl(v_{s-1}^{(2)}v_s^{(2)}\bigr),
$$
as desired. Suppose now that the result is true for $r$. By definition
\begin{equation}\label{ec8}
d^2(\bz_{r+1,s}) = - \sigma^0\xcirc (d^2\xcirc d^0+d^1\xcirc d^1)(\bz_{r+1,s}).
\end{equation}
A similar computation as above (using the inductive hypothesis) shows that
\begin{align*}
& \sigma^0\xcirc d^2\xcirc d^0 (\bz_{r+1,s}) = (-1)^r \sigma^0\bigl(1_E\ot_{\hs A}\wt{\gamma}_{\hs A}(\bv_{1,s-2})\ot \mathfrak{T}(v^{(1)}_{s-1},v^{(2)}_s,\ov{\ba}_{1r}) \ot \gamma(v_{s-1}^{(2)}v_s^{(2)})\jmath_{\nu}(a_{r+1})\bigr)\\
\shortintertext{and}
&\sigma^0\xcirc d^1\xcirc d^1 (\bz_{r+1,s}) = -\sigma^0\bigl(1_E\ot_{\hs A} \wt{\gamma}_{\hs A}(\bv_{1,s-2})\ot v_{s-1}^{(1)}\xcdot (v_s^{(1)}\xcdot \ov{\ba}_{1,r+1})\ot \gamma(v_{s-1}^{(1)})\gamma(v_s^{(1)})\bigr).
\end{align*}
The formula for $d^2(\bz_{r+1,s})$ follows now from equalities~\eqref{segunda cond},~\eqref{eqq} and~\eqref{ec8}.
\end{proof}

\begin{proposition}\label{imagen de flat dag} Let $R$ be a $k$-subalgebra of $A$. If $R$ is stable under $\chi$, then
$$
\quad \mu'_i(\mathfrak{L}^u_{rs}(R))\subseteq \begin{cases} E\xcdot \mathfrak{L}^u_{r,s-1}(R) &\text{if $i=0$,}\\  \mathfrak{L}^u_{r,s-1}(R) &\text{if $0<i<s$,}\\ \mathfrak{U}^u_{r,s-1}(R) &\text{if $i=s$.}\end{cases}
$$
\end{proposition}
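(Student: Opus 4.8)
The plan is to exploit that $\mu'_i$ is an $E$-bimodule map, hence in particular an $(A,K)$-bimodule map through $\jmath_\nu$, together with the fact that $\mathfrak{L}^u_{rs}(R)$ is by definition the $(A,K)$-subbimodule of $X'_{rs}$ generated by the simple tensors $\bz\coloneqq 1_E\ot_{\hs A}\gamma_{\hs A}(\bv_{1s})\ot\ov{\ba}_{1r}\ot 1_E$ with at least $u$ of the $a_j$'s in $R$. So I would first check that each of the three target spaces --- $E\xcdot\mathfrak{L}^u_{r,s-1}(R)$, $\mathfrak{L}^u_{r,s-1}(R)$ and $\mathfrak{U}^u_{r,s-1}(R)$ --- is an $(A,K)$-subbimodule of $X'_{r,s-1}$; for $\mathfrak{U}^u_{r,s-1}(R)=\mathfrak{L}^u_{r,s-1}(R)\xcdot\gamma(V)$ this uses that $K$ is stable under $\chi$, so that $\gamma(v)\jmath_\nu(k)=\chi(v\ot_k k)\in K\ot_k V$ by Theorem~\ref{prodcruz1}(9). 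Having established this, it suffices to evaluate $\mu'_i$ on a single generator $\bz$ and recognize the outcome.

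For $i=0$ we have $x_0x_1=1_E\gamma(v_1)=\gamma(v_1)$, so $\mu'_0(\bz)=\gamma(v_1)\xcdot\bigl(1_E\ot_{\hs A}\gamma_{\hs A}(\bv_{2s})\ot\ov{\ba}_{1r}\ot 1_E\bigr)$, which is $\gamma(v_1)\in E$ times a generator of $\mathfrak{L}^u_{r,s-1}(R)$ (the $a_j$'s are untouched), hence lies in $E\xcdot\mathfrak{L}^u_{r,s-1}(R)$. For $0<i<s$ only the middle factor changes: $x_ix_{i+1}=\gamma(v_i)\gamma(v_{i+1})=\mathcal{F}(v_i\ot_k v_{i+1})\in E$, which by~\eqref{eq12'} equals $\sum_l\jmath_\nu(a_{(l)})\gamma(u_{(l)})$. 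I would then shuttle the factors $\jmath_\nu(a_{(l)})$ leftwards across the balanced tensor product, using repeatedly the identity $\gamma(v)\jmath_\nu(a)=\chi(v\ot_k a)=\sum\jmath_\nu(a')\gamma(v')$ of~\eqref{eq12}; every such $A$-factor cascades to the first tensorand $1_E$, where it is absorbed as a left $A$-action. Since the $\ov{A}$-block $\ov{\ba}_{1r}$ is left unchanged, the count of entries in $R$ is preserved and the result lands in $\mathfrak{L}^u_{r,s-1}(R)$.

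The genuinely delicate case is $i=s$, and it is the one place where stability of $R$ under $\chi$ (rather than merely $K$) is essential. Here $\mu'_s(\bz)=\sum_l 1_E\ot_{\hs A}\gamma_{\hs A}(\bv_{1,s-1})\ot\ov{\ba}_{1r}^{(l)}\ot\gamma(v_s^{(l)})$, where $\sum_l\ov{\ba}_{1r}^{(l)}\ot_k v_s^{(l)}=\ov{\chi}(v_s\ot_k\ov{\ba}_{1r})$ is obtained by moving $v_s$ past the whole $\ov{A}$-block via the iterated induced twisting of Proposition~\ref{twisting inducido}. The point is that this iterated $\ov{\chi}$ acts position-by-position: as $v_s$ is pushed past the $j$-th entry, the new $A$-component is the $A$-part of $\chi(v\ot_k a_j)$ for the current $v$, and whenever $a_j\in R$ stability gives $\chi(V\ot_k R)\subseteq R\ot_k V$, so that component again lies in $R$. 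Hence at least $u$ of the transformed entries $a_j^{(l)}$ are in $R$, so each summand is a generator of $\mathfrak{L}^u_{r,s-1}(R)$ right-multiplied by $\gamma(v_s^{(l)})$, that is, it lies in $\mathfrak{U}^u_{r,s-1}(R)$. I expect the bookkeeping of this positionwise preservation under the iterated, Sweedler-type twisting to be the main obstacle; the cases $i<s$ are essentially formal once the $(A,K)$-linear reduction is in place.
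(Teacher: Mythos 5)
Your proof is correct and follows essentially the same route as the paper's (very terse) proof: $i=0$ is immediate, the case $0<i<s$ is handled by writing $x_ix_{i+1}=\mathcal{F}(v_i\ot_k v_{i+1})$ via~\eqref{eq12'} and shuttling the resulting $A$-factors leftward across the $\ot_{\hs A}$'s with~\eqref{eq12}, and $i=s$ follows from the very definition of $\mu'_s$ together with the positionwise preservation of $R$-entries under the iterated twisting, which is where stability of $R$ under $\chi$ enters. Your preliminary verification that the three targets are $(A,K)$-subbimodules (so that it suffices to evaluate on generators), and your observation that $R$-stability, as opposed to the standing $K$-stability, is needed only in the case $i=s$, are precisely the details the paper leaves implicit.
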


\begin{proof} For $i=0$ this is trivial, while for $0<i<s$ it follows using equality~\eqref{eq12} repeatedly. Finally for $i=s$, it follows from the very definition of $\mu'_s$ using that $R$ is stable under $\chi$.
\end{proof}

\begin{theorem}\label{formula para d^1b} Let $R$ be a $k$-subalgebra of $A$. If $R$ is stable under $\chi$ and $\mathcal{F}(V\ot_k V) \subseteq R\ot_k V$, then~the following assertions hold:

\begin{enumerate}[itemsep=0.7ex, topsep=1.0ex, label=\emph{(\arabic*)}]

\item $d^0(L^u_{rs}(R))\subseteq L^{u-1}_{r-1,s}(R) \xcdot R + W^u_{r-1,s}(R)$, for each $r\ge 1$, $s\ge 0$ and $0<u\le r$.

\item $d^1(L^u_{rs}(R))\subseteq E\xcdot L^u_{r,s-1}(R) + U^u_{r,s-1}(R)$, for each $r\ge 0$, $s\ge 1$ and $0\le u\le r$.

\item $d^l(L^u_{rs}(R))\subseteq U^{u+l-1}_{r+l-1,s-l}(R)$, for each $r\ge 0$, $s\ge 2$, $0\le u\le r$ and $2\le l\le s$.
\end{enumerate}

\end{theorem}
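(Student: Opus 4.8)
The plan is to treat the three inclusions separately, devoting almost all of the work to~(3); throughout, since every $d^l$ is an $E$-bimodule map and $L^u_{rs}(R)$, $U^u_{rs}(R)$, $W^u_{rs}(R)$ are generated from the elements $\bz_{rs}=1_E\ot_{\hs A}\wt{\gamma}_{\hs A}(\bv_{1s})\ot\ov{\ba}_{1r}\ot 1_E$ by $(A,K)$-actions and by right multiplication by $\gamma(V)$ or $A$, I would reduce each inclusion to these generators. For~(1), $d^0$ is, up to sign, the normalized bar differential of the $K$-algebra $A$ in the $\ov{A}^{\ot^r}$-slot, so on $\bz_{rs}$ it is a signed sum of terms that either absorb $\ov{a}_1$ into the $\wt{E}$-block on the left, contract a consecutive pair into $\ov{a_ia_{i+1}}$, or absorb $\ov{a}_r$ into the right-hand $E$-factor. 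As $R$ is a subalgebra we have $1_A\in R$ and $a_ia_{i+1}\in R$ whenever both factors lie in $R$; hence every term in which an $R$-factor is consumed keeps at least $u-1$ of the surviving slots in $R$ and can be placed inside $L^{u-1}_{r-1,s}(R)\xcdot R$ (using $1_A\in R$ to supply a trivial right factor for the left and interior terms, and the right-hand absorption to supply the genuine right multiplication by $a_r\in R$), while the remaining terms keep at least $u$ slots in $R$ and lie in $L^u_{r-1,s}(R)\xcdot A$. This is exactly the asserted bound.

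For~(2), I would combine Theorem~\ref{formula para d^1}, which exhibits $d^1$ as the map induced on the quotient $X'_{rs}\twoheadrightarrow X_{rs}$ by $\sum_{i=0}^s(-1)^i\mu'_i$, with Proposition~\ref{imagen de flat dag}. The latter sends $\mathfrak{L}^u_{rs}(R)$ into $E\xcdot\mathfrak{L}^u_{r,s-1}(R)$ for $i=0$, into $\mathfrak{L}^u_{r,s-1}(R)$ for $0<i<s$, and into $\mathfrak{U}^u_{r,s-1}(R)$ for $i=s$. Pushing these through the quotient, which carries $\mathfrak{L}^u\mapsto L^u$ and $\mathfrak{U}^u\mapsto U^u$, and absorbing $L^u_{r,s-1}(R)\subseteq E\xcdot L^u_{r,s-1}(R)$, yields $d^1(L^u_{rs}(R))\subseteq E\xcdot L^u_{r,s-1}(R)+U^u_{r,s-1}(R)$.

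Item~(3) I would prove by a double induction, outer on $l$ and inner on $r$, running along the recursive definition $d^l=-\sum_j\sigma^0\xcirc d^{l-j}\xcirc d^j$ (the $j=0$ summand, present only for $r>0$, invokes $d^l$ at the lower value $r-1$ through $d^l\xcirc d^0$, which is what closes the inner induction on $r$). The mechanism is the behavior of the outermost $\sigma^0$ on the terminal $E$-factor of its argument. On one hand, because $K$ is stable under $\chi$ we have $\gamma(V)\subseteq K\ot_k V$, so the $\ov{A}$-slot that $\sigma^0$ would append from a \emph{bare} trailing $\gamma$-factor lands in $\ov{K}=0$; this refines~\eqref{ec2} to $\sigma^0(E\xcdot U^u_{rs}(R))=0$ for all $R,u$, and kills every contribution ending in a single $\gamma$. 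On the other hand, the surviving contributions, as produced by the inductive hypothesis together with~(1) and~(2), terminate in a \emph{product} of two $\gamma$-blocks; by the hypothesis $\mathcal{F}(V\ot_k V)\subseteq R\ot_k V$ and Lemma~\ref{lema gamma(V)gamma(V)} such a product lies in $\jmath_{\nu}(R)\gamma(V)\subseteq (R\ot_k V)\cap E$, so feeding it to $\sigma^0$ appends an $\ov{A}$-slot lying in $\ov{R}$ and re-creates a trailing $\gamma$, i.e. it raises both $r$ and the $R$-count by one and returns a $U$-element. That the $R$-membership of the \emph{pre-existing} slots survives uses once more that $R$ is stable under $\chi$, through the induced twistings of Proposition~\ref{twisting inducido} and the maps $\mu'_i$ of Proposition~\ref{imagen de flat dag}. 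Since $d^l$ shifts $(r,s)$ to $(r+l-1,s-l)$, it nets $l-1$ new $\ov{A}$-slots, each created by $\sigma^0$ out of an $\mathcal{F}$-value and hence in $R$; this is precisely why the superscript climbs from $u$ to $u+l-1$.

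The step I expect to be the main obstacle is exactly this superscript bookkeeping inside the nested recursion: one must verify, uniformly across the $j=0$ term (lowering $r$, invoking~(1)) and the $j\ge 1$ terms (lowering $s$, invoking the inductive hypothesis and~(2)), that $R$-slots are never silently lost when adjacent factors merge—this is where $R$ being a subalgebra is indispensable—nor miscounted when $\sigma^0$ appends a slot, and that the $l-1$ net new slots are genuinely in $R$ rather than merely in $A$, all while systematically discarding the $\sigma^0$-annihilated contributions coming from bare trailing $\gamma$'s.
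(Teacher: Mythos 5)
Your proposal is correct and takes essentially the same route as the paper: items~(1) and~(2) are handled exactly as in the text (the bar differential together with Theorem~\ref{formula para d^1} and Proposition~\ref{imagen de flat dag}), and item~(3) by the same induction along the recursion $d^l=-\sum_j\sigma^0\xcirc d^{l-j}\xcirc d^j$, killing the contributions ending in a bare $\gamma$ via equality~\eqref{ec2}, gaining the new $R$-slots from $\gamma(V)\gamma(V)=\mathcal{F}(V\ot_k V)\subseteq R\ot_k V$ through Lemma~\ref{lema gamma(V)gamma(V)} and equality~\eqref{segunda cond}, and treating the $j=0$ branch at lower $r$ with item~(1) and Lemma~\ref{gamma(V)iota(K) C= iota(K)gamma(V)}. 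The superscript bookkeeping you flag as the main obstacle is carried out in the paper exactly as you anticipate, including the uniform handling of the $\jmath_{\nu}(R)$-slots across both branches.
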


\begin{proof} Item~(1) is trivial and item~(2) is an immediate consequence of Theorem~\ref{formula para d^1} and Proposition~\ref{imagen de flat dag}. It remains to prove item~(3). Assume that $s\ge l>1$, $r\ge 0$ and that the result is true for every $d^j_{r's'}$ with $j\le \min(l-1,s')$, or with $j=l\le s'$ and $r'<r$, or with $j=l\le s'<s$ and $r'=r$. We claim that
\begin{equation*}
\sum_{j=1}^{l-1}\sigma^0\xcirc d^{l-j}\xcirc d^j\bigl(L^u_{rs}(R)\bigr)\subseteq U^{u+l-1}_{r+l-1,s-l}(R).
\end{equation*}
By item~(2) and the inductive hypothesis
\begin{equation*}
\sum_{j=1}^{l-1}\sigma^0\xcirc d^{l-j}\xcirc d^j\bigl(L^u_{rs}(R)\bigr) \subseteq \sigma^0\xcirc d^{l-1}\bigl(E\xcdot L^u_{r,s-1}(R)\bigr) +\sum_{j=1}^{l-1} \sigma^0\xcirc d^{l-j}\bigl(U^{u+j-1}_{r+j-1,s-j}(R)\bigr).
\end{equation*}
So, by the inclusion in~\eqref{ec5} and equality~\eqref{ec2},
\begin{equation}
\sum_{j=1}^{l-1}\sigma^0\xcirc d^{l-j}\xcirc d^j\bigl(L^u_{rs}(R)\bigr) \subseteq  \sum_{j=1}^{l-1}\sigma^0\xcirc d^{l-j}\bigl(U^{u+j-1}_{r+j-1,s-j}(R)\bigr).\label{etiqueta}
\end{equation}
Since, by item~(2),
$$
d^1\bigl(U^{u+l-2}_{r+l-2,s-l+1}(R)\bigr)\subseteq E\xcdot U^{u+l-2}_{r+l-2, s-l}(R) + L^{u+l-2}_{r+l-2,s-l}(R)\xcdot \gamma(V)\gamma(V),
$$
and, by the inductive hypothesis,
$$
d^{l-j}\bigl(U^{u+j-1}_{r+j-1,s-j}(R)\bigr)\subseteq L^{u+l-2}_{r+l-2,s-l}(R)\xcdot \gamma(V)\gamma(V)\quad\text{for $l-j>1$},
$$
from the inclusion~\eqref{etiqueta} we obtain that
$$
\sum_{j=1}^{l-1}\sigma^0\xcirc d^{l-j}\xcirc d^j\bigl(L^u_{rs}(R)\bigr)\subseteq \sigma^0\bigl(E\xcdot U^{u+l-2}_{r+l-2, s-l}(R)\bigr) + \sigma^0\bigl(L^{u+l-2}_{r+l-2,s-l}(R)\xcdot \gamma(V)\gamma(V)\bigr).
$$
Thus, the claim follows using equality~\eqref{ec2} and the fact that, by the second equality in Theorem~\ref{prodcruz1}(9), Proposition~\ref{R tensor V esta incluido} and equality~\eqref{segunda cond},
\begin{equation*}
\sigma^0(L^{u+l-2}_{r+l-2,s-l}(R)\xcdot \gamma(V)\gamma(V)) \subseteq \sigma^0(L^{u+l-2}_{r+l-2,s-l}(R)\xcdot \jmath_{\nu}(R)\gamma(V)) \subseteq U^{u+l-1}_{r+l-1,s-l}(R).
\end{equation*}
Consequently, by the very definition of $d^l$ we have
\begin{equation*}
d^l(L^u_{rs}(R)) \subseteq \begin{cases} U^{u+l-1}_{r+l-1,s-l}(R) & \text{if $r=0$}\\ \sigma^0\xcirc d^l\xcirc d^0(L^u_{rs}(R)) + U^{u+l-1}_{r+l-1,s-l}(R) &\text{if $r>0$.} \end{cases}
\end{equation*}
Hence, we are reduced to prove that $\sigma^0\xcirc d^l\xcirc d^0(L^u_{rs}(R)) \subseteq U^{u+l-1}_{r+l-1,s-l}(R)$ for each $r>0$. But, by item~(1) and the inductive hypothesis,
\begin{align*}
\sigma^0\xcirc d^l\xcirc d^0(L^u_{rs}(R))&\subseteq\sigma^0\Bigl(U^{u+l-2}_{r+l-2,s-l}(R)\xcdot R+ U^{u+l-1}_ {r+l-2,s-l}(R)\xcdot A\Bigr)\\
& \subseteq \sigma^0 \Bigl(L^{u+l-2}_{r+l-2,s-l}(R)\xcdot \gamma(V)\jmath_{\nu}(R) + L^{u+l-1}_{r+l-2,s-l}(R)\xcdot E\Bigr),
\end{align*}
and therefore, by the definition of $\sigma^0$, Lemma~\ref{gamma(V)iota(K) C= iota(K)gamma(V)}, the fact that $\gamma(V)\subseteq K\ot_k V$ and equality~\eqref{segunda cond}, we have $\sigma^0\xcirc d^l\xcirc d^0(L^u_{rs}(R))\subseteq U^{u+l-1}_{r+l-1,s-l}(R)$, as we want.
\end{proof}

\begin{corollary}\label{caso complej doble} If $\mathcal{F}$ takes its values in $K\ot_k V$, then $(X_*,d_*)$ is the total complex of $(X_{**},d^0_{**},d^1_{**})$.
\end{corollary}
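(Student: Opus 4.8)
The plan is to prove that the hypothesis $\mathcal{F}(V\ot_k V)\subseteq K\ot_k V$ forces the higher components $d^l$ of the differential to vanish for all $l\ge 2$, so that only the horizontal map $d^0$ and the vertical map $d^1$ remain; this is precisely the assertion that $(X_*,d_*)$ is the total complex of $(X_{**},d^0_{**},d^1_{**})$.

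First I would apply Theorem~\ref{formula para d^1b} with $R=K$. Its hypotheses hold in the present situation: $K$ is stable under $\chi$ by the standing assumption of this section, and $\mathcal{F}(V\ot_k V)\subseteq K\ot_k V$ is exactly the hypothesis of the corollary. Item~(3) of that theorem, taken with $u=0$ and using that $L_{rs}=L^0_{rs}(K)$ (Remark~\ref{L u {rs}(A)}), gives
$$
d^l(L_{rs})\subseteq U^{l-1}_{r+l-1,s-l}(K)=L^{l-1}_{r+l-1,s-l}(K)\xcdot\gamma(V)\qquad\text{for all } 2\le l\le s.
$$

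The crucial observation is that $L^u_{rs}(K)=0$ whenever $u\ge 1$. Indeed, a generating tensor $1_E\ot_{\hs A}\wt{\gamma}_{\hs A}(\bv_{1s})\ot\ov{\ba}_{1r}\ot 1_E$ of $L^u_{rs}(K)$ has at least one $a_j\in K$, so its class $\ov{a}_j$ is zero in $\ov{A}=A/K$ and the whole tensor vanishes. Since $l-1\ge 1$ for $l\ge 2$, this yields $U^{l-1}_{r+l-1,s-l}(K)=0$, hence $d^l$ is zero on $L_{rs}$. Because each $d^l$ is an $E$-bimodule morphism determined by its restriction to $E\xcdot L_{rs}$, it follows that $d^l=0$ identically for every $l\ge 2$.

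With the higher components gone, the total differential reduces to $d_n=\sum_{r\ge 1}d^0_{r,n-r}+\sum_{r\ge 0}d^1_{r,n-r}$. Expanding $d_n\xcirc d_{n+1}=0$ (which holds by Theorem~\ref{res nuestra}) and separating by bidegree forces $d^0\xcirc d^0=0$, $d^1\xcirc d^1=0$ and $d^0\xcirc d^1+d^1\xcirc d^0=0$, so $(X_{**},d^0_{**},d^1_{**})$ is genuinely a double complex whose total complex is $(X_*,d_*)$. The whole argument is short once Theorem~\ref{formula para d^1b} is in hand; the only real point, and the step to get right, is the vanishing $L^u_{rs}(K)=0$ for $u\ge 1$, which is what collapses the target $U^{l-1}_{r+l-1,s-l}(K)$ to zero.
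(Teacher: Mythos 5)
Your proof is correct and is essentially the argument the paper intends: the corollary is stated there without proof as an immediate consequence of Theorem~\ref{formula para d^1b}(3) applied with $R=K$ (whose hypotheses hold by the standing assumptions of the section), and your key observation that $L^u_{rs}(K)=0$ for $u\ge 1$ --- since any $a_j\in K$ kills the class $\ov{\ba}_{1r}$ in $\ov{A}^{\ot^r}$ with $\ov{A}=A/K$, hence $U^{l-1}_{r+l-1,s-l}(K)=L^{l-1}_{r+l-1,s-l}(K)\xcdot\gamma(V)=0$ --- is exactly the point that forces $d^l=0$ for $l\ge 2$ on $E\xcdot L_{rs}$ and therefore, by $E$-bilinearity, on all of $X_{rs}$. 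Your final separation of $d\xcirc d=0$ by bidegree to get the anticommuting double-complex identities is also the standard step and completes the argument.
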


\subsection{Comparison with the normalized bar resolution}\label{comparison maps} Let $(E\ot\ov{E}^{\ot *} \ot E, b'_*)\index{bc@$b'_*$|dotfillboldidx}$ be the normalized bar resolution of the $K$-algebra $E$. The complex
$$
\begin{tikzpicture}
\begin{scope}[yshift=0cm,xshift=0cm]
\matrix(BPcomplex) [matrix of math nodes,row sep=0em, column sep=2.5em, text height=1.5ex, text
depth=0.25ex , inner sep=3pt, minimum height=5mm, minimum width =6mm]
{E & E\ot E & E\ot\ov{E}\ot E & E\ot\ov{E}^{\ot 2}\ot E &\cdots,\\};
\draw[<-] (BPcomplex-1-1) -- node[above=1pt,font=\scriptsize] {$\ov{\mu}_E$} (BPcomplex-1-2);
\draw[<-] (BPcomplex-1-2) -- node[above=1pt,font=\scriptsize] {$b'_1$} (BPcomplex-1-3);
\draw[<-] (BPcomplex-1-3) -- node[above=1pt,font=\scriptsize] {$b'_2$} (BPcomplex-1-4);
\draw[<-] (BPcomplex-1-4) -- node[above=1pt,font=\scriptsize] {$b'_3$} (BPcomplex-1-5);
\end{scope}
\end{tikzpicture}
$$
is contractible as a complex of $(E,K)$-bimodules, with contracting homotopy
$$
\xi_0\colon E\longrightarrow E\ot E\qquad\text{and}\qquad \xi_{n+1}\colon E\ot \ov{E}^{\ot n}\ot E\longrightarrow E\ot\ov{E}^{\ot {n+1}}\ot E\index{zo@$\xi_n$|dotfillboldidx}\quad\text{for $n\ge 0$,}
$$
given by $\xi_{n+1}(\bx) \coloneqq (-1)^{n+1}\bx\ot 1_E$. Let
$$
\phi_*\colon (X_*,d_*)\longrightarrow (E\ot\ov{E}^{\ot *}\ot E,b'_*)\index{zv@$\phi_n$|dotfillboldidx}\qquad\text{and} \qquad\psi_*\colon (E\ot\ov{E}^{\ot *}\ot E,b'_*)\longrightarrow (X_*,d_*)\index{zy@$\psi_n$|dotfillboldidx}
$$
be the morphisms of $E$-bimodule chain complexes, recursively defined by
\begin{align*}
&\phi_0 \coloneqq \ide,\qquad\psi_0 \coloneqq \ide,\\
&\phi_n(\bx\ot 1_E) \coloneqq \xi_n\xcirc\phi_{n-1}\xcirc d_n(\bx\ot 1_E)\quad\text{for $n>0$}
\shortintertext{and}
&\psi_n(\byy\ot 1_E) \coloneqq \ov{\sigma}_n\xcirc\psi_{n-1}\xcirc b'(\byy\ot 1_E)\quad\text{for $n>0$.}
\end{align*}

\begin{notation} Given a right $K$-submodule $E_1$ of $E$, we let $E\ot\ov{E}^{\ot n}\ovb{\ot} E_1\index{fdd@$E\ot\ov{E}^{\ot n}\ovb{\ot} E_1$|dotfillboldidx}$ denote the image of the canon\-i\-cal map $E\ot\ov{E}^{\ot n}\ot E_1\longrightarrow E\ot\ov{E}^{\ot n}\ot E$.
\end{notation}

\begin{proposition}\label{homotopia} $\psi_*\xcirc\phi_* =\ide_*$ and $\phi_*\xcirc \psi_*$ is homotopically equivalent to the identity map. A homotopy $\omega_{*+1}\colon \phi_*\xcirc\psi_* \to \ide_*$ is the one degree map, recursively defined by
\begin{equation}
\omega_1\coloneqq  0\quad\text{and}\quad\omega_{n+1}(\byy\ot 1_E)\coloneqq \xi_{n+1} \xcirc(\phi_n \xcirc\psi_n -\ide -\omega_n\xcirc b'_n)(\byy\ot 1_E)\quad\text{for $n\ge 0$.}\label{aaa}
\end{equation}

\end{proposition}

\begin{proof} We prove both assertions by induction. Clearly $\psi_0\xcirc\phi_0 =\ide$. Assume that $\psi_n\xcirc\phi_n =\ide$. Since
$$
\phi_{n+1}(E\xcdot L_{n+1-s,s}) = \xi_{n+1}\xcirc\phi_n\xcirc d_{n+1}(E\xcdot L_{n+1-s,s}) \subseteq E\ot\ov{E}^{\ot {n+1}} \ovb{\ot}\jmath_{\nu}(K);
$$
on $E\xcdot L_{n+1-s,s}$, we have
\begin{align*}
\psi_{n+1}\xcirc\phi_{n+1} &=\ov{\sigma}_{n+1}\xcirc\psi_n\xcirc b'_{n+1}\xcirc\xi_{n+1}\xcirc\phi_n\xcirc d_{n+1}\\
& =\ov{\sigma}_{n+1}\xcirc\psi_n\xcirc\phi_n\xcirc d_{n+1} -\ov{\sigma}_{n+1}\xcirc \psi_n\xcirc\xi_n\xcirc b'_n\xcirc\phi_n\xcirc d_{n+1}\\
& =\ov{\sigma}_{n+1}\xcirc d_{n+1}\\
& =\ide_{n+1} - d_{n+2}\xcirc\ov{\sigma}_{n+2}.
\end{align*}
So, to conclude that $\psi_{n+1}\xcirc\phi_{n+1} =\ide$ it suffices to check that $\ov{\sigma}_{n+2}(E\xcdot L_{n+1-s,s}) = 0$. But by the definition of $\ov{\sigma}_{n+2}$ and the arguments in the proof of Proposition~\ref{sigma barra al cuadrado es cero}, for this it will be sufficient to prove that $\sigma^{-1}\xcirc\upsilon(E\xcdot L_{0,n+1}) = 0$ and $\sigma^0(E\xcdot L_{n+1-s,s}) = 0$. The first equality is clear and the second one is a particular case of equality~\eqref{ec2}.

\smallskip

Next we prove that $\omega_{*+1}\colon \phi_*\xcirc\psi_* \to \ide_*$ is an homotopy. Clearly $\phi_0\xcirc\psi_0 - \ide = 0 = b'_1\xcirc w_1$. Let
$
U_n \coloneqq \phi_n\xcirc\psi_n-\ide$ and $T_n \coloneqq  U_n -\omega_n\xcirc b'_n.
$
Assuming that $b'_n\xcirc\omega_n +\omega_{n-1}\xcirc b'_{n-1} = U_{n-1}$, we get that, on $E\ot\ov{E}^{\ot n}\ovb{\ot}\jmath_{\nu}(K)$,
\begin{align*}
b'_{n+1}\xcirc\omega_{n+1}+\omega_n\xcirc b'_n &=b'_{n+1}\xcirc\xi_{n+1}\xcirc T_n+\omega_n\xcirc b'_n && \text{by equality~\eqref{aaa}}\\
&= T_n -\xi_n\xcirc b'_n\xcirc T_n +\omega_n\xcirc b'_n\\
& = U_n -\xi_n\xcirc b'_n\xcirc U_n +\xi_n\xcirc b'_n\xcirc\omega_n\xcirc b'_n\\
& = U_n -\xi_n\xcirc U_{n-1}\xcirc b'_n +\xi_n\xcirc b'_n\xcirc\omega_n\xcirc b'_n\\
%
%
& = U_n &&\text{by the assumption}.
\end{align*}
Hence, $b'_{n+1}\xcirc\omega_{n+1} + \omega_n\xcirc b'_n = U_n$ on $E\ot\ov{E}^{\ot n}\ot E$.
\end{proof}

\begin{proposition}\label{formula para psi_n(y ot 1)} We have $\psi(x_0\ot \ov{\bx}_{1n}\ot 1_E) = (-1)^n \ov{\sigma}\xcirc\psi(x_0\ot \ov{\bx}_{1,n-1}\ot x_n)$ for all $n\ge 1$.
\end{proposition}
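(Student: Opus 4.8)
The plan is to read off $\psi_n$ from its recursive definition and then show that all but one of the resulting summands vanish. By the recursive definition of $\psi_*$ we have $\psi_n(x_0\ot\ov{\bx}_{1n}\ot 1_E)=\ov{\sigma}_n\xcirc\psi_{n-1}\xcirc b'_n(x_0\ot\ov{\bx}_{1n}\ot 1_E)$, so first I would expand the normalized bar differential as
\[
b'_n(x_0\ot\ov{\bx}_{1n}\ot 1_E)=x_0x_1\ot\ov{\bx}_{2n}\ot 1_E+\sum_{i=1}^{n-1}(-1)^i x_0\ot\cdots\ot\ov{x_ix_{i+1}}\ot\cdots\ot 1_E+(-1)^n x_0\ot\ov{\bx}_{1,n-1}\ot x_n,
\]
where the last face uses $x_{n+1}=1_E$ and $x_n 1_E=x_n$. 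Writing $Z$ for the sum of the first two groups of terms, every summand of $Z$ ends in the tensor factor $1_E$, while the final term is exactly $x_0\ot\ov{\bx}_{1,n-1}\ot x_n$. Applying $\ov{\sigma}_n\xcirc\psi_{n-1}$, the asserted identity reduces to the single vanishing statement $\ov{\sigma}_n\xcirc\psi_{n-1}(Z)=0$.

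The heart of the matter is this vanishing, and for $n\ge 2$ it is conceptually clean. Each summand of $Z$ has the form $\byy\ot 1_E$ with $\byy\in E\ot\ov{E}^{\ot^{n-1}}$, so the recursive definition gives $\psi_{n-1}(\byy\ot 1_E)=\ov{\sigma}_{n-1}\xcirc\psi_{n-2}\xcirc b'_{n-1}(\byy\ot 1_E)$, which lies in the image of $\ov{\sigma}_{n-1}$. Hence $\ov{\sigma}_n\xcirc\psi_{n-1}(\byy\ot 1_E)=\ov{\sigma}_n\xcirc\ov{\sigma}_{n-1}(\,\cdots\,)=0$ by Proposition~\ref{sigma barra al cuadrado es cero}. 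The point is simply that $\psi$ applied to anything ending in $1_E$ factors through $\ov{\sigma}$, which squares to zero.

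The one case that escapes this argument, and which I expect to be the main obstacle, is the base case $n=1$: there $Z=x_0x_1\ot 1_E$ and $\psi_0=\ide$ is not built from $\ov{\sigma}$, so the square-zero trick does not apply and I would compute $\ov{\sigma}_1(x_0x_1\ot_{\hs A}1_E)$ by hand. Using Proposition~\ref{prim prop of ov sigma} with $n=0$, namely $\ov{\sigma}_1(\bx)=-\sigma^0_{0,1}\xcirc\sigma^{-1}_1\xcirc\upsilon_0(\bx)+\sigma^0_{1,0}(\bx)$, the first summand dies because $\sigma^{-1}_1$ produces the factor $\wt{1_E}$, which is $0$ since $1_E\in\jmath_{\nu}(A)$; the second dies because the standing assumption $1_E\in K\ot_k V$ lets us write $1_E=\sum_l a_{(l)}\ot_k v_{(l)}$ with $a_{(l)}\in K$, so every $\ov{a_{(l)}}$ vanishes in $\ov{A}=A/K$. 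With both terms gone, $\ov{\sigma}_1\xcirc\psi_0(Z)=0$. Combining the two cases yields $\ov{\sigma}_n\xcirc\psi_{n-1}(Z)=0$ for all $n\ge 1$, and therefore $\psi(x_0\ot\ov{\bx}_{1n}\ot 1_E)=(-1)^n\ov{\sigma}\xcirc\psi(x_0\ot\ov{\bx}_{1,n-1}\ot x_n)$, as claimed.
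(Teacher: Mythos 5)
Your proof is correct and follows essentially the same route as the paper's: after expanding $b'_n(x_0\ot \ov{\bx}_{1n}\ot 1_E)$, every face except the last ends in $1_E\in\jmath_{\nu}(K)$, so $\psi$ of those terms lies in $\ima(\ov{\sigma})$ and is killed by Proposition~\ref{sigma barra al cuadrado es cero}. Your only deviation is treating $n=1$ by a separate hand computation (which is accurate); the paper instead absorbs it into the uniform inclusion $\psi\bigl(E\ot\ov{E}^{\ot^{n-1}}\ovb{\ot}\jmath_{\nu}(K)\bigr)\subseteq\ima(\ov{\sigma})$, which also holds in degree $0$ because $1_E\in K\ot_k V$ together with equality~\eqref{primera cond} gives $\ov{\sigma}_0(x)=\pm\, x\ot 1_E$.
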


\begin{proof} By definition $\psi(E\ot \ov{E}^{\ot {n-1}}\ovb{\ot}\jmath_{\nu}(K)) \subseteq \ima(\ov{\sigma})$. So, by Proposition~\ref{sigma barra al cuadrado es cero},
$$
\psi(x_0\ot \ov{\bx}_{1n}\ot 1_E) = \ov{\sigma} \xcirc \psi \xcirc b'(x_0\ot \ov{\bx}_{1n}\ot 1_E) = (-1)^n \ov{\sigma}\xcirc\psi(x_0\ot \ov{\bx}_{1,n-1}\ot x_n),
$$
as desired.
\end{proof}

\begin{remark}\label{simp formula w} Let $x_0,\dots,x_n\in E$ and let $\bx\coloneqq x_0\ot\ov{\bx}_{1n}\ot 1_E$. Since
$$
\omega\bigl(E\ot\ov{E}^{\ot {n-1}}\ovb{\ot}\jmath_{\nu}(K)\bigr)\subseteq E\ot\ov{E}^{\ot n} \ovb{\ot}\jmath_{\nu}(K)\qquad\text{and}\qquad \xi(E\ot\ov{E}^{\ot n}\ovb{\ot} \jmath_{\nu}(K))=0,
$$
we have
$$
\omega(\bx) = \xi\bigl(\phi(\psi(\bx)) - \bx - \omega(b'(\bx))\bigr) = \xi \bigl(\phi(\psi(\bx)) - (-1)^n\omega(x_0\ot \ov{\bx}_{1,n-1}\ot x_n)\bigr).
$$
\end{remark}

\subsection{The filtrations of the resolutions}

\begin{notation}\label{notation F supra i} For each $n\ge 0$ and $i\ge -1$ we write
$
F^i(X_n)\coloneqq \bigoplus_{0\le s\le i} X_{n-s,s},\index{fe@$F^i(X_n)$|dotfillboldidx}
$
and we let $F^i\bigl(\ov{E}^{\ot n}\bigr)\index{fee@$F^i\bigl(\ov{E}^{\ot n}\bigr)$|dotfillboldidx}$ denote the $K$-subbimodule of $\ov{E}^{\ot n}$ generated by the tensors $\ov{\bx}_{1n}$ such that at least $n-i$ of the $x_j$'s belong to $\jmath_{\nu}(A)$. Furthermore, given a right $K$-submodule $E_1$ of $E$ and a left $K$-submodule $E_2$ of $E$, we let $E_1\bar{\ot} F^i(\ov{E}^{\ot n})\bar{\ot} E_2\index{ff@$E_1\bar{\ot} F^i(\ov{E}^{\ot n})\bar{\ot} E_2$|dotfillboldidx}$ denote the image of the canonical map $E_1\ot F^i\bigl(\ov{E}^{\ot n}\bigr)\ot E_2\longrightarrow E\ot \ov{E}^{\ot n}\ot E$. Moreover we write $F^i\bigl(E\ot \ov{E}^{\ot n}\ot E\bigr)\coloneqq E\bar{\ot} F^i(\ov{E}^{\ot n})\bar{\ot} E\index{fg@$F^i\bigl(E\ot \ov{E}^{\ot n}\ot E\bigr)$|dotfillboldidx}$.
\end{notation}

The normalized bar resolution $\bigl(E\ot\ov{E}^{\ot *}\ot E,b'_*\bigr)$ and the resolution $(X_*, d_*)$ are filtered by
\begin{align*}
& 0 = F^{-1}\bigl(E\ot\ov{E}^{\ot *}\ot E\bigr)\subseteq  F^0\bigl(E\ot\ov{E}^{\ot *}\ot E\bigr)\subseteq F^1\bigl(E\ot\ov{E}^{\ot *}\ot E\bigr)\subseteq\dots
\shortintertext{and}
& 0 = F^{-1}(X_*)\subseteq F^0(X_*)\subseteq F^1(X_*)\subseteq F^2(X_*)\subseteq F^3(X_*)\subseteq F^4(X_*) \subseteq \dots,
\end{align*}
respectively.

\begin{notation}\label{notation F sub R} Let $n,u\ge 0$ and $i\ge -1$. For each $k$-subalgebra $R$ of $A$, we let $\cramped{F_{\! R,u}^i\bigl(\ov{E}^{\ot n} \bigr)}\index{fg@$F_{R,u}^i(\ov{E}^{\ot n})$|dotfillboldidx}$ de\-note the $K$-subbimodule of $\cramped{\ov{E}^{\ot n}}$ generated by all the simple tensors $\ov{\bx}_{1n}$ such that
$$
\#\{j:x_j\!\notin\! \jmath_{\nu}(A)\cup\gamma(V)\} = 0,\quad \# \{j:x_j\!\notin\!\jmath_{\nu} (A)\}\le i \quad \text{and} \quad \# \{j:x_j\!\in\!\jmath_{\nu}(R)\}\ge u.
$$
Moreover, given a right $K$-submodule $E_1$ of $E$ and a left $K$-submodule $E_2$ of $E$, we let $\cramped{E_1 \bar{\ot} F_{\! R,u}^i(\ov{E}^{\ot n})\bar{\ot} E_2}\index{fh@$E_1\bar{\ot} F_{R,u}^i(\ov{E}^{\ot n})\bar{\ot} E_2$|dotfillboldidx}$ denote the image of the canonical map $E_1 \ot F_{\! R,u}^i(\ov{E}^{\ot n})\ot E_2\longrightarrow E\ot \ov{E}^{\ot n}\ot E$.
\end{notation}

\begin{remark} Note that $\cramped{F_{\! R,u}^{-1}\bigl(\ov{E}^{\ot n}\bigl)}=0$ and that $\cramped{F_{\! R,0}^i\bigl(\ov{E}^{\ot n}\bigl)}$ does not depend on $R$.
\end{remark}

\begin{definition} For $s,r\ge 0$, we let $\Sh_{sr}\colon  V^{\ot_k s}\ot_k\ov{A}^{\ot r}\to \ov{E}^{\ot {r+s}}\index{sd@$\Sh_{sr}$|dotfillboldidx}$ denote the map recursively defined~by:
\begin{align*}
&\Sh_{0r}(\ov{\ba}_{1r})\coloneqq  \ov{\jmath}_{\nu}(\ba_{1r})\\
\shortintertext{and}
& \Sh_{sr}(\bv_{1s}\ot_k \ov{\ba}_{1r}) \coloneqq \sum_{i=0}^r (-1)^i\sum_{l_i} \Sh_{s-1,i}(\bv_{1,s-1}\ot_k \ov{\ba}^{(l_i)}_{1i}) \ot \ov{\gamma} \bigl(v_s^{(l_i)}\bigr)\ot \ov{\jmath}_{\nu}(\ba_{i+1,r}) \quad\text{for $s\ge 1$,}
\end{align*}
where $\sum_{l_i} \ov{\ba}_{1i}^{(l_i)}\ot_k v_s^{(l_i)}\coloneqq  \ov{\chi}(v_s\ot_k \ov{\ba}_{1i})$.
\end{definition}

\begin{remark} Note that $\Sh_{s0}(\bv_{1s}) = \ov{\gamma}(\bv_{1s})$.
\end{remark}

\begin{remark}\label{imagen de Sh} Let $R$ be a $k$-subalgebra of $A$ and let $v_1,\dots,v_s\in V$ and $a_1,\dots,a_r\in A$. It is evident that if $R$ is stable under $\chi$, then $\cramped{\Sh_{sr}(\bv_{1s}\ot_k \ov{\ba}_{1r})\in F_{\! R,u}^s \bigl(\ov{E}^{\ot {r+s}} \bigr)}$, where $u=\#\{i:a_i\in R\}$.
\end{remark}

\begin{proposition}\label{propiedad de phi} Let $R$ be a subalgebra of $A$. If $R$ is stable under $\chi$ and $\mathcal{F}$ takes its values in $R\ot_k V$, then
$$
\phi\bigl(1_E\ot_{\hs A}\wt{\gamma}_{\hs A}(\bv_{1i})\ot\ov{\ba}_{1,n-i}\ot 1_E\!\bigr)\!\equiv\! 1_E \ot\Sh(\bv_{1i}\ot_k \ov{\ba}_{1,n-i})\ot 1_E \quad\!\! \mod{\jmath_{\nu}(A) \bar{\ot} F_{\! R,u+1}^{i-1}(\ov{E}^{\ot n})\bar{\ot} \jmath_{\nu}(K),}
$$
for all $v_1,\dots, v_i\in V$ and $a_1,\dots,a_{n-i}\in A$ such that $\#\{j:a_j\in R\}\ge u$.
\end{proposition}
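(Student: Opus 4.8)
The plan is to prove the congruence by a double induction on the pair $(s,r)\coloneqq(i,n-i)$, ordered lexicographically with $s$ primary, exploiting the recursion $\phi_n(\bx\ot 1_E)=\xi_n\xcirc\phi_{n-1}\xcirc d_n(\bx\ot 1_E)$ together with the decomposition $d_n=d^0+d^1+\sum_{l\ge 2}d^l$ of $d_n$ on $X_{rs}$. The base case $i=0$ asserts that $\phi$ restricted to $X_{*,0}$ is the map induced by $\jmath_{\nu}\colon A\to E$; here $\Sh_{0n}=\ov{\jmath}_{\nu}^{\ot^n}$ and the target error module $F^{-1}_{R,1}(\ov{E}^{\ot^n})$ vanishes, so the statement is an exact equality obtained by an inner induction on $n$ from the formula for $d^0$ (which is $(-1)^0$ times the normalized bar differential on $\ov{A}$) and the definition of $\xi$. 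The decisive mechanism is that $\xi_n(\bw)=(-1)^n\bw\ot 1_E$ carries the last $E$-factor of $\bw$ into $\ov{E}=E/\jmath_{\nu}(K)$, hence annihilates every summand whose trailing tensorand lies in $\jmath_{\nu}(K)$; and since $\phi_{n-1}$ is right $E$-linear with $\phi_{n-1}(-\ot 1_E)$ always ending in $1_E$, the trailing factor of $\phi_{n-1}$ on a summand of $d_n(\bz)$ is just the trailing $E$-factor of that summand.

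Using Theorem~\ref{formula para d^1}, every $\mu'_i$ with $i<s$ (in particular $\mu'_0$, whose leading $\gamma(v_1)$ would otherwise be troublesome) keeps the trailing factor $1_E$, as do all summands of $d^0$ except the last; all of these are killed by $\xi_n$. The surviving main contributions are exactly two: the last summand of $d^0(\bz)$, which pushes $\ov{a}_r$ into the trailing slot as $\jmath_{\nu}(a_r)$, and the summand $\mu'_s(\bz)$ of $d^1$, which moves $v_s$ past all the $\ov{a}_j$ through $\ov{\chi}$ and deposits $\gamma(v_s^{(l)})$ in the trailing slot. I would then reconstruct $\Sh$ via the inductive hypothesis, factoring the trailing $\jmath_{\nu}(a_r)$ (resp. $\gamma(v_s^{(l)})$) off by right $E$-linearity: a sign check $(-1)^n(-1)^{s+r}=1$ turns the $d^0$-contribution into $1_E\ot\Sh_{s,r-1}(\bv_{1s}\ot\ov{\ba}_{1,r-1})\ot\ov{\jmath}_{\nu}(a_r)\ot 1_E$, which is precisely $1_E\ot\bigl(\sum_{0\le i<r}\text{terms of }\Sh_{sr}\bigr)\ot 1_E$, while the $\mu'_s$-contribution gives $1_E\ot(\Sh_{s-1,r}\ot\ov{E})(\digamma_{sr}(\bv_{1s}\ot\ov{\ba}_{1r}))\ot 1_E$, the $i=r$ term of the $\Sh_{sr}$ recursion. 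Summing reproduces $1_E\ot\Sh_{sr}(\bv_{1s}\ot\ov{\ba}_{1r})\ot 1_E$, once one records the elementary identity that the $i<r$ summands of $\Sh_{sr}$ collapse to $\Sh_{s,r-1}(\bv_{1s}\ot\ov{\ba}_{1,r-1})\ot\ov{\jmath}_{\nu}(a_r)$.

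The remaining work is to confine all other summands to the error module, and this is where the main obstacle lies. The terms $d^l(\bz)$ with $l\ge 2$ survive $\xi_n$ (their trailing factor is a $\gamma$), but by Theorem~\ref{formula para d^1b}(3) they land in $U^{l-1}_{r+l-1,s-l}(R)=L^{l-1}_{r+l-1,s-l}(R)\xcdot\gamma(V)$, carrying at least $l-1\ge 1$ tensorands in $R$ and only $s-l$ interior $\gamma$-factors. I would apply the inductive hypothesis to the underlying generator (which has $s-l<s$ many $V$-slots) and invoke Remark~\ref{imagen de Sh} to place its $\Sh$-image in $F^{s-l}_{R,l-1}(\ov{E}^{\ot^{n-1}})$; factoring the trailing $\gamma(V)$ back in and applying $\xi_n$ adds a single $\gamma$-factor, keeping the $\gamma$-count $\le s-1$ and the $R$-count $\ge 1$, so the contribution lands in $\jmath_{\nu}(A)\bar{\ot}F^{s-1}_{R,1}(\ov{E}^{\ot^n})\bar{\ot}\jmath_{\nu}(K)$. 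One must likewise verify that the errors imported through the inductive hypothesis in the main reconstruction stay in this module: appending one $\ov{\jmath}_{\nu}(a_r)$ or $\ov{\gamma}(v_s^{(l)})$ respects the defining $\gamma$- and $R$-degree bounds, using that $R$ is stable under $\chi$ so the $\chi$-twisted coefficients $\ov{\ba}^{(l)}_{1r}$ retain their membership in $R$. The chief difficulty throughout is precisely this simultaneous control of the $\gamma$-degree and the $R$-degree under $\phi$, which forces the filtration estimates of Theorem~\ref{formula para d^1b} and Remark~\ref{imagen de Sh} to be carried into every invocation of the inductive hypothesis.
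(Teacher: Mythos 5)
Your proposal is correct and follows essentially the same route as the paper's proof: the recursion $\phi_n=\xi_n\circ\phi_{n-1}\circ d_n$, the observation that $\xi$ annihilates every summand whose trailing tensorand lies in $\jmath_{\nu}(K)$ so that only the last term of $d^0$ and the $\mu'_s$-term of $d^1$ survive, and the confinement of the $d^l$-contributions for $l\ge 2$ to the error module $\jmath_{\nu}(A)\bar{\ot} F_{\! R,1}^{i-l+1}(\ov{E}^{\ot^n})\bar{\ot}\jmath_{\nu}(K)$ via Theorem~\ref{formula para d^1b}(3), Remark~\ref{imagen de Sh} and the inductive hypothesis. Your lexicographic induction on $(s,r)$ is a cosmetic variant of the paper's induction on $n$ (every recursive call lands in total degree $n-1$), and your explicit sign check and collapse identity for the $i<r$ summands of $\Sh_{sr}$ merely spell out what the paper compresses into its closing sentence.
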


\begin{proof} We proceed by induction on $n$. For $n=0$ the proposition is trivial. Assume that $n>0$ and that the proposition is true for $n-1$. Write $\bx \coloneqq  1_E\ot_{\hs A}\wt{\gamma}_{\hs A}(\bv_{1i})\ot\ov{\ba}_{1,n-i}\ot 1_E$. By item~(3) of Theorem~\ref{formula para d^1b}, Remark~\ref{imagen de Sh}, the inductive hypothesis and the definition of $\xi$,
$$
\xi\xcirc\phi\xcirc d^l(\bx)\in \jmath_{\nu}(A) \bar{\ot} F_{\! R,u+l-1}^{i-l+1}(\ov{E}^{\ot n})\bar{\ot} \jmath_{\nu}(K)  \qquad\text{for all $l>1$.}
$$
So,
$$
\phi(\bx) = \xi\xcirc\phi\xcirc d^0(\bx) +\xi\xcirc\phi\xcirc d^1(\bx) \quad \mod{\jmath_{\nu}(A) \bar{\ot} F_{\! R,u+1}^{i-1}(\ov{E}^{\ot n})\bar{\ot} \jmath_{\nu}(K)}.
$$
Note now that, since by the inductive hypothesis, $\phi(E\xcdot L_{i',n-i'-1})\subseteq \ima(\xi)\subseteq \ker(\xi)$, from the de\-finition of $d^0$ it follows that
$$
\xi\xcirc\phi\xcirc d^0(\bx)=(-1)^n\xi\xcirc\phi\bigl(1_E\ot_{\hs A} \wt{\gamma}_{\hs A}(\bv_{1i}) \ot\ov{\ba}_{1,n-i-1} \ot \jmath_{\nu}(a_{n-i})\bigr),
$$
while, by Theorem~\ref{formula para d^1} and Proposition~\ref{imagen de flat dag}, we have
$$
\xi\xcirc\phi\xcirc d^1(\bx) = \sum (-1)^i\xi\xcirc\phi\bigl(1_E\ot_{\hs A} \wt{\gamma}_{\hs A}(\bv_{1,i-1})\ot \ov{\ba}^{(l)}_{1,n-i}\ot\gamma(v^{(l)}_i)\bigr),
$$
where $\sum_l \ov{\ba}^{(l)}_{1,n-i}\ot_k v^{(l)}_i\coloneqq \ov{\chi}(v_i\ot_k \ov{\ba}_{1,n-i})$. Now we finish using the in\-ductive hypothesis.
\end{proof}

\begin{remark}\label{nota de propiedad de phi} If the hypothesis of Proposition~\ref{propiedad de phi} are satisfied, then  there exist maps
$$
\phi'_{in}\colon V^{\ot_k i}\ot_k\ov{A}^{\ot_{\hs K} {n-i}}\longrightarrow \jmath_{\nu}(A)\bar{\ot} F_{\! R,1}^{i-1}\bigl(\ov{E}^{\ot n}\bigr) \bar{\ot} \jmath_{\nu}(K) \qquad \text{for $0\le i\le n$},
$$
such that
$$
\phi_n\bigl(1_E\ot_{\hs A}\wt{\gamma}_{\hs A}(\bv_{1i})\ot\ov{\ba}_{1,n-i}\ot 1_E\bigr) = 1_E \ot\Sh(\bv_{1i}\ot_k \ov{\ba}_{1,n-i})\ot 1_E + \phi'_{in}(\bv_{1i}\ot_k \ov{\ba}_{1,n-i}),
$$
for all $v_1,\dots,v_i\in V$ and $a_1,\dots,a_{n-i}\in A$ (of course $\phi'_{0n} = 0$ for all $n$).
\end{remark}

\begin{notation}\label{complemento} Given $v_1,\dots,v_i\in V$ and $a_1,\dots,a_{n-i}\in A$ we will write $\phi'_{in}(\bv_{1i}\ot_k \ov{\ba}_{1,n-i})$ in the form $\phi''_{in}(\bv_{1i}\ot_k \ov{\ba}_{1,n-i})\ot 1_E$.
\end{notation}

\begin{theorem}\label{las func phi psi y omega preservan filtraciones} The maps $\phi$, $\psi$ and $\omega$ preserve filtrations.
\end{theorem}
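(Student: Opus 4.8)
The plan is to prove the three inclusions by reducing to generators and inducting on the homological degree, using the recursive definitions of $\phi_*$, $\psi_*$ and $\omega_*$ together with the filtration behaviour of the elementary pieces $\ov{\sigma}$, $\xi$ and $b'$. Since $\phi_*$ and $\psi_*$ are morphisms of $E$-bimodule complexes and each $F^i(X_n)$ and $F^i\bigl(E\ot\ov{E}^{\ot^n}\ot E\bigr)$ is an $E$-subbimodule, it suffices to test $\phi$ on the generators $1_E\ot_{\hs A}\wt{\gamma}_{\hs A}(\bv_{1s})\ot\ov{\ba}_{1,n-s}\ot 1_E$ of $X_{n-s,s}$ and to test $\psi$ on the elements $1_E\ot\ov{\bx}_{1n}\ot 1_E$; for $\omega$ one uses the simplified recursion of Remark~\ref{simp formula w}. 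Throughout one recalls that on the bar side the index defining $F^i$ counts the tensor factors lying outside $\jmath_{\nu}(A)$, while on $(X_*,d_*)$ it is the $\wt{E}$-grading $s$, and that $E=\jmath_{\nu}(A)\gamma(V)$.

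For $\phi$ the statement is already encoded in Remark~\ref{nota de propiedad de phi}: taking $R=A$ in Proposition~\ref{propiedad de phi} (which is admissible, as $A$ is stable under $\chi$ and $\ima(\mathcal{F})\subseteq A\ot_k V$) one obtains
\[
\phi_n\bigl(1_E\ot_{\hs A}\wt{\gamma}_{\hs A}(\bv_{1s})\ot\ov{\ba}_{1,n-s}\ot 1_E\bigr)=1_E\ot\Sh(\bv_{1s}\ot_k\ov{\ba}_{1,n-s})\ot 1_E+\phi'_{sn}(\bv_{1s}\ot_k\ov{\ba}_{1,n-s}).
\]
By Remark~\ref{imagen de Sh} the first summand lies in $E\bar{\ot}F^s(\ov{E}^{\ot^n})\bar{\ot}E$, and by construction the second lies in $F^{s-1}\bigl(E\ot\ov{E}^{\ot^n}\ot E\bigr)\subseteq F^s\bigl(E\ot\ov{E}^{\ot^n}\ot E\bigr)$. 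Hence $\phi_n(X_{n-s,s})\subseteq F^s$, and summing over $s\le i$ gives $\phi_n\bigl(F^i(X_n)\bigr)\subseteq F^i\bigl(E\ot\ov{E}^{\ot^n}\ot E\bigr)$.

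For $\psi$ I would induct on $n$ by means of Proposition~\ref{formula para psi_n(y ot 1)}, in the form $\psi(1_E\ot\ov{\bx}_{1n}\ot 1_E)=(-1)^n\ov{\sigma}\bigl(\psi(1_E\ot\ov{\bx}_{1,n-1}\ot 1_E)\xcdot x_n\bigr)$. By Proposition~\ref{prim prop of ov sigma} and Remark~\ref{ima of sigma^l} every summand $\sigma^l$ of $\ov{\sigma}$ preserves or lowers $s$, and the only term that can raise $s$ is $-\sigma^0\xcirc\sigma^{-1}\xcirc\upsilon$, which carries the $X_{0m}$-part to $X_{0,m+1}$; by Remark~\ref{ima of ov sigma caso p} and \eqref{ec3} this jump in fact lands in an $L$-type submodule. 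The decisive point is to show that the jump is suppressed exactly when $x_n\in\jmath_{\nu}(A)$. If $x_n\in\jmath_{\nu}(A)$, right multiplication sends the top $s$-degree part into a $W$-type submodule $E\xcdot W_{rs}=E\xcdot L_{rs}\xcdot A$, on which $\ov{\sigma}$ reduces to $\sigma^0$ (Remark~\ref{ov sigma reducido} and \eqref{ec4}), so the top grading is preserved and the filtration index does not grow; the remaining parts sit in degrees $<i$, so even an accidental jump keeps them in $F^i$. If instead $x_n\notin\jmath_{\nu}(A)$, then the $\gamma(V)$-content produced by $E=\jmath_{\nu}(A)\gamma(V)$ permits the jump, which raises $s$ by exactly one and matches the simultaneous increase of the bar index. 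To make this dichotomy rigorous I would strengthen the inductive hypothesis to record that the top $s$-degree part of $\psi(1_E\ot\ov{\bx}_{1n}\ot 1_E)$ always lies in a submodule of $E\xcdot L$ type (the lower parts being of $E\xcdot U$ type, where \eqref{ec2} renders any jump harmless). Keeping track of where the image of $\psi$ sits among the $L$-, $U$- and $W$-type submodules of Notation~\ref{notacion X supra u sub rs (R)} is the main obstacle of the proof.

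Finally, $\omega$ is handled by induction from the simplified recursion $\omega(\bx)=\xi\bigl(\phi(\psi(\bx))-(-1)^n\omega(x_0\ot\ov{\bx}_{1,n-1}\ot x_n)\bigr)$ of Remark~\ref{simp formula w}. The term $\phi(\psi(\bx))$ remains in $F^i$ because $\phi$ and $\psi$ preserve filtrations, and $\omega(x_0\ot\ov{\bx}_{1,n-1}\ot x_n)=\omega(x_0\ot\ov{\bx}_{1,n-1}\ot 1_E)\xcdot x_n$ is controlled by the inductive hypothesis. It then suffices to analyse $\xi$, which appends the factor $1_E\in\jmath_{\nu}(A)$ and moves the previous last factor into an $\ov{E}$-slot, thereby raising the bar index only when that factor lies outside $\jmath_{\nu}(A)$. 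Since $\xi$ vanishes on tensors whose last factor belongs to $\jmath_{\nu}(K)$, and $b'$ never increases the number of factors outside $\jmath_{\nu}(A)$ (merging two adjacent factors cannot create a new one), the same degree-matching argument used for $\psi$ yields $\omega\bigl(F^i(E\ot\ov{E}^{\ot^n}\ot E)\bigr)\subseteq F^i\bigl(E\ot\ov{E}^{\ot^{n+1}}\ot E\bigr)$.
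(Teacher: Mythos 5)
Your proposal is correct and follows essentially the paper's own proof: $\phi$ is handled via Proposition~\ref{propiedad de phi} (Remark~\ref{nota de propiedad de phi}) together with Remark~\ref{imagen de Sh}; $\psi$ by induction through Proposition~\ref{formula para psi_n(y ot 1)} with precisely the paper's strengthened hypothesis (top $s$-degree part in $E\xcdot L_{n-i,i}$, lower parts in $\bigoplus_{s\le i-1}E\xcdot U_{n-s,s}$) and the dichotomy on whether the last factor lies in $\jmath_{\nu}(A)$, using Remark~\ref{ov sigma reducido} in the $W$-type case; and $\omega$ by induction through Remark~\ref{simp formula w}. The only point where you are looser than the paper is the term $\xi\xcirc\phi\xcirc\psi(\bx)$: filtration-preservation of $\phi\xcirc\psi$ alone is insufficient, since $\xi$ can raise the bar index by one, and — as your closing ``degree-matching'' remark anticipates — one must feed the refined $\psi$-inclusion into Proposition~\ref{propiedad de phi} so that the image ends either in $\jmath_{\nu}(K)$ (where $\xi$ vanishes or keeps the index) or in $\jmath_{\nu}(K)\xcdot\gamma(V)$ with middle index at most $i-1$, exactly as in the paper's chain of inclusions.
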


\begin{proof} Since for $\phi$ this follows from Proposition~\ref{propiedad de phi}, we only must check it for $\psi$ and $\omega$. In order to abbreviate expressions we set
$
F_Q^i(X_n)\coloneqq  \bigoplus_{s=0}^i E\xcdot U_{n-s,s}$ for $-1\le i\le n$.
Note that $F_Q^{-1}(X_n) =0$. Let $0\le i\le n$. By Corollary~\ref{ima of ov sigma caso p}
\begin{equation}
\overline{\sigma}\bigl(F^i(X_n)\bigr)\subseteq \begin{cases} F_Q^i(X_{n+1}) & \text{if $i<n$,}\\ L_{0,n+1} + F_Q^n(X_{n+1}) & \text{if $i=n$.}\end{cases} \label{eq5}
\end{equation}
Furthermore, by the inclusion~\eqref{ec6},
\begin{equation}
\overline{\sigma}(E\xcdot W_{n-i,i})\subseteq E\xcdot L_{n+1-i,i}\qquad\text{for $0\le i\le n$.}\label{eq7}
\end{equation}
We claim that
\begin{equation}
\psi\bigl(E\bar{\ot} F^i\bigl(\ov{E}^{\ot n}\bigr)\bar{\ot} \jmath_{\nu}(K)\bigr)\subseteq E\xcdot L_{n-i,i} + F_Q^{i-1}(X_n).\label{eq8}
\end{equation}
For $n=0$ this is trivial. Suppose~\eqref{eq8} is valid for $n$. Let $x_0\ot \ov{\bx}_{1,n+1}\ot 1_E \in E\bar{\ot} F^i\bigl(\ov{E}^{\ot {n+1}}\bigr)\bar{\ot}\jmath_{\nu}(K)$, where $0\le i\le n+1$. By Proposition~\ref{formula para psi_n(y ot 1)}, to prove~\eqref{eq8} for $n+1$, we only must check that
$$
\ov{\sigma}(\psi(x_0\ot\ov{\bx}_{1n}\ot x_{n+1})) \subseteq E\xcdot L_{n+1-i,i} + F_Q^{i-1}(X_{n+1}).
$$
If $x_{n+1}\in \jmath_{\nu}(A)$, then $i\le n$ and using~\eqref{eq5}, \eqref{eq7} and the inductive hypothesis, we get
$$
\ov{\sigma}\bigl(\psi(x_0\ot \ov{\bx}_{1n}\ot x_{n+1})\bigr) \subseteq \ov{\sigma}\bigl(E\xcdot W_{{n-i},i} + F^{i-1}(X_n)\bigr) \subseteq E\xcdot L_{n+1-i,i} + F_Q^{i-1}(X_{n+1});
$$
%
%
while if $x_{n+1}\notin \jmath_{\nu}(A)$, then $\cramped{x_0\ot \ov{\bx}_{1n}\ot x_{n+1}\in F^{i-1}\bigl(E\ot \ov{E}^{\ot n}\ot E\bigr)}$, which together with~\eqref{eq5} and the inductive hypothesis, implies that
$$
\ov{\sigma}\bigl(\psi(x_0\ot\ov{\bx}_{1n}\ot x_{n+1})\bigr) \subseteq \ov{\sigma}\bigl(F^{i-1}(X_n) \bigr) \subseteq E\xcdot L_{n+1-i,i} + F_Q^{i-1}(X_{n+1}),
$$
which finishes the proof of the claim. From~\eqref{eq8} it follows immediately that $\psi$ preserves fil\-trations. Next, we prove that $\omega$ also does it. More concretely we claim that
$$
w_{n+1}\bigl(E\bar{\ot} F^i\bigl(\ov{E}^{\ot n}\bigr)\bar{\ot} \jmath_{\nu}(K)\bigr)\subseteq E\bar{\ot} F^i\bigl(\ov{E}^{\ot {n+1}}\bigr)\bar{\ot} \jmath_{\nu}(K).
$$
We proceed by induction on $n$. The case $n=0$ is trivial, since $\omega_1=0$. Assume that the previous inclusion is satisfied for $n$ and let
$\bx \coloneqq  x_0\ot\ov{\bx}_{1n}\ot 1_E\in E\bar{\ot} F^i\bigl(\ov{E}^{\ot n}\bigr)\bar{\ot} \jmath_{\nu}(K)$. By Remark~\ref{simp formula w}, $$
\omega(\bx) = \xi\xcirc \phi\xcirc \psi(\bx) + (-1)^n \xi\xcirc\omega(x_0\ot\ov{\bx}_{1,n-1}\ot x_n).
$$
Since, by inclusion~\eqref{eq8}, Proposition~\ref{propiedad de phi} and the definition of $\xi$,
\begin{align*}
\xi\xcirc\phi\xcirc \psi(\bx)& \in \xi\xcirc \phi\bigl(E\xcdot L_{n-i,i} + F_Q^{i-1}(X_n)\bigr)\\
& \subseteq \xi\Bigl(E\bar{\ot} F_{A,0}^i(\ov{E}^{\ot n})\bar{\ot} \jmath_{\nu}(K) + E \bar{\ot} F_{A,0}^{i-1}(\ov{E}^{\ot n})\bar{\ot} \jmath_{\nu}(K)\gamma(V)\Bigr)\\
& \subseteq E\bar{\ot} F_{A,1}^i(\ov{E}^{\ot {n+1}})\bar{\ot} \jmath_{\nu}(K),
\end{align*}
to finish the proof of the claim it remains to check that $\xi\bigl(\omega(x_0\ot\ov{\bx}_{1,n-1}\ot x_n)\bigr) \in E\bar{\ot} F^i\bigl(\ov{E}^{\ot {n+1}}\bigr)\bar{\ot} \jmath_{\nu}(K)$. If $x_n\in \jmath_{\nu}(A)$, then, by the inductive hypothesis,
$$
\xi\bigl(\omega(x_0\ot\ov{\bx}_{1,n-1}\ot x_n)\bigr) \subseteq \xi\bigl(E\bar{\ot} F^i\bigl(\ov{E}^{\ot n}\bigr)\bar{\ot} \jmath_{\nu}(A)\bigr) \subseteq E\bar{\ot} F^i\bigl(\ov{E}^{\ot {n+1}}\bigr)\bar{\ot} \jmath_{\nu}(K);
$$
%
%
while if $x_n\notin \jmath_{\nu}(A)$, then $x_0\ot\ov{\bx}_{1,n-1}\ot x_n\in F^{i-1}\bigl(E\ot\ov{E}^{\ot {n-1}}\ot E\bigr)$, and so
$$
\xi\bigl(\omega(x_0\ot\ov{\bx}_{1,n-1}\ot x_n)\bigr) \in \xi\bigl(F^{i-1}\bigl(E\ot\ov{E}^{\ot {n-1}}\ot E\bigr)\bigr) \subseteq E\bar{\ot} F^i\bigl(\ov{E}^{\ot {n+1}}\bigr)\bar{\ot} \jmath_{\nu}(K),
$$
as we want.
\end{proof}

\section{Hochschild homology of general crossed products}\label{section: Hochschild homology of general crossed products}
In this section we use freely the notations introduced in Section~\ref{section: A resolution for a general crossed product}. We assume that all the prop\-er\-ties required in that section are fulfilled (for the convenience of the reader we recall that all these prop\-er\-ties were established at the beginning of Section~\ref{section: A resolution for a general crossed product}). Let~$M$ be an $E$-bimodule. By definition the Hochschild homology $\Ho^{\hs K}_*(E,M)\index{hg@$\Ho^{\hs K}_*(E,M)$|dotfillboldidx}$, of the $K$-algebra $E$ with coefficients in $M$, is the homology of the normalized Hochschild chain complex $\cramped{\bigl(M\ot\ov{E}^{\ot *}\ot,b_*\bigr)}$, where $b_*$ is the canonical Hochschild boundary map. It is well known that $\Ho^{\hs K}_*(E,M)$ is the $\Tor$ functor relative to $\Upsilon$. Since $(X_*,d_*)$ is a $\Upsilon$-relative projective resolution of~$E$, the Hochschild homology of the $K$-algebra $E$ with coefficients in $M$ is the homology of $M\ot_{E^e}(X_*,d_*)$. It is well known that when $K$ is separable, $\Ho^{\hs K}_*(E,M)$ is the absolute Hochschild homology of $E$ with coefficients in $M$. We consider $M$ as an $A$-bimodule through the map $\jmath_{\nu}\colon A\to E$. For $r,s\ge 0$, write
$$
X_{rs}(M)\coloneqq  M\ot_{\hs A} E^{\ot_{\hs A} s}\ot \ov{A}^{\ot r} \ot\index{xi@$X_{rs}(M)$|dotfillboldidx}\quad\text{and}\quad \wh{X}_{rs}(M)\coloneqq  M\ot_{\hs A}\wt{E}^{\ot_{\hs A} s}\ot\ov{A}^{\ot r}\ot\index{xe@$\wh{X}_{rs}(M)$|dotfillboldidx}.
$$
Since $\wt{E}^{\ot_{\hs A} 0}= \jmath_{\nu}(A)$ and $\ov{A}^{\ot 0}=K$, we have
\begin{equation}\label{ec10*}
\wh{X}_{r0}(M)\simeq M\ot\ov{A}^{\ot r}\ot\qquad\text{and}\qquad \wh{X}_{0s}(M)\simeq M\ot_{\hs A}\wt{E}^{\ot_{\hs A} s}\ot.
\end{equation}
Throughout this section we let $[m\ot_{\hs A}\wt{\bx}_{1s}\ot\ov{\ba}_{1r}]$ denote the class of $m\ot_{\hs A}\wt{\bx}_{1s}\ot\ov{\ba}_{1r}$ in $\wh{X}_{rs}(M)$. It is easy to check that the map
$$
\begin{tikzpicture}
\draw [->] node[left=0pt]{$\wh{X}_{rs}(M)$} (0,0) -- (2.5,0) node[right=0pt]{$M\ot_{E^e} X_{rs}$};
\draw[|->] (0.5,-0.5) node[left=0pt]{$[m\ot_{\hs A}\wt{\bx}_{1s}\ot\ov{\ba}_{1r}]$} -- (1.7,-0.5) node[right=0pt]{$m\ot_{E^e}(1_E\ot_{\hs A} \wt{\bx}_{1s}\ot\ov{\ba}_{1r}\ot 1_E)$};
\end{tikzpicture}
$$
is an isomorphism. Let $\wh{d}^l_{rs}\colon\wh{X}_{rs}(M)\longrightarrow \wh{X}_{r+l-1,s-l}(M)\index{dd@$\wh{d}^l_{rs}$|dotfillboldidx}$ be the map induced by $\ide_M\ot_{E^e} d^l_{rs}$. Via the above identifications, $M\ot_{E^e}(X_*,d_*)$ becomes the complex $(\wh{X}_*(M),\wh{d}_*)$, where
$$
\wh{X}_n(M)\coloneqq \bigoplus_{r+s = n}\wh{X}_{rs}(M)\index{xf@$\wh{X}_n(M)$|dotfillboldidx}\qquad\text{and}\qquad \wh{d}_n(\bx)\coloneqq \begin{cases} \displaystyle{\sum_{l=1}^n \wh{d}^l_{0n}(\bx)} &\text{if $\bx\in \wh{X}_{0n}$,}\\ \displaystyle{\sum^{n-r}_{l=0} \wh{d}^l_{r,n-r}(\bx)} &\text{if $\bx\in \wh{X}_{r,n-r}$ with $r>0$.}\end{cases}\index{de@$\wh{d}_n$|dotfillboldidx}
$$
Consequently, we have the following result:

\begin{theorem}\label{Hochschild homology} The Hochschild homology $\Ho^{\hs K}_*(E,M)$, of the $K$-algebra $E$ with coefficients in~$M$, is~the~ho\-mol\-o\-gy of $(\wh{X}_*(M),\wh{d}_*)$.
\end{theorem}

\begin{remark}\label{remark: caso F toma valores en K} It follows from Corollary~\ref{caso complej doble} that, if $\mathcal{F}$ takes its values in $K\ot_k V$, then $(\wh{X}_*(M),\wh{d}_*)$ is the total complex of the double complex $(\wh{X}_{**}(M),\wh{d}^0_{**},\wh{d}^1_{**})$.
\end{remark}

\begin{remark}\label{remark: caso K=A} If $K=A$, then $(\wh{X}_*(M),\wh{d}_*) = (\wh{X}_{0*}(M),\wh{d}^1_{0*})$.
\end{remark}

Likewise for $\wh{X}_{rs}(M)$, there are canonical identifications $X_{rs}(M)\simeq M\ot_{E^e} X'_{rs}$. For $r\ge0$, $s>0$ and $0\le i\le s$, let $\mu_i\colon X_{rs}(M)\longrightarrow X_{r,s-1}(M)\index{zmca@$\mu_i$|dotfillboldidx}$ be the map induced by $\mu'_i$. Clearly
$$
\mu_i([m\ot_{\hs A} \bx_{1s}\ot\ov{\ba}_{1r}])\! = \! \begin{cases} [m\xcdot x_1\ot_{\hs A} \bx_{2s}\ot\ov{\ba}_{1r}] &\text{if $i=0$,}\\
[m\ot_{\hs A}\bx_{1,i-1}\ot_{\hs A} x_ix_{i+1} \ot_{\hs A} \bx_{i+2,s}\ot \ov{\ba}_{1r}] & \text{if $0<i<s$,}\\
\sum_l [\gamma(v^{(l)})\xcdot m\ot_{\hs A}\bx_{1,s-2}\ot_{\hs A} x_{s-1}\jmath_{\nu}(a) \ot \ov{\ba}_{1r}^{(l)}] & \text{if $i=s$,}
\end{cases}
$$
where $m\in M$, $x_1,\dots,x_{s-1}\in E$, $a_1,\dots,a_r\in A$, $x_s\coloneqq \jmath_{\nu}(a)\gamma(v)$ with $a\in A$ and $v\in V$, and
$$
\sum_l \ov{\ba}_{1r}^{(l)}\ot_k v^{(l)} \coloneqq  \ov{\chi}\bigl(v\ot_k \ov{\ba}_{1r}\bigr).
$$

\begin{notation} Given a $k$-subalgebra $R$ of $A$ and $0\!\le\! u\!\le\! r$, we let $\wh{X}^u_{rs}(R,M)\index{xj@$\wh{X}^u_{rs}(R,M)$|dotfillboldidx}$ denote the $k$-sub\-module of $\wh{X}_{rs}(M)$ generated by all the circular simple tensors $\bigl[m\ot_{\hs A}\wt{\gamma}_{\hs A}(\bv_{1s}) \ot \ov{\ba}_{1r}\bigr]$, with $m\in M$, $v_1,\dots,v_s\in V$, $a_1,\dots,a_r\in A$, and at least $u$ of the $a_j$'s in $R$.
\end{notation}

\begin{theorem}\label{teorema wh{d^O}, wh{d^1} y wh{d^l} con l>=2} The following assertions hold:

\begin{enumerate}[itemsep=0.7ex, topsep=1.0ex, label=\emph{(\arabic*)}]

\item The morphism $\wh{d}^0\colon\wh{X}_{rs}(M)\to\wh{X}_{r-1,s}(M)$ is $(-1)^s$-times the boundary map of the normalized chain Hochschild complex of the $K$-algebra $A$ with coefficients in $M\ot_{\hs A} \wt{E}^{ \ot_{\hs A} s}$, considered as an $A$-bimodule via the left and right canonical actions.

\item The morphism $\wh{d}^1\colon\wh{X}_{rs}(M)\to\wh{X}_{r,s-1}(M)$ is induced by $\sum_{i=0}^s (-1)^i \mu_i$.

\item Let $R$ be a $k$-subalgebra of $A$. If $R$ is stable under $\chi$ and $\mathcal{F}$ takes its values in $R\ot_k V$, then $\wh{d}^l\bigl(\wh{X}_{rs}(M)\bigr)\subseteq\wh{X}^{l-1}_{r+l-1,s-l}(R,M)$, for each $r\ge 0$ and $1<l\le s$.
\end{enumerate}
\end{theorem}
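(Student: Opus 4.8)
The three assertions are all statements about the components $\wh d^l$ of the differential of $(\wh X_*(M),\wh d_*)$, and since each $\wh d^l_{rs}$ is by construction induced by $\ide_M\ot_{E^e}d^l_{rs}$ under the isomorphism $\wh X_{rs}(M)\simeq M\ot_{E^e}X_{rs}$, the plan is to obtain all three by transporting the corresponding facts about the $E$-bimodule maps $d^l_{rs}$ of the resolution $(X_*,d_*)$ through the functor $M\ot_{E^e}-$. The only genuine work is bookkeeping: tracking how the outer copies of $E$ in $X_{rs}=E\ot_{\hs A}\wt E^{\ot_{\hs A}^s}\ot\ov A^{\ot^r}\ot E$ are absorbed into $M$ when one applies $M\ot_{E^e}-$, and matching the resulting circular tensors with the submodules named in the statement.

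For item~(1) I would start from the description of the $s$-th row recalled just before Proposition~\ref{las filas son contractiles}: $(X_{*s},d^0_{*s})$ is $(-1)^s$ times the normalized bar resolution of the $K$-algebra $A$, tensored on the left over $A$ with $E\ot_{\hs A}\wt E^{\ot_{\hs A}^s}$ and on the right over $A$ with $E$, so that $d^0_{rs}=(-1)^s\,(\ide\ot b'_r\ot\ide)$. Applying $M\ot_{E^e}-$ folds the leftmost $E$ together with $M$ and the rightmost $E$ into the circular tensor, giving $\wh X_{rs}(M)\simeq \bigl(M\ot_{\hs A}\wt E^{\ot_{\hs A}^s}\bigr)\ot\ov A^{\ot^r}\ot$, and turns $\ide\ot b'_r\ot\ide$ into the normalized Hochschild boundary of the $K$-algebra $A$ with coefficients in the $A$-bimodule $M\ot_{\hs A}\wt E^{\ot_{\hs A}^s}$, the left action being on $M$ through $\jmath_{\nu}$ and the right action on the last tensorand of $\wt E^{\ot_{\hs A}^s}$. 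Carrying the global sign $(-1)^s$ through yields the claim; the point to verify carefully is that the bimodule structure on the coefficients is exactly the canonical one, which is a direct check on generators.

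Item~(2) is immediate from Theorem~\ref{formula para d^1}, which states that $d^1_{rs}$ is induced by $\sum_{i=0}^s(-1)^i\mu'_i$. Since $\wh d^1_{rs}$ is induced by $\ide_M\ot_{E^e}d^1_{rs}$ and each $\mu'_i$ descends to a map $\wh\mu'_i$ on the quotients $\wh X_{rs}(M)$, applying $M\ot_{E^e}-$ gives that $\wh d^1$ is induced by $\sum_{i=0}^s(-1)^i\wh\mu'_i$, the explicit formula for the resulting $\mu_i$ being exactly the one recorded just above.

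For item~(3) I would invoke Theorem~\ref{formula para d^1b}(3) with $u=0$. Since $L^0_{rs}(R)=L_{rs}$ by Remark~\ref{L u {rs}(A)}, that theorem gives $d^l(L_{rs})\subseteq U^{l-1}_{r+l-1,s-l}(R)$ for $2\le l\le s$. As $d^l$ is an $E$-bimodule map and $X_{rs}$ is generated as an $E$-bimodule by $L_{rs}$ (the $\gamma(v)$'s spanning $\wt E$, by Proposition~\ref{algu ident}), this extends to $d^l(X_{rs})\subseteq E\xcdot U^{l-1}_{r+l-1,s-l}(R)\xcdot E$. Applying $M\ot_{E^e}-$ then absorbs the outer $E$'s into $M$ and sends a generator $1_E\ot_{\hs A}\wt\gamma_{\hs A}(\bv_{1s'})\ot\ov\ba_{1r'}\ot\gamma(v)$ of $U^{l-1}_{r+l-1,s-l}(R)$ to the circular tensor $[\gamma(v)\xcdot m\ot_{\hs A}\wt\gamma_{\hs A}(\bv_{1s'})\ot\ov\ba_{1r'}]$, which lies in $\wh X^{l-1}_{r+l-1,s-l}(R,M)$ since at least $l-1$ of the $a_j$'s belong to $R$. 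Hence $\wh d^l(\wh X_{rs}(M))\subseteq\wh X^{l-1}_{r+l-1,s-l}(R,M)$, as desired. I expect the main obstacle throughout to be not any single deep step but making the identifications $\wh X_{rs}(M)\simeq M\ot_{E^e}X_{rs}$ and $M\ot_{E^e}U^{u}_{rs}(R)\subseteq\wh X^{u}_{rs}(R,M)$ precise and compatible with all the sign conventions; once these are fixed, the three items drop out of the already-established properties of $d^0$, $d^1$ and $d^l$.
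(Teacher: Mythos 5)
Your proposal is correct and follows essentially the same route as the paper, whose entire proof is the one-line observation that item~(1) follows from the definition of $d^0$, item~(2) from Theorem~\ref{formula para d^1}, and item~(3) from Theorem~\ref{formula para d^1b}(3), all transported through $M\ot_{E^e}-$. The bookkeeping you spell out --- the identification $\wh{X}_{rs}(M)\simeq M\ot_{E^e}X_{rs}$, the specialization $u=0$ via Remark~\ref{L u {rs}(A)}, and the absorption of the outer $E$-factors into $M$ so that $M\ot_{E^e}\bigl(E\xcdot U^{l-1}_{r+l-1,s-l}(R)\xcdot E\bigr)$ lands in $\wh{X}^{l-1}_{r+l-1,s-l}(R,M)$ --- is exactly what the paper leaves implicit.
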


\begin{proof} Item~(1) follows from the definition of $d^0$; item~(2), from Theorem~\ref{formula para d^1}; and item~(3), from The\-o\-rem~\ref{formula para d^1b}(3).
\end{proof}

\begin{theorem}\label{formula para wh{d}^2} Under the hypothesis of Theorem~\ref{formula para d^2} the map $\wh{d}^2\colon \wh{X}_{rs}(M)\to \wh{X}_{r+1,s-2}(M)$ is given by
$$
\wh{d}^2\bigl([m\ot_{\hs A} \wt{\gamma}_{\hs A}(\bv_{1s})\ot \ov{\ba}_{1r}]\bigr) = (-1)^{s+1} \bigl[\gamma(v_{s-1}^{(2)}v_s^{(2)})\xcdot m\ot_{\hs A} \wt{\gamma}_{\hs A}(\bv_{1,s-2})\ot \mathfrak{T}(v^{(1)}_{s-1},v^{(1)}_s,\ov{\ba}_{1r})\bigr],
$$
where $m\in M$, $v_1,\dots,v_s\in V$, $a_1,\dots,a_r\in A$ and $\mathfrak{T}(v_{s-1},v_s,\ov{\ba}_{1r})$ is as in Theorem~\ref{formula para d^2}.
\end{theorem}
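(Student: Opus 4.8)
The plan is to obtain $\wh{d}^2$ by pushing the explicit formula for $d^2$ from Theorem~\ref{formula para d^2} through the canonical identification $\wh{X}_{rs}(M)\simeq M\ot_{E^e}X_{rs}$ recalled above. Since $\wh{d}^2$ is by definition the map induced by $\ide_M\ot_{E^e}d^2$, it suffices to apply $m\ot_{E^e}(-)$ to the value of $d^2$ on the generator $\bz_{rs}=1_E\ot_{\hs A}\wt{\gamma}_{\hs A}(\bv_{1s})\ot\ov{\ba}_{1r}\ot 1_E$, which is precisely the element corresponding to $[m\ot_{\hs A}\wt{\gamma}_{\hs A}(\bv_{1s})\ot\ov{\ba}_{1r}]$.

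First I would substitute the formula of Theorem~\ref{formula para d^2},
$$
d^2(\bz_{rs}) = (-1)^{s+1}\,1_E\ot_{\hs A}\wt{\gamma}_{\hs A}(\bv_{1,s-2})\ot\mathfrak{T}(v_{s-1}^{(1)},v_s^{(1)},\ov{\ba}_{1r})\ot\gamma(v_{s-1}^{(2)}v_s^{(2)}),
$$
and read off that the resulting element of $X_{r+1,s-2}$ has leftmost outer factor $1_E$ and rightmost outer factor $\gamma(v_{s-1}^{(2)}v_s^{(2)})$, while its middle part $\wt{\gamma}_{\hs A}(\bv_{1,s-2})\ot\mathfrak{T}(v_{s-1}^{(1)},v_s^{(1)},\ov{\ba}_{1r})$ lies in $\wt{E}^{\ot_{\hs A}^{s-2}}\ot\ov{A}^{\ot^{r+1}}$, matching the target $\wh{X}_{r+1,s-2}(M)$. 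The key step is then to transport the two outer factors onto $m$ across $\ot_{E^e}$. Because $E^e=E\ot E^{\op}$, the left end factor $1_E$ acts on the right of $m$ and the right end factor $\gamma(v_{s-1}^{(2)}v_s^{(2)})$ acts on the \emph{left} of $m$, giving $\gamma(v_{s-1}^{(2)}v_s^{(2)})\xcdot m\xcdot 1_E=\gamma(v_{s-1}^{(2)}v_s^{(2)})\xcdot m$; this is the same convention already visible in the formula for $\mu_s$ stated just before the theorem. Reversing the identification $\wh{X}_{r+1,s-2}(M)\simeq M\ot_{E^e}X_{r+1,s-2}$ then yields exactly the claimed expression, with the sign $(-1)^{s+1}$ carried over unchanged since $\ide_M\ot_{E^e}(-)$ is linear.

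I do not expect a genuine obstacle here: the statement is essentially a transcription of Theorem~\ref{formula para d^2} under the tensor-over-$E^e$ identification. The only points requiring care are bookkeeping ones — applying the $E^{\op}$ convention correctly so that the surviving right outer factor acts on the left of $m$, keeping the Sweedler superscripts aligned ($(1)$ inside $\mathfrak{T}$ and $(2)$ in the factor absorbed into $m$), and confirming that $\mathfrak{T}$, which sits in the inner $\ov{A}$-slot untouched by the $E^e$-action, simply passes through intact.
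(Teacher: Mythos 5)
Your proposal is correct and takes essentially the same approach as the paper, whose entire proof is the one-line remark that the statement follows immediately from Theorem~\ref{formula para d^2}. Your transcription through the identification $\wh{X}_{rs}(M)\simeq M\ot_{E^e}X_{rs}$ — in particular the observation that the rightmost outer factor $\gamma(v_{s-1}^{(2)}v_s^{(2)})$ crosses $\ot_{E^e}$ to act on the \emph{left} of $m$, while $\mathfrak{T}$ passes through untouched — is exactly the bookkeeping the paper leaves implicit.
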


\begin{proof} This follows immediately from Theorem~\ref{formula para d^2}.
\end{proof}

\begin{proposition} If $\mathcal{F}$ takes its values in $K\ot_k V$, then $A\ot \ov{A}^{\ot r}\ot M$ is an $E$-bimodule via
$$
\jmath_{\nu}(a)\gamma(v) \xcdot T\xcdot \jmath_{\nu}(a')\gamma(v') \coloneqq \sum a a_0^{(l)} \ot \ov{\ba}_{1r}^{(l)}\ot \gamma(v^{(l)})\xcdot m\xcdot \jmath_{\nu}(a')\gamma(v'),
$$
where $T\coloneqq a_0\ot \ov{\ba}_{1r}\ot m$ and $\sum_l a_0^{(l)}\ot \ov{\ba}_{1r}^{(l)}\ot_k v^{(l)} = (A\ot \ov{\chi})\xcirc(\chi\ot \ov{A}^{\ot_K r})(v\ot_k a_0 \ot \ov{\ba}_{1r})$.
\end{proposition}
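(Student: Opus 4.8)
The plan is to verify the two one-sided actions separately and then check that they commute. Throughout I use that $E$ is spanned by the elements $\jmath_{\nu}(a)\gamma(v) = \nabla_{\!\chi}(a\ot_k v)$ (combine $E = \nabla_{\!\chi}(A\ot_k V)$ with Remark~\ref{propiedad de gamma'} and Theorem~\ref{prodcruz1}(7)), together with the multiplication rules $\gamma(v)\jmath_{\nu}(a) = \chi(v\ot_k a)$ and $\gamma(v)\gamma(w) = \mathcal{F}(v\ot_k w)$ of Theorem~\ref{prodcruz1}(9). Since $\mathcal{F}(V\ot_k V)\subseteq K\ot_k V$ and $K$ is stable under $\chi$, Lemma~\ref{lema gamma(V)gamma(V)} gives $\gamma(V)\gamma(V)\subseteq\jmath_{\nu}(K)\gamma(V)$ and $\gamma(V)\subseteq K\ot_k V$, and the induced twisting $\ov{\chi}$ on $\ov{A} = A/K$ is well defined by Proposition~\ref{twisting inducido}. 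The right action is the easy half: it is $\ide_{A\ot\ov{A}^{\ot^r}}$ tensored with the right $E$-action on the last factor, $T\xcdot e' = a_0\ot\ov{\ba}_{1r}\ot(m\xcdot e')$. This is a right $E$-action because $M$ is a right $E$-module, and it respects the $K$-balancing between $\ov{A}^{\ot^r}$ and $M$ because the right $E$-action on $M$ commutes with the left $K$-action $\lambda\xcdot m = \jmath_{\nu}(\lambda)\xcdot m$.

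For the left action the decisive simplification is to absorb the explicit factor $a$ into $E$. Writing $\chi(v\ot_k a_0)=\sum a_0'\ot_k v'$ and $\ov{\chi}(v'\ot_k\ov{\ba}_{1r}) = \sum\ov{\ba}_{1r}''\ot_k v''$, one sees that the composite $(A\ot\ov{\chi})\xcirc(\chi\ot\ov{A}^{\ot^r})$ leaves the leftmost $A$-factor untouched, so the $A$-component $aa_0'$ together with the $V$-element $v'$ is precisely the $A\ot_k V$-decomposition of $e\,\jmath_{\nu}(a_0) = \jmath_{\nu}(a)\gamma(v)\jmath_{\nu}(a_0)\in E$. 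Hence the left action can be rewritten as
$$
e\xcdot T = \Psi_r\bigl(e\,\jmath_{\nu}(a_0)\ot\ov{\ba}_{1r}\ot m\bigr),
$$
where $\Psi_r$ is the fixed $k$-linear map on $(A\ot_k V)\ot\ov{A}^{\ot^r}\ot M$ sending $(b\ot_k w)\ot\ov{\ba}_{1r}\ot m$ to $\sum b\ot\ov{\ba}_{1r}''\ot\gamma(w'')\xcdot m$ with $\ov{\chi}(w\ot_k\ov{\ba}_{1r}) = \sum\ov{\ba}_{1r}''\ot_k w''$. Since $e\,\jmath_{\nu}(a_0)$ is a genuine element of $E\subseteq A\ot_k V$ depending only on $e$, and $\Psi_r$ is well defined (being assembled from $\ov{\chi}$, which exists by Proposition~\ref{twisting inducido}, and the left $E$-action on $M$, both compatible with the $K$-balancings), this shows at once that the left action is well defined and depends only on $e\in E$, not on the chosen representation $\jmath_{\nu}(a)\gamma(v)$.

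The heart of the proof, and the step I expect to be the main obstacle, is the left-module axiom $(e_1 e_2)\xcdot T = e_1\xcdot(e_2\xcdot T)$. By the reformulation and associativity of $E$, this reduces to a single compatibility identity for $\Psi_r$: twisting $w$ and then $z$ through the bar factors and evaluating $\gamma(z')\xcdot(\gamma(w'')\xcdot m) = (\gamma(z')\gamma(w''))\xcdot m$ must agree with the twisting of the single element $e_1e_2\,\jmath_{\nu}(a_0)$. I would prove this by induction on $r$, tracking the iterated twistings via the maps $\chi_{jl}$ and $\ov{\chi}_{jl}$ of Notation~\ref{iteracion de chi} and Proposition~\ref{twisting inducido}, and using the twisted module condition and the cocycle condition of Definition~\ref{twisted module and cociclo condiciones}. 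The essential point — and exactly where the hypothesis $\mathcal{F}(V\ot_k V)\subseteq K\ot_k V$ enters — is that $\gamma(z')\gamma(w'') = \mathcal{F}(z'\ot_k w'')\in\jmath_{\nu}(K)\gamma(V)$, so the cocycle contribution is a $K$-element; because $K$ is stable under $\chi$, this element stays in $K$ when twisted, hence vanishes in $\ov{A}$ and slides freely across the $K$-balanced tensors $\ov{A}^{\ot^r}$ from the $M$-end back to the leading $A$-factor, reproducing the $K$-element that $e_1e_2\,\jmath_{\nu}(a_0)$ already carries in its $A$-component before any bar-twisting. (The base case $r=0$ already exhibits this mechanism transparently.) The unit axiom $1_E\xcdot T = T$ follows in the same spirit from $1_E\in K\ot_k V$, so that $1_E$ disappears from the bar factors, together with $1_E$ being the unit of $E$ and acting as the identity on $M$.

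Finally, the two actions commute: the right action only right-multiplies the last factor $m$, while the left action twists the $A$- and bar-components and left-multiplies the $M$-component by factors $\gamma(v^{(l)})$; since the left and right $E$-actions on the bimodule $M$ commute, $(\gamma(v^{(l)})\xcdot m)\xcdot e' = \gamma(v^{(l)})\xcdot(m\xcdot e')$, whence $e\xcdot(T\xcdot e') = (e\xcdot T)\xcdot e'$. Together with the two module axioms this yields the asserted $E$-bimodule structure. Everything except the inductive $\Psi_r$-compatibility of the third paragraph is formal, so that is where the real work lies.
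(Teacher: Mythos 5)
Your proposal is correct and follows essentially the same route as the paper's proof: the right action and the commuting of the two actions are formal, and all the real work is pushed into an induction on $r$ over the bar factors driven by the twisted module condition (respectively, condition~\eqref{preunit3} for unitality), with the hypothesis $\mathcal{F}(V\ot_k V)\subseteq K\ot_k V$ and the $\chi$-stability of $K$ ensuring the cocycle's $A$-component is a $K$-element that slides across the $\ot_K$-balanced tensors. Your $\Psi_r$-reformulation is only a repackaging of the paper's reduction to the two identities $\gamma(v)\jmath_{\nu}(a)\xcdot T = \gamma(v)\xcdot(\jmath_{\nu}(a)\xcdot T)$ (via equality~\eqref{eq12}) and $\gamma(v_1)\gamma(v_2)\xcdot T = \gamma(v_1)\xcdot(\gamma(v_2)\xcdot T)$, though note your phrase ``vanishes in $\ov{A}$'' is imprecise --- the $K$-element does not vanish but, as you then correctly say, slides across the balanced tensor product.
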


\begin{proof} It is clear that $A\ot \ov{A}^{\ot r} \ot M$ is a right $E$-module. We next prove it is also a left $E$-module. For this it suffices to show that the left action is unitary and that
\begin{equation}\label{epa}
\gamma(v)\jmath_{\nu}(a)\xcdot T = \gamma(v)\cdot \bigl(\jmath_{\nu}(a)\xcdot T\bigr)\quad\text{and}\quad \gamma(v_1)\gamma(v_2)\xcdot T = \gamma(v_1)\cdot \bigl(\gamma(v_2)\xcdot T\bigr).
\end{equation}
The first equality in~\eqref{epa} follows easily using equality~\eqref{eq12}; while, by equality~\eqref{eq12'}, for the second one it suffices to note that, by the twisted module condition, the fact that $\mathcal{F}$ takes its values in $K\ot_k V$ and an inductive argument, we have
\begin{equation*}
\begin{tikzpicture}[scale=0.45]
\def\braid(#1,#2)[#3]{\draw (#1+1*#3,#2) .. controls (#1+1*#3,#2-0.05*#3) and (#1+0.96*#3,#2-0.15*#3).. (#1+0.9*#3,#2-0.2*#3) (#1+0.1*#3,#2-0.8*#3)--(#1+0.9*#3,#2-0.2*#3) (#1,#2-1*#3) .. controls (#1,#2-0.95*#3) and (#1+0.04*#3,#2-0.85*#3).. (#1+0.1*#3,#2-0.8*#3) (#1,#2) .. controls (#1,#2-0.05*#3) and (#1+0.04*#3,#2-0.15*#3).. (#1+0.1*#3,#2-0.2*#3) (#1+0.1*#3,#2-0.2*#3) -- (#1+0.37*#3,#2-0.41*#3) (#1+0.62*#3,#2-0.59*#3)-- (#1+0.9*#3,#2-0.8*#3) (#1+1*#3,#2-1*#3) .. controls (#1+1*#3,#2-0.95*#3) and (#1+0.96*#3,#2-0.85*#3).. (#1+0.9*#3,#2-0.8*#3)}
\def\braidmamed(#1,#2)[#3]{\draw (#1,#2-0.5)  node[name=nodemap,inner sep=0pt,  minimum size=10pt, shape=circle,draw]{$#3$}
(#1-0.5,#2) .. controls (#1-0.5,#2-0.15) and (#1-0.4,#2-0.2) .. (#1-0.3,#2-0.3) (#1-0.3,#2-0.3) -- (nodemap)
(#1+0.5,#2) .. controls (#1+0.5,#2-0.15) and (#1+0.4,#2-0.2) .. (#1+0.3,#2-0.3) (#1+0.3,#2-0.3) -- (nodemap)
(#1+0.5,#2-1) .. controls (#1+0.5,#2-0.85) and (#1+0.4,#2-0.8) .. (#1+0.3,#2-0.7) (#1+0.3,#2-0.7) -- (nodemap)
(#1-0.5,#2-1) .. controls (#1-0.5,#2-0.85) and (#1-0.4,#2-0.8) .. (#1-0.3,#2-0.7) (#1-0.3,#2-0.7) -- (nodemap)
}
\def\cocycle(#1,#2)[#3]{\draw (#1,#2) .. controls (#1,#2-0.555*#3/2) and (#1+0.445*#3/2,#2-#3/2) .. (#1+#3/2,#2-#3/2) .. controls (#1+1.555*#3/2,#2-#3/2) and (#1+2*#3/2,#2-0.555*#3/2) .. (#1+2*#3/2,#2) (#1+#3/2,#2-#3/2) -- (#1+#3/2,#2-2*#3/2) (#1+#3/2,#2-#3/2)  node [inner sep=0pt,minimum size=3pt,shape=circle,fill] {}}
\def\cocyclenamed(#1,#2)[#3][#4]{\draw (#1+#3/2,#2-1*#3/2) node[name=nodemap,inner sep=0pt,  minimum size=9pt, shape=circle,draw]{$#4$}
(#1,#2) .. controls (#1,#2-0.555*#3/2) and (#1+0.445*#3/2,#2-#3/2) .. (nodemap) .. controls (#1+1.555*#3/2,#2-#3/2) and (#1+2*#3/2,#2-0.555*#3/2) .. (#1+2*#3/2,#2) (nodemap)-- (#1+#3/2,#2-2*#3/2)
}
\def\cocycleTR90(#1,#2)[#3]{\draw (#1,#2) .. controls (#1,#2-0.555*#3/2) and (#1+0.445*#3/2,#2-#3/2) .. (#1+#3/2,#2-#3/2) .. controls (#1+1.555*#3/2,#2-#3/2) and (#1+2*#3/2,#2-0.555*#3/2) .. (#1+2*#3/2,#2) (#1+#3/2,#2-#3/2) -- (#1+#3/2,#2-2*#3/2) (#1+#3/2,#2-#3/2)  node [inner sep=0pt,minimum size=3pt,shape=isosceles triangle, rotate=90,fill] {}}
\def\comult(#1,#2)[#3,#4]{\draw (#1,#2) -- (#1,#2-0.5*#4) arc (90:0:0.5*#3 and 0.5*#4) (#1,#2-0.5*#4) arc (90:180:0.5*#3 and 0.5*#4)}
\def\counit(#1,#2){\draw (#1,#2) -- (#1,#2-0.93) (#1,#2-1) circle[radius=2pt]}
\def\cuadrupledoublemap(#1,#2)[#3]{\draw (#1+1.5,#2-0.5) node [name=doublesinglemapnode,inner xsep=0pt, inner ysep=0pt, minimum height=9pt, minimum width=40pt,shape=rectangle,draw,rounded corners] {$#3$} (#1+1,#2) .. controls (#1+1,#2-0.075) .. (doublesinglemapnode)
(#1,#2) .. controls (#1,#2-0.075) .. (doublesinglemapnode) (#1+2,#2) .. controls (#1+2,#2-0.075) .. (doublesinglemapnode) (#1+3,#2) .. controls (#1+3,#2-0.075) .. (doublesinglemapnode) (doublesinglemapnode)-- (#1+1,#2-1) (doublesinglemapnode)-- (#1+2,#2-1)
}
\def\doublemap(#1,#2)[#3]{\draw (#1+0.5,#2-0.5) node [name=doublemapnode,inner xsep=0pt, inner ysep=0pt, minimum height=10pt, minimum width=22pt,shape=rectangle,draw,rounded corners] {$#3$} (#1,#2) .. controls (#1,#2-0.075) .. (doublemapnode) (#1+1,#2) .. controls (#1+1,#2-0.075).. (doublemapnode) (doublemapnode) .. controls (#1,#2-0.925)..(#1,#2-1) (doublemapnode) .. controls (#1+1,#2-0.925).. (#1+1,#2-1)}
\def\doublesinglemap(#1,#2)[#3]{\draw (#1+0.5,#2-0.5) node [name=doublesinglemapnode,inner xsep=0pt, inner ysep=0pt, minimum height=10pt, minimum width=2pt,shape=rectangle,draw,rounded corners] {$#3$} (#1,#2) .. controls (#1,#2-0.075) .. (doublesinglemapnode) (#1+1,#2) .. controls (#1+1,#2-0.075).. (doublesinglemapnode) (doublesinglemapnode)-- (#1+0.5,#2-1)}
\def\flip(#1,#2)[#3]{\draw (
#1+1*#3,#2) .. controls (#1+1*#3,#2-0.05*#3) and (#1+0.96*#3,#2-0.15*#3).. (#1+0.9*#3,#2-0.2*#3)
(#1+0.1*#3,#2-0.8*#3)--(#1+0.9*#3,#2-0.2*#3)
(#1,#2-1*#3) .. controls (#1,#2-0.95*#3) and (#1+0.04*#3,#2-0.85*#3).. (#1+0.1*#3,#2-0.8*#3)
(#1,#2) .. controls (#1,#2-0.05*#3) and (#1+0.04*#3,#2-0.15*#3).. (#1+0.1*#3,#2-0.2*#3)
(#1+0.1*#3,#2-0.2*#3) -- (#1+0.9*#3,#2-0.8*#3)
(#1+1*#3,#2-1*#3) .. controls (#1+1*#3,#2-0.95*#3) and (#1+0.96*#3,#2-0.85*#3).. (#1+0.9*#3,#2-0.8*#3)
}
\def\laction(#1,#2)[#3,#4]{\draw (#1,#2) .. controls (#1,#2-0.555*#4/2) and (#1+0.445*#4/2,#2-1*#4/2) .. (#1+1*#4/2,#2-1*#4/2) -- (#1+2*#4/2+#3*#4/2,#2-1*#4/2) (#1+2*#4/2+#3*#4/2,#2)--(#1+2*#4/2+#3*#4/2,#2-2*#4/2)}
\def\lactionnamed(#1,#2)[#3,#4][#5]{\draw (#1+2*#4/2+#3*#4/2,#2-#4/2)  node[name=nodemap,inner sep=0pt,  minimum size=10pt, shape=circle,draw]{$#5$}
(#1,#2) .. controls (#1,#2-0.555*#4/2) and (#1+0.445*#4/2,#2-1*#4/2) .. (#1+1*#4/2,#2-1*#4/2) -- (nodemap)
(#1+2*#4/2+#3*#4/2,#2)  -- (nodemap)
(nodemap)--(#1+2*#4/2+#3*#4/2,#2-2*#4/2)
}
\def\map(#1,#2)[#3]{\draw (#1,#2-0.5)  node[name=nodemap,inner sep=0pt,  minimum size=10pt, shape=circle,draw]{$#3$} (#1,#2)-- (nodemap)  (nodemap)-- (#1,#2-1)}
\def\mult(#1,#2)[#3,#4]{\draw (#1,#2) arc (180:360:0.5*#3 and 0.5*#4) (#1+0.5*#3, #2-0.5*#4) -- (#1+0.5*#3,#2-#4)}
\def\raction(#1,#2)[#3,#4]{\draw (#1,#2) -- (#1,#2-2*#4/2)  (#1,#2-1*#4/2)--(#1+1*#4/2+#3*#4/2,#2-1*#4/2) .. controls (#1+1.555*#4/2+#3*#4/2,#2-1*#4/2) and (#1+2*#4/2+#3*#4/2,#2-0.555*#4/2) .. (#1+2*#4/2+#3*#4/2,#2)}
\def\mult(#1,#2)[#3,#4]{\draw (#1,#2) arc (180:360:0.5*#3 and 0.5*#4) (#1+0.5*#3, #2-0.5*#4) -- (#1+0.5*#3,#2-#4)}
\def\rcoaction(#1,#2)[#3,#4]{\draw (#1,#2)-- (#1,#2-2*#4/2) (#1,#2-1*#4/2) -- (#1+1*#4/2+#3*#4/2,#2-1*#4/2).. controls (#1+1.555*#4/2+#3*#4/2,#2-1*#4/2) and (#1+2*#4/2+#3*#4/2,#2-1.445*#4/2) .. (#1+2*#4/2+#3*#4/2,#2-2*#4/2)}
\def\singledoublemap(#1,#2)[#3]{\draw (#1+0.5,#2-0.5) node [name=doublemapnode,inner xsep=0pt, inner ysep=0pt, minimum height=10pt, minimum width=22pt,shape=rectangle,draw,rounded corners] {$#3$} (#1+0.5,#2)--(doublemapnode) (doublemapnode) .. controls (#1,#2-0.925)..(#1,#2-1) (doublemapnode) .. controls (#1+1,#2-0.925).. (#1+1,#2-1) }
\def\singletriplemap(#1,#2)[#3]{\draw (#1+0.5,#2-0.5) node [name=triplemapnode,inner xsep=0pt, inner ysep=0pt, minimum height=9pt, minimum width=27pt, shape=rectangle,draw,rounded corners] {$#3$} (#1+0.5,#2)--(triplemapnode) (triplemapnode) .. controls (#1-0.5,#2-0.925)..(#1-0.5,#2-1) (triplemapnode) .. controls (#1+0.5,#2-0.925).. (#1+0.5,#2-1) (triplemapnode) .. controls (#1+1.5,#2-0.925).. (#1+1.5,#2-1) }
\def\transposition(#1,#2)[#3]{\draw (#1+#3,#2) .. controls (#1+#3,#2-0.05*#3) and (#1+0.96*#3,#2-0.15*#3).. (#1+0.9*#3,#2-0.2*#3) (#1+0.1*#3,#2-0.8*#3)--(#1+0.9*#3,#2-0.2*#3) (#1+0.1*#3,#2-0.2*#3) .. controls  (#1+0.3*#3,#2-0.2*#3) and (#1+0.46*#3,#2-0.31*#3) .. (#1+0.5*#3,#2-0.34*#3) (#1,#2-1*#3) .. controls (#1,#2-0.95*#3) and (#1+0.04*#3,#2-0.85*#3).. (#1+0.1*#3,#2-0.8*#3) (#1,#2) .. controls (#1,#2-0.05*#3) and (#1+0.04*#3,#2-0.15*#3).. (#1+0.1*#3,#2-0.2*#3) (#1+0.1*#3,#2-0.2*#3) .. controls  (#1+0.1*#3,#2-0.38*#3) and (#1+0.256*#3,#2-0.49*#3) .. (#1+0.275*#3,#2-0.505*#3) (#1+0.50*#3,#2-0.66*#3) .. controls (#1+0.548*#3,#2-0.686*#3) and (#1+0.70*#3,#2-0.8*#3)..(#1+0.9*#3,#2-0.8*#3) (#1+0.72*#3,#2-0.50*#3) .. controls (#1+0.80*#3,#2-0.56*#3) and (#1+0.9*#3,#2-0.73*#3)..(#1+0.9*#3,#2-0.8*#3) (#1+#3,#2-#3) .. controls (#1+#3,#2-0.95*#3) and (#1+0.96*#3,#2-0.85*#3).. (#1+0.9*#3,#2-0.8*#3)}
\def\tripledoublemap(#1,#2)[#3]{\draw (#1,#2-0.5) node [name=doublesinglemapnode,inner xsep=0pt, inner ysep=0pt, minimum height=10pt, minimum width=30pt, shape=rectangle, draw, rounded corners] {$#3$} (#1-1,#2) .. controls (#1-1,#2-0.075) .. (doublesinglemapnode)
(#1,#2) .. controls (#1,#2-0.075) .. (doublesinglemapnode) (#1+1,#2) .. controls (#1+1,#2-0.075) .. (doublesinglemapnode) (doublesinglemapnode)-- (#1-0.5,#2-1) (doublesinglemapnode)-- (#1+0.5,#2-1)
}
\def\triplesinglemap(#1,#2)[#3]{\draw (#1,#2-0.5) node [name=doublesinglemapnode,inner xsep=0pt, inner ysep=0pt, minimum height=10pt, minimum width=30pt, shape=rectangle, draw, rounded corners] {$#3$} (#1-1,#2) .. controls (#1-1,#2-0.075) .. (doublesinglemapnode)
(#1,#2) .. controls (#1,#2-0.075) .. (doublesinglemapnode) (#1+1,#2) .. controls (#1+1,#2-0.075) .. (doublesinglemapnode) (doublesinglemapnode)-- (#1,#2-1)
}
\def\twisting(#1,#2)[#3]{\draw (#1+#3,#2) .. controls (#1+#3,#2-0.05*#3) and (#1+0.96*#3,#2-0.15*#3).. (#1+0.9*#3,#2-0.2*#3) (#1,#2-1*#3) .. controls (#1,#2-0.95*#3) and (#1+0.04*#3,#2-0.85*#3).. (#1+0.1*#3,#2-0.8*#3) (#1+0.1*#3,#2-0.8*#3) ..controls (#1+0.25*#3,#2-0.8*#3) and (#1+0.45*#3,#2-0.69*#3) .. (#1+0.50*#3,#2-0.66*#3) (#1+0.1*#3,#2-0.8*#3) ..controls (#1+0.1*#3,#2-0.65*#3) and (#1+0.22*#3,#2-0.54*#3) .. (#1+0.275*#3,#2-0.505*#3) (#1+0.72*#3,#2-0.50*#3) .. controls (#1+0.75*#3,#2-0.47*#3) and (#1+0.9*#3,#2-0.4*#3).. (#1+0.9*#3,#2-0.2*#3) (#1,#2) .. controls (#1,#2-0.05*#3) and (#1+0.04*#3,#2-0.15*#3).. (#1+0.1*#3,#2-0.2*#3) (#1+0.5*#3,#2-0.34*#3) .. controls (#1+0.6*#3,#2-0.27*#3) and (#1+0.65*#3,#2-0.2*#3).. (#1+0.9*#3,#2-0.2*#3) (#1+#3,#2-#3) .. controls (#1+#3,#2-0.95*#3) and (#1+0.96*#3,#2-0.85*#3).. (#1+0.9*#3,#2-0.8*#3) (#1+#3,#2) .. controls (#1+#3,#2-0.05*#3) and (#1+0.96*#3,#2-0.15*#3).. (#1+0.9*#3,#2-0.2*#3) (#1+0.1*#3,#2-0.2*#3) .. controls  (#1+0.3*#3,#2-0.2*#3) and (#1+0.46*#3,#2-0.31*#3) .. (#1+0.5*#3,#2-0.34*#3) (#1+0.1*#3,#2-0.2*#3) .. controls  (#1+0.1*#3,#2-0.38*#3) and (#1+0.256*#3,#2-0.49*#3) .. (#1+0.275*#3,#2-0.505*#3) (#1+0.50*#3,#2-0.66*#3) .. controls (#1+0.548*#3,#2-0.686*#3) and (#1+0.70*#3,#2-0.8*#3)..(#1+0.9*#3,#2-0.8*#3) (#1+#3,#2-1*#3) .. controls (#1+#3,#2-0.95*#3) and (#1+0.96*#3,#2-0.85*#3).. (#1+0.9*#3,#2-0.8*#3) (#1+0.72*#3,#2-0.50*#3) .. controls (#1+0.80*#3,#2-0.56*#3) and (#1+0.9*#3,#2-0.73*#3)..(#1+0.9*#3,#2-0.8*#3)(#1+0.72*#3,#2-0.50*#3) -- (#1+0.50*#3,#2-0.66*#3) -- (#1+0.275*#3,#2-0.505*#3) -- (#1+0.5*#3,#2-0.34*#3) -- (#1+0.72*#3,#2-0.50*#3)}
\def\unit(#1,#2){\draw (#1,#2) circle[radius=2pt] (#1,#2-0.07) -- (#1,#2-1)}
\begin{scope}[xshift=0cm, yshift=-1cm]
\doublemap(0,0)[\scriptstyle \mathcal{F}]; \draw (2,0)-- (2,-1); \draw (3,0)-- (3,-2); \draw (4,0)-- (4,-4); \draw (0,-1)-- (0,-2); \mult(0,-2)[1,1]; \twisting(1,-1)[1]; \twisting(2,-2)[1]; \map(3,-3)[\scriptstyle \gamma]; \draw (0.5,-3)-- (0.5,-5); \draw (2,-3)-- (2,-5); \laction(3,-4)[0,1];
\end{scope}
\begin{scope}[xshift=4.45cm,yshift=-2.55cm]
\node at (0,-1){=};
\end{scope}
\begin{scope}[xshift=4.9cm, yshift=-0.5cm]
\doublemap(1,-2)[\scriptstyle \mathcal{F}]; \draw (0,-2)-- (0,-3); \draw (3,0)-- (3,-3); \draw (4,0)-- (4,-5); \draw (0,0)-- (0,-1); \mult(0,-3)[1,1]; \twisting(1,0)[1]; \twisting(0,-1)[1]; \twisting(2,-3)[1]; \map(3,-4)[\scriptstyle \gamma]; \draw (0.5,-4)-- (0.5,-6); \draw (2,-1)-- (2,-2); \laction(3,-5)[0,1];  \draw (2,-4)-- (2,-6);
\end{scope}
\begin{scope}[xshift=9.35cm,yshift=-2.55cm]
\node at (0,-1){=};
\end{scope}
\begin{scope}[xshift=9.9cm, yshift=-0.5cm]
\doublemap(1,-2)[\scriptstyle \mathcal{F}]; \draw (0,-2)-- (0,-6); \draw (3,0)-- (3,-3); \draw (4,0)-- (4,-5); \draw (0,0)-- (0,-1); \draw (1,-3)-- (1,-4); \laction(1,-4)[0,1]; \twisting(1,0)[1]; \twisting(0,-1)[1]; \twisting(2,-3)[1]; \map(3,-4)[\scriptstyle \gamma]; \draw (2,-1)-- (2,-2); \laction(3,-5)[0,1];  \draw (2,-5)-- (2,-6);
\end{scope}
\begin{scope}[xshift=14.35cm,yshift=-2.55cm]
\node at (0,-1){=};
\end{scope}
\begin{scope}[xshift=14.9cm, yshift=0cm]
\doublemap(2,-3)[\scriptstyle \mathcal{F}]; \draw (0,-2)-- (0,-7); \draw (3,0)-- (3,-1); \draw (3,-2)-- (3,-3); \draw (4,0)-- (4,-5); \draw (0,0)-- (0,-1); \draw (1,-3)-- (1,-7); \laction(3,-5)[0,1]; \twisting(1,0)[1]; \twisting(0,-1)[1]; \twisting(2,-1)[1]; \twisting(1,-2)[1]; \map(3,-4)[\scriptstyle \gamma]; \map(2,-4)[\scriptstyle \jmath_{\nu}]; \laction(3,-5)[0,1]; \laction(2,-6)[2,1]; \draw (2,-5)-- (2,-6);
\end{scope}
\begin{scope}[xshift=19.1cm,yshift=-2.55cm]
\node at (0,-1){,};
\end{scope}
\end{tikzpicture}
\end{equation*}
where the domain is $V\ot_k V \ot_k A\ot A^{\ot r}\ot M$ and the codomain is $A\ot A^{\ot r}\ot M$. The proof that the left action is unitary follows the same pattern, but using equality~\eqref{preunit3} instead of the twisted module condition.
\end{proof}

\begin{remark}\label{homologia de las filas} Note that $\Ho_r\bigl(\wh{X}_{*s}(M),\wh{d}^0_{*s}\bigr) = \Ho^{\hs K}_r\bigl(A,M \ot_{\hs A} \wt{E}^{\ot_{\hs A}^s}\bigr)$ and that if $\mathcal{F}$ takes its values in $K\ot_k V$, then $\Ho_s\bigl(\wh{X}_{r*}(M),\wh{d}^1_{r*}\bigr) = \Ho^{\hs A}_s\bigl(E,A\ot \ov{A}^{\ot r} \ot M\bigr)$.
\end{remark}

\subsection{Comparison maps}\label{morfismos de comparacion en homologia}
Let $\cramped{\bigl(M\ot\ov{E}^{\ot *}\ot,b_*\bigr)}$ be the normalized Hochschild chain complex of the $K$-algebra $E$ with coe\-fficients in $M$. Recall that there is a canonical identification $\bigl(M\ot\ov{E}^{\ot *}\ot,b_*\bigr)\simeq M\ot_{E^e}\bigl(E\ot\ov{E}^{\ot *}\ot E,b'_*\bigr)$. From now on we let $[m\ot\ov{\bx}_{1n}]$ denote the class of $m\ot\ov{\bx}_{1n}$ in $M\ot\ov{E}^{\ot n}\ot$. Let
$$
\wh{\phi}_*\colon (\wh{X}_*(M),\wh{d}_*)\longrightarrow\bigl(M\ot\ov{E}^{\ot *} \ot,b_*\bigr)\index{zva@$\wh{\phi}_n$|dotfillboldidx} \quad\text{and} \quad \wh{\psi}_*\colon \bigl(M\ot\ov{E}^{\ot *}\ot,b_*\bigr)\longrightarrow (\wh{X}_*(M),\wh{d}_*)\index{zya@$\wh{\psi}_n$|dotfillboldidx}
$$
be the morphisms of complexes induced by $\phi_*$ and $\psi_*$, respectively. By Proposition~\ref{homotopia} we know that $\wh{\psi}_*\xcirc\wh{\phi}_* =\ide_*$ and $\wh{\phi}_*\xcirc\wh{\psi}_*$ is ho\-motopically equivalent to the identity map. More precisely a homotopy $\wh{\omega}_{*+1}\index{zyya@$\wh{\omega}_n$|dotfillboldidx}$, from $\wh{\phi}_*\xcirc\wh{\psi}_*$ to $\ide_*$, is the family of maps
$\bigl(\wh{\omega}_{n+1}\colon M\ot\ov{E}^{\ot n}\ot\longrightarrow M\ot \ov{E}^{\ot {n+1}}\ot\bigr)_{n\ge 0}$, induced by $\cramped{\bigl(\omega_{n+1}\colon E\ot\ov{E}^{\ot n}\ot E\longrightarrow E\ot\ov{E}^{\ot {n+1}}\ot E\bigr)_{n\ge 0}}$.

\subsection[The filtrations of \texorpdfstring{$(\wh{X}_*(M),\wh{d}_*)$}{(X(M),d)} and \texorpdfstring{$\bigl(M\ot\ov{E}^{\ot *}\ot,b_*\bigr)$}{(ME,b)}]{The filtrations of \texorpdfstring{$\bm{(\wh{X}_*(M),\wh{d}_*)}$}{(X(M),d)} and \texorpdfstring{$\bm{\bigl(M\ot\ov{E}^{\ot *}\ot,b_*\bigr)}$}{(ME,b)}}\label{filtraciones en homologia}
The complex $(\wh{X}_*(M),\wh{d}_*)$ is filtered by
$$
0=F^{-1}(\wh{X}_*(M))\subseteq F^0(\wh{X}_*(M))\subseteq F^1(\wh{X}_*(M))\subseteq F^2(\wh{X}_*(M))\subseteq F^3(\wh{X}_*(M))\subseteq \dots,
$$
where $F^i(\wh{X}_n(M)) \coloneqq \bigoplus_{s = 0}^i\wh{X}_{n-s,s}(M)\index{fi@$F^i(\wh{X}_n(M))$|dotfillboldidx}$. Hence, by Remark~\ref{homologia de las filas} there is a spectral sequence
\begin{equation}
E^l_{rs}\Longrightarrow\Ho^{\hs K}_{r+s}(E,M),\qquad\text{with $E^1_{rs} =\Ho^{\hs K}_r\bigr(A,M \ot_{\hs A} \wt{E}^{\ot_{\hs A} s}\bigl)$.}\label{eq9}
\end{equation}
Let $F^i\bigl(M\ot\ov{E}^{\ot n}\ot\bigr)\index{fj@$F^i\bigl(M\ot\ov{E}^{\ot n}\ot\bigr)$|dotfillboldidx}$ be the image of the canonical map $M\ot F^i\bigl(\ov{E}^{\ot n}\bigr)\ot \longrightarrow M\ot\ov{E}^{\ot n}\ot$, where $F^i\bigl(\ov{E}^{\ot n}\bigr)$ is as in Notation~\ref{notation F supra i}. The normalized Hochschild complex $\bigl(M\ot\ov{E}^{\ot *}\ot,b_*\bigr)$ is filtered~by
$$
F^{-1}\bigl(M\ot\ov{E}^{\ot *}\ot\bigr)\subseteq F^0\bigl(M\ot\ov{E}^{\ot *}\ot\bigr)\subseteq F^1\bigl(M\ot\ov{E}^{\ot *}\ot\bigr) \subseteq F^2\bigl(M\ot \ov{E}^{\ot *}\ot\bigr)\subseteq\dots.
$$
The spectral sequence associated with this filtration is called the {\em homological Hochschild-Serre spectral sequence of $E$ with coefficients in $M$}.

\begin{theorem}\label{phi, psi y omega preservan filtraciones en homologia} The maps $\wh{\phi}_*$, $\wh{\psi}_*$ and $\wh{\omega}_*$ preserve filtrations.
\end{theorem}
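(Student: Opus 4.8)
The plan is to deduce this statement directly from its resolution-level counterpart, Theorem~\ref{las func phi psi y omega preservan filtraciones}, by applying the functor $M\ot_{E^e}(-)$. First I would record the two elementary identifications that tie the filtrations on the complexes to those on the resolutions. On the one hand, since $F^i(X_n)=\bigoplus_{0\le s\le i} X_{n-s,s}$ is a direct summand of $X_n$, the canonical identification $\wh{X}_{rs}(M)\simeq M\ot_{E^e} X_{rs}$ gives
$$
F^i(\wh{X}_n(M))=\bigoplus_{s=0}^i\wh{X}_{n-s,s}(M)\simeq M\ot_{E^e} F^i(X_n),
$$
and this inclusion into $\wh{X}_n(M)\simeq M\ot_{E^e}X_n$ is split injective. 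On the other hand, under the identification $M\ot\ov{E}^{\ot^n}\ot\simeq M\ot_{E^e}(E\ot\ov{E}^{\ot^n}\ot E)$, the submodule $F^i(M\ot\ov{E}^{\ot^n}\ot)$ is exactly the image of $M\ot_{E^e}F^i(E\ot\ov{E}^{\ot^n}\ot E)$; indeed, applying the right exact functor $M\ot_{E^e}(-)$ to the defining surjection $E\ot F^i(\ov{E}^{\ot^n})\ot E\to E\bar{\ot}F^i(\ov{E}^{\ot^n})\bar{\ot}E=F^i(E\ot\ov{E}^{\ot^n}\ot E)$ and composing to $M\ot\ov{E}^{\ot^n}\ot$ recovers the image of $M\ot F^i(\ov{E}^{\ot^n})\ot$, which is $F^i(M\ot\ov{E}^{\ot^n}\ot)$ by definition.

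Second, I would note that $\wh{\phi}_*$, $\wh{\psi}_*$ and $\wh{\omega}_*$ are, by their very construction, the maps induced by $\phi_*$, $\psi_*$ and $\omega_*$ through $M\ot_{E^e}(-)$. For any $E$-bimodule map $f\colon P\to Q$ with $f(F^iP)\subseteq F^iQ$, functoriality produces a commuting square relating $M\ot_{E^e}F^iP\to M\ot_{E^e}F^iQ$ with $M\ot_{E^e}P\to M\ot_{E^e}Q$, so that $\ide_M\ot_{E^e}f$ carries the image of $M\ot_{E^e}F^iP$ into the image of $M\ot_{E^e}F^iQ$. Combining this with the two identifications above and with Theorem~\ref{las func phi psi y omega preservan filtraciones}, which guarantees that $\phi$, $\psi$ and $\omega$ preserve the filtrations $F^i(X_*)$ and $F^i(E\ot\ov{E}^{\ot^*}\ot E)$, yields at once that $\wh{\phi}_*$, $\wh{\psi}_*$ and $\wh{\omega}_*$ preserve $F^i(\wh{X}_*(M))$ and $F^i(M\ot\ov{E}^{\ot^*}\ot)$.

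The only genuine point requiring care — and the one I would treat as the main obstacle — is the compatibility of images under the non-exact functor $M\ot_{E^e}(-)$ on the bar side: one must verify that $F^i(M\ot\ov{E}^{\ot^n}\ot)$ is really the image of $M\ot_{E^e}F^i(E\ot\ov{E}^{\ot^n}\ot E)$ and not some larger submodule, and that passing to images commutes with the induced maps. Right exactness disposes of the surjection defining $F^i(E\ot\ov{E}^{\ot^n}\ot E)$, so the matter reduces to the purely formal remark that a functor sends the image of a composite $F^iP\hookrightarrow P\to Q$ into the image of $F^iQ\hookrightarrow Q$; no computation beyond Theorem~\ref{las func phi psi y omega preservan filtraciones} is required. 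On the $\wh{X}$ side the argument is strictly cleaner, since $F^i(X_n)$ is a direct summand and $M\ot_{E^e}(-)$ preserves the corresponding inclusion verbatim.
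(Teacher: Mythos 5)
Your proposal is correct and takes essentially the same approach as the paper, whose entire proof of this theorem is the remark that it follows immediately from Theorem~\ref{las func phi psi y omega preservan filtraciones}. You have simply made explicit the formal step the authors leave implicit: $\wh{\phi}$, $\wh{\psi}$ and $\wh{\omega}$ are obtained from $\phi$, $\psi$ and $\omega$ by applying $M\ot_{E^e}(-)$, which identifies $M\ot_{E^e}F^i(X_*)$ with the direct summand $F^i(\wh{X}_*(M))$ and, by right-exactness, sends $F^i\bigl(E\ot\ov{E}^{\ot^*}\ot E\bigr)$ onto $F^i\bigl(M\ot\ov{E}^{\ot^*}\ot\bigr)$, so filtration preservation descends functorially.
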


\begin{proof} This follows immediately from Theorem~\ref{las func phi psi y omega preservan filtraciones}.
\end{proof}

\begin{corollary} The homological Hochschild-Serre spectral sequence of $E$ with coefficients in~$M$ is iso\-morphic to the spectral sequence~\eqref{eq9}.
\end{corollary}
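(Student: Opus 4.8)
The plan is to use the comparison maps $\wh\phi_*$ and $\wh\psi_*$ to produce a morphism between the two spectral sequences and then show it is an isomorphism. Recall the general principle that a filtration-preserving chain map induces a morphism of the associated spectral sequences, compatible with the filtrations on the abutments. By Theorem~\ref{phi, psi y omega preservan filtraciones en homologia} the maps $\wh\phi_*$ and $\wh\psi_*$ preserve filtrations, so each induces a morphism of spectral sequences between the sequence~\eqref{eq9}, attached to $\bigl(\wh{X}_*(M),\wh{d}_*\bigr)$, and the homological Hochschild--Serre spectral sequence, attached to $\bigl(M\ot\ov{E}^{\ot^*}\ot,b_*\bigr)$. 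Since both filtrations are indexed in the same way (the filtration degree on either side being the number $s$ of tensor slots lying in the $V$-direction), these induced morphisms have bidegree $(0,0)$.

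First I would pass to the $E^0$-pages, that is, to the associated graded complexes. A filtration-preserving map induces a map on associated graded objects; let $\mathrm{gr}(\wh\phi_*)$ and $\mathrm{gr}(\wh\psi_*)$ denote the resulting morphisms of $E^0$-pages with their $d^0$-differentials. The identity $\wh\psi_*\xcirc\wh\phi_*=\ide_*$ of Proposition~\ref{homotopia} passes verbatim to the associated graded, giving $\mathrm{gr}(\wh\psi_*)\xcirc\mathrm{gr}(\wh\phi_*)=\ide$. For the other composite, Proposition~\ref{homotopia} supplies the homotopy $\wh\omega_*$ with $\wh\phi_*\xcirc\wh\psi_*-\ide_* = b_*\xcirc\wh\omega_* + \wh\omega_*\xcirc b_*$; since $\wh\omega_*$ also preserves filtrations (again Theorem~\ref{phi, psi y omega preservan filtraciones en homologia}), it descends to a map $\mathrm{gr}(\wh\omega_*)$, and the homotopy relation descends to $\mathrm{gr}(\wh\phi_*)\xcirc\mathrm{gr}(\wh\psi_*)-\ide = d^0\xcirc\mathrm{gr}(\wh\omega_*)+\mathrm{gr}(\wh\omega_*)\xcirc d^0$. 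Thus $\mathrm{gr}(\wh\phi_*)$ and $\mathrm{gr}(\wh\psi_*)$ are mutually inverse homotopy equivalences of the $E^0$-pages, and therefore induce mutually inverse isomorphisms on the $E^1$-pages.

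Once the morphism is an isomorphism on the $E^1$-page, the standard comparison theorem for morphisms of spectral sequences shows it is an isomorphism on every later page $E^r$, $r\ge 1$. Both filtrations are bounded (one has $F^{-1}=0$ and $F^i(\wh{X}_n(M))=\wh{X}_n(M)$ for $i\ge n$, and likewise on the Hochschild side), so both spectral sequences converge to $\Ho^{\hs K}_*(E,M)$, and the isomorphism is compatible with the induced filtrations on the abutment, since $\wh\phi_*$ and $\wh\psi_*$ realize the homotopy equivalence computing that homology. This yields the asserted isomorphism of spectral sequences.

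I expect the only genuinely delicate point to be the verification that the filtration-preserving homotopy $\wh\omega_*$ really descends to a chain homotopy for the $d^0$-differential on the $E^0$-page; this is exactly where one must invoke that $\wh\omega_*$ respects the filtration (Theorem~\ref{phi, psi y omega preservan filtraciones en homologia}), rather than merely being a homotopy of the total complexes. Everything else is the formal machinery of morphisms of filtered complexes together with the comparison theorem for spectral sequences.
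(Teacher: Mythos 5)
Your argument is correct and is precisely the reasoning the paper leaves implicit: the corollary is stated without proof immediately after Theorem~\ref{phi, psi y omega preservan filtraciones en homologia}, the intended justification being exactly that the filtration-preserving maps $\wh{\phi}_*$, $\wh{\psi}_*$ and the filtration-preserving homotopy $\wh{\omega}_*$ (via Proposition~\ref{homotopia}) descend to the associated graded complexes, give mutually inverse homotopy equivalences there, hence isomorphisms from the $E^1$-page on, with convergence guaranteed by the canonically bounded filtrations. You have simply spelled out the standard comparison argument the authors take for granted, including the one genuinely delicate point (that $\wh{\omega}_*$ must preserve filtrations for the homotopy relation to descend to the $d^0$-level), so there is nothing to correct.
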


\begin{notation}\label{notation F sub R(M)} For $n,u\ge 0$, $i\ge -1$ and each $k$-subalgebra $R$ of $A$, we let $M\bar{\ot}F_{\! R,u}^i(\ov{E}^{\ot n})\bar{\ot}\index{fk@$M\bar{\ot} F_{R,u}^i(\ov{E}^{\ot n})\bar{\ot}$|dotfillboldidx}$ denote the image of the canonical map $M\ot F_{\! R,u}^i\bigl(\ov{E}^{\ot n} \bigr)\ot \longrightarrow M\ot \ov{E}^{\ot n}\ot$, where $F_{\! R,u}^i\bigl(\ov{E}^{\ot n} \bigr)$ is as in Notation~\ref{notation F sub R}.
\end{notation}

\begin{proposition}\label{propiedad de wh phi} Let $R$ be a $k$-subalgebra of $A$. If $R$ is stable under $\chi$ and $\mathcal{F}$ takes its values in $R\ot_k V$, then
$$
\wh{\phi}\bigl([m\ot_{\hs A}\wt{\gamma}_{\hs A}(\bv_{1i})\ot\ov{\ba}_{1,n-i}]\bigr) = \bigl[m\ot \Sh(\bv_{1i}\ot_k \ov{\ba}_{1,n-i})\bigr] + \bigl[m\ot_{\hs A} \phi''_{in}(\bv_{1i}\ot_k \ov{\ba}_{1,n-i})\bigr],
$$
where $m\in M$, $a_1,\dots,a_{n-i}\in A$, $v_1,\dots,v_i\in V$ and $\phi''_{in}(\bv_{1i}\ot_k \ov{\ba}_{1,n-i})$ is as in Notation~\ref{complemento}.
\end{proposition}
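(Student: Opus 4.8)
The proof amounts to transporting the formula for $\phi_n$ established in Remark~\ref{nota de propiedad de phi} through the canonical identifications recalled in this section, so the plan is almost entirely bookkeeping. Recall that $\wh{\phi}_*$ is, by definition, the map induced by $\phi_*$ under the isomorphisms $\wh{X}_{rs}(M)\simeq M\ot_{E^e}X_{rs}$ and $\bigl(M\ot\ov{E}^{\ot^*}\ot,b_*\bigr)\simeq M\ot_{E^e}\bigl(E\ot\ov{E}^{\ot^*}\ot E,b'_*\bigr)$. Under the first of these, the class $[m\ot_{\hs A}\wt{\gamma}_{\hs A}(\bv_{1i})\ot\ov{\ba}_{1,n-i}]$ corresponds to $m\ot_{E^e}\bigl(1_E\ot_{\hs A}\wt{\gamma}_{\hs A}(\bv_{1i})\ot\ov{\ba}_{1,n-i}\ot 1_E\bigr)$, so that $\wh{\phi}$ of it is the image of $\phi_n\bigl(1_E\ot_{\hs A}\wt{\gamma}_{\hs A}(\bv_{1i})\ot\ov{\ba}_{1,n-i}\ot 1_E\bigr)$ under $m\ot_{E^e}(-)$.

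First I would substitute the explicit expression from Remark~\ref{nota de propiedad de phi}, together with Notation~\ref{complemento} (which writes $\phi'_{in}=\phi''_{in}\ot 1_E$), to obtain
$$
\wh{\phi}\bigl([m\ot_{\hs A}\wt{\gamma}_{\hs A}(\bv_{1i})\ot\ov{\ba}_{1,n-i}]\bigr) = m\ot_{E^e}\bigl(1_E\ot\Sh(\bv_{1i}\ot_k\ov{\ba}_{1,n-i})\ot 1_E\bigr) + m\ot_{E^e}\bigl(\phi''_{in}(\bv_{1i}\ot_k\ov{\ba}_{1,n-i})\ot 1_E\bigr).
$$
For the first summand I would invoke that the identification $\bigl(M\ot\ov{E}^{\ot^*}\ot,b_*\bigr)\simeq M\ot_{E^e}\bigl(E\ot\ov{E}^{\ot^*}\ot E,b'_*\bigr)$ carries $m\ot_{E^e}(1_E\ot\ov{\bx}_{1n}\ot 1_E)$ to $[m\ot\ov{\bx}_{1n}]$, which produces the term $[m\ot\Sh(\bv_{1i}\ot_k\ov{\ba}_{1,n-i})]$. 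For the second summand I would use that, by Remark~\ref{nota de propiedad de phi}, $\phi''_{in}(\bv_{1i}\ot_k\ov{\ba}_{1,n-i})$ lies in $\jmath_{\nu}(A)\bar{\ot}F_{\! R,1}^{i-1}(\ov{E}^{\ot^n})$; writing its leading tensor factor in $\jmath_{\nu}(A)$ and pushing it onto $m$ across the $E^e$-balanced tensor product, via the $A$-bimodule structure of $M$ afforded by $\jmath_{\nu}$, turns $m\ot_{E^e}\bigl(\phi''_{in}(\bv_{1i}\ot_k\ov{\ba}_{1,n-i})\ot 1_E\bigr)$ into $[m\ot_{\hs A}\phi''_{in}(\bv_{1i}\ot_k\ov{\ba}_{1,n-i})]$. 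Adding the two contributions gives the claimed identity.

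The only step requiring genuine care, and hence the one I regard as the main obstacle, is this last reinterpretation of the second summand: one must verify that collapsing the outer $E$-factors of $m\ot_{E^e}(\,\cdot\,\ot 1_E)$ reproduces precisely the meaning of the symbol $[m\ot_{\hs A}\phi''_{in}(\dots)]$, namely that the trailing $1_E$ acts trivially on $m$ and that the leading $\jmath_{\nu}(A)$-component is absorbed into $m$ as an $A$-action rather than an $E$-action. This rests on $1_E=\jmath_{\nu}(1_A)$ being the unit (Theorem~\ref{prodcruz1}(6)) and on the compatibility of the $E$-actions on $M$ with the $A$-actions through $\jmath_{\nu}$; everything else is a direct transcription of Remark~\ref{nota de propiedad de phi} through the stated identifications.
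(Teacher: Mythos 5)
Your proposal is correct and follows the same route as the paper, whose entire proof is the one-line observation that the statement is an immediate consequence of Remark~\ref{nota de propiedad de phi}; you have simply made explicit the bookkeeping (applying $m\ot_{E^e}(-)$ under the identifications $\wh{X}_{rs}(M)\simeq M\ot_{E^e}X_{rs}$ and $M\ot\ov{E}^{\ot^*}\ot\simeq M\ot_{E^e}(E\ot\ov{E}^{\ot^*}\ot E)$, with the $\jmath_{\nu}(A)$-component of $\phi''_{in}$ accounting for the $\ot_{\hs A}$ in the second summand) that the paper leaves implicit. Your handling of the trailing $1_E$ and the leading $\jmath_{\nu}(A)$-factor is exactly the right point of care, and nothing further is needed.
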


\begin{proof} This is an immediate consequence of Remark~\ref{nota de propiedad de phi}.
\end{proof}

To prove Propositions~\ref{propiedad de hat psi} and~\ref{propiedad de hat omega} we will need some technical results that we establish in an appendix. For $n\ge 0$ and $i\ge -1$ we set $F^i_{\!R}\bigl(\wh{X}_n(M)\bigr) \coloneqq \bigoplus_{s=0}^i \wh{X}^1_{s,n-s}(R,M)\index{fl@$F^i_R\bigl(\wh{X}_n(M)\bigr)$|dotfillboldidx}$.

\begin{proposition}\label{propiedad de hat psi} Let $R$ be a $k$-subalgebra of $A$. Assume that $R$ is stable under $\chi$ and that $\mathcal{F}$ takes its values in $R\ot_k V$. Let $m\in M$, $v, v_1,\dots,v_i\in V$ and $a,a_{i+1},\dots, a_n\in A$. The map $\wh{\psi}_n$ has the following properties:

\begin{enumerate}[itemsep=0.7ex, topsep=1.0ex, label=\emph{(\arabic*)}]

\item $\wh{\psi}([m\ot\ov{\gamma}(\bv_{1i})\ot\ov{\jmath}_{\nu}(\ov{\ba}_{i+1,n})]) = [m\ot_{\hs A} \wt{\gamma}_{\hs A}(\bv_{1i}) \ot \ov{\ba}_{i+1,n}]$.

\item Let $x_1,\dots,x_n\in\jmath_{\nu}(A)\cup \gamma(V)$. If there exist indices $j_1<j_2$ such that $x_{j_1}\in \jmath_{\nu}(A)$ and $x_{j_2}\in\gamma(V)$, then $\wh{\psi}([m\ot\ov{\bx}_{1n}]) = 0$.

\item If $\bx = \bigl[m\ot\ov{\gamma}(\bv_{1,i-1})\ot \ov{a\xcdot \gamma(v)}\ot \ov{\jmath}_{\nu}(\ov{\ba}_{i+1,n})\bigr]$, then
$$
\begin{aligned}
\quad\qquad\wh{\psi}(\bx) &\equiv \bigl[m\ot_{\hs A}\wt{\gamma}_{\hs A}(\bv_{1,i-1}) \ot_{\hs A} a\xcdot\wt{\gamma}(v)\ot \ov{\ba}_{i+1,n}\bigr]\\
& +\sum\bigl[\gamma(v^{(l)})\xcdot m\ot_{\hs A}\wt{\gamma}_{\hs A}(\bv_{1,i-1})\ot \ov{a} \ot\ov{\ba}_{i+1,n}^{(l)}\bigr],
\end{aligned}\quad\mod{F^{i-2}_{\!R}\bigl(\wh{X}_n(M)\bigr)},
$$
where $\sum_l \ov{\ba}_{i+1,n}^{(l)}\ot_k v^{(l)}\coloneqq \ov{\chi}(v\ot_k \ov{\ba}_{i+1,n})$.

\item If $\bx = \bigl[m\ot\ov{\gamma}(\bv_{1,j-1})\ot \ov{a\xcdot\gamma(v)}\ot \ov{\gamma}(\bv_{j+1,i}) \ot\ov{\jmath}_{\nu}(\ba_{i+1,n})\bigr]$ with $j<i$, then
$$
\qquad\qquad\wh{\psi}(\bx)\equiv \bigl[m\ot_{\hs A}\wt{\gamma}_{\hs A}(\bv_{1,j-1}) \ot_{\hs A} a\xcdot\wt{\gamma}(v) \ot_{\hs A}\wt{\gamma}_{\hs A} (\bv_{j+1,i})\ot \ov{\ba}_{i+1,n}\bigr] \quad\mod{F^{i-2}_{\!R}\bigl(\wh{X}_n(M)\bigr)}.
$$

\item If $\bx = \bigl[m\ot\ov{\gamma}(\bv_{1,i-1})\ot\ov{\jmath}_{\nu}(\ba_{i,j-1})\ot \ov{a\xcdot\gamma(v)}\ot \ov{\jmath}_{\nu}(\ba_{j+1,n}) \bigr]$ with $j>i$, then
$$
\qquad\qquad\wh{\psi}(\bx)\equiv\sum \gamma(v^{(l)})\xcdot m\ot_{\hs A} \wt{\gamma}_{\hs A}(\bv_{1,i-1}) \ot \ov{\ba}_{i,j-1}\ot \ov{a}\ot \ov{\ba}_{j+1,n}^{(l)}] \quad \mod{F^{i-2}_{\!R}\bigl(\wh{X}_n(M)\bigr)},
$$
where $\sum_l \ov{\ba}_{j+1,n}^{(l)}\ot_k v^{(l)}\coloneqq  \ov{\chi}(v\ot_k \ov{\ba}_{j+1,n})$.

\item Let $x_1,\dots,x_n\in E$ satisfying $\#\{l:x_l\notin\jmath_{\nu}(A) \cup \gamma(V)\} = 1$. If there exist $j_1 < j_2$ such that $x_{j_1}\in \jmath_{\nu}(A)$ and $x_{j_2}\in \gamma(V)$, then
$$
\qquad \wh{\psi}([m\ot\ov{\bx}_{1n}])\in F^{i-2}_{\!R}\bigl(\wh{X}_n(M)\bigr),
$$
where $i\coloneqq \#\{l:x_l\notin \jmath_{\nu}(A)\}$.

\end{enumerate}

\end{proposition}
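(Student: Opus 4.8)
The plan is to prove all six assertions simultaneously by induction on $n$, working first with the $E$-bimodule map $\psi$ evaluated at $1_E\ot\ov{\bx}_{1n}\ot 1_E$ and then transferring to $\wh{\psi}$ through the identification $\wh{X}_n(M)\simeq M\ot_{E^e}X_n$. Under this identification a left (resp. right) multiplication on the outer $E$-slots of $\psi(1_E\ot\ov{\bx}_{1n}\ot 1_E)$ becomes a right (resp. left) action on $m$, which is exactly how the factors $\gamma(v^{(l)})\xcdot m$ in items (3) and (5) are generated. The single computational engine is Proposition~\ref{formula para psi_n(y ot 1)}, which peels the last tensor factor: using right $E$-linearity to absorb $x_n$ one gets $\psi(1_E\ot\ov{\bx}_{1n}\ot 1_E) = (-1)^n\ov{\sigma}\bigl(\psi(1_E\ot\ov{\bx}_{1,n-1}\ot 1_E)\xcdot x_n\bigr)$. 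I will also need two image-control facts about $\ov{\sigma}$. First, on $W_{r,s}=L_{r,s}\xcdot A$ the homotopy $\ov{\sigma}$ reduces to $\sigma^0$ (Remark~\ref{ov sigma reducido}), and $\sigma^0$ appends the class $\ov{a}$ to the $\ov{A}$-block by equality~\eqref{segunda cond}. Second, $\ov{\sigma}$ annihilates $E\xcdot U_{r,s}$ whenever $r>0$: indeed $\sigma^0(E\xcdot U_{r,s})=0$ by~\eqref{ec2}, and the identities $\sigma^l=\sigma^0\xcirc\varsigma^l\xcirc\sigma^0$ established inside the proof of Proposition~\ref{sigma barra al cuadrado es cero} then force $\sigma^l(E\xcdot U_{r,s})=0$ for every $l\ge 1$, so the $r>0$ branch of $\ov{\sigma}$ in Proposition~\ref{cont nuestra} vanishes term by term.

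Item (1) is the base computation. Peeling the final factor $\jmath_{\nu}(a_n)$ and using Theorem~\ref{prodcruz1}(7) to rewrite $\bz\xcdot\jmath_{\nu}(a_n)=\bz\xcdot a_n$, the inductive value of $\psi$ lands in $W_{r,i}$ with $r=n-1-i$; then $\ov{\sigma}=\sigma^0$ and~\eqref{segunda cond} (legitimate since $1_E\in K\ot_k V$) append $\ov{a_n}$, reproducing $1_E\ot_{\hs A}\wt{\gamma}_{\hs A}(\bv_{1i})\ot\ov{\ba}_{i+1,n}\ot 1_E$ up to the sign cancelling $(-1)^n$. Item (2) is the first vanishing result and is the template for the whole order bookkeeping. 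Peeling $x_n$: if $x_n\in\jmath_{\nu}(A)$ then $j_1<j_2<n$, so $x_1,\dots,x_{n-1}$ still contains the out-of-order pair and $\psi$ vanishes by induction; if $x_n\in\gamma(V)$ then either $x_1,\dots,x_{n-1}$ retains an out-of-order pair (again zero) or it is already ordered as in item (1) yet still contains the factor $x_{j_1}\in\jmath_{\nu}(A)$, so item (1) places $\psi(1_E\ot\ov{\bx}_{1,n-1}\ot 1_E)$ in $L_{r,i}$ with $r\ge 1$; right multiplication by $\gamma(v_n)$ moves it into $E\xcdot U_{r,i}$ with $r>0$, which $\ov{\sigma}$ kills by the second fact above. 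The crucial point, recurring in item (6), is that the earlier $\jmath_{\nu}$-factor guarantees $r>0$, so the dangerous $\sigma^{-1}\xcirc\upsilon$ term never appears.

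Items (3), (4) and (5) are the normal-form computations in the presence of the single mixed factor $\ov{a\xcdot\gamma(v)}=\ov{\jmath_{\nu}(a)\gamma(v)}$, all obtained modulo $F^{i-2}_{\!R}(\wh{X}_n(M))$. In each case I peel factors with Proposition~\ref{formula para psi_n(y ot 1)}, apply item (1) to the ordered blocks flanking the mixed factor, and treat the mixed factor by the explicit $\sigma^0$ behaviour: equality~\eqref{primera cond} keeps the $\gamma(v)$-part inside the $\wt{E}$-block, producing the first displayed term, while equality~\eqref{segunda cond} together with the right $A$-action through $\ov{\chi}$ (and equalities~\eqref{eq4} and~\eqref{eq12}) commutes the $a$-part into the $\ov{A}$-block and transports $\gamma(v)$ to the outer slot, producing the second displayed term with its $\ov{\chi}$-twist $\sum_l\ov{\ba}^{(l)}\ot_k v^{(l)}$. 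Every remaining contribution raises the number of $\ov{A}$-entries arising from a $\gamma$-symbol by at most one while lowering the $\wt{E}$-degree, so it falls into $F^{i-2}_{\!R}$; here the filtration estimates of the appendix and Theorem~\ref{formula para d^1b}(3) are what bound the discarded terms. Item (5), where the mixed factor sits among the trailing $\jmath_{\nu}$-block, is handled identically, except that only the $\ov{\chi}$-transport term survives modulo $F^{i-2}_{\!R}$, since the ordered $\gamma$-block lies entirely to its left.

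Finally, item (6) merges the two mechanisms: exactly one factor lies outside $\jmath_{\nu}(A)\cup\gamma(V)$, together with an out-of-order pair among the remaining group-like factors. I again peel $x_n$ and split on its type; whenever the out-of-order pair survives into length $n-1$ the inductive vanishing of item (2), or the filtration drop of item (6) at level $n-1$, applies, and whenever the peeling exposes an ordered configuration still carrying an earlier $\jmath_{\nu}$-factor, right multiplication by a $\gamma(V)$-element lands in $E\xcdot U_{r,s}$ with $r>0$ modulo lower filtration, which $\ov{\sigma}$ annihilates; the lone exceptional factor only shifts the result within the same or a lower filtration stratum, as controlled by items (3)--(5). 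I expect the main obstacle to be precisely this bookkeeping: keeping the out-of-order pair and the count $i=\#\{l:x_l\notin\jmath_{\nu}(A)\}$ synchronized through the recursion so that the $r>0$ guarantee, and hence the vanishing of $\ov{\sigma}$ on $E\xcdot U$, is never lost, and so that every correction term is certified to sit in $F^{i-2}_{\!R}(\wh{X}_n(M))$. This is where the technical lemmas of the appendix become indispensable, since they supply the uniform image bounds for $\ov{\sigma}$ and $\psi$ needed to make the filtration accounting rigorous.
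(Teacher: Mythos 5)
Your proposal is correct and follows essentially the same route as the paper: the paper reduces Proposition~\ref{propiedad de hat psi} to Proposition~\ref{propA.7} via the identification $\wh{X}_n(M)\simeq M\ot_{E^e}X_n$, and the appendix proof of Proposition~\ref{propA.7} is precisely your induction on $n$ --- peeling the last tensor factor with Proposition~\ref{formula para psi_n(y ot 1)} and controlling the images of $\ov{\sigma}$, $\sigma^0$ and $\sigma^l$ through Lemma~\ref{lemma A6} together with equalities~\eqref{primera cond}, \eqref{segunda cond} and~\eqref{eq12}. The only cosmetic difference is that you obtain the vanishing $\sigma^l(E\xcdot U_{rs})=0$ from the factorization $\sigma^l=\sigma^0\xcirc\varsigma^l\xcirc\sigma^0$ taken from the proof of Proposition~\ref{sigma barra al cuadrado es cero}, whereas the paper (Lemma~\ref{lemma A6}(3)) proves it by a direct induction on $l$; both arguments are equivalent.
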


\begin{proof} This follows immediately from Proposition~\ref{propA.7}.
\end{proof}

\begin{proposition}\label{propiedad de hat omega} Let $x_1,\dots, x_n\in E$ be such that $\#\{l\!:\!x_l\!\notin\!\jmath_{\nu}(A) \cup \gamma(V)\}\! =\! 1$, let $m\in M$ and let $i\coloneqq \#\{l:x_l\notin \jmath_{\nu}(A)\}$. There exists $\byy \in \jmath_{\nu}(A)\ot F_{\!A,0}^i\bigl(\ov{E}^{\ot {n+1}}\bigr)$ such that $\wh{\omega}([m\ot\ov{\bx}_{1n}]) = [m\ot_{\hs A} \byy]$.
\end{proposition}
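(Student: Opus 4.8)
The plan is to reduce the statement to a claim about $\omega$ on the normalized bar resolution and to prove that claim by induction on $n$ using the recursion of Remark~\ref{simp formula w}. Since $\wh{\omega}$ is induced by $\omega$ and $[m\ot\ov{\bx}_{1n}]=m\ot_{E^e}(1_E\ot\ov{\bx}_{1n}\ot 1_E)$, it suffices to show
$$
\omega\bigl(1_E\ot\ov{\bx}_{1n}\ot 1_E\bigr)\in \jmath_{\nu}(A)\,\bar{\ot}\, F_{\!A,0}^i\bigl(\ov{E}^{\ot^{n+1}}\bigr)\,\bar{\ot}\,\jmath_{\nu}(K).
$$
Indeed, $1_E\in\jmath_{\nu}(K)$, and the outer operation in the recursion is $\xi$, which always closes with a rightmost factor in $\jmath_{\nu}(K)$; this factor is absorbed into the coefficient $M$ under $m\ot_{E^e}(-)$, while the leftmost factor in $\jmath_{\nu}(A)$ supplies the required $\byy$. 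Because $\wh{\omega}$, $F_{\!A,0}^i$ and the map $[m\ot_{\hs A}(-)]$ are all linear, I would first use multilinearity in every tensor slot to reduce to the generating situation in which each good factor is $\jmath_{\nu}(a_l)$ or $\gamma(v_l)$ and the unique bad factor equals $\jmath_{\nu}(a)\gamma(v)=a\xcdot\gamma(v)$; the sub-cases in which a summand of the bad factor happens to land in $\jmath_{\nu}(A)\cup\gamma(V)$ only lower the filtration index and are harmless, since $F_{\!A,0}^{i-1}\subseteq F_{\!A,0}^i$.

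For the inductive step I would expand, via Remark~\ref{simp formula w},
$$
\omega(\bx)=\xi\bigl(\phi(\psi(\bx))-(-1)^n\omega(x_0\ot\ov{\bx}_{1,n-1}\ot x_n)\bigr),\qquad \bx\coloneqq 1_E\ot\ov{\bx}_{1n}\ot 1_E,
$$
and treat the two summands separately. For the $\phi\psi$ summand I would compute $\wh{\psi}(\bx)$ with the explicit formulas of Proposition~\ref{propiedad de hat psi}: its items~(3)--(5) resolve the bad factor $\ov{a\xcdot\gamma(v)}$ into $\jmath_{\nu}(A)$- and $\gamma(V)$-factors (possibly twisted through $\ov{\chi}$), whereas items~(2) and~(6) send the remaining configurations into $F_{\!R}^{i-2}\bigl(\wh{X}_n(M)\bigr)$. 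Applying $\wh{\phi}$ through Proposition~\ref{propiedad de wh phi} rewrites the result by means of the shuffles $\Sh$, whose image lies in $F_{\!A,0}^i$ with leftmost factor in $\jmath_{\nu}(A)$ by Remark~\ref{imagen de Sh} and Remark~\ref{nota de propiedad de phi}; finally $\xi$ promotes the rightmost $E$-factor to a new middle factor and appends $1_E$, so the only factor it introduces is one already present, hence again in $\jmath_{\nu}(A)\cup\gamma(V)$, and the rightmost slot stays in $\jmath_{\nu}(K)$. For the recursive summand I would apply the inductive hypothesis to the level-$(n-1)$ term $x_0\ot\ov{\bx}_{1,n-1}\ot x_n$, distinguishing whether $x_n$ is good or bad, and then check that the subsequent $\xi$ respects all three requirements (factor types in $\jmath_{\nu}(A)\cup\gamma(V)$, leftmost in $\jmath_{\nu}(A)$, rightmost in $\jmath_{\nu}(K)$), raising the number of $\gamma(V)$-factors by exactly one when $x_n\in\gamma(V)\setminus\jmath_{\nu}(A)$ and by none otherwise.

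The cleanest way to organize the global control of factor types and of the leftmost slot is to establish, as a refinement of Theorem~\ref{las func phi psi y omega preservan filtraciones} and by a parallel induction along the recursive definition of $\omega$ in Proposition~\ref{homotopia}, that $\omega$ carries $\jmath_{\nu}(A)\,\bar{\ot}\,F_{\!A,0}^i\bigl(\ov{E}^{\ot^n}\bigr)\,\bar{\ot}\,\jmath_{\nu}(K)$ into $\jmath_{\nu}(A)\,\bar{\ot}\,F_{\!A,0}^i\bigl(\ov{E}^{\ot^{n+1}}\bigr)\,\bar{\ot}\,\jmath_{\nu}(K)$. Combining the image descriptions of $\xi$, of $\phi$ (through $\Sh$) and of $\psi$, together with the bare filtration bound of Theorem~\ref{las func phi psi y omega preservan filtraciones}, then yields the displayed bar-resolution claim, and hence the proposition after applying $m\ot_{E^e}(-)$.

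I expect the main obstacle to be the bookkeeping for the unique bad factor: one must verify that the splitting of $\ov{a\xcdot\gamma(v)}$ produced by Proposition~\ref{propiedad de hat psi} never creates a $\gamma(V)$-factor beyond the allowed count $i$, and that neither the twistings through $\ov{\chi}$ nor the shuffles $\Sh$ push any factor outside $\jmath_{\nu}(A)\cup\gamma(V)$. The most delicate case is the one in which the bad factor occupies the last position $x_n$, so that it is peeled off by the recursion and reinserted by $\xi$; there the resolution of the bad factor and the filtration count must be synchronized with the degree shift introduced by $\xi$, and it is precisely this synchronization that forces the careful formulation of the auxiliary subspace used in the parallel induction above.
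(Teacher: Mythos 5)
Your overall architecture coincides with the paper's: the proposition is reduced to the bar-resolution statement that $\omega\bigl(1_E\ot\ov{\bx}_{1n}\ot 1_E\bigr)\in \jmath_{\nu}(A)\,\bar{\ot}\, F_{\! A,0}^i\bigl(\ov{E}^{\ot^{n+1}}\bigr)\bar{\ot}\,\jmath_{\nu}(K)$ — this is exactly Proposition~\ref{prop A.9}, of which the proposition is an immediate consequence — and that statement is proven by induction on $n$ through the recursion of Remark~\ref{simp formula w}, with the $\phi\xcirc\psi$ summand controlled by the appendix results (the paper uses items of Proposition~\ref{proposition A8} with $R=A$; your use of the $M$-coefficient shadows, Propositions~\ref{propiedad de hat psi} and~\ref{propiedad de wh phi}, is only cosmetically different, since $\omega$ lives at the bimodule level and those are induced from Propositions~\ref{propA.7} and~\ref{propiedad de phi}).

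However, there is a genuine gap precisely at the case you yourself flag as most delicate, namely $x_n\notin\jmath_{\nu}(A)\cup\gamma(V)$. Your auxiliary ``parallel induction'' asserts only that $\omega$ \emph{preserves} $\jmath_{\nu}(A)\,\bar{\ot}\, F_{\! A,0}^i\bigl(\ov{E}^{\ot^n}\bigr)\bar{\ot}\,\jmath_{\nu}(K)$, and a preservation statement cannot close the induction: when the unique bad factor is $x_n$, right $E$-linearity gives $\omega(1_E\ot\ov{\bx}_{1,n-1}\ot x_n)=\omega(1_E\ot\ov{\bx}_{1,n-1}\ot 1_E)\xcdot x_n$, and if the right-hand $\omega$ merely lies in $\jmath_{\nu}(A)\,\bar{\ot}\, F_{\! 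A,0}^{i-1}\bigl(\ov{E}^{\ot^n}\bigr)\bar{\ot}\,\jmath_{\nu}(K)$, then after multiplying by $x_n$ the subsequent $\xi$ pushes the bad factor $x_n$ into a middle slot, producing terms that are \emph{not} in $\jmath_{\nu}(A)\,\bar{\ot}\, F_{\! A,0}^i\bigl(\ov{E}^{\ot^{n+1}}\bigr)\bar{\ot}\,\jmath_{\nu}(K)$ — the subspace $F_{\! A,0}^i$ tolerates no factor outside $\jmath_{\nu}(A)\cup\gamma(V)$ — and there is no mechanism for cancellation against the $\xi\xcirc\phi\xcirc\psi$ summand. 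What is actually needed, and what the paper establishes first as the claim opening the proof of Proposition~\ref{prop A.9}, is the \emph{vanishing} statement: $\omega(1_E\ot\ov{\bx}_{1n}\ot 1_E)=0$ whenever all $x_j\in\jmath_{\nu}(A)\cup\gamma(V)$. This follows by its own induction because items~(1) and~(2) of Proposition~\ref{proposition A8} show that $\phi\xcirc\psi$ applied to such a tensor lands in $\jmath_{\nu}(A)\,\bar{\ot}\, F_{\! A,0}^i\bigl(\ov{E}^{\ot^n}\bigr)\bar{\ot}\,\jmath_{\nu}(K)$, and $\xi$ \emph{kills} every tensor whose last factor lies in $\jmath_{\nu}(K)$ (Remark~\ref{simp formula w}) — a vanishing property of $\xi$ your sketch never invokes, since you treat $\xi$ as merely type-preserving. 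With the vanishing claim in hand the delicate case evaporates (the recursive summand is zero) and no ``synchronization'' is required; without it, your plan as written does not produce the required $\byy$.
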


\begin{proof} This is an immediate consequence of Proposition~\ref{prop A.9}.
\end{proof}

\section{Hochschild cohomology of general crossed products}\label{section: Hochschild cohomology of general crossed products}
In this section we are in the same conditions as in the previous one. By definition the Hochschild cohomology $\Ho_{\hs K}^*(E,M)\index{hh$@$\Ho_{\hs K}^*(E,M)$|dotfillboldidx}$, of the $K$-algebra $E$ with co\-e\-fficients in an $E$-bimodule $M$, is the cohomology of the normalized Hochschild cochain complex $\cramped{\bigl(\Hom_{K^e}\bigl(\ov{E}^{\ot *},M\bigr),b^*\bigr)}$, where $b^*$ is the canonical Hochschild boundary map. It is well known that $\Ho_{\hs K}^*(E,M)$ is the $\Ext$ functor relative to $\Upsilon$. Since $(X_*,d_*)$ is a $\Upsilon$-relative pro\-jective resolution of $E$, the Hochschild cohomology of $K$-algebra $E$ with co\-e\-fficients in $M$ is the cohomology of the cochain complex $\cramped{\Hom_{E^e}\bigl((X_*,d_*),M\bigr)}$. When $K$ is separable, $\Ho_{\hs K}^*(E,M)$ is the absolute Hochschild cohomology of $E$ with coefficients in $M$. For each $s\!\ge\! 0$, we will denote by $\cramped{\Hom_{\hs A} (\wt{E}^{\ot_{\hs A} s},M)}$ the abelian group of left $A$-linear maps from $\cramped{\wt{E}^{\ot_{\hs A} s}}$ to $M$. As it is well known, $\cramped{\Hom_{\hs A}(\wt{E}^{\ot_{\hs A} s},M)}$ is an $A$-bimodule via $(a\xcdot\alpha)(\wt{\bx}_{1s})\coloneqq \alpha(\wt{\bx}_{1s}\xcdot a)$ and $(\alpha\xcdot a)(\wt{\bx}_{1s})\coloneqq  \alpha(\wt{\bx}_{1s})\xcdot a$. For each $r,s\ge 0$, write
\begin{equation}\label{pepe3}
\wh{X}^{rs}(M)\coloneqq \Hom_{(A,K)}\bigl(\wt{E}^{\ot_{\hs A} s}\ot\ov{A}^{\ot r},M\bigr) \simeq\Hom_{\hs K^e} \bigl(\ov{A}^{\ot r},\Hom_{\hs A} \bigl(\wt{E}^{\ot_{\hs A} s},M\bigr)\bigr)\index{xl@$\wh{X}^{rs}(M)$|dotfillboldidx}.
\end{equation}
Note that, since $\wt{E}^{\ot_{\hs A} 0}=\jmath_{\nu}(A)$ and $\ov{A}^{\ot 0}=K$, we have
\begin{equation}\label{eq20}
\wh{X}^{r0}(M)\simeq\Hom_{\hs K^e}\bigl(\ov{A}^{\ot r},M\bigr)\qquad\text{and}\qquad\wh{X}^{0s}(M)\simeq \Hom_{(A,K)}\bigl(\wt{E}^{\ot_{\hs A} s},M\bigr).
\end{equation}
It is easy to check that the $k$-linear map
$
\zeta^{rs}\colon\Hom_{E^e}(X_{rs},M)\longrightarrow\wh{X}^{rs}(M)\index{zg@$\zeta^{rs}$|dotfillboldidx},
$
given by
$$
\zeta(\alpha)(\wt{\bx}_{1s}\ot\ov{\ba}_{1r})\coloneqq \alpha(1_E\ot_{\hs A}\wt{\bx}_{1s} \ot\ov{\ba}_{1r}\ot 1_E),
$$
is an isomorphism. For each $l\le s$, we let $\wh{d}_l^{rs}\colon\wh{X}^{r+l-1,s-l}(M)\longrightarrow \wh{X}^{rs}(M)\index{df@$\wh{d}_l^{rs}$|dotfillboldidx}$ denote the map induced by $\Hom_{E^e}(d^l_{rs},M)$. Via the above identifications, $\Hom_{E^e}((X_*,d_*),M)$ becomes the complex $(\wh{X}^*(M),\wh{d}^*)$, where
$$
\wh{X}^n(M)\coloneqq \bigoplus_{r+s = n}\wh{X}^{rs}(M)\index{xm@$\wh{X}^n(M)$|dotfillboldidx}\quad\text{and}\quad \wh{d}^n(\alpha)\coloneqq \sum^{r+1}_{l=0} \wh{d}_l^{r-l+1,n-r+l-1} (\alpha)\index{dg@$\wh{d}^n$|dotfillboldidx} \quad\text{for all $\alpha\in \wh{X}^{r,n-r-1}(M)$.}
$$
We have thus proven the following result:

\begin{theorem}\label{Hochschild cohomology} The Hochschild cohomology $\Ho_{\hs K}^*(E,\hs M)$, of the $K$-algebra $E$ with coefficients in $M$, is the co\-homology of $(\wh{X}^*(M),\wh{d}^*)$.
\end{theorem}

\begin{remark}\label{remark para cohomologia: caso F toma valores en K} It follows immediately from Corollary~\ref{caso complej doble} that, if $\mathcal{F}$ takes its values in $K\ot_k V$, then $(\wh{X}^*(M),\wh{d}^*)$ is the total complex of the double complex $(\wh{X}^{**}(M),\wh{d}_0^{**},\wh{d}_1^{**})$.
\end{remark}

\begin{remark}\label{remark para cohomologia: caso K=A} If $K=A$, then $(\wh{X}^*(M),\wh{d}^*) = (\wh{X}^{0*}(M),\wh{d}_1^{0*})$.
\end{remark}

For each $s\ge 0$, we let $\Hom_{\hs A}(E^{\ot_{\hs A} s},M)$ denote the abelian group of left $A$-linear maps from $E^{\ot_{\hs A} s}$ to $M$. Likewise $\Hom_{\hs A}(\wt{E}^{\ot_{\hs A} s},M)$, the group $\Hom_{\hs A}(E^{\ot_{\hs A} s},M)$ is an $A$-bimodule via
$$
(a\xcdot\alpha)(\bx_{1s})\coloneqq \alpha(\bx_{1s}\xcdot a)\qquad\text{and}\qquad (\alpha\xcdot a) (\bx_{1s}) \coloneqq  \alpha(\bx_{1s})\xcdot a,
$$
where $\bx_{1s}\coloneqq x_1\ot_{\hs A}\dots\ot_{\hs A}x_s\in E^{\ot_{\hs A}^s}$. For $r,s\ge 0$, let
$$
X^{rs}(M)\coloneqq \Hom_{(A,K)}\bigl(E^{\ot_{\hs A} s}\ot\ov{A}^{\ot r},M\bigr) \simeq \Hom_{\hs K^e}\bigl(\ov{A}^{\ot r},\Hom_{\hs A}\bigl(E^{\ot_{\hs A} s},M \bigr)\bigr)\index{xp@$X^{rs}(M)$|dotfillboldidx}.
$$
Similarly like for $\wh{X}^{rs}(M)$, we have canonical identifications $X^{rs}(M)\simeq \Hom_{E^e}(X'_{rs},M)$. For $r\ge 0$, $s>0$ and $0\le i\le s$, let
$\mu^i\colon X^{r,s-1}(M)\longrightarrow X^{rs}(M)\index{zmd@$\mu^i$|dotfillboldidx}$ be the map induced by $\mu'_i$. It is easy to see that
$$
\mu^i(\alpha)(\bx_{1s}\ot\ov{\ba}_{1r}) = \begin{cases} x_1\xcdot\alpha\bigl(x_{2s}\ot\ov{\ba}_{1r}\bigr) &\text{if $i=0$,}\\ \alpha\bigl(\bx_{1,i-1}\ot_{\hs A} x_ix_{i+1} \ot_{\hs A} \bx_{i+2,s}\ot \ov{\ba}_{1r}\bigr) & \text{if $0<i< s$,}\\ \sum_l \alpha\bigl(\bx_{1,s-2}\ot_{\hs A} x_{s-1}\jmath_{\nu}(a)\ot \ov{\ba}_{1r}^{(l)}\bigr)\xcdot \gamma\bigl(v^{(l)}\bigr)&\text{if $i=s$,}\end{cases}
$$
where $x_1,\dots,x_{s-1}\in E$, $a_1,\dots,a_r\in A$, $x_s\coloneqq \jmath_{\nu}(a)\gamma(v)$ with $a\in A$ and $v\in V$, and
$$
\sum_l \ov{\ba}_{1r}^{(l)}\ot_k v^{(l)} \coloneqq  \ov{\chi}\bigl(v\ot_k \ov{\ba}_{1r}\bigr).
$$

\begin{notation}\label{notation wh X sub u supra rs } Given a $k$-subalgebra $R$ of $A$ and $0\le u\le r$, we let $T^u_{rs}(M)$ denote the $k$-sub\-mo\-du\-le of $\Hom_{E^e}\bigl(X_{rs},M\bigr)$ consisting of all the $E$-bimodule maps that factorize through the $E$-subbimodule of $X_{r+u,s-u-1}$ generated by all the simple tensors $1_E\ot_{\hs A}\wt{\bx}_{1,s-u-1}\ot\ov{\ba}_{1,r+u}\ot 1_E$, with at least $u$ of the $a_j$'s in $R$. We set $\wh{X}_u^{rs}(R,M)\coloneqq \zeta^{rs}(T^u_{rs}(M))\index{xq@$\wh{X}_u^{rs}(R,M)$|dotfillboldidx}$.
\end{notation}

\begin{theorem}\label{formula para wh{d} 1 en cohomologia} The following assertions hold:

\begin{enumerate}[itemsep=0.7ex, topsep=1.0ex, label=\emph{(\arabic*)}]

\item Via the identifications~\eqref{pepe3}, the morphism $\wh{d}_0\colon\wh{X}^{r-1,s}(M)\to\wh{X}^{rs}(M)$ is $(-1)^s$-times the coboundary map of the normalized cochain Hochschild complex of the $K$-algebra $A$ with coefficients in $\Hom_{\hs A}(\wt{E}^{\ot_{\hs A} s},M)$, considered as an $A$-bimodule as at the beginning of this section.

\item The morphism $\wh{d}_1\colon\wh{X}^{r,s-1}(M)\to\wh{X}^{rs}(M)$ is induced by $\sum_{i=0}^s (-1)^i \mu^i$.

\item Let $R$ be a $k$-subalgebra of $A$. If $R$ is stable under $\chi$ and $\mathcal{F}$ takes its values in $R\ot_k V$, then $\wh{d}_l\bigl(\wh{X}^{r+l-1,s-l}(M)\bigr) \subseteq \wh{X}_{l-1}^{rs}(R,M)$, for each $r\ge 0$ and $1<l\le s$.
\end{enumerate}
\end{theorem}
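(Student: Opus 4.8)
The plan is to obtain each of the three assertions as the $\Hom_{E^e}(-,M)$\nobreakdash-transpose of the corresponding statement about the homology differentials $d^l_{rs}$ established in Theorem~\ref{teorema wh{d^O}, wh{d^1} y wh{d^l} con l>=2}, transported through the isomorphisms $\zeta^{rs}$ and the identification~\eqref{pepe3}. Recall that $\wh{d}_l^{rs}$ is by definition induced by $\Hom_{E^e}(d^l_{rs},M)$, that is, after the identification $\wh{X}^{rs}(M)\simeq\Hom_{E^e}(X_{rs},M)$ it is precomposition with $d^l_{rs}\colon X_{rs}\to X_{r+l-1,s-l}$. Thus the three items are the exact cohomological mirrors of the three homological items, and the whole content of the earlier theorem dualizes directly.

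For item~(1): by its very construction (the rows of the diagram preceding Theorem~\ref{res nuestra}), $d^0_{rs}$ is $(-1)^s$ times the normalized bar boundary of the $K$-algebra $A$, tensored on the left with $E\ot_{\hs A}\wt{E}^{\ot_{\hs A}^s}$ and on the right with $E$. Applying $\Hom_{E^e}(-,M)$ and the Hom-tensor identification $\wh{X}^{rs}(M)\simeq\Hom_{\hs K^e}\bigl(\ov{A}^{\ot^r},\Hom_{\hs A}(\wt{E}^{\ot_{\hs A}^s},M)\bigr)$ of~\eqref{pepe3}, this transpose becomes precisely $(-1)^s$ times the coboundary of the normalized Hochschild cochain complex of $A$ with coefficients in the $A$-bimodule $\Hom_{\hs A}(\wt{E}^{\ot_{\hs A}^s},M)$, the bimodule structure being the one fixed at the start of the section. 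Item~(2) is the transpose of Theorem~\ref{formula para d^1}: since $d^1_{rs}$ is induced by $\sum_{i=0}^s(-1)^i\mu'_i$, its transpose is induced by $\sum_{i=0}^s(-1)^i\mu^i$, where $\mu^i=\Hom_{E^e}(\mu'_i,M)$ is the map whose explicit formula was recorded just before the statement. Both of these are formal.

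For item~(3) I would argue as follows. Fix $\alpha\in\wh{X}^{r+l-1,s-l}(M)\simeq\Hom_{E^e}(X_{r+l-1,s-l},M)$; then $\wh{d}_l(\alpha)$ corresponds to $\alpha\circ d^l_{rs}$. By Theorem~\ref{formula para d^1b}(3) with $u=0$, together with $E$-bilinearity, we get $d^l_{rs}(X_{rs})=d^l_{rs}(E\xcdot L_{rs})\subseteq E\xcdot U^{l-1}_{r+l-1,s-l}(R)$. Since $U^{l-1}_{r+l-1,s-l}(R)=L^{l-1}_{r+l-1,s-l}(R)\xcdot\gamma(V)$ and $\gamma(V)\subseteq E$, the image of $d^l_{rs}$ lies in the $E$-subbimodule of $X_{r+l-1,s-l}$ generated by $L^{l-1}_{r+l-1,s-l}(R)$, hence in the one generated by the simple tensors $1_E\ot_{\hs A}\wt{\bx}_{1,s-l}\ot\ov{\ba}_{1,r+l-1}\ot 1_E$ with at least $l-1$ of the $a_j$ in $R$. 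Consequently $\alpha\circ d^l_{rs}$ factors through that subbimodule, i.e.\ lies in $T^{l-1}_{rs}(M)$, and so $\wh{d}_l(\alpha)\in\wh{X}_{l-1}^{rs}(R,M)$. The only nonformal point, and hence the main obstacle, is precisely this last matching: one must reconcile the description of $\ima(d^l_{rs})$ as $E\xcdot U^{l-1}_{r+l-1,s-l}(R)=E\xcdot L^{l-1}_{r+l-1,s-l}(R)\xcdot\gamma(V)$, whose generators carry trailing $\gamma$-entries, with the generators $1_E\ot_{\hs A}\wt{\bx}_{1,s-l}\ot\ov{\ba}_{1,r+l-1}\ot 1_E$ defining $T^{l-1}_{rs}(M)$. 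The care needed is to verify that absorbing the factor $\gamma(V)$ into the left $E$-action and passing to the generated $E$-subbimodule does not disturb the count ``at least $l-1$ of the $a_j$ in $R$'' carried by the $\ov{A}$-slots; once this bookkeeping is checked, the factorization is immediate.
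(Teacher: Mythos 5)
Your proposal is correct and takes essentially the same route as the paper, whose one-line proof derives item~(1) from the definition of $d^0$, item~(2) from Theorem~\ref{formula para d^1}, and item~(3) from Theorem~\ref{formula para d^1b}(3), all transported through the isomorphisms $\zeta^{rs}$ and the identification~\eqref{pepe3} exactly as you do. Two harmless slips in your item~(3): $X_{rs}=E\xcdot L_{rs}\xcdot E$ rather than $E\xcdot L_{rs}$, and the trailing factor $\gamma(V)$ in $U^{l-1}_{r+l-1,s-l}(R)=L^{l-1}_{r+l-1,s-l}(R)\xcdot\gamma(V)$ is absorbed into the \emph{right} (not left) $E$-action --- it multiplies only the last tensor slot and leaves the $\ov{A}$-slots, hence the count ``at least $l-1$ of the $a_j$ in $R$'', untouched --- so both are swallowed by passing to the generated $E$-subbimodule, just as your closing bookkeeping remark anticipates.
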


\begin{proof} Item~(1) follows from the very definition of $d^0$; item~(2), from Theorem~\ref{formula para d^1}; and item~(3), from The\-orem~\ref{formula para d^1b}(3).
\end{proof}

\begin{theorem}\label{formula para wh{d}_2} Under the hypothesis of Theorem~\ref{formula para d^2} the map $\wh{d}_2\colon \wh{X}^{r+1,s-2}(M)\to \wh{X}^{rs}(M)$ is given by
$$
\wh{d}_2(\alpha)(\wt{\gamma}_{\hs A}(\bv_{1s})\ot \ov{\ba}_{1r}) = (-1)^{s+1} \alpha\bigl(\wt{\gamma}_{\hs A}(\bv_{1,s-2}) \ot \mathfrak{T}(v^{(1)}_{s-1}v^{(1)}_s,\ov{\ba}_{1r})\bigr)\xcdot \gamma(v_{s-1}^{(2)}v_s^{(2)}),
$$
where $v_1,\dots,v_s\in H$, $a_1,\dots,a_r\in A$ and $\mathfrak{T}(v_{s-1},v_s,\ov{\ba}_{1r})$ is as in Theorem~\ref{formula para d^2}.
\end{theorem}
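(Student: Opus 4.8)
The plan is to dualize, almost verbatim, the proof of Theorem~\ref{formula para wh{d}^2}, exploiting that $\wh{d}_2^{rs}$ is \emph{by definition} the map induced by $\Hom_{E^e}(d^2_{rs},M)$ under the isomorphism $\zeta^{rs}$ of Section~\ref{section: Hochschild cohomology of general crossed products}. Everything therefore reduces to the explicit description of $d^2$ on the resolution $(X_*,d_*)$ furnished by Theorem~\ref{formula para d^2}, combined with the right $E$-linearity of the cochains. No new computation is needed; the work is purely a matter of transporting the formula for $d^2$ through the adjunction.

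First I would fix a cochain $\alpha\in\wh{X}^{r+1,s-2}(M)$, corresponding via $\zeta^{r+1,s-2}$ to an $E$-bimodule map on $X_{r+1,s-2}$, and unwind the definitions of $\wh{d}_2$ and $\zeta$. For $v_1,\dots,v_s\in V$ and $a_1,\dots,a_r\in A$, writing $\bz_{rs}\coloneqq 1_E\ot_{\hs A}\wt{\gamma}_{\hs A}(\bv_{1s})\ot\ov{\ba}_{1r}\ot 1_E$ (the element on which $d^2$ is evaluated in Theorem~\ref{formula para d^2}), this gives
\begin{equation*}
\wh{d}_2(\alpha)(\wt{\gamma}_{\hs A}(\bv_{1s})\ot\ov{\ba}_{1r}) = \alpha\bigl(d^2(\bz_{rs})\bigr).
\end{equation*}

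Then I would substitute the formula of Theorem~\ref{formula para d^2},
\begin{equation*}
d^2(\bz_{rs}) = (-1)^{s+1}\, 1_E\ot_{\hs A}\wt{\gamma}_{\hs A}(\bv_{1,s-2})\ot \mathfrak{T}(v^{(1)}_{s-1},v^{(1)}_s,\ov{\ba}_{1r})\ot \gamma(v_{s-1}^{(2)}v_s^{(2)}),
\end{equation*}
and use that $\alpha$ is right $E$-linear to extract the rightmost factor $\gamma(v_{s-1}^{(2)}v_s^{(2)})$ from the argument of $\alpha$. Recognizing the remaining expression $1_E\ot_{\hs A}\wt{\gamma}_{\hs A}(\bv_{1,s-2})\ot\mathfrak{T}(v^{(1)}_{s-1},v^{(1)}_s,\ov{\ba}_{1r})\ot 1_E$ as exactly the argument on which $\zeta^{r+1,s-2}$ evaluates $\alpha$, I obtain precisely the claimed value
\begin{equation*}
(-1)^{s+1}\,\alpha\bigl(\wt{\gamma}_{\hs A}(\bv_{1,s-2})\ot \mathfrak{T}(v^{(1)}_{s-1},v^{(1)}_s,\ov{\ba}_{1r})\bigr)\xcdot \gamma(v_{s-1}^{(2)}v_s^{(2)}).
\end{equation*}

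There is essentially no obstacle. The only point deserving care is bookkeeping: one must verify that in $X_{r+1,s-2}=E\ot_{\hs A}\wt{E}^{\ot_{\hs A}^{s-2}}\ot\ov{A}^{\ot^{r+1}}\ot E$ the last tensor slot of $d^2(\bz_{rs})$ is a genuine right $E$-factor (of the form $1_E$ acted on by $\gamma(v_{s-1}^{(2)}v_s^{(2)})$), so that right $E$-linearity legitimately moves it outside $\alpha$ and the sign $(-1)^{s+1}$ is carried through unchanged. This is immediate from the shape of $X_{r+1,s-2}$, so the statement follows at once from Theorem~\ref{formula para d^2}.
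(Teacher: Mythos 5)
Your proposal is correct and coincides with the paper's argument: the paper proves this theorem simply by noting that it "follows immediately from Theorem~\ref{formula para d^2}", and your write-up just makes explicit the routine transport through $\zeta^{rs}$ and the right $E$-linearity of $\alpha$ that the paper leaves implicit. No discrepancy.
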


\begin{proof} This follows immediately from Theorem~\ref{formula para d^2}.
\end{proof}

\begin{remark}\label{cohomologia de las filas} Note that $\Ho^r\bigl(\wh{X}^{*s}(M),\wh{d}_0^{*s}\bigr) = \Ho_{{\hs K}}^r\bigr(A,\Hom_{{\hs A}} \bigl(\wt{E}^{\ot_{\hs A} s},M\bigr)\bigl)$.
\end{remark}

\subsection{Comparison maps}\label{morfismos de comparacion en cohomologia}
Let $\cramped{\bigl(\Hom_{K^e}\bigl(\ov{E}^{\ot *},M\bigr),b^*\bigr)}$ be the normalized Hochschild cochain complex of the $K$-algebra $E$ with coefficients in $M$. Recall that there is a canonical identification
$$
\bigl(\Hom_{K^e}\bigl(\ov{E}^{\ot *},M\bigr),b^*\bigr)\simeq\Hom_{E^e}\bigl(\bigl(E\ot\ov{E}^{\ot *}\ot E,b'_*\bigr),M\bigr).
$$
Let
$$
\wh{\phi}^*\colon\!\bigl(\Hom_{K^e}\bigl(\ov{E}^{\ot *}\!,M\bigr),b^*\!\bigr)\!\longrightarrow\!\bigl(\wh{X}^*(M),\wh{d}^*\bigr) \index{zvb@$\wh{\phi}^n$|dotfillboldidx} \quad\,\text{and}\,\quad \wh{\psi}^*\!\colon\bigl(\wh{X}^*(M),\wh{d}^*\bigr)\!\longrightarrow\! \bigl(\Hom_{K^e}\bigl(\ov{E}^{\ot *}\!,M\bigr),b^*\!\bigr)\index{zyb@$\wh{\psi}^n$|dotfillboldidx}
$$
be the morphisms of cochain complexes induced by $\phi_*$ and $\psi_*$ respectively. Proposition~\ref{homotopia} implies that $\wh{\phi}^*\xcirc\wh{\psi}^* =\ide^*$ and $\wh{\psi}^*\xcirc\wh{\phi}^*$ is homotopically equivalent to the identity map. An homotopy $\wh{\omega}^{*+1}$ from $\wh{\psi}^*\xcirc\wh{\phi}^*$ to $\ide^*$, is the family of maps
$$
\bigl(\wh{\omega}^{n+1}\colon\Hom_{K^e}\bigl(\ov{E}^{\ot {n+1}},M\bigr)\longrightarrow\Hom_{K^e}\bigl(\ov{E}^{\ot n},M\bigr)\bigr)_{n\ge 0}
\index{zyyb@$\wh{\omega}^n$|dotfillboldidx},
$$
induced by $\cramped{\bigl(\omega_{n+1}\colon E\ot\ov{E}^{\ot n}\ot E\longrightarrow E\ot\ov{E}^{\ot {n+1}}\ot E\bigr)_{n\ge 0}}$.

\subsection[The filtrations of \texorpdfstring{$(\wh{X}^*(M),\wh{d}^*)$}{(X(M),d)} and \texorpdfstring{$\bigl({\Hom}_{K^e}(\ov{E}^{\ot *},M),b^*\bigr)$}{Hom(E,M),b}]{The filtrations of \texorpdfstring{$\bm{(\wh{X}^*(M),\wh{d}^*)}$}{(X(M),d)} and \texorpdfstring{$\bm{\bigl({\Hom}_{K^e}(\ov{E}^{\ot *},M),b^*\bigr)}$}{Hom(E,M),b}}
The complex $(\wh{X}^*(M),\wh{d}^*)$ is filtered by
$$
F_0(\wh{X}^*(M))\supseteq F_1(\wh{X}^*(M))\supseteq F_2(\wh{X}^*(M))\supseteq F_3(\wh{X}^*(M))\supseteq F_4(\wh{X}^*(M))\supseteq\dots,
$$
where $F_i(\wh{X}^n(M)) \coloneqq \bigoplus_{s=i}^n\wh{X}^{n-s,s}(M)\index{fm$@$F_i(\wh{X}^n(M))$|dotfillboldidx}$. Hence, by Remark~\ref{cohomologia de las filas} there is an spectral sequence
\begin{equation}
E_l^{rs}\Longrightarrow\Ho_{\hs K}^{r+s}(E,M),\qquad\text{with $E_1^{rs}=\Ho_{\hs K}^r(A,\Hom_{\hs A} (\wt{E}^{\ot_{\hs A} s},M))$}.\label{eq10}
\end{equation}
We let $\cramped{F_i\bigl({\Hom}_{K^e}\bigl(\ov{E}^{\ot *},M\bigr)\bigr)}\index{fn@$F_i\bigl({\Hom}_{K^e}\bigl(\ov{E}^{\ot *},M\bigr)\bigr)$ |dotfillboldidx}$ denote the $k$-submodule of $\cramped{\Hom_{K^e}\bigl(\ov{E}^{\ot *},M\bigr)}$ consisting of all morphisms $\cramped{\alpha\in {\Hom}_{K^e}\bigl(\ov{E}^{\ot *},M\bigr)}$, such that $\cramped{\alpha\bigl(F^{i-1}\bigl(\ov{E}^{\ot *}\bigr)\bigr) = 0}$, where $\cramped{F^i\bigl(\ov{E}^{\ot n}\bigr)}$ is as in Notation~\ref{notation F supra i}. The normalized Hochschild co\-chain complex $\cramped{\bigl({\Hom}_{K^e}\bigl(\ov{E}^{\ot *},M\bigr), b^*\bigr)}$ is filtered by
\begin{equation}
F_0\bigl(\Hom_{K^e}(\ov{E}^{\ot *},M)\bigr)\supseteq F_1\bigl(\Hom_{K^e}(\ov{E}^{\ot *},M)\bigr)\supseteq F_2\bigl(\Hom_{K^e}(\ov{E}^{\ot *},M)\bigr) \supseteq\dots.\label{eq11}
\end{equation}
The spectral sequence associated to this filtration is called the {\em cohomological Hochschild-Serre spectral sequence of} $E$ {with coefficients in} $M$.

\begin{theorem}\label{phi, psi y omega preservan filtraciones en cohomologia} The maps $\wh{\phi}^*$, $\wh{\psi}^*$ and $\wh{\omega}^*$ preserve filtrations.
\end{theorem}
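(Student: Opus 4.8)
The plan is to obtain this statement by dualizing its resolution-level counterpart, Theorem~\ref{las func phi psi y omega preservan filtraciones}, exactly as the homological version (Theorem~\ref{phi, psi y omega preservan filtraciones en homologia}) was obtained from it, the only genuinely new feature being the contravariance of $\Hom_{E^e}(-,M)$. First I would record that, under the identifications $\wh{X}^{rs}(M)\simeq\Hom_{E^e}(X_{rs},M)$ furnished by $\zeta^{rs}$ and $\Hom_{K^e}(\ov{E}^{\ot^*},M)\simeq\Hom_{E^e}(E\ot\ov{E}^{\ot^*}\ot E,M)$, the maps $\wh{\phi}^*$, $\wh{\psi}^*$ and $\wh{\omega}^*$ are precisely $\Hom_{E^e}(\phi_*,M)$, $\Hom_{E^e}(\psi_*,M)$ and the map induced by $\omega_*$; in particular each of them acts by precomposition with the corresponding map between the resolutions $(X_*,d_*)$ and $(E\ot\ov{E}^{\ot^*}\ot E,b'_*)$.

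Next I would rephrase both cochain filtrations as annihilators of the chain filtrations living on those resolutions. Since $X_n=\bigoplus_{r+s=n}X_{rs}$ and $\wh{X}^{rs}(M)$ is the summand $\Hom_{E^e}(X_{rs},M)$ extended by zero, the definition $F_i(\wh{X}^n(M))=\bigoplus_{s\ge i}\wh{X}^{n-s,s}(M)$ says exactly that a functional lies in $F_i(\wh{X}^n(M))$ if and only if it vanishes on $\bigoplus_{s\le i-1}X_{n-s,s}=F^{i-1}(X_n)$. Likewise, using $F^i(E\ot\ov{E}^{\ot^n}\ot E)=E\bar{\ot}F^i(\ov{E}^{\ot^n})\bar{\ot}E$, the condition $\alpha\bigl(F^i(\ov{E}^{\ot^n})\bigr)=0$ defining $F_i\bigl(\Hom_{K^e}(\ov{E}^{\ot^*},M)\bigr)$ becomes the requirement that the corresponding $E$-bimodule map annihilate $F^i(E\ot\ov{E}^{\ot^n}\ot E)$. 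Thus each cohomological filtration step is the annihilator of an increasing filtration step of the underlying resolution.

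With this dictionary the core of the argument is formal: if $f$ is a map of complexes with $f(F^jC)\subseteq F^jD$, then $\Hom_{E^e}(f,M)$ sends the annihilator of $F^jD$ into the annihilator of $F^jC$, since for $\alpha$ vanishing on $F^jD$ the composite $\alpha\circ f$ vanishes on $F^jC$. Substituting the three inclusions $\phi(F^jX_*)\subseteq F^j$, $\psi(F^j)\subseteq F^jX_*$ and $\omega(F^j)\subseteq F^j$ provided by Theorem~\ref{las func phi psi y omega preservan filtraciones} then yields the desired filtration behaviour of $\wh{\phi}^*$, $\wh{\psi}^*$ and $\wh{\omega}^*$, respectively.

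The step I expect to be delicate is the index bookkeeping in this annihilator correspondence. The two conventions recorded above are not literally aligned: the filtration on $\wh{X}^*(M)$ is the annihilator of $F^{\bullet-1}$, whereas the one on the bar complex is the annihilator of $F^{\bullet}$, so a naive application of the duality formula carries $F_i(\wh{X}^*(M))$ a priori only into $F_{i-1}$ of the bar complex. The point to check carefully is therefore that, after accounting for this one-step discrepancy together with the inclusion $F^{i-1}\subseteq F^i$, the genuine column-$i$ contribution $E\cdot L_{n-i,i}$ that appears in claim~\eqref{eq8} is absorbed correctly, so that all three maps are honest morphisms of the filtered cochain complexes and induce the comparison of the associated spectral sequences required in the sequel.
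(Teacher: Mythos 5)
Your proposal is correct and is essentially the paper's own argument: the paper proves this theorem in one line as an immediate consequence of Theorem~\ref{las func phi psi y omega preservan filtraciones}, which is precisely the precomposition/annihilator duality you spell out, using that $\wh{\phi}^*$, $\wh{\psi}^*$, $\wh{\omega}^*$ are $\Hom_{E^e}(-,M)$ applied to $\phi_*$, $\psi_*$, $\omega_*$ under the identifications $\zeta^{rs}$. The one-step discrepancy you flag is genuine but is an artifact of the paper's indexing rather than a gap in the argument: the reading consistent with $F_i(\wh{X}^n(M))=\bigoplus_{s\ge i}\wh{X}^{n-s,s}(M)$ (whose elements annihilate exactly $F^{i-1}(X_n)$) is that $F_i$ of the cochain bar complex should annihilate $F^{i-1}\bigl(\ov{E}^{\ot^n}\bigr)$ --- note $F^{-1}=0$ must give $F_0=$ everything --- and with that convention the formal annihilator argument closes uniformly for all three maps, so the column-$i$ term $E\xcdot L_{n-i,i}$ appearing in~\eqref{eq8} is not something to be ``absorbed'' but exactly the graded contribution detected by $\wh{X}^{n-i,i}(M)$.
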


\begin{proof} This follows immediately from Theorem~\ref{las func phi psi y omega preservan filtraciones}.
\end{proof}

\begin{corollary}\label{suc esp isom en cohom} The cohomological Hochschild-Serre spectral sequence is isomorphic to the spectral sequence~\eqref{eq10}.
\end{corollary}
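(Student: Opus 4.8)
The plan is to derive the corollary from Theorem~\ref{phi, psi y omega preservan filtraciones en cohomologia} via the standard comparison principle for spectral sequences of filtered cochain complexes, exactly parallel to the homological case treated after Theorem~\ref{phi, psi y omega preservan filtraciones en homologia}. Both spectral sequences arise from filtered complexes: the one in~\eqref{eq10} from the filtration $F_i(\wh{X}^n(M))=\bigoplus_{s=i}^n\wh{X}^{n-s,s}(M)$ of $\bigl(\wh{X}^*(M),\wh{d}^*\bigr)$, and the cohomological Hochschild--Serre spectral sequence from the filtration~\eqref{eq11} of $\bigl(\Hom_{K^e}(\ov{E}^{\ot^*},M),b^*\bigr)$. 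Since a filtration-preserving cochain map functorially induces a morphism of the associated spectral sequences, agreeing on each page with the map induced on the associated graded, the first step is to invoke Theorem~\ref{phi, psi y omega preservan filtraciones en cohomologia} to get morphisms of spectral sequences $\wh{\phi}^*_r$ and $\wh{\psi}^*_r$ for $r\ge 0$, together with the fact that $\wh{\omega}^*$ respects the filtration.

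Next I would examine the two composites page by page. Because Proposition~\ref{homotopia} gives $\wh{\phi}^*\xcirc\wh{\psi}^*=\ide^*$ already at the cochain level, functoriality yields $\wh{\phi}^*_r\xcirc\wh{\psi}^*_r=\ide$ on every page $E_r$. For the reverse composite the key point is that a homotopy preserving the filtration descends to the associated graded: reducing the identity $\wh{\psi}^*\xcirc\wh{\phi}^*-\ide^*=b^*\xcirc\wh{\omega}^*+\wh{\omega}^*\xcirc b^*$ modulo one filtration step exhibits the induced endomorphism of the associated graded complex as chain-homotopic to the identity with respect to the $E_0$-differential $d_0$. Hence $\wh{\psi}^*_1\xcirc\wh{\phi}^*_1=\ide$ on $E_1=H(E_0,d_0)$, and therefore on all later pages by naturality of the differentials.

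Combining the two relations shows that $\wh{\phi}^*_1$ is an isomorphism on the $E_1$-page. By the mapping lemma for spectral sequences, a morphism that is an isomorphism on one page is an isomorphism on every subsequent page; since both filtrations are bounded in each total degree, the induced map on the common abutment $\Ho_{\hs K}^*(E,M)$, computed by both complexes by Theorem~\ref{Hochschild cohomology}, is compatible with the filtrations. This produces the asserted isomorphism of spectral sequences from the $E_1$-page onward.

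The step I expect to require the most care, though it is ultimately routine, is checking that the filtration-preserving homotopy $\wh{\omega}^*$ genuinely descends to an honest $d_0$-chain homotopy on the associated graded, i.e.\ that the inclusion $\wh{\omega}^*(F_i)\subseteq F_i$ recorded in Theorem~\ref{phi, psi y omega preservan filtraciones en cohomologia} holds in the strict sense needed for the reduction modulo one filtration step; once this is in hand the argument is pure functoriality of the spectral sequence construction.
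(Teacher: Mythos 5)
Your proposal is correct and is exactly the argument the paper intends: the corollary is stated without explicit proof because it follows from Theorem~\ref{phi, psi y omega preservan filtraciones en cohomologia} together with the relations $\wh{\phi}^*\xcirc\wh{\psi}^*=\ide^*$ and $\wh{\psi}^*\xcirc\wh{\phi}^*\simeq\ide^*$ via the filtration-preserving homotopy $\wh{\omega}^*$, by the standard comparison of spectral sequences of filtered complexes. Your care about the homotopy descending to a $d_0$-homotopy on the associated graded is precisely the (routine) point that makes the implicit argument work, so there is nothing to correct.
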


Recall that the cup product in $\HH_K^*(E)$, of $\beta\in\Hom_{K^e}(\ov{E}^{\ot m},E)$ and $\beta'\in\Hom_{K^e}(\ov{E}^{\ot n},E)$, is given by $(\beta\smile\beta')(\ov{\bx}_{1,m+n}) \coloneqq \beta(\ov{\bx}_{1m})\beta'(\ov{\bx}_{m+1,m+n})\index{zzh@$\smile$|dotfillboldidx}$.

\begin{corollary}\label{es multiplicativa} When $M=E$ the spectral sequence~\eqref{eq10} is multiplicative.
\end{corollary}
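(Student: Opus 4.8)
The plan is to reduce the statement, via Corollary~\ref{suc esp isom en cohom}, to the cohomological Hochschild--Serre spectral sequence, and then to recognize the latter as the spectral sequence of a filtered differential graded algebra whose filtration is compatible with the product. By Corollary~\ref{suc esp isom en cohom} the spectral sequence~\eqref{eq10} is isomorphic to the one associated with the filtration~\eqref{eq11} of the normalized Hochschild cochain complex $\bigl(\Hom_{K^e}(\ov{E}^{\ot^*},E),b^*\bigr)$, so it suffices to prove that this last spectral sequence is multiplicative, and then transport the structure through the isomorphism.

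First I would recall that, for $M=E$, the cup product $\smile$ turns $\bigl(\Hom_{K^e}(\ov{E}^{\ot^*},E),b^*\bigr)$ into a differential graded algebra: associativity of $\smile$ is immediate from the formula $(\beta\smile\beta')(\ov{\bx}_{1,m+n})=\beta(\ov{\bx}_{1m})\beta'(\ov{\bx}_{m+1,m+n})$ together with the associativity of $\mu_E$, while the Leibniz rule $b^*(\beta\smile\beta')=b^*(\beta)\smile\beta'+(-1)^m\beta\smile b^*(\beta')$ is the standard computation for Hochschild cohomology with coefficients in the algebra itself. Here is precisely where the hypothesis $M=E$ enters, since the products $\beta(\ov{\bx}_{1m})\beta'(\ov{\bx}_{m+1,m+n})$ only make sense when the coefficients carry a multiplication.

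The heart of the argument is to check that the decreasing filtration~\eqref{eq11} is multiplicative, i.e. that
$$
F_i\bigl(\Hom_{K^e}(\ov{E}^{\ot^m},E)\bigr)\smile F_j\bigl(\Hom_{K^e}(\ov{E}^{\ot^n},E)\bigr)\subseteq F_{i+j}\bigl(\Hom_{K^e}(\ov{E}^{\ot^{m+n}},E)\bigr).
$$
By definition $\beta\in F_i$ means $\beta$ vanishes on $F^i(\ov{E}^{\ot^m})$, i.e. on every simple tensor with at least $m-i$ factors in $\jmath_{\nu}(A)$. Let $\beta\in F_i$ and $\beta'\in F_j$, and take a simple tensor $\ov{\bx}_{1,m+n}\in F^{i+j}(\ov{E}^{\ot^{m+n}})$, so that at most $i+j$ of its factors lie outside $\jmath_{\nu}(A)$. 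Writing $p$ and $q$ for the numbers of factors of $\ov{\bx}_{1m}$ and of $\ov{\bx}_{m+1,m+n}$ respectively that are not in $\jmath_{\nu}(A)$, we have $p+q\le i+j$, so it is impossible that both $p>i$ and $q>j$. Hence either $\ov{\bx}_{1m}\in F^i(\ov{E}^{\ot^m})$, forcing $\beta(\ov{\bx}_{1m})=0$, or $\ov{\bx}_{m+1,m+n}\in F^j(\ov{E}^{\ot^n})$, forcing $\beta'(\ov{\bx}_{m+1,m+n})=0$; in either case $(\beta\smile\beta')(\ov{\bx}_{1,m+n})=0$, which proves the inclusion.

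Having a differential graded algebra filtered by a multiplicative decreasing filtration, the standard formalism of spectral sequences then furnishes compatible bigraded products on every page, with each differential $d_l$ a derivation, with the product on $E_{l+1}$ induced from that on $E_l$, and with the product on $E_\infty$ equal to the associated graded of the cup product on $\HH_K^*(E)$; that is, the Hochschild--Serre spectral sequence is multiplicative. Transporting this structure through the isomorphism of Corollary~\ref{suc esp isom en cohom} yields the multiplicativity of~\eqref{eq10}. The only genuinely delicate point is the filtration-compatibility inclusion displayed above, and in particular the bookkeeping that a factor $x_j\notin\jmath_{\nu}(A)$ sitting in the first or the second block of the concatenated tensor contributes to $p$ or to $q$ accordingly; the remainder is the usual multiplicative-spectral-sequence machinery applied to a filtered DG algebra.
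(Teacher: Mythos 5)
Your proof is correct and follows exactly the paper's own argument: the paper proves this corollary in one line by citing Corollary~\ref{suc esp isom en cohom} together with the inclusion $F_i\smile F_j\subseteq F_{i+j}$ for the filtration~\eqref{eq11}. You have simply supplied the details (the counting argument for the filtration inclusion and the standard filtered DG-algebra formalism) that the paper leaves implicit.
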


\begin{proof} By Corollary~\ref{suc esp isom en cohom} and the fact that the filtration~\eqref{eq11} satisfies $F_i\smile F_j\subseteq F_{i+j}$.
\end{proof}

\begin{proposition}\label{propiedad de wh phi en coh} Let $R$ be a $k$-subalgebra of $A$ and let $\rho_l$ be the left action of $A$ on $M$. If $R$ is stable under $\chi$ and $\mathcal{F}$ takes its values in $R\ot_k V$, then for all $\beta\in\Hom_{K^e}(\ov{E}^{\ot n},M)$,
$$
\wh{\phi}(\beta)\bigl(\wt{\gamma}_{\hs A}(\bv_{1i})\ot\ov{\ba}_{1,n-i}\bigr) = \beta\bigl( \Sh(\bv_{1i}\ot_k \ov{\ba}_{1,n-i})\bigr) + \rho_l\bigl((E\ot \beta)\bigl(\phi''_{in}(\bv_{1i}\ot_k \ov{\ba}_{1,n-i})\bigr)\bigr),
$$
where $a_1,\dots,a_{n-i}\in A$, $v_1,\dots,v_i\in V$ and $\phi''_{in}(\bv_{1i}\ot_k \ov{\ba}_{1,n-i})$ is as in Notation~\ref{complemento}.
\end{proposition}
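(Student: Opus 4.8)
The plan is to dualise the reasoning behind the homological statement Proposition~\ref{propiedad de wh phi}, whose proof was merely a reading-off of Remark~\ref{nota de propiedad de phi}. First I would make explicit what $\wh{\phi}$ does to a cochain. Under the identification $\Hom_{K^e}\bigl(\ov{E}^{\ot^n},M\bigr)\simeq\Hom_{E^e}\bigl(E\ot\ov{E}^{\ot^n}\ot E,M\bigr)$ recalled at the start of the cohomological comparison, a map $\beta$ corresponds to the $E$-bimodule map $\tilde{\beta}$ determined by $\tilde{\beta}(x_0\ot\ov{\bx}_{1n}\ot x_{n+1}) = x_0\cdot\beta(\ov{\bx}_{1n})\cdot x_{n+1}$. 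Since $\wh{\phi}^*$ is induced by $\phi_*$ and the component isomorphisms $\zeta^{rs}$, evaluating $\wh{\phi}(\beta)$ on the $\wh{X}^{n-i,i}$-piece at $\wt{\gamma}_{\hs A}(\bv_{1i})\ot\ov{\ba}_{1,n-i}$ amounts, by the defining formula for $\zeta$, to computing $\tilde{\beta}\bigl(\phi_n(1_E\ot_{\hs A}\wt{\gamma}_{\hs A}(\bv_{1i})\ot\ov{\ba}_{1,n-i}\ot 1_E)\bigr)$.

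At this point I would substitute the formula of Remark~\ref{nota de propiedad de phi}, which splits $\phi_n$ applied to that simple tensor into $1_E\ot\Sh(\bv_{1i}\ot_k\ov{\ba}_{1,n-i})\ot 1_E$ plus the correction $\phi'_{in}(\bv_{1i}\ot_k\ov{\ba}_{1,n-i})$. Applying the $E$-bilinear map $\tilde{\beta}$ to the first summand and using that both outer legs equal $1_E$ gives $1_E\cdot\beta\bigl(\Sh(\bv_{1i}\ot_k\ov{\ba}_{1,n-i})\bigr)\cdot 1_E = (\beta\circ\Sh)(\bv_{1i}\ot_k\ov{\ba}_{1,n-i})$, exactly the first term claimed.

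For the correction I would invoke Notation~\ref{complemento}, by which $\phi'_{in}(\bv_{1i}\ot_k\ov{\ba}_{1,n-i}) = \phi''_{in}(\bv_{1i}\ot_k\ov{\ba}_{1,n-i})\ot 1_E$ with $\phi''_{in}(\bv_{1i}\ot_k\ov{\ba}_{1,n-i})\in\jmath_{\nu}(A)\bar{\ot}F_{\!R,1}^{i-1}\bigl(\ov{E}^{\ot^n}\bigr)$; here one uses that the right $\jmath_{\nu}(K)$-leg appearing in Remark~\ref{nota de propiedad de phi} is absorbed into $1_E$ by $K$-linearity of the tensor product. Writing $\phi''_{in}$ as a sum of simple tensors $\jmath_{\nu}(a)\ot\ov{\bx}_{1n}$ and applying $\tilde{\beta}$ turns each into $\jmath_{\nu}(a)\cdot\beta(\ov{\bx}_{1n})$; since $M$ is regarded as an $A$-bimodule through $\jmath_{\nu}$, this equals $\rho_l$ applied to $(\jmath_{\nu}\ot\beta)\bigl(\jmath_{\nu}(a)\ot\ov{\bx}_{1n}\bigr)$, which is precisely the second term $\bigl(\rho_l\circ(\jmath_{\nu}(A)\ot\beta)\circ\phi''_{in}\bigr)(\bv_{1i}\ot_k\ov{\ba}_{1,n-i})$.

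No serious difficulty is expected: the statement is the $\Hom(-,M)$-transpose of Proposition~\ref{propiedad de wh phi} and, like it, reduces to Remark~\ref{nota de propiedad de phi}. The only point that demands care is the bookkeeping in the last step, namely verifying that applying the $E$-bimodule extension $\tilde{\beta}$ to the correction term reproduces the \emph{left}-action composite $\rho_l\circ(\jmath_{\nu}(A)\ot\beta)$ and not a right-hand variant; this is where the fact that the surviving $\jmath_{\nu}(A)$-leg sits on the left, as guaranteed by Remark~\ref{nota de propiedad de phi}, is essential.
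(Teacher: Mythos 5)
Your proof is correct and follows exactly the route the paper intends: its own proof consists of the single line ``This is an immediate consequence of Remark~\ref{nota de propiedad de phi}'', and your argument is precisely the unpacking of that remark through the identification $\Hom_{K^e}\bigl(\ov{E}^{\ot^n},M\bigr)\simeq\Hom_{E^e}\bigl(E\ot\ov{E}^{\ot^n}\ot E,M\bigr)$ and Notation~\ref{complemento}, including the correct observation that the surviving $\jmath_{\nu}(A)$-leg sits on the left and hence produces $\rho_l\circ(\jmath_{\nu}(A)\ot\beta)\circ\phi''_{in}$.
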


\begin{proof} This is an immediate consequence of Remark~\ref{nota de propiedad de phi}.
\end{proof}

We will use the following proposition in the next section.

\begin{proposition}\label{propiedad de hat psi en cohomologia} Let $0\le s\le n$ and $\alpha\in \wh{X}^{n-s,s}(M)$. Let $a_1,\dots, a_i\in A$, $v_{i+1},\dots,v_n\in V$ and $x_1,\dots,x_n\in \jmath_{\nu}(A)\cup\gamma(V)$. The map $\wh{\psi}(\alpha)$ have the following properties:

\begin{enumerate}[itemsep=0.7ex, topsep=1.0ex, label=\emph{(\arabic*)}]

\item If $i=s$, then $\wh{\psi}(\alpha)\bigl(\ov{\gamma}(\bv_{1s})\ot\ov{\jmath}_{\nu}(\ba_{s+1,n})\bigr) = \alpha \bigl(\wt{\gamma}_{\hs A} (\bv_{1s})\ot \ov{\ba}_{s+1,n}\bigr)$.

\item If $i\ne s$, then $\wh{\psi}(\alpha)\bigl(\ov{\gamma}(\bv_{1i})\ot\ov{\jmath}_{\hs A}(\ba_{i+1,n})\bigr) = 0$.

\item If there exist $j_1 < j_2$ such that $x_{j_1}\in \jmath_{\nu}(A)$ and $x_{j_2}\in \gamma(V)$, then $\wh{\psi}(\alpha)(\ov{\bx}_{1n})=0$.

\end{enumerate}
\end{proposition}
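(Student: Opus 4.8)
The plan is to unwind the definition of the cohomological comparison map and reduce everything to the explicit description of $\psi_*$ that already underlies the homological Proposition~\ref{propiedad de hat psi}. Recall that $\wh{\psi}^*$ is $\Hom_{E^e}(\psi_*,M)$ read through the identifications $\wh{X}^{rs}(M)\simeq\Hom_{E^e}(X_{rs},M)$ and $\Hom_{K^e}(\ov{E}^{\ot^*},M)\simeq\Hom_{E^e}(E\ot\ov{E}^{\ot^*}\ot E,M)$. Thus, viewing $\alpha\in\wh{X}^{n-s,s}(M)$ as an $E$-bimodule map $X_{n-s,s}\to M$ and extending it by zero on the remaining summands of $X_n$, one has
\[
\wh{\psi}(\alpha)(\ov{\bx}_{1n}) = \alpha\bigl(\psi_n(1_E\ot\ov{\bx}_{1n}\ot 1_E)\bigr),
\]
and, since $\alpha$ kills every bidegree except $(n-s,s)$, only the $X_{n-s,s}$-component of $\psi_n(1_E\ot\ov{\bx}_{1n}\ot 1_E)$ contributes. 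So the whole statement is a matter of identifying that single component.

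First I would record, from the computation of $\psi_n$ carried out in the appendix (Proposition~\ref{propA.7}, the very tool invoked for Proposition~\ref{propiedad de hat psi}), the precise shape of $\psi_n(1_E\ot\ov{\bx}_{1n}\ot 1_E)$ on the tensors appearing in the statement. On a \emph{properly ordered} generator, i.e. $x_j=\gamma(v_j)$ for $j\le i$ and $x_j=\jmath_{\nu}(a_j)$ for $j>i$, the element $\psi_n(1_E\ot\ov{\bx}_{1n}\ot 1_E)$ lies entirely in bidegree $(n-i,i)$ and equals the canonical generator $1_E\ot_{\hs A}\wt{\gamma}_{\hs A}(\bv_{1i})\ot\ov{\ba}_{i+1,n}\ot 1_E$; on an \emph{inverted} tensor, one for which some $\jmath_{\nu}$-slot precedes some $\gamma$-slot, it vanishes. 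These are exactly the facts recorded homologically in Proposition~\ref{propiedad de hat psi}(1) and~(2).

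Granting this, the three items are immediate. For item~(1), with $i=s$ the relevant bidegree matches, and applying $\alpha$ to the canonical generator yields $\alpha(\wt{\gamma}_{\hs A}(\bv_{1s})\ot\ov{\ba}_{s+1,n})$ through the defining isomorphism $\zeta^{n-s,s}$. For item~(2), with $i\ne s$, the element $\psi_n(1_E\ot\ov{\gamma}(\bv_{1i})\ot\ov{\jmath}_{\nu}(\ba_{i+1,n})\ot 1_E)$ sits in bidegree $(n-i,i)$, so its $(n-s,s)$-component is zero and hence $\wh{\psi}(\alpha)$ vanishes on it. For item~(3), the inverted tensor forces $\psi_n(1_E\ot\ov{\bx}_{1n}\ot 1_E)=0$, whence $\wh{\psi}(\alpha)(\ov{\bx}_{1n})=0$ independently of $\alpha$. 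The main obstacle therefore is not here but in the appendix: establishing the exact form of the surviving component of $\psi_n$, which is obtained by running the recursion $\psi_n=\ov{\sigma}_n\xcirc\psi_{n-1}\xcirc b'$ together with Proposition~\ref{formula para psi_n(y ot 1)} and the image bounds for $\ov{\sigma}$ in Remarks~\ref{ima of ov sigma} and~\ref{ima of ov sigma caso p}; once that is in hand, the cohomological bookkeeping---transposition, bidegree matching and the $\zeta$-identification---is routine.
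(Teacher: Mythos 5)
Your proposal is correct and follows essentially the same route as the paper, whose entire proof is that the statement ``follows immediately from items~(1) and~(2) of Proposition~\ref{propA.7}''. Your unwinding of $\wh{\psi}(\alpha)=\alpha\xcirc\psi_n$ through the identification $\zeta^{n-s,s}$, together with the bidegree bookkeeping showing that only the $(n-s,s)$-component of $\psi_n(1_E\ot\ov{\bx}_{1n}\ot 1_E)$ can contribute, simply makes explicit the step the paper leaves as immediate.
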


\begin{proof} This follows immediately from items~(1) and~(2) of Proposition~\ref{propA.7}.
\end{proof}

\section{The cup and cap products for general crossed products}\label{The cup and cap products for general crossed products}
In this section we obtain formulas involving the complexes $(\wh{X}^*(E),\wh{d}^*)$ and $(\wh{X}_*(M),\wh{d}_*)$ that induce the cup product of $\HH_{\hs K}^*(E)$ and the cap product of $\Ho^{\hs K}_*(E,M)$. First of all recall that for $m\le n$, the cap product $\smallfrown\colon\Ho^{\hs K}_n(E,M)\times\HH_{\hs K}^m(E)\longrightarrow \Ho^{\hs K}_{n-m}(E,M)$, is induced by the map
$$
\smallfrown\colon \bigl(M\ot\ov{E}^{\ot n}\ot\bigr)\times \Hom_{{\hs K}^e}(\ov{E}^{\ot m},E)\longrightarrow M\ot\ov{E}^{\ot {n-m}}\ot,
$$
defined by $[m\ot\ov{\bx}_{1n}]\smallfrown\beta \coloneqq  [m\xcdot \beta(\ov{\bx}_{1m})\ot\ov{\bx}_{m+1,n}]\index{zzi@$\smallfrown$|dotfillboldidx}$. When $m>n$ we set $[m\ot\ov{\bx}_{1n}]\smallfrown\beta \coloneqq 0$.

\begin{definition}\label{producto bullet} Let $\alpha\in \wh{X}^{rs}(E)$ and $\alpha'\in \wh{X}^{r's'}(E)$.  We define $\alpha\bullet \alpha'\in \wh{X}^{r'',s''}(E)$, where $r'' \coloneqq r+r'$ and $s'' \coloneqq s+s'$, by
$$
(\alpha\bullet\alpha')\bigl(\wt{\gamma}_{\hs A}(\bv_{1s''})\ot \ov{\ba}_{1r''}\bigr)\coloneqq  \sum (-1)^{rs'}\alpha \bigl(\wt{\gamma}_{\hs A} (\bv_{1s}) \ot\ov{\ba}_{1r}^{(l)}\bigr) \alpha'\bigl(\wt{\gamma}_{\hs A}(\bv_{s+1,s''}^{(l)})\ot \ov{\ba}_{r+1,r''}\bigr)\index{zzj@$\bullet$|dotfillboldidx},
$$
where $v_1,\dots,v_{s''}\in V$, $a_1,\dots, a_{r''}\in A$ and $\sum_l \ov{\ba}_{1r}^{(l)}\ot_k \bv_{s+1,s''}^{(l)} \coloneqq  \ov{\chi}\bigl(\bv_{s+1,s''} \ot_k \ov{\ba}_{1r}\bigr)$.
\end{definition}

\begin{theorem}\label{th cup product} Let $\alpha\!\in\! \wh{X}^{rs}(E)$ and $\alpha'\!\in\! \wh{X}^{r's'}(E)$. Set $s''\coloneqq s+s'$ and $n \coloneqq  r+r'+s+s'$. If $\mathcal{F}$ takes its values in $K\ot_k V$, then, for all $v_1,\dots,v_i\in V$ and $a_{i+1},\dots,a_n\in A$, we have
\begin{equation*}
\wh{\phi}\bigl(\wh{\psi}(\alpha)\smile \wh{\psi}(\alpha')\bigr)\bigl(\wt{\gamma}_{\hs A}(\bv_{1i})\ot \ov{\ba}_{i+1,n}\bigr) = \begin{cases} (\alpha\bullet \alpha')\bigl(\wt{\gamma}_{\hs A}(\bv_{1s''})\ot \ov{\ba}_{s''+1,n}\bigr) &\text{if $i = s''$,}\\ 0 &\text{otherwise,}\end{cases}
\end{equation*}
\end{theorem}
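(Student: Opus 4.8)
The plan is to transport the computation to the simpler complex by means of the explicit description of the comparison maps, and then to read off the answer from the combinatorics of the shuffle map $\Sh$. Throughout write $\beta\coloneqq\wh{\psi}(\alpha)\smile\wh{\psi}(\alpha')\in\Hom_{K^e}(\ov{E}^{\ot^n},E)$, so that the left-hand side of the statement is $\wh{\phi}(\beta)\bigl(\wt{\gamma}_{\hs A}(\bv_{1i})\ot\ov{\ba}_{i+1,n}\bigr)$. Since $\mathcal{F}$ takes its values in $K\ot_k V$ and $K$ is stable under $\chi$, I would apply Proposition~\ref{propiedad de wh phi en coh} with $R=K$. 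The decisive simplification is that the correction term vanishes identically: by Remark~\ref{nota de propiedad de phi} it lands in $\jmath_{\nu}(A)\bar{\ot}F^{i-1}_{K,1}(\ov{E}^{\ot^n})\bar{\ot}\jmath_{\nu}(K)$, and every generator of $F^{i-1}_{K,1}(\ov{E}^{\ot^n})$ has at least one tensorand in $\jmath_{\nu}(K)$, hence maps to $0$ in $\ov{E}^{\ot^n}=\bigl(E/\jmath_{\nu}(K)\bigr)^{\ot^n}$. Consequently $F^{i-1}_{K,1}(\ov{E}^{\ot^n})=0$ and Proposition~\ref{propiedad de wh phi en coh} reduces to
\[
\wh{\phi}(\beta)\bigl(\wt{\gamma}_{\hs A}(\bv_{1i})\ot\ov{\ba}_{i+1,n}\bigr)=\beta\bigl(\Sh(\bv_{1i}\ot_k\ov{\ba}_{i+1,n})\bigr).
\]

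Next I would expand $\Sh(\bv_{1i}\ot_k\ov{\ba}_{i+1,n})$ using its recursive definition: it is a signed, $\ov{\chi}$-twisted sum of shuffles of the $i$ factors $\ov{\gamma}(v_\bullet)$ into the $n-i$ factors $\ov{\jmath}_{\nu}(a_\bullet)$, and \emph{every} summand has exactly $i$ tensorands lying in $\ov{\gamma}(V)$. Applying $\beta$ and using the cup product formula $(\wh{\psi}(\alpha)\smile\wh{\psi}(\alpha'))(\ov{\bx}_{1n})=\wh{\psi}(\alpha)(\ov{\bx}_{1m})\,\wh{\psi}(\alpha')(\ov{\bx}_{m+1,n})$ with $m=r+s$, each summand is split into its first $m$ and last $n-m$ tensorands. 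By items~(2) and~(3) of Proposition~\ref{propiedad de hat psi en cohomologia}, $\wh{\psi}(\alpha)$ annihilates the first block unless it equals $\ov{\gamma}(V)^{\ot^s}$ followed by $\ov{\jmath}_{\nu}(A)^{\ot^r}$, and likewise $\wh{\psi}(\alpha')$ forces the second block to be $\ov{\gamma}(V)^{\ot^{s'}}$ followed by $\ov{\jmath}_{\nu}(A)^{\ot^{r'}}$. Thus at most one shuffle term survives, the one with tensor pattern $\gamma^{s}\jmath^{r}\gamma^{s'}\jmath^{r'}$, and it carries exactly $s+s'=s''$ factors in $\ov{\gamma}(V)$. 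Since each summand has exactly $i$ such factors, survival forces $i=s''$; for $i\neq s''$ no term survives and the value is $0$, which is the stated value in that case.

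For $i=s''$ it remains to identify the surviving summand with $(\alpha\bullet\alpha')\bigl(\wt{\gamma}_{\hs A}(\bv_{1s''})\ot\ov{\ba}_{s''+1,n}\bigr)$. On the two blocks I would evaluate by item~(1) of Proposition~\ref{propiedad de hat psi en cohomologia}, obtaining $\alpha\bigl(\wt{\gamma}_{\hs A}(\bv_{1s})\ot\ov{\ba}^{(l)}\bigr)$ and $\alpha'\bigl(\wt{\gamma}_{\hs A}(\bv^{(l)}_{s+1,s''})\ot\ov{\ba}\bigr)$ respectively, and then multiply. The coefficient of this term comes from the $\digamma$-braidings in $\Sh$ that carry the $s'$ factors $v_{s+1},\dots,v_{s''}$ across the first $r$ algebra factors; these assemble precisely into the twist $\ov{\chi}(\bv_{s+1,s''}\ot_k\ov{\ba}_{s''+1,s''+r})$ appearing in Definition~\ref{producto bullet}, while the signs $(-1)^{(\cdot)}$ in $\digamma$ accumulate to $(-1)^{rs'}$. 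This reproduces exactly the defining formula for $\alpha\bullet\alpha'$.

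The main obstacle is the last paragraph: proving that the coefficient of the single surviving shuffle equals the $\ov{\chi}$-twist together with the sign $(-1)^{rs'}$ prescribed by $\bullet$. I would establish this by induction on $s''$ (equivalently on $n$) following the recursion $\Sh_{sr}=\sum_{i}\bigl(\Sh_{s-1,i}\ot\ov{E}^{\ot^{r-i+1}}\bigr)\xcirc\bigl(\digamma_{si}\ot\ov{\jmath}_{\nu}^{\ot^{r-i}}\bigr)$, peeling off the last $V$-factor and matching its braiding against the twist built into $\bullet$; the appendix results (Proposition~\ref{propA.7}) underlying Proposition~\ref{propiedad de hat psi en cohomologia} supply exactly the control over $\wh{\psi}$ on twisted-shuffle arguments that makes this bookkeeping rigorous. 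The remaining verifications — that the cup product splits at the correct degree $m=r+s$ and that the braided indices in $\ov{\chi}$ line up with those in the $\bullet$ product — are routine once this coefficient identity is in hand.
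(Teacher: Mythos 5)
Your proposal is correct and follows the paper's own proof essentially step for step: reduce via Proposition~\ref{propiedad de wh phi en coh} to evaluating $\wh{\psi}(\alpha)\smile\wh{\psi}(\alpha')$ on $\Sh(\bv_{1i}\ot_k\ov{\ba}_{i+1,n})$, use items (1)--(3) of Proposition~\ref{propiedad de hat psi en cohomologia} to annihilate every shuffle summand except the one with pattern $\gamma^{s}\jmath^{r}\gamma^{s'}\jmath^{r'}$ (which forces $i=s''$, with value $0$ otherwise), and identify the survivor with the defining formula of $\bullet$. Your two additions---the explicit observation that $F_{K,1}^{i-1}\bigl(\ov{E}^{\ot^n}\bigr)=0$ because every generator has a tensorand in $\jmath_{\nu}(K)$, so the $\phi''_{in}$ correction term vanishes, and the flagged induction producing the sign $(-1)^{rs'}$ and the $\ov{\chi}$-twist from the recursion for $\Sh$---merely make explicit what the paper compresses into its citation of Proposition~\ref{propiedad de wh phi en coh} and its closing ``the result follows.''
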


\begin{proof} For the sake of brevity we set $T\coloneqq \Sh(\bv_{1i}\ot_k \ov{\ba}_{i+1,n})$. By Proposition~\ref{propiedad de wh phi en coh},
$$
\wh{\phi}\bigl(\wh{\psi}(\alpha)\smile\wh{\psi}(\alpha')\bigr) \bigl(\wt{\gamma}_{\hs A}(\bv_{1i})\ot \ov{\ba}_{i+1,n}\bigr) = \bigl(\wh{\psi}(\alpha)\smile \wh{\psi}(\alpha') \bigr)(T).
$$
Since, by the de\-finition of $\smile$ and Proposition~\ref{propiedad de hat psi en cohomologia},

\begin{itemize}

\smallskip

\item[-] if $i\ne s''$, then$\bigl(\wh{\psi}(\alpha)\smile \wh{\psi}(\alpha') \bigr)(T) = 0$,

\smallskip

\item[-] if $i = s''$, then
$$
\qquad\qquad\bigl(\wh{\psi}(\alpha)\!\smile\!\wh{\psi}(\alpha')\bigr)(T) = \sum_l (-1)^{s'r}\alpha\bigl( \wt{\gamma}_{\hs A} (\bv_{1s})\ot \ov{\ba}_{i+1,i+r}^{(l)} \bigr) \alpha'\bigl(\wt{\gamma}_{\hs A}(\bv_{s+1,i}^{(l)})\ot\ov{\ba}_{i+r+1,n}\bigr),
$$
where $\sum_l \ov{\ba}_{i+1,i+r}^{(l)}\ot_k \bv_{s+1,i}^{(l)}\coloneqq  \ov{\chi}\bigl(\bv_{s+1,i}\ot_k \ov{\ba}_{i+1,i+r} \bigr)$,

\smallskip

\end{itemize}
the result follows.
\end{proof}

\begin{corollary}\label{cor cup product} If $\mathcal{F}$ takes its values in $K\ot_k V$, then the cup product of $\HH_{\hs K}^*(E)$ is induced by the operation $\bullet$ in $(\wh{X}^*(E),\wh{d}^*)$.
\end{corollary}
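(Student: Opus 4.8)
The plan is to deduce the statement almost immediately from Theorem~\ref{th cup product}, after recording that the two complexes involved compute $\HH_{\hs K}^*(E)$ compatibly. First I would recall, from Proposition~\ref{homotopia} applied with $M=E$ (see Subsection~\ref{morfismos de comparacion en cohomologia}), that $\wh{\phi}^*$ and $\wh{\psi}^*$ are mutually quasi-inverse morphisms of cochain complexes: one has $\wh{\phi}^*\xcirc\wh{\psi}^* = \ide^*$ on $\wh{X}^*(E)$, while $\wh{\psi}^*\xcirc\wh{\phi}^*$ is chain-homotopic to the identity through $\wh{\omega}^*$. Hence they induce mutually inverse isomorphisms $\HH_{\hs K}^*(E)\cong H^*\bigl(\wh{X}^*(E),\wh{d}^*\bigr)$ on cohomology, under which the cup product of $\HH_{\hs K}^*(E)$ is transported, at the level of cochains, to the operation $(\alpha,\alpha')\mapsto \wh{\phi}\bigl(\wh{\psi}(\alpha)\smile\wh{\psi}(\alpha')\bigr)$ on $\wh{X}^*(E)$.

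Next I would show that this transported operation equals $\bullet$ as an operation on cochains, not merely on cohomology classes. The point is that a cochain in $\wh{X}^{r''s''}(E) = \Hom_{(A,K)}\bigl(\wt{E}^{\ot_{\hs A}^{s''}}\ot\ov{A}^{\ot^{r''}},E\bigr)$ is completely determined by its values on the simple tensors $\wt{\gamma}_{\hs A}(\bv_{1s''})\ot\ov{\ba}_{1r''}$: indeed $E=\jmath_{\nu}(A)\gamma(V)$ by Proposition~\ref{R tensor V esta incluido} (with $R=A$), so $\wt{E}$ is generated by the classes $\wt{\gamma}(v)$ as a left $A$-module, and, using that right multiplication carries $\wt{\gamma}(v)$ back into left $A$-combinations of the $\wt{\gamma}$'s (Theorem~\ref{prodcruz1}(9)), the module $\wt{E}^{\ot_{\hs A}^{s''}}$ is generated over $A$ by the $\wt{\gamma}_{\hs A}(\bv_{1s''})$. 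Now evaluating $\wh{\phi}\bigl(\wh{\psi}(\alpha)\smile\wh{\psi}(\alpha')\bigr)$ on $\wt{\gamma}_{\hs A}(\bv_{1i})\ot\ov{\ba}_{i+1,n}$ with $n=r''+s''$, Theorem~\ref{th cup product} returns exactly $(\alpha\bullet\alpha')\bigl(\wt{\gamma}_{\hs A}(\bv_{1s''})\ot\ov{\ba}_{s''+1,n}\bigr)$ when $i=s''$ and $0$ otherwise, matching $\alpha\bullet\alpha'$ on all generators and vanishing with it in every other bidegree. Therefore $\wh{\phi}\bigl(\wh{\psi}(\alpha)\smile\wh{\psi}(\alpha')\bigr) = \alpha\bullet\alpha'$ in $\wh{X}^{r''s''}(E)$.

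Combining the two steps, for cocycles $\alpha,\alpha'$ we get $[\alpha]\smile[\alpha'] = [\alpha\bullet\alpha']$ under the identification $\HH_{\hs K}^*(E)\cong H^*\bigl(\wh{X}^*(E),\wh{d}^*\bigr)$; in particular $\bullet$ sends cocycles to cocycles and descends to cohomology, being the cochain-level composite of the chain maps $\wh{\phi},\wh{\psi}$ with the bar cup product, and the induced product is precisely the cup product. This is the assertion of the corollary. I do not expect any genuine obstacle, since all the analytic content was already absorbed into Theorem~\ref{th cup product}; the only point requiring care is the bookkeeping of the preceding paragraph, namely verifying that the unique nonzero bidegree singled out by $i=s''$ in Theorem~\ref{th cup product} is exactly the one that determines $\alpha\bullet\alpha'$ as a cochain, so that the two operations agree on the nose rather than only up to a coboundary.
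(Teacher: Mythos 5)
Your proposal is correct and follows the paper's approach exactly: the paper's proof of Corollary~\ref{cor cup product} is the one-line deduction ``This follows immediately from Theorem~\ref{th cup product}.'' Your elaboration --- that $\wh{\phi}^*$ and $\wh{\psi}^*$ are quasi-inverse (Subsection~\ref{morfismos de comparacion en cohomologia}), so the cup product is transported to $(\alpha,\alpha')\mapsto \wh{\phi}\bigl(\wh{\psi}(\alpha)\smile\wh{\psi}(\alpha')\bigr)$, which Theorem~\ref{th cup product} identifies with $\alpha\bullet\alpha'$ on the generators $\wt{\gamma}_{\hs A}(\bv_{1s''})\ot\ov{\ba}_{1r''}$ that determine an element of $\wh{X}^{r''s''}(E)$ --- is precisely the content the paper leaves implicit.
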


\begin{proof} This follows immediately from Theorem~\ref{th cup product}.
\end{proof}

\begin{remark} Let $M$ be and $E$-bimodule. There is an evident generalization of the definition of the cup products that makes up $\Ho_{\hs K}^*(E,M)$ in a $\HH_{\hs K}^*(E)$-bimodule. It is clear that obvious generalizations of Theorem~\ref{th cup product} and Corollary~\ref{cor cup product} hold.
\end{remark}

\begin{definition}\label{accion bullet} Let $\alpha\in \wh{X}^{r's'}(E)$ and let $\bx\coloneqq [m\ot_{\hs A} \wt{\gamma}_{\hs A}(\bv_{1s})\ot \ov{\ba}_{1r}]\in \wh{X}_{rs}(M)$, where $m\in M$, $v_1,\dots,v_s\in V$ and $a_1,\dots,a_r\in A$. If $r'\le r$ and $s'\le s$, then we define $\bx\diamond \alpha\in \wh{X}_{r-r',s-s'}(M)$ by
$$
\bx \diamond\alpha \coloneqq \sum_l (-1)^{r'(s-s')}\bigl[m\xcdot \alpha\bigl(\wt{\gamma}_{\hs A}(\bv_{1s'})\ot \ov{\ba}_{1r'}^{(l)}\bigr) \ot_{\hs A} \wt{\gamma}_{\hs A}(\bv_{s'+1,s}^{(l)})\ot\ov{\ba}_{r'+1,r}\bigr]\index{zzjj@$\diamond$|dotfillboldidx},
$$
where $\sum_l \ov{\ba}_{1r'}^{(l)}\ot_k \bv_{s'+1,s}^{(l)} \coloneqq  \ov{\chi}\bigl(\bv_{s'+1,s} \ot_k \ov{\ba}_{1r'}\bigr)$. If $r'>r$ or $s'>s$, then we set $\bx\diamond\alpha \coloneqq 0$.
\end{definition}

\begin{theorem}\label{th cap product} Let $m\in M$, $v_1,\dots,v_s\in V$, $a_1,\dots,a_r\in A$ and $\alpha\in \wh{X}^{r's'}(E)$. If  $\mathcal{F}$ takes its values in $K\ot_k V$, then $\wh{\psi}\bigl(\wh{\phi}([m\ot_{\hs A}\wt{\gamma}_{\hs A}(\bv_{1s})\ot\ov{\ba}_{1r}]) \smallfrown \wh{\psi}(\alpha)\bigr) = [m\ot_{\hs A}\wt{\gamma}_{\hs A}(\bv_{1s})\ot\ov{\ba}_{1r}] \diamond\alpha$.
\end{theorem}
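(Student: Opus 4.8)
The plan is to mirror the proof of Theorem~\ref{th cup product}: transport $\bx$ to the bar complex via $\wh\phi$, perform the cap product there, and return through $\wh\psi$, exploiting that $\wh\psi\xcirc\wh\phi=\ide$ (Proposition~\ref{homotopia}). Write $\bx\coloneqq[m\ot_{\hs A}\wt\gamma_{\hs A}(\bv_{1s})\ot\ov\ba_{1r}]$, $n\coloneqq r+s$ and $p\coloneqq r'+s'$, and take $R=K$ throughout (legitimate, since $\mathcal F$ takes its values in $K\ot_k V$ and $K$ is stable under $\chi$). Rather than evaluating $\wh\psi$ directly on the impure tensors produced by the cap product, I would instead establish the sharper congruence
$$
\wh\phi(\bx)\smallfrown\wh\psi(\alpha)\equiv\wh\phi(\bx\diamond\alpha)\pmod{\ker\wh\psi},
$$
from which the theorem follows at once by applying $\wh\psi$ and using $\wh\psi\xcirc\wh\phi=\ide$.

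The core of the argument computes the left-hand side. First expand $\wh\phi(\bx)=[m\ot\Sh(\bv_{1s}\ot_k\ov\ba_{1r})]+[m\ot_{\hs A}\phi''_{sn}(\bv_{1s}\ot_k\ov\ba_{1r})]$ using Proposition~\ref{propiedad de wh phi}. The shuffle $\Sh(\bv_{1s}\ot_k\ov\ba_{1r})$ is a signed sum of interleavings of the factors $\ov\gamma(v_1),\dots,\ov\gamma(v_s)$ and $\ov\jmath_\nu(a_1),\dots,\ov\jmath_\nu(a_r)$, with relative order preserved inside each group and with $\ov\chi$ applied each time a $\gamma$-factor is carried past a $\jmath_\nu$-factor (this is exactly the recursion defining $\digamma$ and $\Sh$, compatible with Notation~\ref{iteracion de chi} and Proposition~\ref{twisting inducido}). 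Capping with the cochain $\wh\psi(\alpha)$, which consumes the first $p$ tensor entries, I would invoke Proposition~\ref{propiedad de hat psi en cohomologia}: this cochain annihilates every tensor whose first $p$ entries are not exactly $s'$ factors from $\gamma(V)$ followed by $r'$ factors from $\jmath_\nu(A)$, and on a surviving tensor returns $\alpha(\wt\gamma_{\hs A}(\bv_{1s'})\ot\ov\ba^{(l)}_{1r'})$. Since the $\gamma$-factors keep their order in $\Sh$, the only surviving components are those in which $\ov\gamma(v_1),\dots,\ov\gamma(v_{s'})$ fill the first $s'$ slots and the twisted $\ov\jmath_\nu(a_1),\dots,\ov\jmath_\nu(a_{r'})$ the next $r'$; the twisting accumulated by $v_{s'+1},\dots,v_s$ as they cross $a_1,\dots,a_{r'}$ is precisely the $\ov\chi(\bv_{s'+1,s}\ot_k\ov\ba_{1r'})$ of Definition~\ref{accion bullet}, while the factors left in positions $p+1,\dots,n$ reassemble into $\Sh(\bv^{(l)}_{s'+1,s}\ot_k\ov\ba_{r'+1,r})$. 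After collecting the Koszul signs into $(-1)^{r'(s-s')}$, this surviving part is exactly the leading (shuffle) term of $\wh\phi(\bx\diamond\alpha)$, obtained by applying Proposition~\ref{propiedad de wh phi} to each summand of $\bx\diamond\alpha$.

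It then remains to prove that the discrepancy lies in $\ker\wh\psi$. This discrepancy is the difference of the two families of correction terms: $[m\ot_{\hs A}\phi''_{sn}(\bv_{1s}\ot_k\ov\ba_{1r})]\smallfrown\wh\psi(\alpha)$ coming from $\wh\phi(\bx)$, and the correction part of $\wh\phi(\bx\diamond\alpha)$. By Proposition~\ref{propiedad de wh phi} and Remark~\ref{nota de propiedad de phi}, both families land in $\jmath_\nu(A)\bar\ot F_{K,1}^{\,i-1}(\ov E^{\ot n})\bar\ot\jmath_\nu(K)$, so each of their tensor summands carries strictly fewer than the maximal number of $\gamma$-factors for its degree. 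I expect this to be the main obstacle: one must show $\wh\psi$ annihilates this difference. This is precisely what the vanishing properties of $\wh\psi$ in Proposition~\ref{propiedad de hat psi} are designed for, in particular item~(2) (killing any tensor in which a $\jmath_\nu(A)$-factor precedes a $\gamma(V)$-factor) and item~(6) together with the filtration-preservation of $\wh\psi$ (Theorem~\ref{phi, psi y omega preservan filtraciones en homologia}), which force the "one bad factor" contributions into strictly lower filtration. A careful induction on $n$, parallel to the proofs of Propositions~\ref{propiedad de phi} and~\ref{propiedad de hat psi}, should show that these correction contributions cancel and are sent to zero by $\wh\psi$. The remaining sign bookkeeping and the verification that the iterated twistings compose to the single $\ov\chi$ appearing in $\diamond$ are routine and can be organized exactly as in Theorem~\ref{th cup product}.
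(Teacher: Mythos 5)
Your central computation coincides with the paper's: reduce via Proposition~\ref{propiedad de wh phi} to capping $[m\ot \Sh(\bv_{1s}\ot_k\ov{\ba}_{1r})]$ with $\wh{\psi}(\alpha)$, then use items~(2) and~(3) of Proposition~\ref{propiedad de hat psi en cohomologia} to see that the only surviving shuffle terms are those whose first $r'+s'$ entries are $\ov{\gamma}(v_1),\dots,\ov{\gamma}(v_{s'})$ followed by the twisted $\ov{\jmath}_{\nu}$-factors, with twisting $\ov{\chi}(\bv_{s'+1,s}\ot_k\ov{\ba}_{1r'})$ and sign $(-1)^{r'(s-s')}$. The gap is in your third paragraph: the disposal of the correction terms $\phi''$ is deferred to ``a careful induction on $n$ \dots should show,'' flagged by you as the main obstacle but never carried out, so the proof as written is incomplete. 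Moreover the obstacle is illusory, and noticing this is precisely the idea your write-up is missing: since you (legitimately) take $R=K$, the receptacle you yourself cite for the corrections, $\jmath_{\nu}(A)\bar{\ot} F_{\! K,1}^{i-1}(\ov{E}^{\ot^n})\bar{\ot}\jmath_{\nu}(K)$, is the zero module. Indeed, by Notation~\ref{notation F sub R} every generating simple tensor of $F_{\! K,1}^{i-1}(\ov{E}^{\ot^n})$ has at least one factor in $\jmath_{\nu}(K)$, and such a factor is already $0$ in $\ov{E}=E/\jmath_{\nu}(K)$. Hence $\phi''_{sn}=0$ identically under the hypothesis that $\mathcal{F}$ takes its values in $K\ot_k V$, so $\wh{\phi}(\bx)=[m\ot\Sh(\bv_{1s}\ot_k\ov{\ba}_{1r})]$ holds on the nose (and likewise for $\wh{\phi}(\bx\diamond\alpha)$); there is nothing to cancel and no congruence modulo $\ker\wh{\psi}$ is needed. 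This exact vanishing is also what silently justifies the first line of the paper's proofs of both Theorem~\ref{th cup product} and the present theorem.

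Once the corrections are seen to vanish, your detour through $\wh{\phi}(\bx\diamond\alpha)$ and $\wh{\psi}\xcirc\wh{\phi}=\ide$ (Proposition~\ref{homotopia}) does work, but it is longer than necessary. The paper finishes directly: after the cap one has $\sum (-1)^{r'(s-s')}\, m\xcdot\alpha\bigl(\wt{\gamma}_{\hs A}(\bv_{1s'})\ot\ov{\ba}_{1r'}^{(l)}\bigr)\ot\Sh\bigl(\bv_{s'+1,s}^{(l)}\ot_k\ov{\ba}_{r'+1,r}\bigr)$, and one applies $\wh{\psi}$ to this: item~(2) of Proposition~\ref{propiedad de hat psi} kills every shuffle summand in which a $\jmath_{\nu}(A)$-factor precedes a $\gamma(V)$-factor, leaving only $\ov{\gamma}(\bv^{(l)}_{s'+1,s})\ot\ov{\jmath}_{\nu}(\ba_{r'+1,r})$, which item~(1) sends to $[\,\cdot\,\ot_{\hs A}\wt{\gamma}_{\hs A}(\bv^{(l)}_{s'+1,s})\ot\ov{\ba}_{r'+1,r}]$, i.e.\ to $\bx\diamond\alpha$. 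The degenerate cases $s'>s$ or $r'>r$, where both sides are $0$ by Definition~\ref{accion bullet}, deserve an explicit sentence, though your survival criterion covers them implicitly (if $r'>r$ and $s'\le s$, some $\gamma$-factor must follow a $\jmath_{\nu}$-factor among the first $r'+s'$ entries). In short: right computation and right lemmas, but your submitted argument rests on an unproven claim whose correct resolution is ``the correction terms vanish identically,'' not ``they cancel under $\wh{\psi}$.''
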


\begin{proof} By Proposition~\ref{propiedad de wh phi}, we know that $\wh{\psi}\bigl(\wh{\phi}([m\ot_{\hs A}\gamma_{\hs A}(\bv_{1s})\ot\ov{\ba}_{1r}])\smallfrown \wh{\psi}(\alpha)\bigr) = \wh{\psi}\bigl([m\ot T] \smallfrown \wh{\psi}(\alpha)\bigr)$, where $T\coloneqq \Sh(\bv_{1s}\ot_k \ov{\ba}_{1r})$. moreover, by the definition of $\smallfrown$ and Proposition~\ref{propiedad de hat psi en cohomologia}, we have

\begin{itemize}

\smallskip

\item[-] If $s'>s$ or $r'>r$, then $[m\ot T]\smallfrown \wh{\psi}(\alpha) = 0$.

\smallskip

\item[-] If $s'\le s$ and $r'\le r$, then
$$
\quad\qquad [m\ot T]\smallfrown\wh{\psi}(\alpha)= \sum_l (-1)^{r's-r's'} m\xcdot \alpha \bigl(\wt{\gamma}_{\hs A}(\bv_{1s'})\ot \ov{\ba}_{1r'}^{(l)} \bigr)\ot \Sh\bigl(\bv_{s'+1,s}^{(l)}\ot_k \ov{\ba}_{r'+1,r}\bigr),
$$
where $\sum_l \ov{\ba}_{1r'}^{(l)}\ot_k \bv_{s'+1,s}^{(l)}\coloneqq \ov{\chi}(\bv_{s'+1,s}\ot_k \ov{\ba}_{1r'})$.

\smallskip

\end{itemize}
The result follows now immediately from items~(1) and~(2) of Proposition~\ref{propiedad de hat psi}.
\end{proof}

\begin{corollary}\label{cap product caso simple} If $\mathcal{F}$ takes its values in $K\ot_k V$, then in terms of the complexes $(\wh{X}_*(M),\wh{d}_*)$ and $(\wh{X}^*(E),\wh{d}^*)$, the cap product is induced by the operation $\diamond$.
\end{corollary}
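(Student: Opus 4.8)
The plan is to deduce the statement directly from Theorem~\ref{th cap product}, exactly as Corollary~\ref{cor cup product} was obtained from Theorem~\ref{th cup product}. The guiding principle is that the cap product is defined on the normalized bar complexes, while the comparison maps transport it to the simpler complexes $(\wh{X}_*(M),\wh{d}_*)$ and $(\wh{X}^*(E),\wh{d}^*)$. So the whole task reduces to identifying the transported operation with $\diamond$ on the chain level.

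First I would recall that, by Proposition~\ref{homotopia}, the induced maps $\wh{\phi}$ and $\wh{\psi}$ are mutually inverse quasi-isomorphisms, with $\wh{\psi}\xcirc\wh{\phi}=\ide$ and $\wh{\phi}\xcirc\wh{\psi}$ homotopic to the identity, both in the homological and the cohomological settings. Consequently, for a cycle $z\in\wh{X}_n(M)$ representing a class in $\Ho^{\hs K}_n(E,M)$ and a cocycle $\alpha\in\wh{X}^m(E)$ representing a class in $\HH_{\hs K}^m(E)$, the cap product of these classes is computed by pushing forward to the bar complexes, capping there, and pulling back; that is, $z\smallfrown[\alpha]$ is represented by $\wh{\psi}\bigl(\wh{\phi}(z)\smallfrown\wh{\psi}(\alpha)\bigr)$. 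This transported operation automatically descends to (co)homology, being a composite of chain maps with the chain-level cap product, which obeys the usual graded Leibniz identity with respect to $b_*$ and $b^*$; no separate verification that $\diamond$ is compatible with $\wh{d}_*$ and $\wh{d}^*$ is needed.

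Next I would invoke Theorem~\ref{th cap product}, which evaluates precisely this transported operation on the circular simple tensors $[m\ot_{\hs A}\wt{\gamma}_{\hs A}(\bv_{1s})\ot\ov{\ba}_{1r}]$ generating $\wh{X}_{rs}(M)$, showing that it equals $[m\ot_{\hs A}\wt{\gamma}_{\hs A}(\bv_{1s})\ot\ov{\ba}_{1r}]\diamond\alpha$. Since these tensors span $\wh{X}_{rs}(M)$ over $k$ and both $\diamond$ and $\wh{\psi}\bigl(\wh{\phi}(-)\smallfrown\wh{\psi}(-)\bigr)$ are $k$-linear, the two chain-level operations coincide on all of $\wh{X}_*(M)$. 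Hence $\diamond$ is exactly the transported cap product at the chain level, and therefore induces $\smallfrown\colon\Ho^{\hs K}_*(E,M)\times\HH_{\hs K}^*(E)\to\Ho^{\hs K}_*(E,M)$ on (co)homology, as claimed. There is essentially no remaining obstacle: all computational content, including the matching of the sign $(-1)^{r'(s-s')}$ and the twist by $\ov{\chi}$ built into the definition of $\diamond$ against the signs produced by $\smallfrown$, $\wh{\phi}$ and $\wh{\psi}$, has already been carried out in Theorem~\ref{th cap product}, so the corollary follows immediately.
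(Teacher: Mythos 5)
Your proposal is correct and follows essentially the same route as the paper, whose proof of the corollary consists precisely of citing Theorem~\ref{th cap product}: the chain-level identity $\wh{\psi}\bigl(\wh{\phi}(\bx)\smallfrown\wh{\psi}(\alpha)\bigr)=\bx\diamond\alpha$ on the spanning tensors, combined with the comparison maps from Proposition~\ref{homotopia}, is exactly the intended argument. Your elaboration of why no separate compatibility check of $\diamond$ with $\wh{d}_*$ and $\wh{d}^*$ is needed is a valid (and correctly reasoned) expansion of what the paper leaves implicit.
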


\begin{proof} This follows from Theorem~\ref{th cap product}.
\end{proof}

\section{Cyclic homology of a general crossed product}\label{cyclic-Brzezinski}
In this section we construct a mixed complex computing the cyclic homology of $E$, whose underlying Hochschild complex is $(\wh{X}_*(E),\wh{d}_*)$. The notation $\wh{X}_{n+1}(M)$ is not only meaningful in the case when~$M$ is an $E$-bimodule, but also when $M$ is an $A$-bimodule. In this section we will use it with $M = \jmath_{\nu}(A)$.

\begin{notation} For $i\le n$, we let $F_{\! A,0}^{i,1}\bigl(\ov{E}^{\ot n}\bigr)\index{fo@$F_{\hs A,0}^{i,1}\bigl(\ov{E}^{\ot n}\bigr)$|dotfillboldidx}$ denote the $K$-subbimodule of $\ov{E}^{\ot n}$ generated by all the simple tensors $\bx_{1n}$ such that $\# \{j:x_j\notin \jmath_{\nu}(A)\cup \gamma(V)\}\le 1$ and  $\# \{j:x_j\notin \jmath_{\nu}(A)\}\le i$.
Furthermore we let $\cramped{\jmath_{\nu}(K)\bar{\ot} F_{\hs A,0}^{i,1} \bigl(\ov{E}^{\ot n}\bigr)\bar{\ot}}\index{fp@$\jmath_{\nu}(K)\bar{\ot} F_{\hs A,0}^{i,1} \bigl(\ov{E}^{\ot n}\bigr)\bar{\ot}$|dotfillboldidx}$ denote the image of the canonical map $\jmath_{\nu}(K)\ot F_{\! A,0}^{i,1} \bigl(\ov{E}^{\ot n}\bigr)\ot \longrightarrow E\ot \ov{E}^{\ot n}\ot$.
\end{notation}

\begin{lemma}\label{B circ wh{omega} circ B circ wh{phi}=0} Let $\wh{\phi}$ and $\wh{\omega}$ be as in Subsection~\ref{morfismos de comparacion en homologia}. The composition $B\xcirc\wh{\omega}\xcirc B\xcirc \wh{\phi}$, where the map $B_*\colon E\ot\ov{E}^{\ot *}\ot\longrightarrow E\ot\ov{E}^{\ot {*+1}}\ot$ is the Connes operator, is the zero map.
\end{lemma}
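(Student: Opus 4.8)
The plan is to follow, across the three arrows, how many tensor entries of a circular chain lie outside $\jmath_{\nu}(A)\cup\gamma(V)$, and to keep track of the coefficient slot, exploiting that in the normalized complex the class of any element of $\jmath_{\nu}(K)$ vanishes.

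First I would pin down the image of $\wh{\phi}$. Taking $M=\jmath_{\nu}(A)$, Proposition~\ref{propiedad de wh phi} expresses $\wh{\phi}\bigl([m\ot_{\hs A}\wt{\gamma}_{\hs A}(\bv_{1i})\ot\ov{\ba}_{1,n-i}]\bigr)$ as $[m\ot\Sh(\bv_{1i}\ot_k\ov{\ba}_{1,n-i})]+[m\ot_{\hs A}\phi''_{in}(\bv_{1i}\ot_k\ov{\ba}_{1,n-i})]$. By Remark~\ref{imagen de Sh} the shuffle term lies in $F_{\! A,0}^{i}(\ov{E}^{\ot^n})$, and by Remark~\ref{nota de propiedad de phi} the correction term lies in $\jmath_{\nu}(A)\bar{\ot}F_{\! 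A,1}^{i-1}(\ov{E}^{\ot^n})\bar{\ot}\jmath_{\nu}(K)$. In either case every tensor entry, as well as the coefficient $m\in\jmath_{\nu}(A)$, belongs to $\jmath_{\nu}(A)\cup\gamma(V)$; hence $\ima(\wh{\phi})$ consists of sums of circular tensors all of whose slots lie in $\jmath_{\nu}(A)\cup\gamma(V)$.

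Next I apply the Connes operator. Since $B$ only inserts $1_E=\jmath_{\nu}(1_A)\in\jmath_{\nu}(K)$ in the coefficient position and cyclically permutes the remaining slots, it sends the chains of the previous paragraph to sums of terms $[1_E\ot\ov{\bx}_{1m}]$ whose coefficient is $1_E$ and all of whose entries still lie in $\jmath_{\nu}(A)\cup\gamma(V)$; that is, $B\xcirc\wh{\phi}$ takes values in $\jmath_{\nu}(K)\bar{\ot}F_{\hs A,0}^{\bullet,1}(\ov{E}^{\ot^\bullet})\bar{\ot}$, in fact with no entry outside $\jmath_{\nu}(A)\cup\gamma(V)$. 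It therefore suffices to show that $B\xcirc\wh{\omega}$ annihilates $\jmath_{\nu}(K)\bar{\ot}F_{\hs A,0}^{\bullet,1}(\ov{E}^{\ot^\bullet})\bar{\ot}$.

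The last step is where the real work lies. I would feed the explicit description of $\wh{\omega}$ on chains all of whose entries lie in $\jmath_{\nu}(A)\cup\gamma(V)$ — the companion of Proposition~\ref{propiedad de hat omega}, resting on the same appendix computation — to show that $\wh{\omega}$ maps $\jmath_{\nu}(K)\bar{\ot}F_{\hs A,0}^{\bullet,1}(\ov{E}^{\ot^\bullet})\bar{\ot}$ into chains whose coefficient still lies in $\jmath_{\nu}(K)$ (intuitively because $\wh{\omega}$ is induced by $\omega$, which is built from $\xi$ and only enlarges the tensor part). Granting this, the outer $B$ vanishes at once: every summand of $B$ carries the class of the coefficient as one of its tensor factors, and that class is zero because the coefficient lies in $\jmath_{\nu}(K)$. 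The main obstacle is exactly this coefficient-tracking for $\wh{\omega}$: Proposition~\ref{propiedad de hat omega} as stated only covers chains with one entry outside $\jmath_{\nu}(A)\cup\gamma(V)$, whereas here all entries are inside, so one must either extend that description or argue directly that the cyclic sum defining the outer $B$ cancels. Controlling $\wh{\omega}$ on these fully admissible tensors, together with the interaction of the two insertions of $1_E$ coming from the two copies of $B$ and the one produced by $\xi$ inside $\wh{\omega}$, is the delicate point.
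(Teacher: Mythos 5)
There is a genuine gap, and it originates in your very first step: you take $M=\jmath_{\nu}(A)$, but the lemma concerns the mixed complex $\bigl(\wh{X}_*(E),\wh{d}_*,\wh{D}_*\bigr)$, so $\wh{\phi}$ is applied with coefficients $M=E$ (the remark at the start of Section~\ref{cyclic-Brzezinski} about $M=\jmath_{\nu}(A)$ refers to the targets of certain congruences, not to the domain of $\wh{\phi}$ here). A spanning element of $\wh{X}_{n-i,i}(E)$ has coefficient $m=\jmath_{\nu}(a_0)\gamma(v_0)$, which in general lies \emph{outside} $\jmath_{\nu}(A)\cup\gamma(V)$. Proposition~\ref{propiedad de wh phi} together with Remarks~\ref{imagen de Sh} and~\ref{nota de propiedad de phi} does put all the \emph{tensor entries} of $\wh{\phi}(\bx)$ in $\jmath_{\nu}(A)\cup\gamma(V)$, but when $B$ rotates the coefficient into the tensor part, the resulting chains have coefficient $1_E$ and up to \emph{one} entry outside $\jmath_{\nu}(A)\cup\gamma(V)$. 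This is exactly why the paper's proof establishes $B\xcirc\wh{\phi}\bigl(\wh{X}_{n-i,i}(E)\bigr)\subseteq \jmath_{\nu}(K)\bar{\ot} F_{\hs A,0}^{i+1,1}\bigl(\ov{E}^{\ot^{n+1}}\bigr)\bar{\ot}$ --- note the second superscript $1$, which allows one bad entry --- rather than the ``all slots admissible'' statement you assert; your claimed strengthening is false for $M=E$.

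This misidentification then inverts the logic of your last step. You worry that Proposition~\ref{propiedad de hat omega} ``only covers chains with one entry outside $\jmath_{\nu}(A)\cup\gamma(V)$, whereas here all entries are inside,'' and you leave the required coefficient-tracking for $\wh{\omega}$ as an acknowledged obstacle. In the correct configuration that proposition applies verbatim: its source, Proposition~\ref{prop A.9}, treats $\#\{j:x_j\notin\jmath_{\nu}(A)\cup\gamma(V)\}\le 1$ (in the case of zero bad entries the claim inside its proof even gives $\omega=0$), and the paper concludes $\wh{\omega}\xcirc B\xcirc\wh{\phi}\bigl(\wh{X}_{n-i,i}(E)\bigr)\subseteq \jmath_{\nu}(K)\bar{\ot} F_{\hs A,0}^{i+1}\bigl(\ov{E}^{\ot^{n+2}}\bigr)\bar{\ot}$: now every tensor entry lies in $\jmath_{\nu}(A)\cup\gamma(V)$ and the coefficient lies in $\jmath_{\nu}(K)$, so the outer $B$ vanishes because each cyclic summand of $B$ carries the (zero) class of the coefficient as a tensor factor. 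Your final vanishing mechanism is thus the right one, but of the two containments feeding it, the one you state is stronger than what is true, and the one you flag as ``the delicate point'' and leave unproved is precisely the content of Proposition~\ref{propiedad de hat omega}; as written, the proposal therefore does not constitute a proof.
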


\begin{proof} By Proposition~\ref{propiedad de wh phi}, Remark~\ref{imagen de Sh} and the very definition of $B$,
$$
B\xcirc\wh{\phi}\bigl(\wh{X}_{n-i,i}(E)\bigr)\subseteq \jmath_{\nu}(K)\bar{\ot} F_{\hs A,0}^{i+1,1} \bigl(\ov{E}^{\ot {n+1}}\bigr) \bar{\ot}.
$$
So, by Proposition~\ref{propiedad de hat omega} we have
$
\wh{\omega}\xcirc B\xcirc\wh{\phi}\bigl(\wh{X}_{n-i,i}(E)\bigr)\subseteq \jmath_{\nu}(K)\bar{\ot} F_{\hs A,0}^{i+1}\bigl(\ov{E}^{\ot {n+2}}\bigr) \bar{\ot} \subseteq\ker B,
$
as desired.
\end{proof}

For each $n\ge 0$, let $\wh{D}_n\colon\wh{X}_n(E)\to \wh{X}_{n+1}(E)$ be the map $\wh{D}\coloneqq  \wh{\psi}\xcirc B \xcirc \wh{\phi}$.

\begin{theorem}\label{complejo mezclado que da la homologia ciclica} $\bigl(\wh{X}_*(E),\wh{d}_*,\wh{D}_*\bigr)$ is a mixed complex that yields the Hochschild, cyclic, negative and periodic homologies of the $K$-algebra $E$. Moreover we have chain complexes maps
$$
\begin{tikzpicture}[label distance=5mm]
\node {$\Tot\bigl(\BP(\wh{X}_*(E) ,\wh{d}_*,\wh{D}_*)\bigr)$};\draw (6,0) node {$\Tot\bigl(\BP(E\ot\ov{E}^{\ot *}\ot, b_*,B_*)\bigr)$};\draw[<-] (2,0.12) -- node[above=-2pt,font=\scriptsize] {$\wh{\Psi}$}(3.7,0.12);\draw[->] (2,-0.12) -- node[below=-2pt,font=\scriptsize] {$\wh{\Phi}$}(3.7,-0.12);
\end{tikzpicture}
$$
given by $\wh{\Phi}_n(\bx u^i)\coloneqq  \wh{\phi}(\bx)u^i + \wh{\omega}\xcirc B\xcirc\wh{\phi}(\bx) u^{i-1}$ and $\wh{\Psi}_n(\bx u^i)\coloneqq  \sum_{j\ge 0} \wh{\psi}\xcirc (B\xcirc\wh{\omega})^j(\bx) u^{i-j}$. These maps satisfy $\wh{\Psi}\xcirc \wh{\Phi} = \ide$ and $\wh{\Phi}\xcirc\wh{\Psi}$ is homotopically equivalent to the identity map. A homotopy $\wh{\Omega}_{*+1}\colon\wh{\Phi}_* \xcirc\wh{\Psi}_*\to \ide_*$ is given by $\wh{\Omega}_{n+1}(\bx u^i)\coloneqq  \sum_{j\ge 0}\wh{\omega}\xcirc (B\xcirc\wh{\omega})^j(\bx)u^{i-j}$.
\end{theorem}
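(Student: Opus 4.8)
The plan is to exhibit $(\wh X_*(E),\wh d_*,\wh D_*)$ as the mixed complex obtained by transferring the canonical mixed structure $(E\ot\ov E^{\ot^*}\ot,b_*,B_*)$ along the contraction $(\wh\phi,\wh\psi,\wh\omega)$ of Subsection~\ref{morfismos de comparacion en homologia}, and to deduce every assertion of the statement from a single application of the perturbation lemma (Theorem~\ref{lema de perturbacion}). Throughout I would use that $\wh\phi,\wh\psi$ are chain maps with $\wh\psi\xcirc\wh\phi=\ide$ and $\wh\phi\xcirc\wh\psi-\ide=b\xcirc\wh\omega+\wh\omega\xcirc b$ (Proposition~\ref{homotopia}).

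First I would verify the mixed-complex identities directly. The anticommutation $\wh D\xcirc\wh d+\wh d\xcirc\wh D=0$ follows at once from $\wh\phi\xcirc\wh d=b\xcirc\wh\phi$, $\wh d\xcirc\wh\psi=\wh\psi\xcirc b$ and $B\xcirc b+b\xcirc B=0$, since the expression collapses to $\wh\psi\xcirc(Bb+bB)\xcirc\wh\phi$. For $\wh D\xcirc\wh D=0$ I would insert $\wh\phi\wh\psi=\ide+b\wh\omega+\wh\omega b$ into $\wh\psi B\wh\phi\wh\psi B\wh\phi$; the $B^2$-term vanishes, and after moving $b$ past $B$ via $Bb=-bB$ and using $b\wh\phi=\wh\phi\wh d$, the two surviving terms each acquire the factor $B\xcirc\wh\omega\xcirc B\xcirc\wh\phi$, which is zero by Lemma~\ref{B circ wh{omega} circ B circ wh{phi}=0}.

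For the comparison of homologies I would run the perturbation lemma with everything extended $u$-linearly. I take the special deformation retract given by $i=\wh\phi$, $p=\wh\psi$, $h=\wh\omega$ between $\Tot\BP(E\ot\ov E^{\ot^*}\ot,b_*,0)$ and $\Tot\BP(\wh X_*(E),\wh d_*,0)$; here $\wh\psi\wh\phi=\ide$ gives the retract, and the side conditions $\wh\omega\wh\phi=0$, $\wh\psi\wh\omega=0$, $\wh\omega\wh\omega=0$ follow from the recursive definition of $\omega$ through $\xi$ (which satisfies $\xi\xi=0$ and annihilates its own image), in the same spirit as Proposition~\ref{sigma barra al cuadrado es cero}. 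The perturbation is $\delta=B$, the Connes operator, which is a genuine perturbation because $(b+B)^2=0$, and is small since $B\xcirc\wh\omega$ is locally nilpotent (cf.\ the remark preceding Theorem~\ref{lema de perturbacion}). Writing $A=\sum_{j\ge0}(B\wh\omega)^jB$, Lemma~\ref{B circ wh{omega} circ B circ wh{phi}=0} shows $(B\wh\omega)^jB\wh\phi=0$ for all $j\ge1$, so that $A\xcirc\wh\phi=B\xcirc\wh\phi$.

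Feeding this into the four formulas of Theorem~\ref{lema de perturbacion}, I would identify $\partial^1=\wh d+\wh\psi A\wh\phi=\wh d+\wh D$ (re-proving, by separation of $u$-degrees, that $(\wh X_*(E),\wh d_*,\wh D_*)$ is a mixed complex whose total complex is the perturbed object), together with $i^1=\wh\phi+\wh\omega B\wh\phi=\wh\Phi$, $p^1=\wh\psi+\wh\psi A\wh\omega=\wh\Psi$ and $h^1=\wh\omega+\wh\omega A\wh\omega=\wh\Omega$, the last two after reindexing the geometric series $A\wh\omega=\sum_{j\ge1}(B\wh\omega)^j$. Since the base data is a special deformation retract, Theorem~\ref{lema de perturbacion} yields that $(\wh\Phi,\wh\Psi,\wh\Omega)$ is one as well, giving $\wh\Psi\xcirc\wh\Phi=\ide$ and the homotopy $\wh\Omega$ from $\wh\Phi\xcirc\wh\Psi$ to $\ide$. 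Finally, because $B$ lowers and $\wh\phi,\wh\psi,\wh\omega$ preserve the $u$-degree, all three maps respect the column filtration, hence restrict to $\BN$ and descend to $\BC$, so they induce the asserted isomorphisms on $\HH$, $\HC$, $\HN$ and $\HP$. The main obstacle will be collapsing the a priori infinite series $A$, $\wh\Psi$ and $\wh\Omega$ into the closed forms of the statement — precisely where Lemma~\ref{B circ wh{omega} circ B circ wh{phi}=0} and the local nilpotency of $B\wh\omega$ are indispensable — together with the verification of the side conditions for the base special deformation retract.
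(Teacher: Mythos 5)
Your proposal is correct and follows essentially the same route as the paper: both transfer the canonical mixed complex along the special deformation retract $(\wh{\phi},\wh{\psi},\wh{\omega})$ extended $u$-linearly, apply the perturbation lemma with perturbation $B$, and use Lemma~\ref{B circ wh{omega} circ B circ wh{phi}=0} to collapse $A\xcirc\wh{\phi}$ to $B\xcirc\wh{\phi}$, recovering $\partial^1=\wh{d}+\wh{D}$, $i^1=\wh{\Phi}$, $p^1=\wh{\Psi}$ and $h^1=\wh{\Omega}$ before passing between $\BC$, $\BN$ and $\BP$ by the standard column-degree argument. Your direct check of the mixed-complex identities and your sketch of the side conditions $\wh{\omega}\xcirc\wh{\phi}=0$, $\wh{\psi}\xcirc\wh{\omega}=0$, $\wh{\omega}\xcirc\wh{\omega}=0$ merely make explicit what the paper asserts implicitly when invoking the results of Subsection~\ref{comparison maps}.
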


\begin{proof} For each $i\ge 0$, let
\begin{align*}
& \wh{\phi}u^i\colon\wh{X}_{n-2i}(E)u^i\longrightarrow \bigl(E\ot\ov{E}^{\ot {n-2i}}\ot \bigr)u^i,\\
& \wh{\psi}u^i\colon \bigl(E\ot\ov{E}^{\ot {n-2i}}\ot\bigr)u^i\longrightarrow \wh{X}_{n-2i}(E)u^i
\intertext{and}
& \wh{\omega}u^i\colon \bigl(E\ot\ov{E}^{\ot {n-2i}}\ot \bigr)u^i\longrightarrow \bigl(E\ot \ov{E}^{\ot {n+1-2i}}\ot \bigr)u^i,
\end{align*}
be the maps defined by $\wh{\phi}u^i(\bx u^i)\!\coloneqq \! \wh{\phi}(\bx)\hs u^i$, etcetera. By the results in Subsection~\ref{comparison maps} we have a special de\-for\-mation retract
$$
\begin{tikzpicture}[label distance=5mm]
\node {$\Tot\bigl(\BC(\wh{X}_*(E),\wh{d}_*,0)\bigr)$};\draw (6,0) node {$\Tot\bigl(\BC(E\ot\ov{E}^{\ot *}\ot, b_*,0)\bigr)$};\draw[<-] (1.8,0.12) -- node[above=-2pt,font=\scriptsize] {$\bigoplus_{i\ge 0} \wh{\psi}u^i$} (3.8,0.12);\draw[->] (1.8,-0.12) -- node[below=-2pt,font=\scriptsize]{$\bigoplus_{i\ge 0} \wh{\phi} u^i$}(3.8,-0.12);\draw (10.5,0) node {$\bigoplus_{i\ge 0} \wh{\omega}u^i$.};
\end{tikzpicture}
$$
Applying the perturbation lemma to this datum endowed with the perturbation induced by $B$, and taking into account Lemma~\ref{B circ wh{omega} circ B circ wh{phi}=0}, we obtain the special deformation retract
$$
\begin{tikzpicture}[label distance=5mm]
\node {$\Tot\bigl(\BP(\wh{X}_*(E),\wh{d}_*,\wh{D}_*)\bigr)$};\draw (6,0) node {$\Tot\bigl(\BP(E\ot\ov{E}^{\ot *}\ot, b_*,B_*)\bigr)$};\draw[<-] (2,0.12) -- node[above=-2pt,font=\scriptsize] {$\wh{\Psi}$}(3.7,0.12);\draw[->] (2,-0.12) -- node[below=-2pt,font=\scriptsize] {$\wh{\Phi}$}(3.7,-0.12); \draw (10.25,0) node {$\wh{\Omega}_{*+1}.$};
\end{tikzpicture}
$$
It is easy to see that $\wh{\Phi}$, $\wh{\Psi}$ and $\wh{\Omega}$ commute with the canonical surjections
\begin{align*}
&\Tot\bigl(\BC(\wh{X}_*(E),\wh{d}_*,\wh{D}_*)\bigr)\longrightarrow \Tot\bigl(\BC(\wh{X}_*(E),\wh{d}_*,\wh{D}_*)\bigr)[2]\\
\shortintertext{and}
&\Tot\bigl(\BC(E\ot\ov{E}^{\ot *}\ot,b_*,B_*)\bigr)\longrightarrow \Tot\bigl(\BC(E\ot\ov{E}^{\ot *}\ot,b_*,B_*)\bigr)[2].
\end{align*}
A standard argument, from these facts, finishes the proof.
\end{proof}

\begin{remark} If $K$ is a separable algebra, then the mixed complex $\bigl(\wh{X}_*(E),\wh{d}_*,\wh{D}_*\bigr)$ gives the Hochs\-child, cyclic, negative and periodic absolute homologies of $E$.
\end{remark}

\begin{definition}\label{def wh{D}0, etc} For $r,s\ge 0$, let $\wh{D}^0_{rs}\colon \wh{X}_{rs}\to \wh{X}_{r,s+1}$ and $\wh{D}^1_{rs}\colon \wh{X}_{rs}\to \wh{X}_{r+1,s}$ be the maps defined by

\begin{itemize}[itemsep=0.7ex, topsep=1.0ex]

\item[-] If $\bx = [\jmath_{\nu}(a_0)\gamma(v_0)\ot_{\hs A} \wt{\gamma}_{\hs A}(\bv_{1s})\ot \ov{\ba}_{1r}]$, with $a_0,\dots,a_r\in A$ and $v_0,\dots,v_s\in V$, then
\begin{align*}
\qquad\quad &\wh{D}^0(\bx)\coloneqq  \sum_{j=0}^s\sum_l (-1)^{js+s} \bigl[1_E\ot_{\hs A} \wt{\gamma}_{\hs A}(\bv_{j+1,s}^{(l)})\ot_{\hs A} \stackon[-8pt]{$\jmath_{\nu}(a_0) \gamma(v_0)$}{\vstretch{1.5}{\hstretch{2.8} {\widetilde{\phantom{\;\;\;\;\;\;}}}}}\ot_{\hs A}\wt{\gamma}_{\hs A} (\bv_{1j})\ot \ov{\ba}_{1r}^{(l)}\bigr]
\shortintertext{and}
&\wh{D}^1(\bx)\coloneqq  \sum_{j=0}^r\sum_l (-1)^{jr+r+s}\bigl[\gamma(v_0^{(l)})\ot_{\hs A} \wt{\gamma}_{\hs A}(\bv_{1s}^{(l)}) \ot \ov{\ba}_{j+1,r}\ot \ov{a}_0\ot \ov{\ba}_{1j}^{(l)} \bigr],
\end{align*}
where $\sum_l \ov{\ba}_{1r}^{(l)}\ot_k \bv_{j+1,s}^{(l)}\coloneqq  \ov{\chi}(\bv_{j+1,s} \ot_k \ov{a}_{1r})$ and $\sum_l \ov{\ba}_{1j}^{(l)} \ot_k v_0^{(l)}\ot_k \bv_{1s}^{(l)}\coloneqq  \ov{\chi}(\bv_{0s}\ot_k \ov{\ba}_{1j})$.

\item[-] If $\bx = [\jmath_{\nu}(a_0)\ot_{\hs A}\wt{\gamma}_{\hs A}(\bv_{1s})\ot \ov{\ba}_{1r}]$ with $a_0,\dots,a_r\in A$ and $v_1,\dots,v_s\in V$, then
$$
\qquad\quad \wh{D}^0(\bx)\coloneqq 0 \quad\text{and}\quad \wh{D}^1(\bx)\coloneqq \sum_{j=0}^r\sum_l (-1)^{jr+r+s} \bigl[1_E\ot_{\hs A} \wt{\gamma}_{\hs A}(\bv_{1s}^{(l)})\ot \ov{\ba}_{j+1,r}\ot \ov{a}_0\ot \ov{\ba}_{1j}^{(l)}\bigr],
$$
where $\sum_l \ov{\ba}_{1j}^{(l)} \ot_k \bv_{1s}^{(l)}\coloneqq  \ov{\chi}(\bv_{1s}\ot_k \ov{\ba}_{1j})$.
\end{itemize}

\end{definition}

\begin{proposition}\label{Connes operator} Let $R$ be a $k$-subalgebra of $A$. Assume that $R$ is stable under $\chi$ and $\mathcal{F}$ takes its values in $R\ot_k V$. Let $a_0,\dots,a_{n-i}\in A$ and $v_0,\dots,v_i\in V$. The Connes operator $\wh{D}$ satisfies the following prop\-er\-ties:

\begin{enumerate}[itemsep=0.7ex, topsep=1.0ex, label=\emph{(\arabic*)}]

\item If $\bx = [\jmath_{\nu}(a_0)\gamma(v_0)\ot_{\hs A} \wt{\gamma}_{\hs A}(\bv_{1i})\ot \ov{\ba}_{1,n-i}]$, then
$$
\wh{D}(\bx) = \wh{D}^0(\bx) + \wh{D}^1(\bx)\qquad \mod{F_R^{i-1}\bigl(\wh{X}_{n+1}(E)\bigr) + F_R^i\bigl(\wh{X}_{n+1}(\jmath_{\nu}(A)) \bigr).}
$$

\item If $\bx = [\jmath_{\nu}(a_0)\ot_{\hs A}\wt{\gamma}_{\hs A}(\bv_{1i})\ot \ov{\ba}_{1,n-i}]$, then $\wh{D}(\bx) = \wh{D}^1(\bx) \mod{F_R^{i-1}\bigl(\wh{X}_{n+1}(\jmath_{\nu}(A))\bigr)}$.
\end{enumerate}

\end{proposition}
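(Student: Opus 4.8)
The plan is to unwind the definition $\wh{D} = \wh{\psi}\xcirc B\xcirc \wh{\phi}$ and evaluate the three maps in turn on the given generator, keeping track of everything modulo the indicated filtration pieces. Writing $m$ for the coefficient ($m=\jmath_{\nu}(a_0)\gamma(v_0)$ in case~(1) and $m=\jmath_{\nu}(a_0)$ in case~(2)), the first step is to apply Proposition~\ref{propiedad de wh phi} and Remark~\ref{nota de propiedad de phi} to replace $\wh{\phi}(\bx)$ by its leading term, so that
$$
\wh{\phi}(\bx) = \bigl[m \ot \Sh(\bv_{1i}\ot_k \ov{\ba}_{1,n-i})\bigr] + \bigl[m\ot_{\hs A}\phi''_{in}(\bv_{1i}\ot_k\ov{\ba}_{1,n-i})\bigr],
$$
where the second summand lies in $\jmath_{\nu}(A)\bar{\ot} F_{\!R,1}^{i-1}(\ov{E}^{\ot^n})\bar{\ot}\jmath_{\nu}(K)$, i.e. strictly below filtration level $i$. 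Since $B$ raises homological degree without raising filtration and $\wh{\psi}$ preserves filtration (Theorem~\ref{phi, psi y omega preservan filtraciones en homologia}), the image $\wh{\psi}\xcirc B$ of this correction term stays in $F_R^{i-1}$, which is contained in the stated modulus; hence it suffices to compute $\wh{\psi}\xcirc B$ on the leading term $[m\ot \Sh(\cdots)]$.

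Next I would expand $B[m\ot \Sh(\cdots)]$ as the Connes cyclic sum, whose summands all have coefficient $1_E$ with $m$ and the factors of $\Sh$ cyclically permuted. By Remark~\ref{imagen de Sh}, $\Sh(\bv_{1i}\ot_k\ov{\ba}_{1,n-i})$ is a staircase interleaving of factors lying in $\gamma(V)$ and in $\jmath_{\nu}(A)$, with the $a$'s twisted past the $v$'s by $\bar{\chi}$, so the cyclic rotations split naturally into rotations that bring a $V$-factor to the front while keeping the block containing $v_0$ intact, rotations that bring the algebra part $\ov{a}_0$ of $m$ onto the $\ov{A}$-side, and mixed rotations that place a $\jmath_{\nu}(A)$-factor strictly before a $\gamma(V)$-factor.

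Then, applying $\wh{\psi}$ termwise through Proposition~\ref{propiedad de hat psi}, item~(2) annihilates every mixed term having a $\jmath$ before a $\gamma$, while item~(6) shows the remaining single-defect mixed terms fall into $F_R^{i-2}\subseteq F_R^{i-1}$; the surviving terms are then evaluated by items~(1), (3), (4) and~(5). I would check that these surviving terms reassemble exactly, with the signs and $\bar{\chi}$-twistings prescribed in Definition~\ref{def wh{D}0, etc}, into $\wh{D}^0(\bx)+\wh{D}^1(\bx)$ in case~(1) and into $\wh{D}^1(\bx)$ in case~(2). The case distinction is explained by the coefficient: in case~(2) the block $m=\jmath_{\nu}(a_0)$ carries no $V$-factor, so no $\wh{D}^0$-type summand can be produced and every surviving coefficient remains in $\jmath_{\nu}(A)$, which is why the modulus sharpens to $F_R^{i-1}(\wh{X}_{n+1}(\jmath_{\nu}(A)))$; in case~(1) the level-$i$ terms whose coefficient happens to land in $\jmath_{\nu}(A)$ but which do not belong to $\wh{D}^0$ or $\wh{D}^1$ are precisely what the extra summand $F_R^{i}(\wh{X}_{n+1}(\jmath_{\nu}(A)))$ of the modulus is designed to absorb.

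The hard part will be the last step: the sign and twisting bookkeeping, namely verifying that the $\bar{\chi}$-twisting carried by $\Sh$ commutes correctly through both the Connes rotation and $\wh{\psi}$, so that the surviving terms coincide termwise with the two explicit $\bar{\chi}$-twisted cyclic sums defining $\wh{D}^0$ and $\wh{D}^1$, and simultaneously confirming that every discarded term genuinely drops into the appropriate lower-filtration piece rather than contributing an uncancelled contribution at level $i$.
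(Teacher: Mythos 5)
Your overall strategy is the paper's: expand $\wh{D}=\wh{\psi}\xcirc B\xcirc\wh{\phi}$, split $\wh{\phi}(\bx)$ by Proposition~\ref{propiedad de wh phi} into the leading term $[m\ot \Sh(\bv_{1i}\ot_k\ov{\ba}_{1,n-i})]$ plus a correction, expand $B$ as the cyclic sum, and evaluate termwise through Proposition~\ref{propiedad de hat psi}. But your disposal of the correction term rests on a false claim, namely that ``$B$ raises homological degree without raising filtration.'' The Connes operator transplants the coefficient into the chain part, and in case~(1) that coefficient --- $\jmath_{\nu}(a_0)\gamma(v_0)$, possibly multiplied by the $\jmath_{\nu}(A)$-part of $\phi''_{in}$ --- lies in general in neither $\jmath_{\nu}(A)$ nor $\gamma(V)$. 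So $B$ sends a class in $E\bar{\ot} F_{\! R,1}^{i-1}(\ov{E}^{\ot^n})\bar{\ot}$ to classes of simple tensors with up to $i$ factors outside $\jmath_{\nu}(A)$, i.e.\ only into $F^{i}$, not $F^{i-1}$. A quick sanity check: if $B$ preserved the filtration then, since $\wh{\phi}$ and $\wh{\psi}$ do (Theorem~\ref{phi, psi y omega preservan filtraciones en homologia}), $\wh{D}$ would preserve it too, contradicting the very presence of $\wh{D}^0\colon\wh{X}_{rs}\to\wh{X}_{r,s+1}$ in the formula you are proving.

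Consequently the correction term must be analyzed with the same factor-type bookkeeping as the main term, which is exactly what the paper does: it records that every simple tensor occurring in $B$ of the correction has at least one factor in $\jmath_{\nu}(R)$ (inherited from $F_{\! R,1}^{i-1}$), at least $n-i+1$ factors in $\jmath_{\nu}(A)$, and at most one factor outside $\jmath_{\nu}(A)\cup\gamma(V)$, and then feeds these types into items~(3)--(6) (case~(1)) resp.\ items~(1)--(2) (case~(2)) of Proposition~\ref{propiedad de hat psi}. Note, too, that a bare appeal to filtration preservation could never land you in the stated moduli: these are built from the pieces $\wh{X}^1_{s,n+1-s}(R,M)$, which carry the extra constraint that at least one $a_j$ lies in $R$, and in case~(2) additionally force the coefficient into $\jmath_{\nu}(A)$; Theorem~\ref{phi, psi y omega preservan filtraciones en homologia} concerns only the plain filtration $F^i$ and sees neither the $R$-condition nor the coefficient condition. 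The reason the correction does drop in case~(2) is not filtration-theoretic but concrete: there the coefficient $\jmath_{\nu}(a_0)$ (and its product with the $\jmath_{\nu}(A)$-part of $\phi''_{in}$) stays in $\jmath_{\nu}(A)$, so after $B$ every factor lies in $\jmath_{\nu}(A)\cup\gamma(V)$ with at most $i-1$ factors in $\gamma(V)$, whence items~(1)--(2) of Proposition~\ref{propiedad de hat psi} place the image in $F_R^{i-1}\bigl(\wh{X}_{n+1}(\jmath_{\nu}(A))\bigr)$. Your treatment of the main term --- the classification of the cyclic rotations, the annihilation and low-order estimates via items~(2) and~(6), and the identification of the survivors with $\wh{D}^0+\wh{D}^1$ resp.\ $\wh{D}^1$, including your correct explanation of why the summand $F_R^{i}\bigl(\wh{X}_{n+1}(\jmath_{\nu}(A))\bigr)$ is needed in case~(1) --- does match the paper; it is only the shortcut on the correction term that must be replaced by the same analysis.
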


\begin{proof} (1)\enspace We must compute $\wh{D}(\bx) = \wh{\psi}\xcirc B \xcirc \wh{\phi}(\bx)$. By Proposition~\ref{propiedad de wh phi}
$$
\wh{D}(\bx) = \wh{\psi}\xcirc B\bigl(\bigl[\jmath_{\nu}(a_0)\gamma(v_0)\ot\Sh(\bv_{1i} \ot_k \ov{\ba}_{1,n-i})\bigr]\bigr) + \wh{\psi}\xcirc B \bigl([\jmath_{\nu}(a_0)\gamma(v_0)\ot_{\hs A}\byy]\bigr),
$$
where $[\jmath_{\nu}(a_0)\gamma(v_0)\ot_{\hs A}\byy]\in E\bar{\ot} F_{\! R,1}^{i-1}(\ov{E}^{\ot n})\bar{\ot}$. Moreover

\begin{itemize}

\smallskip

\item[-] $B\bigl(\bigl[\jmath_{\nu}(a_0)\gamma(v_0)\ot\Sh(\bv_{1i}\ot_k \ov{\ba}_{1,n-i})\bigr] \bigr)$ is a sum of circular simple tensors $[1\ot \byy_{1,n+1}]$, with at least $n-i$ of the $y_j$'s in $\jmath_{\nu}(A)$ and at most one $y_j$ in $E\setminus \jmath_{\nu}(A)\cup\gamma(V)$.

\smallskip

\item[-] $B\bigl([\jmath_{\nu}(a_0)\gamma(v_0)\ot_{\hs A}\byy]\bigr)$ is a sum of circular simple tensors $[1\ot\bz_{1,n+1}]$, with at least one $z_j$ in $\jmath_{\nu}(R)$, at least $n-i+1$ of the $z_j$'s in $\jmath_{\nu}(A)$ and at most one $z_j$ in $E\setminus \jmath_{\nu}(A)\cup \gamma(V)$.

\smallskip
\end{itemize}
The result follows now using items~(3)--(6) of Proposition~\ref{propiedad de hat psi} and the definitions of $\Sh$ and $B$.

\smallskip

\noindent (2)\enspace As in the proof of item~(1) we have
$$
\wh{D}(\bx) = \wh{\psi}\xcirc B\bigl([\jmath_{\nu}(a_0)\ot\Sh(\bv_{1i}\ot_k \ov{\ba}_{1,n-i})]\bigr) + \wh{\psi}\xcirc B \bigl([\jmath_{\nu}(a_0)\ot_{\hs A}\byy]\bigr),
$$
where $[\jmath_{\nu}(a_0)\ot_{\hs A} \byy]\in F_{\! R,1}^{i-1}\bigl(E\ot\ov{E}^{\ot n}\ot \bigr)$. Moreover

\begin{itemize}

\smallskip

\item[-] $B\bigl([\jmath_{\nu}(a_0)\ot\Sh(\bv_{1i}\ot_k \ov{\ba}_{1,n-i})]\bigr)$ is a sum of circular simple tensors $[1\ot\byy_{1,n+1}]$, with at least $n-i+1$ of the $y_j$'s in $\jmath_{\nu}(A)$ and each $y_j$'s in $\jmath_{\nu}(A)\cup \gamma(V)$.

\smallskip

\item[-] $B\bigl([\jmath_{\nu}(a_0)\ot_{\hs A}\byy]\bigr)$ is a sum of circular simple tensors $[1\ot\bz_{1,n+1}]$, with at least one $z_j$ in $\jmath_{\nu}(R)$, at least $n-i+2$ of the $z_j$'s in $\jmath_{\nu}(A)$, and each $z_j$ in $\jmath_{\nu}(A)\cup \gamma(V)$,.

\smallskip

\end{itemize}
The result follows now from items~(1)--(2) of Proposition~\ref{propiedad de hat psi} and the definitions~of~$\Sh$~and~$B$.
\end{proof}

\begin{remark}\label{remark: caso F toma valores en K'''} If $\mathcal{F}$ takes its values in $K\ot_k V$, then $\wh{D}= \wh{D}^0 + \wh{D}^1$.
\end{remark}

\begin{remark} If $K=A$, then $\bigl(\wh{X}_*(E),\wh{d}_*,\wh{D}_*\bigr) = \bigl(\wh{X}_{0*}(E),\wh{d}^1_{0*}, \wh{D}^0_{0*}\bigr)$.
\end{remark}

\subsection{The spectral sequences} In this subsection we study two spectral sequences. The first one generalizes those obtained in~\cite{CGG}*{Section 3.1} and~\cite{ZH}*{Theorem 4.7}, while the second one generalizes those obtained in~\cites{AK, KR} and \cite{CGG}*{Section 3.2}. Let $\wh{d}^0_{rs}$ and $\wh{d}^1_{rs}$ be as at the beginning of Section~\ref{section: Hochschild homology of general crossed products} and let $\wh{D}^0_{rs}$ and $\wh{D}^1_{rs}$ be as in Definition~\ref{def wh{D}0, etc}.

\subsubsection{The first spectral sequence} Recall from Remark~\ref{homologia de las filas} that
$\Ho_r\bigl(\wh{X}_{*s},\wh{d}^0_{*s}\bigr) = \Ho^{\hs K}_r\bigr(A,E\ot_{\hs A} \wt{E}^{\ot_{\hs A} s} \bigl)$. Let
\begin{align*}
\breve{d}_{rs}\colon \Ho^{\hs K}_r\bigr(A,E\ot_{\hs A} \wt{E}^{\ot_{\hs A} s} \bigl) \longrightarrow \Ho^{\hs K}_r\bigr(A,E\ot_{\hs A} \wt{E}^{\ot_{\hs A} {s-1}} \bigl)\\
\intertext{and}
\breve{D}_{rs}\colon \Ho^{\hs K}_r\bigr(A,E\ot_{\hs A} \wt{E}^{\ot_{\hs A} s} \bigl)\longrightarrow \Ho^{\hs K}_r\bigr(A,E\ot_{\hs A} \wt{E}^{\ot_{\hs A} {s+1}} \bigl)
\end{align*}
be the maps induced by $\wh{d}^1$ and $\wh{D}^0$, respectively.

\begin{proposition}\label{pepito} For each $r\ge 0$,
$$
\breve{\Ho}^{\hs K}_r\bigr(A,E\ot_{\hs A} \wt{E}^{\ot_{\hs A} *} \bigl) \coloneqq  \bigl(\Ho^{\hs K}_r\bigr(A,E\ot_{\hs A} \wt{E}^{\ot_{\hs A} *}\bigr),\breve{d}_{r*}, \breve{D}_{r*}\bigr)
$$
is a mixed complex. Moreover there is a convergent spectral sequence $(E^v_{sr},\partial^v_{sr})_{v\ge 0} \Longrightarrow \HC^{\hs K}_{r+s}(E)$, such that $E^2_{sr} = \HC_s\bigl(\breve{\Ho}^{\hs K}_r\bigr(A,E\ot_{\hs A} \wt{E}^{\ot_{\hs A} *}\bigl)\bigr)$ for all $r,s\ge 0$.
\end{proposition}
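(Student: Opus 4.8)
The plan is to realize the asserted spectral sequence as the one attached to a bounded filtration on the total cyclic complex of the mixed complex $\bigl(\wh{X}_*(E),\wh{d}_*,\wh{D}_*\bigr)$ of Theorem~\ref{complejo mezclado que da la homologia ciclica}, the filtration being calibrated so that the $E^0$-differential is $\wh{d}^0$ and the $E^1$-differential is $(\wh{d}^1,\wh{D}^0)$ reduced to homology. First I would check that $\breve{\Ho}^{\hs K}_r\bigl(A,E\ot_{\hs A}\wt{E}^{\ot_{\hs A}^*}\bigr)$ is a mixed complex. The homogeneous pieces of the two differentials carry bidegrees (in $(r,s)$) $\wh{d}^0\colon(-1,0)$, $\wh{d}^1\colon(0,-1)$, $\wh{d}^l\colon(l-1,-l)$, $\wh{D}^0\colon(0,+1)$ and $\wh{D}^1\colon(+1,0)$, the remaining pieces of $\wh{D}$ being of strictly lower filtration by Proposition~\ref{Connes operator}. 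Extracting from $\wh{d}\wh{d}=0$, $\wh{D}\wh{D}=0$ and $\wh{d}\wh{D}+\wh{D}\wh{d}=0$ the components of bidegrees $(-1,-1)$, $(-1,+1)$, $(0,-2)$, $(0,+2)$ and $(0,0)$ gives, modulo lower filtration, the identities $\wh{d}^0\wh{d}^1+\wh{d}^1\wh{d}^0=0$, $\wh{d}^0\wh{D}^0+\wh{D}^0\wh{d}^0\equiv 0$, $\wh{d}^0\wh{d}^2+\wh{d}^1\wh{d}^1+\wh{d}^2\wh{d}^0=0$, $\wh{D}^0\wh{D}^0\equiv 0$ and $\wh{d}^1\wh{D}^0+\wh{D}^0\wh{d}^1+\wh{d}^0\wh{D}^1+\wh{D}^1\wh{d}^0\equiv 0$. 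Passing to $\wh{d}^0$-homology, where every summand containing a factor $\wh{d}^0$ vanishes, shows that $\wh{d}^1$ and $\wh{D}^0$ descend to well-defined maps $\breve{d}$ and $\breve{D}$ satisfying $\breve{d}^2=\breve{D}^2=0$ and $\breve{d}\breve{D}+\breve{D}\breve{d}=0$.

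Next I would filter $\Tot\bigl(\BC(\wh{X}_*(E),\wh{d}_*,\wh{D}_*)\bigr)$ by assigning to a generator lying in $\wh{X}_{rs}(E)u^j$ the filtration degree $p\coloneqq s+2j$. Using the bidegrees above together with the fact that $\wh{D}$, being the Connes operator, lowers the cyclic index $j$ by one, the summands of the total differential change $p$ by $0$ for $\wh{d}^0$, by $-1$ for $\wh{d}^1$ and for $\wh{D}^0$, and by $\le -2$ for $\wh{d}^l$ $(l\ge 2)$, for $\wh{D}^1$, and for the lower-filtration error terms of Proposition~\ref{Connes operator}. Hence the total differential never raises $p$ and $F_p$ is a subcomplex. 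Since in total degree $n$ one has $0\le p=n-r\le n$, the filtration is bounded in each degree, so the associated spectral sequence converges; and by Theorem~\ref{complejo mezclado que da la homologia ciclica} its abutment is $\HC^{\hs K}_{r+s}(E)$.

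It then remains to identify the low pages. The $E^0$-differential is the $p$-preserving part $\wh{d}^0$, so by Remark~\ref{homologia de las filas} the $E^1$-term in filtration degree $s$ and complementary degree $r$ is $\bigoplus_{j\ge 0}\Ho^{\hs K}_r\bigl(A,E\ot_{\hs A}\wt{E}^{\ot_{\hs A}^{s-2j}}\bigr)u^j$, and the $E^1$-differential, being the part lowering $p$ by exactly one, is induced by $\wh{d}^1+\wh{D}^0$, i.e.\ it is $\breve{d}+\breve{D}$. Reading $\breve{d}$ as the Hochschild boundary and $\breve{D}$ (which raises the auxiliary degree by one while dropping one power of $u$) as the Connes operator identifies $\bigl(E^1,d^1\bigr)$, for each fixed $r$, with the total complex of $\BC\bigl(\breve{\Ho}^{\hs K}_r(A,E\ot_{\hs A}\wt{E}^{\ot_{\hs A}^*})\bigr)$; taking homology yields $E^2_{sr}=\HC_s\bigl(\breve{\Ho}^{\hs K}_r(A,E\ot_{\hs A}\wt{E}^{\ot_{\hs A}^*})\bigr)$, as asserted.

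The hard part will be the bookkeeping of the error terms in Proposition~\ref{Connes operator}: one must verify that the pieces of $\wh{D}$ beyond $\wh{D}^0+\wh{D}^1$, as well as $\wh{D}^1$ itself, all land in $F_{p-2}$ once the drop in the cyclic index is accounted for, so that on the associated graded they neither spoil the mixed-complex relations for $\breve{\Ho}^{\hs K}_r$ nor contribute to $d^1$. The degree count above shows this is exactly the calibration supplied by the $F_R^{i-1}\bigl(\wh{X}_{n+1}(E)\bigr)$ and $F_R^i\bigl(\wh{X}_{n+1}(\jmath_{\nu}(A))\bigr)$ in that proposition, so that the only remaining work is the routine sign verification in matching $\bigl(E^1,d^1\bigr)$ with the cyclic bicomplex of $\breve{\Ho}^{\hs K}_r$.
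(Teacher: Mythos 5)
Your proposal is correct and follows essentially the same route as the paper: the filtration degree $s+2j$ you assign to $\wh{X}_{rs}(E)u^j$ is exactly the paper's filtration $F^s\bigl(\Tot(\BC(\wh{X},\wh{d},\wh{D})_n)\bigr)=\bigoplus_{j\ge 0}F^{s-2j}(\wh{X}_{n-2j})u^j$, and your identifications of $E^0$, $E^1$ with $\partial^1$ induced by $\wh{d}^1+\wh{D}^0$ (i.e.\ $\breve{d}+\breve{D}$), the resulting $E^2$-term, the boundedness of the filtration, and the appeal to Theorem~\ref{complejo mezclado que da la homologia ciclica} to identify the abutment with $\HC^{\hs K}_{r+s}(E)$ all coincide with the paper's proof. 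The only cosmetic difference is that you verify the mixed-complex axioms for $\breve{\Ho}^{\hs K}_r\bigl(A,E\ot_{\hs A}\wt{E}^{\ot_{\hs A}^*}\bigr)$ by extracting bidegree components of the relations for $(\wh{X}_*(E),\wh{d}_*,\wh{D}_*)$ via the error control of Proposition~\ref{Connes operator}, whereas the paper reads them off from $\partial^1\xcirc\partial^1=0$ on the $E^1$-page, which sidesteps the bookkeeping you flag in your final paragraph.
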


\begin{proof} For each $s,n\ge 0$, let
$
F^s\bigl(\Tot(\BC(\wh{X},\wh{d},\wh{D})_n)\bigr) \coloneqq  \bigoplus_{j\ge 0} F^{s-2j}(\wh{X}_{n-2j}) u^j,
$
where $F^{s-2j}(\wh{X}_{n-2j})$ is the filtration introduced in Subsection~\ref{filtraciones en homologia}. Let $(E^v_{sr},\partial^v_{sr})_{v\ge 0}$ be the spectral sequence associated with the filtration
$$
F^0\bigl(\Tot(\BC(\wh{X}_*,\wh{d}_*,\wh{D}_*))\bigr) \subseteq F^1\bigl(\Tot(\BC(\wh{X}_*,\wh{d}_*,\wh{D}_*))\bigr)  F^2\bigl(\Tot(\BC(\wh{X}_*,\wh{d}_*,\wh{D}_*))\bigr) \subseteq \subseteq \cdots
$$
of $\Tot\bigl(\BC(\wh{X}_*,\wh{d}_*,\wh{D}_*)\bigr)$. A straightforward computation shows that

\begin{itemize}

\smallskip

\item[-] $E^0_{sr} = \bigoplus_{j\ge 0} \wh{X}_{r,s-2j} u^j$,

\smallskip

\item[-] $\partial^0_{sr}\colon E^0_{sr}\longrightarrow E^0_{s,r-1}$ is $\bigoplus_{j\ge 0} \wh{d}^0_{r,s-2j} u^j$,

\smallskip

\item[-] $E^1_{sr} = \bigoplus_{j\ge 0} \Ho_r\bigl(\wh{X}_{*,s-2j}, \wh{d}^0_{*,s-2j}\bigr) u^j$,

\smallskip

\item[-] $\partial^1_{sr}\colon E^1_{sr}\longrightarrow E^1_{s-1,r}$ is $\bigoplus_{j\ge 0} \breve{d}_{r,s-2j}u^j + \bigoplus_{j\ge 1} \breve{D}_{r,s-2j}u^{j-1}$.

\smallskip

\end{itemize}
From this it follows easily that $\breve{\Ho}^{\hs K}_r\bigr(A,E\ot_{\hs A} \wt{E}^{\ot_{\hs A} *}\bigl)$ is a mixed complex and
$$
E^1_{sr} = \bigoplus_{j\ge 0} \Ho^{\hs K}_r\bigr(A,E\ot_{\hs A} \wt{E}^{\ot_{\hs A} {s-2j}} \bigl)u^j \quad\text{and}\quad E^2_{sr} = \HC_s\Bigl(\breve{\Ho}^{\hs K}_r\bigr(A,E\ot_{\hs A} \wt{E}^{\ot_{\hs A} *} \bigl)\Bigr).
$$
In order to finish the proof note that the filtration of $\Tot\bigl(\BC(\wh{X}_*,\wh{d}_*, \wh{D}_*)\bigr)$ introduced above is canonically bounded, and so, by Theorem~\ref{complejo mezclado que da la homologia ciclica}, the spectral sequence $(E^v_{sr},\partial^v_{sr})_{v\ge 0}$ converges to the cyclic homology of the $K$-algebra $E$.
\end{proof}

\subsubsection{The second spectral sequence} Assume that $\mathcal{F}$ takes its values in $K\ot_k V$. Let
\begin{align*}
\check{d}_{rs}\colon \Ho^{\hs A}_s\bigr(E,A\ot\ov{A}^{\ot r}\ot E\bigl) \longrightarrow \Ho^{\hs A}_s\bigr(E,A\ot\ov{A}^{\ot {r-1}}\ot E\bigl)\\
\intertext{and}
\check{D}_{rs}\colon \Ho^{\hs A}_s\bigr(E,A\ot\ov{A}^{\ot r}\ot E\bigl)\longrightarrow \Ho^{\hs A}_s \bigr(E,A\ot\ov{A}^{\ot {r+1}}\ot E\bigl)
\end{align*}
be the maps induced by $\wh{d}^0$ and $\wh{D}^1$, respectively.

\begin{proposition}\label{pepitos} For each $s\ge 0$,
\[
\check{\Ho}^{\hs A}_s\bigr(E,A\ot\ov{A}^{\ot *}\ot E\bigl)\coloneqq  \Bigl(\Ho^{\hs A}_s\bigr(E,A\ot\ov{A}^{\ot *} \ot E\bigl), \check{d}_{*s},\check{D}_{*s}\Bigr)
\]
is a mixed complex. Moreover there is a convergent spectral sequence $(\mathfrak{E}^v_{rs},\mathfrak{d}^v_{rs})_{v\ge 0} \Longrightarrow \HC^{K}_{r+s}(E)$, such that $\mathfrak{E}^2_{rs} = \HC_r\bigl(\check{\Ho}^{\hs A}_s\bigr(E,A\ot\ov{A}^{\ot *}\ot E\bigl)\Bigr)$ for all $r,s\ge 0$.
\end{proposition}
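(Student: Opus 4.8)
The plan is to imitate the proof of Proposition~\ref{pepito}, filtering the total cyclic complex by the $\ov{A}$-degree $r$ instead of by the $\wt{E}$-degree $s$. Writing $G^q(\wh{X}_m(E))\coloneqq\bigoplus_{0\le\rho\le q}\wh{X}_{\rho,m-\rho}(E)$ for the filtration by the number of $\ov{A}$-factors, I would set
\[
\mathfrak{F}^r\bigl(\Tot(\BC(\wh{X}_*(E),\wh{d}_*,\wh{D}_*))_n\bigr)\coloneqq\bigoplus_{j\ge 0} G^{r-2j}\bigl(\wh{X}_{n-2j}(E)\bigr)u^j,
\]
so that a summand $\wh{X}_{\rho,s}(E)u^j$ acquires filtration degree $\rho+2j$. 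Here the hypothesis that $\mathcal{F}$ takes its values in $K\ot_k V$ enters essentially, and this is precisely what breaks the symmetry with Proposition~\ref{pepito}: by Corollary~\ref{caso complej doble} it forces $\wh{d}=\wh{d}^0+\wh{d}^1$ with no higher components, so that the offending map $\wh{d}^2\colon\wh{X}_{rs}\to\wh{X}_{r+1,s-2}$ (which would raise $r$, and hence the filtration degree) is absent and $\mathfrak{F}$ is genuinely a filtration by subcomplexes.

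First I would check that each of $\wh{d}^0$, $\wh{D}^0$ and $\wh{D}^1$ strictly lowers $\mathfrak{F}$ (by $1$, $2$ and $1$, respectively, once the shift in the $u$-power is accounted for), while $\wh{d}^1$ preserves it; the control on the Connes operator is provided by Proposition~\ref{Connes operator}, which expresses $\wh{D}$ as $\wh{D}^0+\wh{D}^1$ modulo strictly lower filtration. A direct inspection then yields $\mathfrak{E}^0_{rs}=\bigoplus_{j\ge 0}\wh{X}_{r-2j,s}(E)u^j$ with $\mathfrak{d}^0$ induced by $\bigoplus_{j\ge 0}\wh{d}^1_{r-2j,s}u^j$. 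Taking homology with respect to $\wh{d}^1$ and invoking Remark~\ref{homologia de las filas} (valid precisely because $\mathcal{F}$ takes its values in $K\ot_k V$) identifies
\[
\mathfrak{E}^1_{rs}=\bigoplus_{j\ge 0}\Ho^{\hs A}_s\bigl(E,A\ot\ov{A}^{\ot^{r-2j}}\ot E\bigr)u^j,
\]
and shows that $\mathfrak{d}^1$ is induced by $\bigoplus_{j\ge 0}\check{d}_{r-2j,s}u^j+\bigoplus_{j\ge 1}\check{D}_{r-2j,s}u^{j-1}$, since on the associated graded only the filtration-lowering-by-one pieces $\wh{d}^0$ and $\wh{D}^1$ survive.

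From the shape of $\mathfrak{d}^1$ it then follows formally, exactly as in Proposition~\ref{pepito}, that $\check{d}_{*s}$ and $\check{D}_{*s}$ are well defined on $\Ho^{\hs A}_s(E,A\ot\ov{A}^{\ot^*}\ot E)$ and satisfy $\check{d}^2=0$, $\check{D}^2=0$ and $\check{d}\check{D}+\check{D}\check{d}=0$; hence $\check{\Ho}^{\hs A}_s(E,A\ot\ov{A}^{\ot^*}\ot E)$ is a mixed complex whose cyclic double complex has total differential $\mathfrak{d}^1$, giving $\mathfrak{E}^2_{rs}=\HC_r\bigl(\check{\Ho}^{\hs A}_s(E,A\ot\ov{A}^{\ot^*}\ot E)\bigr)$. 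Finally, since $\mathfrak{F}$ is canonically bounded in every total degree, the spectral sequence converges, and by Theorem~\ref{complejo mezclado que da la homologia ciclica} its abutment is $\HC^{\hs K}_{r+s}(E)$.

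The main obstacle will be the bookkeeping in the middle step: one must verify that, after passing to $\mathfrak{E}^1$, the operator $\wh{D}$ contributes only through $\wh{D}^1$ — its $\wh{D}^0$ part dropping the filtration by $2$ and the error terms of Proposition~\ref{Connes operator} landing in strictly lower filtration — so that $\mathfrak{d}^1$ reassembles precisely into the cyclic double complex differential of the mixed complex $\check{\Ho}^{\hs A}_s(E,A\ot\ov{A}^{\ot^*}\ot E)$. This is the point where Proposition~\ref{Connes operator} must be used with its full precision, and where the hypothesis on $\mathcal{F}$ is indispensable.
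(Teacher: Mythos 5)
Your construction is the paper's proof of Proposition~\ref{pepitos} step for step: the same filtration $\mathfrak{F}^r\bigl(\Tot(\BC(\wh{X},\wh{d},\wh{D}))_n\bigr)=\bigoplus_{j\ge 0}\mathfrak{F}^{r-2j}(\wh{X}_{n-2j})u^j$ by the $\ov{A}$-degree, the same identifications of $\mathfrak{E}^0,\mathfrak{d}^0,\mathfrak{E}^1,\mathfrak{d}^1$ (your formulas even repair two evident typos in the paper, where $\mathfrak{d}^1$ carries $u^{s-j}$ instead of $u^{j-1}$ and the displayed $\mathfrak{E}^1_{rs}$ has garbled indices), the same use of Remark~\ref{homologia de las filas}, and the same conclusion via canonical boundedness together with Theorem~\ref{complejo mezclado que da la homologia ciclica}. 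Your observation that Corollary~\ref{caso complej doble} kills the components $\wh{d}^l$, $l\ge 2$, which would raise the $\ov{A}$-degree, is also correct and is exactly how the hypothesis on $\mathcal{F}$ enters for $\wh{d}$. There is, however, one genuine flaw in your justification, at precisely the step you flag as critical: it is \emph{not} true that the error terms of Proposition~\ref{Connes operator} land in strictly lower $\mathfrak{F}$-filtration. Those error terms lie in $F^{i-1}_R\bigl(\wh{X}_{n+1}(E)\bigr)+F^i_R\bigl(\wh{X}_{n+1}(\jmath_{\nu}(A))\bigr)$, i.e.\ they have $\wt{E}$-degree bounded \emph{above} (by $i-1$, resp.\ $i$) and hence $\ov{A}$-degree bounded \emph{below} (by $n-i+2$, resp.\ $n-i+1$); after the shift $u^j\mapsto u^{j-1}$ their $\mathfrak{F}$-degree is $\ge (n-i)+2j$, equal to or greater than that of the source class in $\wh{X}_{n-i,i}u^j$. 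This is the opposite of what happens in Proposition~\ref{pepito}, where one filters by the $\wt{E}$-degree and the same terms drop the filtration by at least $3$. If the error terms were nonzero, $\wh{D}$ would fail to preserve $\mathfrak{F}$ altogether, and the filtration would not even be a filtration by subcomplexes.

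The repair is to instantiate Proposition~\ref{Connes operator} at $R=K$, which is legitimate in this subsection: $K$ is stable under $\chi$ by the standing assumptions, and $\mathcal{F}$ takes its values in $K\ot_k V$ by hypothesis. Every generator of $\wh{X}^1_{rs}(K,M)$ contains a tensorand $\ov{a}$ with $a\in K$, which vanishes in $\ov{A}=A/K$; hence $F^{i-1}_K\bigl(\wh{X}_{n+1}(E)\bigr)=F^i_K\bigl(\wh{X}_{n+1}(\jmath_{\nu}(A))\bigr)=0$, and Proposition~\ref{Connes operator} yields the \emph{exact} identities $\wh{D}=\wh{D}^0+\wh{D}^1$ on classes of the first type and $\wh{D}=\wh{D}^1$ on classes with coefficient in $\jmath_{\nu}(A)$, with no remainder at all. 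With this, $\wh{D}^0$ lowers $\mathfrak{F}$ by $2$ and $\wh{D}^1$ by $1$, your bookkeeping for $\mathfrak{E}^0,\mathfrak{d}^0,\mathfrak{E}^1,\mathfrak{d}^1$ goes through unchanged, and the remainder of your argument (the mixed-complex identities for $\check{d}_{*s}$ and $\check{D}_{*s}$, the identification $\mathfrak{E}^2_{rs}=\HC_r\bigl(\check{\Ho}^{\hs A}_s(E,A\ot\ov{A}^{\ot^*}\ot E)\bigr)$, and convergence) coincides with the paper's.
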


\begin{proof} For each $r,n\ge 0$, let
$$
\mathfrak{F}^r\bigl(\Tot(\BC(\wh{X},\wh{d},\wh{D})_n)\bigr) \coloneqq  \bigoplus_{j\ge 0} \mathfrak{F}^{r-2j}(\wh{X}_{n-2j}) u^j,
$$
where $\mathfrak{F}^{r-2j}(\wh{X}_{n-2j})\coloneqq  \bigoplus_{i\le r-2j} \wh{X}_{i,n-i-2j}$. Consider the spectral sequence $(\mathfrak{E}^v_{rs},\mathfrak{d}^v_{rs})_{v\ge 0}$, associated with the filtration
$$
\mathfrak{F}^0\bigl(\Tot(\BC(\wh{X}_*,\wh{d}_*,\wh{D}_*))\bigr) \subseteq \mathfrak{F}^1\bigl(\Tot(\BC(\wh{X}_*,\wh{d}_*,\wh{D}_*))\bigr) \subseteq \mathfrak{F}^2\bigl(\Tot(\BC(\wh{X}_*,\wh{d}_*,\wh{D}_*))\bigr) \subseteq \cdots
$$
of $\Tot\bigl(\BC(\wh{X}_*,\wh{d}_*,\wh{D}_*)\bigr)$. A straightforward computation shows that

\begin{itemize}

\smallskip

\item[-] $\mathfrak{E}^0_{rs} = \bigoplus_{j\ge 0} \wh{X}_{r-2j,s} u^j$,

\smallskip

\item[-] $\mathfrak{d}^0_{rs}\colon \mathfrak{E}^0_{rs}\longrightarrow \mathfrak{E}^0_{r,s-1}$ is $\bigoplus_{j\ge 0} \wh{d}^1_{r-2j,s} u^j$,

\smallskip

\item[-] $\mathfrak{E}^1_{rs} = \bigoplus_{j\ge 0} \Ho_s\bigl(\wh{X}_{r-2j,*}, \wh{d}^1_{r-2j,*}\bigr) u^j$,

\smallskip

\item[-] $\mathfrak{d}^1_{rs}\colon \mathfrak{E}^1_{rs}\longrightarrow \mathfrak{E}^1_{r-1,s}$ is $\bigoplus_{j\ge 0} \check{d}_{r-2j,s} u^j + \bigoplus_{j\ge 1} \check{D}_{r-2j,s} u^{s-j}$.

\smallskip

\end{itemize}
From this and Remark~\ref{homologia de las filas} it follows that $\check{\Ho}^{\hs A}_s\bigr(E,A\ot \ov{A}^{\ot *}\ot E\bigl)$ is a mixed complex,
$$
\mathfrak{E}^1_{rs} = \bigoplus_{j\ge 0}\Ho^{A}_{s-j}\bigr(E,A\ot\ov{A}^{\ot {r-j}} \ot E\bigl) \quad\text{and}\quad  \mathfrak{E}^2_{rs} = \HC_r\Bigl(\check{\Ho}^{\hs A}_s\bigr(E,A\ot\ov{A}^{\ot *}\ot E\bigl)\Bigr).
$$
In order to finish the proof note that the filtration of $\Tot\bigl(\BC(\wh{X}_*,\wh{d}_*, \wh{D}_*)\bigr)$ introduced above is canonically bounded, and so, by Theorem~\ref{complejo mezclado que da la homologia ciclica}, the spectral sequence $(\mathfrak{E}^v_{sr}, \mathfrak{d}^v_{sr})_{v\ge 0}$ converges to the cyclic homology of the $K$-algebra $E$.
\end{proof}

\section*{Appendix}

\appendix

\setcounter{secnumdepth}{0}
\setcounter{section}{1}
\setcounter{theorem}{0}
\renewcommand\thesection{\Alph{section}}
\renewcommand\sectionmark[1]{}

For each $-1\le i< n$ and each $k$-subalgebra $R$ of $A$ we set $F_{U,R}^i(X_n)\coloneqq \bigoplus_{s=0}^i U^1_{n-s,s}(R)\index{fq@$F_{U,R}^i(X_n)$|dotfillboldidx}$, where $U^1_{n-s,s}(R)$ is as in Notation~\ref{notacion X supra u sub rs (R)}. Note that $F_{U,R}^i(X_n) = 0$.

\begin{lemma}\label{lemma A6} Let $R$ be a $k$-subalgebra of $A$. Assume that $R$ is stable under $\chi$ and that $\mathcal{F}$ takes its values in $R\ot_k V$. Let $r,s\ge 0$ and let $n\coloneqq r+s$. For each $\bz\in X_{rs}$ it is true that:

\begin{enumerate}[itemsep=0.7ex, topsep=1.0ex, label=\emph{(\arabic*)}]

\item If $\bz\in E\xcdot W_{rs}$, then $\ov{\sigma}(\bz)= \sigma^0(\bz) \in E\xcdot L_{r+1,s}$.

\item If $\bz\in L^u_{rs}(R)\xcdot E$, then $\sigma^l(\bz)\in U^{l+u}_{r+l+1,s-l}(R)$ for all $0\le l\le s$.

\item If $\bz\in E\xcdot U_{rs}$, then $\sigma^l(\bz)=0$ for $0\le l\le s$.

\item If $\bz\in L_{rs}\xcdot E$ and $r>0$, then $\ov{\sigma}(\bz)\equiv\sigma^0(\bz)$ modulo $F_{U,R}^{s-1}(X_{n+1})$.

\item If $\bz\in L_{0n}\xcdot E$, then $\ov{\sigma}(\bz)\equiv\sigma^0(\bz)- \sigma^0\xcirc\sigma^{-1}\xcirc\upsilon(\bz)$ modulo $F_{U,R}^{n-1}(X_{n+1})$.

\item If $\bz\in E\xcdot U_{rs}$ and $r>0$, then $\ov{\sigma}(\bz)=0$.

\item If $\bz\in E\xcdot U_{0n}$, then $\ov{\sigma}(\bz) =-\sigma^0\xcirc\sigma^{-1}\xcirc\upsilon(\bz)$.

\end{enumerate}
\end{lemma}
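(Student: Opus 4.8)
The plan is to deduce every item from the explicit descriptions of $\sigma^0$, $\sigma^l$ and $\ov\sigma$ recorded above, treating item~(2) as the engine: once it is available, items~(4)--(7) follow by substituting the closed formulas for $\ov\sigma$ and discarding terms that either vanish or fall into the filtration $F^{*}_{U,R}$. Items~(1) and~(3) are almost restatements of earlier facts, so I would clear them first. For item~(1), the equality $\ov\sigma(\bz)=\sigma^0(\bz)$ is precisely Remark~\ref{ov sigma reducido}, and since $\sigma^0$ is left $E$-linear the inclusion~\eqref{ec4} gives $\sigma^0(E\xcdot W_{rs})=E\xcdot\sigma^0(W_{rs})\subseteq E\xcdot L_{r+1,s}$. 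For item~(3), the case $l=0$ is exactly equality~\eqref{ec2}; for $l\ge 1$ I would use the factorization $\sigma^l=\sigma^0\xcirc\varsigma^l\xcirc\sigma^0$ isolated in the proof of Proposition~\ref{sigma barra al cuadrado es cero}, so that feeding $E\xcdot U_{rs}$ into the innermost $\sigma^0$ already returns $0$ by~\eqref{ec2}.

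The core is item~(2), which I would prove by induction on $l$. The base case $l=0$ is a direct reading of the defining formula for $\sigma^0_{r+1,s}$: on $1_E\ot_{\hs A}\wt\gamma_{\hs A}(\bv_{1s})\ot\ov\ba_{1r}\ot x_{s+1}$ it appends one new, unrestricted $\ov A$-slot together with a trailing factor in $\gamma(V)$, leaving the number of $R$-entries unchanged, so the image lies in $L^u_{r+1,s}(R)\xcdot\gamma(V)=U^u_{r+1,s}(R)$. For the inductive step I would expand $\sigma^l=-\sum_{i=0}^{l-1}\sigma^0\xcirc d^{l-i}\xcirc\sigma^i$, apply the inductive hypothesis to get $\sigma^i(\bz)\in U^{i+u}_{r+i+1,s-i}(R)$, and then propagate the superscript through $d^{l-i}$ using the refined bounds of Theorem~\ref{formula para d^1b}: when $l-i\ge 2$ item~(3) of that theorem raises the $R$-degree to $u+l-1$, and when $l-i=1$ item~(2) splits the image into a piece lying in $E\xcdot U$ (annihilated by the outer $\sigma^0$ through~\eqref{ec2}) and a piece carrying an extra $\gamma(V)\gamma(V)$. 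On the surviving piece the outer $\sigma^0$ raises the degree by one more, via the inclusion $\sigma^0\bigl(L^{v}_{r's'}(R)\xcdot\gamma(V)\gamma(V)\bigr)\subseteq U^{v+1}_{r'+1,s'}(R)$ already established inside the proof of Theorem~\ref{formula para d^1b}; matching indices yields exactly $U^{l+u}_{r+l+1,s-l}(R)$.

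Finally I would read items~(4)--(7) off the shape of $\ov\sigma$. For $r>0$, Proposition~\ref{cont nuestra} gives $\ov\sigma(\bz)=\sum_{l=0}^{s}\sigma^l(\bz)$, and for $r=0$, Proposition~\ref{prim prop of ov sigma} gives $\ov\sigma(\bz)=-\sigma^0\xcirc\sigma^{-1}\xcirc\upsilon(\bz)+\sum_{l=0}^{n}\sigma^l(\bz)$. In items~(4) and~(5), where $\bz\in L_{rs}\xcdot E=L^0_{rs}(R)\xcdot E$, item~(2) with $u=0$ puts each $\sigma^l(\bz)$ with $l\ge 1$ in $U^1_{r+l+1,s-l}(R)$, i.e.\ in the $(s-l)$-th summand of $F^{s-1}_{U,R}(X_{n+1})$ since $s-l\le s-1$; the whole tail is thus absorbed, leaving only $\sigma^0(\bz)$ (and, when $r=0$, the term $-\sigma^0\xcirc\sigma^{-1}\xcirc\upsilon(\bz)$). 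In items~(6) and~(7) the same two formulas apply, but now every $\sigma^l(\bz)$ vanishes by item~(3), so $\ov\sigma(\bz)=0$ for $r>0$ and $\ov\sigma(\bz)=-\sigma^0\xcirc\sigma^{-1}\xcirc\upsilon(\bz)$ for $r=0$. The only genuine difficulty is the index bookkeeping in item~(2); everything else is substitution.
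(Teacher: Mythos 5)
Your proposal is correct and follows essentially the same route as the paper's own proof: item~(2) by induction on $l$ through the recursion $\sigma^l=-\sum_{i}\sigma^0\xcirc d^{l-i}\xcirc\sigma^i$, propagated via items~(2) and~(3) of Theorem~\ref{formula para d^1b}, with the $E\xcdot U$ pieces killed by equality~\eqref{ec2} and the surviving $L\xcdot\gamma(V)\gamma(V)$ piece absorbed by $\sigma^0\bigl(L^{v}_{r's'}(R)\xcdot\gamma(V)\gamma(V)\bigr)\subseteq U^{v+1}_{r'+1,s'}(R)$ (which the paper derives from Lemma~\ref{lema gamma(V)gamma(V)} and equality~\eqref{segunda cond}), and items~(1), (4)--(7) read off from Remark~\ref{ov sigma reducido}, Proposition~\ref{cont nuestra} and Proposition~\ref{prim prop of ov sigma} exactly as in the paper. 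The only cosmetic deviation is item~(3), where you use the factorization $\sigma^l=\sigma^0\xcirc\varsigma^l\xcirc\sigma^0$ from the proof of Proposition~\ref{sigma barra al cuadrado es cero} instead of the paper's direct induction on $l$; both are immediate and equivalent.
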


\begin{proof} Item~1) is Remark~\ref{ov sigma reducido} and item~(2) holds for $l=0$ by the very definition of $\sigma^0$. Assume that item~(2) is true for all the maps $\sigma^i$ with $i<l$. By items~(2) and~(3) of Theorem~\ref{formula para d^1b},
\begin{align*}
\sigma^l(\bz)& =-\sum_{i=0}^{l-1}\sigma^0\xcirc d^{l-i}\xcirc\sigma^i(\bz)\\
&\in\sum_{i=0}^{l-1}\sigma^0\bigl(d^{l-i}(U^{u+i}_{r+i+1,s-i}(R))\bigr)\\
& \subseteq \sigma^0(E\xcdot U^{u+l-1}_{r+l,s-l}(R))+\sigma^0(L^{u+l-1}_{r+l,s-l}(R)\xcdot \gamma(V)\gamma(V)).
\end{align*}
So, $\sigma^l(\bz)\in U^{u+l}_{r+l+1,s-l}(R)$, since by the definition of $\sigma^0$, equality~\eqref{ec2} and Lemma~\ref{lema gamma(V)gamma(V)},
$$
\sigma^0(E\xcdot U^{u+l}_{r+l,s-l}(R)) = 0\qquad\text{and}\qquad \sigma^0(L^{u+l-1}_{r+l,s-l}(R)\xcdot \gamma(V)\gamma(V))\subseteq U^{u+l}_{r+l+1,s-l}(R).
$$
Item~(3) follows easily by induction on $l$ using the recursive definition of $\sigma^l$ (the initial case $l=0$ is equality~\eqref{ec2}). Items~(4), (5), (6) and~(7) follow from Proposition~\ref{prim prop of ov sigma} and items~(2) and~(3).
\end{proof}

\begin{remark}\label{Prev a A7} From the definition of $\ov{\sigma}$ and item~(2) of the previous lemma it follows that
$$
\ov{\sigma}\bigl(F_{\!U,R}^i(X_n)\xcdot E\bigr)\subseteq F_{\!U,R}^i(X_{n+1})\qquad\text{for $i<n$.}
$$
\end{remark}

\begin{proposition}\label{propA.7} Let $R$ be a $k$-subalgebra of $A$. Assume that $R$ is stable under $\chi$ and that $\mathcal{F}$ takes its values in $R\ot_k V$. Let $v, v_1,\dots,v_i \in V$ and $a,a_{i+1},\dots,a_n \in A$. The map $\psi_n$ has the following properties:

\begin{enumerate}

\smallskip

\item $\psi\bigl(1_E\ot\ov{\gamma}(\mathrm{v}_{1i})\ot\ov{\jmath}_{\nu}(\ba_{i+1,n})\ot 1_E\bigr) = 1_E\ot_{\hs A} \wt{\gamma}_{\hs A}(\bv_{1i}) \ot \ov{\ba}_{i+1,n}\ot 1_E$.

\smallskip

\item Let $x_1,\dots,x_n\in\jmath_{\nu}(A)\cup \gamma(V)$. If there exist indices $j_1<j_2$ such that $x_{j_1}\in \jmath_{\nu}(A)$ and $x_{j_2}\in\gamma(V)$, then $\psi(1_E\ot \ov{\bx}_{1n}\ot 1_E) = 0$.

\smallskip

\item If $\bx = 1_E\ot\ov{\gamma}(\bv_{1,i-1})\ot\ov{a\xcdot\gamma(v)}\ot \ov{\jmath}_{\nu}(\ba_{i+1,n}) \ot 1_E$, then
$$
\begin{aligned}
\qquad\psi(\bx) &\equiv 1_E\ot_{\hs A}\wt{\gamma}_{\hs A}(\bv_{1,i-1})\ot_{\hs A} a\xcdot\wt{\gamma}(v)\ot\ov{\ba}_{i+1,n} \ot 1_E\\
& + \sum 1_E\ot_{\hs A}\wt{\gamma}_{\hs A}(\bv_{1,i-1})\ot\ov{a}\ot\ov{\ba}_{i+1,n}^{(l)}\ot \gamma\bigl(v^{(l)}\bigr)
\end{aligned}\quad\mod{F_{\!U,R}^{i-2}(X_n)},
$$
where $\sum_{l} \ov{\ba}_{i+1,n}^{(l)}\ot_k v^{(l)}\coloneqq \ov{\chi}(v\ot_k \ov{\ba}_{i+1,n})$.

\smallskip

\item If $\bx = 1_E\ot\ov{\gamma}(\bv_{1,j-1})\ot \ov{a\xcdot\gamma(v)}\ot \ov{\gamma}(\bv_{j+1,i})\ot \ov{\jmath}_{\nu}(\ba_{i+1,n})\ot 1_E$ with $j<i$, then
$$
\qquad\qquad\psi(\bx)\equiv 1_E\ot_{\hs A}\wt{\gamma}_{\hs A}(\bv_{1,j-1})\ot_{\hs A} a\xcdot \wt{\gamma}(v) \ot_{\hs A} \wt{\gamma}_{\hs A} (\bv_{j+1,i})\ot \ov{\ba}_{i+1,n}\ot 1_E\quad \mod{F_{\!U,R}^{i-2}(X_n)}.
$$

\smallskip

\item If $\bx = 1_E\ot\ov{\gamma}(\bv_{1,i-1})\ot\ov{\jmath}_{\nu}(\ba_{i,j-1})\ot \ov{a\xcdot \gamma(v)}\ot \ov{\jmath}_{\nu}(\ba_{j+1,n})\ot 1_E$ with $j>i$, then
$$
\quad\qquad\psi(\bx)\equiv\sum 1_E\ot_{\hs A}\wt{\gamma}_{\hs A}(\bv_{1,i-1})\ot \ov{\ba}_{i,j-1}\ot \ov{a}\ot \ov{\ba}_{j+1,n}^{(l)}\ot \gamma(v^{(l)})\quad \mod{F_{\!U,R}^{i-2}(X_n)},
$$
where $\sum_l \ov{\ba}_{j+1,n}^{(l)}\ot_k v^{(l)}\coloneqq \ov{\chi}(v\ot_k \ov{\ba}_{j+1,n})$.

\smallskip

\item Let $x_1,\dots,x_n\in E$ satisfying $\#\{l:x_l\notin \jmath_{\nu}(A) \cup\gamma(V)\} = 1$. If there exist $j_1 < j_2$ such that $x_{j_1}\in \jmath_{\nu}(A)$ and $x_{j_2} \in \gamma(V)$, then
$$
\psi(1_E\ot\ov{\bx}_{1n}\ot 1_E)\in F_{\!U,R}^{i-2}(X_n),
$$
where $i\coloneqq \# \{l:x_l\notin\jmath_{\nu}(A)\}$.

\end{enumerate}

\end{proposition}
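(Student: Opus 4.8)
The plan is to establish all six assertions simultaneously by induction on $n$, the engine being the peeling formula of Proposition~\ref{formula para psi_n(y ot 1)}, namely $\psi(x_0\ot\ov{\bx}_{1n}\ot 1_E) = (-1)^n\ov\sigma\xcirc\psi(x_0\ot\ov{\bx}_{1,n-1}\ot x_n)$, together with the fact that $\psi$ is a morphism of $E$-bimodule complexes, so the trailing factor may be pulled out by right $E$-linearity. In each step I would rewrite $\psi(\bx)$ as $(-1)^n\ov\sigma$ applied to $\psi(1_E\ot\ov{\bx}_{1,n-1}\ot 1_E)\xcdot x_n$, where the inner value of $\psi$ is supplied by the inductive hypothesis and $x_n$ ranges over $\jmath_\nu(A)$, $\gamma(V)$, or a single mixed factor. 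The entire argument then reduces to recognizing the inner expression as an element of $E\xcdot W_{rs}$, $L^u_{rs}(R)\xcdot E$, or $E\xcdot U_{rs}$, and evaluating $\ov\sigma$ on it through Lemma~\ref{lemma A6}.

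For parts (1) and (2) I would use that $\ov\sigma$ collapses to $\sigma^0$ on $E\xcdot W_{rs}$ by Lemma~\ref{lemma A6}(1), and that $\sigma^0$ appends the last algebra factor to the $\ov A$-string with sign $(-1)^{r+s+1}$ by equality~\eqref{segunda cond}; combined with the $(-1)^n$ from peeling this produces the normalized tensor of part (1) with trivial sign, the base case $n=0$ being $\psi_0=\ide$, while the all-$\gamma$ subcase is closed by peeling a $\gamma$ factor, which lands the inner term in $E\xcdot U_{0,i-1}$ and is resolved by Lemma~\ref{lemma A6}(7). Part (2) is the vanishing statement: if the surviving last factor lies in $\jmath_\nu(A)$ the wrong-order pair persists and the inductive hypothesis gives $0$; if it lies in $\gamma(V)$, right $E$-linearity places the inner term in $E\xcdot U_{rs}$ with $r>0$ (some $\jmath_\nu(A)$ factor forces a nonempty $\ov A$-string), whence $\ov\sigma=0$ by Lemma~\ref{lemma A6}(6).

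For parts (3)--(5) I would run the same peeling but reduce $\ov\sigma$ to $\sigma^0$ only modulo the filtration: Lemma~\ref{lemma A6}(4)--(5) give $\ov\sigma\equiv\sigma^0$ (up to the $\sigma^0\xcirc\sigma^{-1}\xcirc\upsilon$ correction when $r=0$) modulo $F_{\!U,R}^{s-1}(X_{n+1})$, and Remark~\ref{Prev a A7} keeps the discarded higher-homotopy contributions inside $F_{\!U,R}^{i-2}(X_n)$. The explicit $\sigma^0$ then yields the displayed leading terms; the twisting $\ov\chi(v\ot_k\ov{\ba})$ appears because moving a trailing $\gamma(v^{(l)})$ past algebra factors uses $\gamma(v)\jmath_\nu(a)=\chi(v\ot_k a)$ from Theorem~\ref{prodcruz1}(9) and equality~\eqref{eq12}, which accumulates to the full $\ov\chi$ as successive $\jmath_\nu$-factors are peeled. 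Parts (4) and (5) are the positional variants (mixed factor among the $\gamma$'s, respectively among the $\jmath_\nu$'s) and follow the same template, the mixed factor being carried through untouched until it reaches the seam where $\sigma^0$ acts.

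Part (6) is the filtered analogue of part (2): with one bad factor and a wrong-order pair present, peeling and case analysis on $x_n$ feed into parts (2)--(5) and Lemma~\ref{lemma A6}(2)--(3), so that every leading term either vanishes by the wrong-order mechanism or is pushed into $F_{\!U,R}^{i-2}(X_n)$. I expect the main obstacle to be bookkeeping rather than any single idea: keeping the six inductive hypotheses mutually consistent, tracking the exact filtration degree ($i-2$ versus $i-1$) through each application of $\sigma^0$ and each use of $\ov\chi$, verifying that the many terms generated by $b'$ and by the higher $\sigma^l$ with $l\ge 1$---all controlled by Lemma~\ref{lemma A6}---land uniformly in $F_{\!U,R}^{i-2}(X_n)$, and reconciling the signs $(-1)^n$, $(-1)^{r+s+1}$, and those hidden inside $\ov\sigma$.
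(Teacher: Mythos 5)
Your proposal follows essentially the same route as the paper's own proof: induction on $n$ driven by the peeling formula of Proposition~\ref{formula para psi_n(y ot 1)} and right $E$-linearity, with $\ov{\sigma}$ evaluated through Lemma~\ref{lemma A6} (items (1), (4)--(7), including the $\sigma^0\xcirc\sigma^{-1}\xcirc\upsilon$ correction when $r=0$), Remark~\ref{Prev a A7} controlling the discarded filtration terms, and equalities~\eqref{primera cond}, \eqref{segunda cond} and~\eqref{eq12} producing the explicit $\sigma^0$-computations and the $\ov{\chi}$-twistings, exactly as in the paper. The only slight imprecision is in item (6), where the paper's closing subcase ($j_2$ necessarily equal to $n$) rests on items (4) and (6) of Lemma~\ref{lemma A6} together with Lemma~\ref{lema gamma(V)gamma(V)} applied to a term of the form $L_{n-i+1,i-2}\xcdot\gamma(V)\gamma(V)$ --- the point where the hypothesis $\mathcal{F}(V\ot_k V)\subseteq R\ot_k V$ actually enters --- rather than items (2)--(3) as you cite, but this is precisely the bookkeeping you flagged and does not change the argument.
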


\begin{proof} (1) We proceed by induction on $n$. The case $n=0$ is trivial. Suppose $n>0$ and the result is valid for $n-1$. Assume first that $i<n$. By Proposition~\ref{formula para psi_n(y ot 1)}, the inductive hypothesis and item~(1) of Lemma~\ref{lemma A6},
\begin{align*}
\psi\bigl(1_E\ot\ov{\gamma}(\bv_{1i})\ot\ov{\jmath}_{\nu}(\ba_{i+1,n})\ot 1_E\bigr) & = (-1)^n\ov{\sigma}\xcirc \psi\bigl(1_E\ot\ov{\gamma}(\bv_{1i})\ot \ov{\jmath}_{\nu}(\ba_{i+1,n-1}) \ot\jmath_{\nu}(a_n)\bigr)\\
& = (-1)^n\ov{\sigma}\bigl(1_E\ot_{\hs A}\wt{\gamma}_{\hs A}(\bv_{1i})\ot\ov{\ba}_{i+1,n-1} \ot\jmath_{\nu}(a_n) \bigr)\\
& = (-1)^n \sigma^0\bigl(1_E\ot_{\hs A}\wt{\gamma}_{\hs A}(\bv_{1i})\ot\ov{\ba}_{i+1,n-1} \ot\jmath_{\nu}(a_n) \bigr),
\end{align*}
and the result follows from equality~\eqref{segunda cond}, since $1_E\!\in\! K\ot_k V$. Assume now that $i=n$. By Pro\-position~\ref{formula para psi_n(y ot 1)}, the inductive hypothesis, item~(7) of Lemma~\ref{lemma A6} and the definitions of $\upsilon$ and~$\sigma^{-1}$,
\begin{align*}
\psi\bigl(1_E\ot\ov{\gamma}(\bv_{1n})\ot 1_E\bigr) &= (-1)^n\ov{\sigma}\xcirc\psi \bigl(1_E\ot \ov{\gamma}(\bv_{1,n-1}) \ot \gamma(v_n)\bigr)\\
& = (-1)^{n+1}\sigma^0\xcirc\sigma^{-1}\xcirc\upsilon\bigl(1_E\ot_{\hs A} \wt{\gamma}_{\hs A}(\bv_{1,n-1}) \ot\gamma(v_n)\bigr)\\
%
%
& =\sigma^0\bigl(1_E\ot_{\hs A}\wt{\gamma}_{\hs A}(\bv_{1n})\ot_{\hs A} 1_E\bigr).
\end{align*}
The result follows now immediately from equality~\eqref{primera cond}, since $1_E\in K\ot_k V$.

\smallskip

\noindent (2) We proceed by induction on $n$. The case $n=0$ is trivial. Suppose $n>0$ and the result is valid for $n-1$. By Proposition~\ref{formula para psi_n(y ot 1)} and the inductive hypothesis, if there exist $j_1<j_2<n$ such that $x_{j_1}\in\jmath_{\nu}(A)$ and $x_{j_2}\in\gamma(V)$, then
$$
\psi(1_E\ot\ov{\bx}_{1n}\ot 1_E)= (-1)^n\ov{\sigma}\xcirc\psi(1_E\ot\ov{\bx}_{1,n-1}\ot x_n) = (-1)^n\ov{\sigma}(0) = 0.
$$
On the other hand, if $\bx_{1n}\! =\! \gamma(\bv_{1,i-1})\ot \jmath_{\nu}(\ba_{i,n-1})\ot \gamma(v_n)$, then, by Proposition~\ref{formula para psi_n(y ot 1)} and s\-ta\-te\-ment~(1),
$$
\psi(1_E\ot\ov{\bx}_{1n}\ot 1_E) = (-1)^n\ov{\sigma}\xcirc\psi(1_E\ot\ov{\bx}_{1,n-1}\ot x_n) = (-1)^n\ov{\sigma} \bigl(1_E\ot_{\hs A} \wt{\gamma}_{\hs A}(\bv_{1,i-1}) \ot\ov{\ba}_{i,n-1}\ot\gamma(v_n)\bigr),
$$
and the result follows from item~(6) of Lemma~\ref{lemma A6}.

\smallskip

\noindent (3) We proceed by induction on $n$. The case $n=0$ is trivial. Suppose $n>0$ and the result is valid for $n-1$. Assume first that $i<n$. Let
\begin{align*}
\byy & \coloneqq  1_E\ot\ov{\gamma}(\bv_{1,i-1})\ot \ov{a\xcdot\gamma(v)}\ot \ov{\jmath}_{\nu}(\ba_{i+1,n-1}) \ot \jmath_{\nu}(a_n),\\
\bz & \coloneqq  1_E\ot_{\hs A}\wt{\gamma}_{\hs A}(\bv_{1,i-1})\ot_{\hs A} a\xcdot\wt{\gamma}(v) \ot \ov{\ba}_{i+1,n-1} \ot \jmath_{\nu}(a_n)
\shortintertext{and}
\bz' &\coloneqq \sum 1_E\ot_{\hs A}\wt{\gamma}_{\hs A}(\bv_{1,i-1})\ot\ov{a}\ot\ov{\ba}_{i+1,n-1}^{(l)} \ot \gamma \bigl(v^{(l)}\bigr) \jmath_{\nu}(a_n),
\end{align*}
where $\sum_l \ov{\ba}_{i+1,n-1}^{(l)}\ot_k v^{(l)} \coloneqq  \ov{\chi}(v\ot_k \ov{\ba}_{i+1,n-1})$. Clearly $\bz'\in L_{n-i,i-1}\xcdot E$, while $\bz \in W_{n-i-1,i}$ (use equality~\eqref{eq12} repeatedly). By Proposition~\ref{formula para psi_n(y ot 1)} and the inductive hypothesis,
$$
\psi(\bx) = (-1)^n\ov{\sigma}\xcirc\psi(\byy) \equiv (-1)^n\ov{\sigma}(\bz+\bz')\quad\mod{\ov{\sigma} \bigl(F^{i-2}_{U,R}(X_{n-1})\xcdot \jmath_{\nu}(A)\bigr)}
$$
So, by Remark~\ref{Prev a A7} and items~(1) and~(4) of Lemma~\ref{lemma A6}, $\psi(\bx)\equiv (-1)^n\sigma^0(\bz+\bz') \mod{F^{i-2}_{U,R}(X_n)}$.
Using the fact that $\gamma(V)\subseteq K\ot_k V$ and equalities~\eqref{eq12} and~\eqref{segunda cond} in order to compute $\sigma^0(\bz')$, and using the fact that $1_E\in K\ot_k V$ and equality~\eqref{segunda cond} in order to compute $\sigma^0(\bz)$, we obtain the desired expression for $\psi(\bx)$. Assume now that $i=n$. Let
\begin{align*}
\byy &\coloneqq  1_E\ot\ov{\gamma}(\bv_{1,n-1})\ot a\xcdot\gamma(v),\\
\bz &\coloneqq  1_E\ot_{\hs A}\wt{\gamma}_{\hs A}(\bv_{1,n-1})\ot a\xcdot\gamma(v)
\shortintertext{and}
\bz' &\coloneqq  1_E\ot_{\hs A}\wt{\gamma}_{\hs A}(\bv_{1,n-1})\ot_{\hs A} a\xcdot\wt{\gamma}(v)\ot_{\hs A} 1_E.
\end{align*}
By Proposition~\ref{formula para psi_n(y ot 1)}, item~(1) and Lemma~\ref{lemma A6}(5),
$$
\psi(\bx) = (-1)^n\ov{\sigma}\xcirc\psi(\byy) = (-1)^n\ov{\sigma}(\bz)\equiv (-1)^n\sigma^0(\bz)-(-1)^n\sigma^0\xcirc \sigma^{-1}\xcirc\upsilon(\bz) \quad\mod{F^{n-2}_{U,R}(X_n)}.
$$
Hence, by the definitions of $\upsilon$ and $\sigma^{-1}$, $\psi(\bx) \equiv (-1)^n\sigma^0(\bz)+\sigma^0(\bz') \mod{F^{n-2}_{U,R}(X_n)}$. The desired formula for $\psi(\bx)$ follows now easily using the fact that $\gamma(V)\subseteq K\ot_k V$ and equality~\eqref{segunda cond} in order to compute $\sigma^0(\bz)$, and using the fact that $1_E\in K\ot_k V$ and equality~\eqref{primera cond} in order to compute $\sigma^0(\bz')$.

\smallskip

\noindent (4) We proceed by induction on $n$. The case $n=0$ is trivial. Suppose $n>0$ and the result is valid for $n-1$. Assume first that $i<n$. Let
\begin{align*}
& \byy \coloneqq  1_E\ot\ov{\gamma}(\bv_{1,j-1})\ot \ov{a\xcdot\gamma(v)}\ot \ov{\gamma}(\bv_{j-1,i})\ot \ov{\jmath}_{\nu}(\ba_{i+1,n-1}) \ot\jmath_{\nu}(a_n)\\
\shortintertext{and}
& \bz \coloneqq  1_E\ot_{\hs A}\wt{\gamma}_{\hs A}(\bv_{1,j-1})\ot_{\hs A} a\xcdot\wt{\gamma}(v) \ot_{\hs A} \wt{\gamma}_{\hs A} (\bv_{j-1,i}) \ot\ov{\ba}_{i+1,n-1}\ot\jmath_{\nu}(a_n).
\end{align*}
Note that $\bz \in W_{n-i-1,i}$. By Proposition~\ref{formula para psi_n(y ot 1)} and the inductive hypothesis,
$$
\psi(\bx) = (-1)^n\ov{\sigma}\xcirc\psi(\byy)\equiv (-1)^n\ov{\sigma}(\bz)\quad\mod{\ov{\sigma}\bigl(F^{i-2}_{U,R}(X_{n-1})\xcdot \jmath_{\nu}(A)\bigr)}.
$$
So, by Remark~\ref{Prev a A7} and item~(1) of Lemma~\ref{lemma A6}, $\psi(\bx)\equiv (-1)^n\sigma^0(\bz) \mod{F^{i-2}_{U,R}(X_n)}$. The desired formula for $\psi(\bx)$ follows from equality~\eqref{segunda cond}, because $1_E\in K\ot_k V$. Assume now that $j<i-1$ and $i=n$. Let
\begin{align*}
&\byy \coloneqq  1_E\ot\ov{\gamma}(\bv_{1,j-1})\ot \ov{a\xcdot\gamma(v)}\ot \ov{\gamma}(\bv_{j+1,n-1})\ot \gamma(v_n),\\
&\bz \coloneqq  1_E\ot_{\hs A}\wt{\gamma}_{\hs A}(\bv_{1,j-1})\ot_{\hs A} a\xcdot\wt{\gamma}(v) \ot_{\hs A} \wt{\gamma}_{\hs A}(\bv_{j+1,n-1}) \ot\gamma(v_n)
\shortintertext{and}
&\bz' \coloneqq  1_E\ot_{\hs A}\wt{\gamma}_{\hs A}(\bv_{1,j-1})\ot_{\hs A} a\xcdot\wt{\gamma}(v) \ot_{\hs A} \wt{\gamma}_{\hs A}(\bv_{j+1,n}) \ot_{\hs A} 1_E.
\end{align*}
Clearly $\bz \in U_{0,n-1}$. By Proposition~\ref{formula para psi_n(y ot 1)} and the inductive hypothesis,
$$
\psi(\bx) = (-1)^n\ov{\sigma}\xcirc\psi(\byy)\equiv (-1)^n\ov{\sigma}(\bz)\quad \mod{\ov{\sigma}\bigl(F^{n-3}_{U,R}(X_{n-1})\xcdot E\bigr)}.
$$
Consequently, by Remark~\ref{Prev a A7} and item~(7) of Lemma~\ref{lemma A6}, $\psi(\bx)\equiv (-1)^{n+1}\sigma^0\xcirc\sigma^{-1}\xcirc \upsilon(\bz) \mod{F^{n-3}_{U,R}(X_n)}$, So, by the definitions of $\upsilon$ and $\sigma^{-1}$, $\psi(\bx)\equiv\sigma^0(\bz') \mod{F^{n-3}_{U,R}(X_n)}$. Thus, the formula for $\psi(\bx)$ follows from equality~\eqref{primera cond}, because $1_E\in K\ot_k V$. Assume finally that $j = i-1$ and $i=n$. Let
\begin{align*}
&\byy\coloneqq  1_E\ot\ov{\gamma}(\bv_{1,n-2})\ot \ov{a\xcdot\gamma(v)}\ot \gamma(v_n),\\
&\bz \coloneqq  1_E\ot_{\hs A}\wt{\gamma}_{\hs A}(\bv_{1,n-2})\ot_{\hs A} a\xcdot\wt{\gamma}(v) \ot\gamma(v_n),\\
&\bz' \coloneqq  1_E\ot_{\hs A}\wt{\gamma}_{\hs A}(\bv_{1,n-2})\ot \ov{a}\ot\gamma(v) \gamma(v_n)
\shortintertext{and}
&\bz'' \coloneqq  1_E\ot_{\hs A}\wt{\gamma}_{\hs A}(\bv_{1,n-2})\ot_{\hs A} a\xcdot\wt{\gamma}(v) \ot_{\hs A}\wt{\gamma}(v_n)\ot_{\hs A} 1_E.
\end{align*}
Clearly $\bz'\in L_{1,n-2}\xcdot E$ and $\bz \in U_{0,n-1}$. By Proposition~\ref{formula para psi_n(y ot 1)} and item~(3),
$$
\psi(\bx)= (-1)^n\ov{\sigma}\xcirc\psi(\byy) \equiv (-1)^n\ov{\sigma}(\bz+\bz')\quad\mod{\ov{\sigma}\bigl(F^{n-3}_{U,R}(X_{n-1})\xcdot E\bigr)}.
$$
Hence, by Remark~\ref{Prev a A7} and items~(4) and~(7) of Lemma~\ref{lemma A6},
$$
\psi(\bx)\equiv (-1)^{n+1}\sigma^0\xcirc\sigma^{-1}\xcirc \upsilon(\bz) + (-1)^n \sigma^0(\bz') \quad \mod{F^{n-3}_{U,R}(X_n)}.
$$
By Lemma~\ref{lema gamma(V)gamma(V)} and the definition of $\sigma^0$, we have $\sigma^0(\bz')\in U_{2,n-2}^1(R)$. Consequently, by the~defi\-nitions of $\upsilon$ and $\sigma^{-1}$, $\psi(\bx)\equiv \sigma^0(\bz'') \mod{F^{n-2}_{U,R}(X_n)}$. Since $1_E\in K\ot_k V$, the formula for $\psi(\bx)$ follows now immediately from equality~\eqref{primera cond}.

\smallskip

\noindent (5) We proceed by induction on $n$. The case $n=0$ is trivial. Suppose $n>0$ and the result is valid for $n-1$. Assume first that $j<n$. Let
\begin{align*}
&\byy \coloneqq  1_E\ot\ov{\gamma}(\bv_{1,i-1})\ot\ov{\jmath}_{\nu}(\ba_{i,j-1})\ot \ov{a\xcdot\gamma(v)} \ot\ov{\jmath}_{\nu}(\ba_{j+1,n-1}) \ot\jmath_{\nu}(a_n)
\shortintertext{and}
&\bz \coloneqq \sum 1_E\ot_{\hs A}\wt{\gamma}_{\hs A}(\bv_{1,i-1})\ot\ov{\ba}_{i,j-1}\ot\ov{a}\ot \ov{\ba}_{j+1,n-1}^{(l)} \ot\gamma(v^{(l)}\bigr)\jmath_{\nu}(a_n),
\end{align*}
where $\sum_l \ov{\ba}_{j+1,n-1}^{(l)}\ot_k v^{(l)}\coloneqq \ov{\chi}(v\ot_k \ov{a}_{j+1,n-1})$. Note that $\bz\in L_{n-i,i-1}\xcdot E$. By Proposition~\ref{formula para psi_n(y ot 1)} and the inductive hypothesis,
$$
\psi(\bx) = (-1)^n\ov{\sigma}\xcirc\psi(\byy)\equiv (-1)^n\ov{\sigma}(\bz)\quad \mod{\ov{\sigma}\bigl(F^{i-2}_{U,R}(X_{n-1})\xcdot \jmath_{\nu}(A)\bigr)}.
$$
Thus, by Remark~\ref{Prev a A7} and item~(4) of Lemma~\ref{lemma A6}, $\psi(\bx)\equiv (-1)^n\sigma^0(\bz) \mod{F^{i-2}_{U,R}(X_n)}$.
The expression for $\psi(\bx)$ follows now from the fact that $\gamma(V)\subseteq K\ot_k V$ and equalities~\eqref{eq12} and~\eqref{segunda cond}. We next consider the case $j=n$. Let
$$
\byy \coloneqq  1_E\ot\ov{\gamma}(\bv_{1,i-1})\ot\ov{\jmath}_{\nu}(\ba_{i,n-1})\ot a\xcdot \gamma(v)\quad\text{and}\quad \bz \coloneqq  1_E\ot_{\hs A}\wt{\gamma}_{\hs A}(\bv_{1,i-1})\ot\ov{\ba}_{i,n-1}\ot a\xcdot\gamma(v).
$$
Note that $\bz\in L_{n-i,i-1}\xcdot E$. By Proposition~\ref{formula para psi_n(y ot 1)} and item~(1), $\psi(\bx) = (-1)^n\ov{\sigma}\xcirc\psi(\byy) = (-1)^n\ov{\sigma}(\bz)$. Thus, by item~(4) of Lemma~\ref{lemma A6}, $\psi(\bx)\equiv (-1)^n\sigma^0(\bz)\quad\mod{F^{i-2}_{U,R}(X_n)}$. The expression for $\psi(\bx)$ follows now from the fact that $\gamma(V)\subseteq K\ot_k V$ and equality~\eqref{segunda cond}.

\smallskip

\noindent (6) By Proposition~\ref{formula para psi_n(y ot 1)},
$$
\psi(1_E\ot\ov{\bx}_{1n}\ot 1_E) = (-1)^n\ov{\sigma}\xcirc\psi(1_E\ot\ov{\bx}_{1,n-1}\ot x_n).
$$
Assume first that $x_n\notin \jmath_{\nu}(A)\cup\gamma(V)$. Then, by item~(2), $\psi(\bx) = (-1)^n\ov{\sigma}(0) = 0$. Assume now that $x_n\in\jmath_{\nu}(A)$. By inductive hypothesis, $\psi(\bx)\in\ov{\sigma}\bigl(F^{i-2}_{U,R}(X_{n-1})\xcdot \jmath_{\nu}(A)\bigr)$, and consequently, by Remark~\ref{Prev a A7}, $\psi(\bx)\in F^{i-2}_{U,R}(X_n)$. Assume finally that $x_n\hs\!\in\!\hs\gamma(\!V\!)$. If $j_2$ in the statement can be taken lesser than $n$, then, by in\-duc\-tive hypothesis, $\psi(\bx)\in\ov{\sigma}\bigl(F^{i-3}_{U,R}(X_{n-1})\xcdot\gamma(V)\bigr)$, and so, again by Remark~\ref{Prev a A7},
$$
\psi(\bx)\in F^{i-3}_{U,R}(X_n)\subseteq F^{i-2}_{U,R}(X_n).
$$
If $j_2$ is necessarily equals $n$, then, by items~(3), (4) and~(5),
$$
\psi(\bx)\in\ov{\sigma}\bigl(U_{n-i,i-1}+ L_{n-i+1,i-2}\xcdot\gamma(V)\gamma(V)\bigr)+\ov{\sigma}\bigl(F^{i-3}_{U,R}(X_{n-1})\xcdot \gamma(V)\bigr).
$$
So, by Remark~\ref{Prev a A7} and items~(4) and~(6) of Lemma~\ref{lemma A6},
$$
\psi(\bx)\in\sigma^0\bigl(L_{n-i+1,i-2} \xcdot\gamma(V)\gamma(V)\bigr)+F^{i-3}_{U,R}(X_n),
$$
and the result follows from Lemma~\ref{lema gamma(V)gamma(V)}, the fact that $\gamma(V)\subseteq K\ot_k V$ and equality~\eqref{segunda cond}.
\end{proof}

\begin{proposition}\label{proposition A8} Let $R$ be a $k$-subalgebra of $A$. Assume that $R$ is stable under $\chi$ and that $\mathcal{F}$ takes its values in $R\ot_k V$. Let $v,v_1,\dots,v_i\in V$ and $a,a_{i+1},\dots,a_n\in A$. The map $\phi_n \xcirc \psi_n$ has the following properties:

\begin{enumerate}

\item If $\bx \coloneqq  1_E\ot\ov{\gamma}(\bv_{1i})\ot\ov{\jmath}_{\nu}(\ba_{i+1,n})\ot 1_E$, then
$$
\qquad\phi\xcirc\psi(\bx) \equiv 1_E\ot\Sh(\bv_{1i}\ot_k \ov{\ba}_{i+1,n})\ot 1_E \quad \mod{\jmath_{\nu}(A) \bar{\ot} F_{\! R,1}^{i-1}(\ov{E}^{\ot n})\bar{\ot} \jmath_{\nu}(K).}
$$

\smallskip

\item Let $x_1,\dots,x_n\in\jmath_{\nu}(A)\cup\gamma(V)$. If there exist indices $j_1<j_2$ such that $x_{j_1}\in \jmath_{\nu}(A)$ and $x_{j_2}\in\gamma(V)$, then $\phi\xcirc \psi(1_E \ot\ov{\bx}_{1n}\ot 1_E) = 0$.

\smallskip

\item If $\bx= 1_E\ot\ov{\gamma}(\bv_{1,i-1})\ot\ov{a\xcdot\gamma(v)}\ot \ov{\jmath}_{\nu}(\ba_{i+1,n}) \ot 1_E$, then
\begin{align*}
\qquad \phi\xcirc\psi(\bx) &\equiv \sum \jmath_{\nu}(a^{(j)})\ot \Sh\bigl(\bv_{1,i-1}^{(j)}\ot_k v\ot_k \ov{\ba}_{i+1,n}\bigr)\ot 1_E\\
& +\sum 1_E\ot\Sh\bigl(\bv_{1,i-1}\ot_k\ov{a}\ot \ov{\ba}_{i+1,n}^{(l)}\bigr)\ot \gamma\bigl(v^{(l)}\bigr),
\end{align*}
modulo
$\jmath_{\nu}(A) \bar{\ot} F_{\! R,1}^{i-1}(\ov{E}^{\ot n})\bar{\ot} \jmath_{\nu}(K) + \jmath_{\nu}(A) \bar{\ot} F_{\! R,1}^{i-2}(\ov{E}^{\ot n})\bar{\ot} \jmath_{\nu}(K) \xcdot\gamma(V)$,
where
$$
\qquad \sum_j \ov{a}^{(j)}\ot_k \bv_{1,i-1}^{(j)}\coloneqq  \ov{\chi}(\bv_{1,i-1}\ot_k \ov{a}) \quad\text{and}\quad \sum_l \ov{\ba}_{i+1,n}^{(l)}\ot_k v^{(l)} \coloneqq  \ov{\chi}(v\ot_k \ov{\ba}_{i+1,n}).
$$

\smallskip

\item If $\bx = 1_E\ot\ov{\gamma}(\bv_{1,j-1})\ot \ov{a\xcdot\gamma(v)}\ot \ov{\gamma}(\bv_{j+1,i}) \ot \ov{\jmath}_{\nu}(\ba_{i+1,n})\ot 1_E$ with $j<i$, then
$$
\qquad \phi\xcirc \psi(\bx)\equiv \sum \jmath_{\nu}(a^{(l)})\ot \Sh\bigl(\bv_{1,i-1}^{(l)}\ot_{\hs k} v\ot_k \ov{\ba}_{i+1,n}\bigr)\ot 1_E,
$$
modulo
$$
\qquad \jmath_{\nu}(A) \bar{\ot} F_{\! R,1}^{i-1}(\ov{E}^{\ot n})\bar{\ot} \jmath_{\nu}(K) + \jmath_{\nu}(A) \bar{\ot} F_{\! R,1}^{i-2}(\ov{E}^{\ot n})\bar{\ot} \jmath_{\nu}(K) \xcdot\gamma(V),
$$
where $\sum_l \ov{a}^{(l)}\ot_k \bv_{1,i-1}^{(l)}\coloneqq  \ov{\chi}(\bv_{1,i-1}\ot_k \ov{a})$.

\smallskip

\item If $\bx = 1_E\ot\ov{\gamma}(\bv_{1,i-1})\ot\ov{\jmath}_{\nu}(\ba_{i,j-1})\ot \ov{a\xcdot \gamma(v)}\ot \ov{\jmath}_{\nu}(\ba_{j+1,n})\ot 1_E$ with $j>i$, then
$$
\qquad\phi\xcirc \psi(\bx)\equiv\sum_l 1_E\ot\Sh\bigl(\bv_{1,i-1}\ot_k \ov{\ba}_{i,j-1} \ot \ov{a} \ot \ov{\ba}_{j+1,n}^{(l)}\bigr)\ot\gamma \bigl(v^{(l)}\bigr),
$$
modulo $\jmath_{\nu}(A) \bar{\ot} F_{\! R,1}^{i-2}(\ov{E}^{\ot n})\bar{\ot} \jmath_{\nu}(K) \xcdot\gamma(V)$, where $\sum_l \ov{\ba}_{j+1,n}^{(l)}\ot_k v^{(l)} \coloneqq  \ov{\chi}(v\ot_k \ov{\ba}_{j+1,n})$.

\smallskip

\item Let $x_1,\dots,x_n\in E$ satisfying $\#\{l:x_l\notin \jmath_{\nu}(A)\cup \gamma(V)\} = 1$. If there exist $j_1<j_2$ such that $x_{j_1}\in \jmath_{\nu}(A)$ and $x_{j_2}\in \gamma(V)$, then
$$
\qquad \phi\xcirc \psi(1_E\ot\ov{\bx}_{1n}\ot 1_E)\in \jmath_{\nu}(A) \bar{\ot} F_{\! R,1}^{i-2}(\ov{E}^{\ot n})\bar{\ot} \jmath_{\nu}(K) \xcdot\gamma(V),
$$
where $i\coloneqq \#\{l:x_l\notin\jmath_{\nu}(A)\}$.

\end{enumerate}

\end{proposition}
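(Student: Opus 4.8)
The plan is to compute $\phi_n\xcirc\psi_n$ by composing the two descriptions already at our disposal: first evaluate $\psi_n$ on the given argument by quoting the matching item of Proposition~\ref{propA.7}, and then feed each resulting term of $(X_*,d_*)$ into the explicit formula for $\phi_n$ supplied by Proposition~\ref{propiedad de phi} and Remark~\ref{nota de propiedad de phi}. Since each item below has exactly the shape of the $\phi$-image of the corresponding item of Proposition~\ref{propA.7}, the six cases will line up one-to-one, and the genuine work is the filtration bookkeeping created by transporting the modulus $F^{i-2}_{U,R}(X_n)$ of Proposition~\ref{propA.7} across $\phi_n$.

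Before the case analysis I would record the auxiliary inclusion
$$
\phi_n\bigl(F^{j}_{U,R}(X_n)\bigr)\subseteq \jmath_{\nu}(A)\bar{\ot} F_{\! R,1}^{j}(\ov{E}^{\ot^n})\bar{\ot}\jmath_{\nu}(K)\xcdot\gamma(V)\qquad\text{for }j\le n-1,
$$
which is the engine behind cases (2), (6) and all the error terms. Its proof is short: a generator of $U^1_{n-s,s}(R)=L^1_{n-s,s}(R)\xcdot\gamma(V)$ is a generator of $L^1_{n-s,s}(R)$ multiplied on the right by some $\gamma(w)$; applying the right $E$-linear map $\phi_n$ and invoking Proposition~\ref{propiedad de phi} together with Remark~\ref{imagen de Sh} (which places $\Sh(\bv_{1s}\ot_k\ov{\ba}_{1,n-s})$ in $F_{\! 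R,1}^{s}(\ov{E}^{\ot^n})$ as soon as one $a_j$ lies in $R$), and finally recalling that $\gamma(V)\subseteq K\ot_k V$, lands the image in the asserted submodule.

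With this in hand the cases split cleanly. Cases (1), (2) and (6) are immediate: for (1) I apply Proposition~\ref{propA.7}(1) and then Proposition~\ref{propiedad de phi}; for (2) Proposition~\ref{propA.7}(2) already gives $\psi(\bx)=0$; and for (6) Proposition~\ref{propA.7}(6) gives $\psi(\bx)\in F^{i-2}_{U,R}(X_n)$, whence the auxiliary inclusion finishes the argument. Cases (3), (4) and (5) carry the substance. In each I would substitute the congruence for $\psi(\bx)$ from the matching item of Proposition~\ref{propA.7}, apply $\phi_n$ summand by summand, and rewrite: the terms ending in $1_E$ are honest generators, on which Proposition~\ref{propiedad de phi} produces the $\Sh$-expressions modulo $\jmath_{\nu}(A)\bar{\ot}F^{i-1}_{R,1}(\ov{E}^{\ot^n})\bar{\ot}\jmath_{\nu}(K)$, while the terms ending in $\gamma(v^{(l)})$ are generators multiplied on the right by $\gamma(v^{(l)})$, so the right $E$-linearity of $\phi_n$ and Remark~\ref{nota de propiedad de phi} yield the second families of summands together with errors absorbed by the auxiliary inclusion.

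The hard part will be verifying, in cases (3) and (4), that $\phi_n$ applied to the leading term $1_E\ot_{\hs A}\wt{\gamma}_{\hs A}(\bv_{1,i-1})\ot_{\hs A} a\xcdot\wt{\gamma}(v)\ot\cdots$ of $\psi(\bx)$ reproduces exactly $\sum\jmath_{\nu}(a^{(j)})\ot\Sh(\bv_{1,i-1}^{(j)}\ot_k v\ot_k\ov{\ba}_{i+1,n})\ot 1_E$, with $\sum_j\ov{a}^{(j)}\ot_k\bv_{1,i-1}^{(j)}=\ov{\chi}(\bv_{1,i-1}\ot_k\ov{a})$. This amounts to sliding the factor $a$ leftward through the $\gamma$'s, which is governed by the relation $\gamma(v)\jmath_{\nu}(a)=\chi(v\ot_k a)$ of Theorem~\ref{prodcruz1}(9), i.e. by equality~\eqref{eq12}, and matches precisely the $\digamma$-step in the recursion defining $\Sh_{sr}$; I expect the cleanest route is to check that this commutation is compatible with that recursion. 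Keeping the two distinct moduli $\jmath_{\nu}(A)\bar{\ot}F^{i-1}_{R,1}(\ov{E}^{\ot^n})\bar{\ot}\jmath_{\nu}(K)$ and $\jmath_{\nu}(A)\bar{\ot}F^{i-2}_{R,1}(\ov{E}^{\ot^n})\bar{\ot}\jmath_{\nu}(K)\xcdot\gamma(V)$ correctly aligned as the error terms are pushed through $\phi_n$ is the remaining point that demands care.
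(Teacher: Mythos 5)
Your proposal is correct and follows essentially the same route as the paper's proof: compute $\psi$ via the matching item of Proposition~\ref{propA.7}, push the result through Proposition~\ref{propiedad de phi} and Remark~\ref{nota de propiedad de phi} using the bimodule linearity of $\phi$, and absorb the error terms via the inclusion $\phi\bigl(F_{\!U,R}^{i-2}(X_n)\bigr)\subseteq \jmath_{\nu}(A)\bar{\ot} F_{\! R,1}^{i-2}(\ov{E}^{\ot^n})\bar{\ot}\jmath_{\nu}(K)\xcdot\gamma(V)$, which is exactly the inclusion the paper establishes inline (from Remark~\ref{imagen de Sh}, Proposition~\ref{propiedad de phi} and the definition of $F_{\!U,R}^{i-2}(X_n)$) in its proof of item~(3), with items~(4)--(6) done by the same scheme. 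The only minor difference is cosmetic: the ``hard part'' you flag in cases~(3) and~(4) does not require checking compatibility with the $\digamma$-recursion for $\Sh$ --- one simply slides $a$ leftward across the $\ot_{\hs A}$-tensors using equality~\eqref{eq12} \emph{before} applying $\phi$, turning the leading term into a combination $\sum\jmath_{\nu}(a^{(j)})\cdot\bz^{(j)}$ of honest generators, and then invokes left $E$-linearity of $\phi$, which is precisely how the paper argues.
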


\begin{proof} (1)\enspace This is a consequence of Proposition~\ref{propiedad de phi} and item~(1) of Proposition~\ref{propA.7}.

\smallskip

\noindent (2)\enspace This is a consequence of item~(2) of Proposition~\ref{propA.7}.

\smallskip

\noindent (3)\enspace By item~(3) of Proposition~\ref{propA.7},
$$
\begin{aligned}
\qquad \phi\xcirc \psi(\bx) &\equiv \phi \bigl(1_E\ot_{\hs A} \wt{\gamma}_{\hs A}(\bv_{1,i-1}) \ot_{\hs A} a\xcdot \wt{\gamma}(v)\ot \ov{\ba}_{i+1,n} \ot 1_E\bigr)\\
& + \sum \phi\bigl(1_E\ot_{\hs A} \wt{\gamma}_{\hs A}(\bv_{1,i-1})\ot \ov{a}\ot \ov{\ba}_{i+1,n}^{(l)}\ot \gamma(v^{(l)})\bigr)
\end{aligned}\quad\mod{\phi\bigl(F_{\!U,R}^{i-2}(X_n)\bigr)},
$$
Hence, by equality~\eqref{eq12} and Proposition~\ref{propiedad de phi},
$$
\phi\xcirc \psi(\bx) \equiv \sum \jmath_{\nu}(a^{(j)})\ot \Sh\bigl(\bv_{1,i-1}^{(j)}\ot_k v\ot_k \ov{\ba}_{i+1,n}\bigr)\ot 1_E+ \sum 1_E\ot \Sh\bigl(\bv_{1,i-1}\ot_k \ov{a}\ot \ov{\ba}_{i+1,n}^{(l)}\bigr) \ot \gamma\bigl(v^{(l)}\bigr),
$$
modulo $\jmath_{\nu}(A) \bar{\ot} F_{\! R,1}^{i-1}(\ov{E}^{\ot n})\bar{\ot}\jmath_{\nu}(K) + \jmath_{\nu}(A)\bar{\ot} F_{\! R,1}^{i-2}(\ov{E}^{\ot n}) \bar{\ot} \jmath_{\nu}(K) \xcdot\gamma(V) + \phi \bigl(F_{\!U,R}^{i-2}(X_n)\bigr)$. Since, by Remark~\ref{imagen de Sh}, Proposition~\ref{propiedad de phi}, and the definition of $F_{\!U,R}^{i-2}(X_n)$,
$$
\phi \bigl(F_{\!U,R}^{i-2}(X_n)\bigr)\subseteq \jmath_{\nu}(A) \bar{\ot} F_{\! R,1}^{i-2}(\ov{E}^{\ot n})\bar{\ot} \jmath_{\nu}(K) \xcdot\gamma(V),
$$
the statement is true.

\smallskip

\noindent (4)\enspace Mimic the proof of item~(3).


\smallskip

\noindent (5)\enspace Mimic the proof of item~(3).


\smallskip

\noindent (6)\enspace Mimic the proof of item~(3).
\end{proof}

\begin{proposition}\label{prop A.9} If $x_1,\dots,x_n\in E$ satisfy $\# \{j:x_j\notin \jmath_{\nu}(A)\cup\gamma(V)\}\le 1$, then
$$
\omega(\bx)\in \jmath_{\nu}(A) \bar{\ot} F_{\! A,0}^i(\ov{E}^{\ot {n+1}})\bar{\ot} \jmath_{\nu}(K),
$$
where $\bx\coloneqq 1_E\ot\ov{\bx}_{1n}\ot 1_E$ and $i\coloneqq \# \{j:x_j\notin \jmath_{\nu}(A)\}$.
\end{proposition}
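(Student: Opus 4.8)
The plan is to argue by induction on $n$, feeding everything through the simplified recursion of Remark~\ref{simp formula w}, which for $\bx=1_E\ot\ov{\bx}_{1n}\ot 1_E$ reads $\omega(\bx)=\xi\xcirc\phi\xcirc\psi(\bx)-(-1)^n\xi\bigl(\omega(1_E\ot\ov{\bx}_{1,n-1}\ot x_n)\bigr)$. Two structural facts will organize the whole argument. First, on any tensor $\byy\ot 1_E$ ending in $1_E$ the map $\omega$ returns a tensor again ending in $1_E$, because $\omega_m=\xi_m\xcirc(\cdots)$ for $m\ge 2$ (with $\omega_1=0$) and $\xi_m(\bz)=(-1)^m\bz\ot 1_E$ literally appends $1_E\in\jmath_\nu(K)$; in particular $\xi$ annihilates every tensor ending in $\jmath_\nu(K)$. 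Second, $\omega$ is right $E$-linear, being determined on the right-$E$-generators $\byy\ot 1_E$ and extended. By multilinearity and $E=\jmath_\nu(A)\gamma(V)$ I reduce to $\bx$ whose factors are each $\jmath_\nu(a_j)$ or $\gamma(v_j)$ — which I call \emph{good} — with at most one \emph{mixed} factor $a\xcdot\gamma(v)=\jmath_\nu(a)\gamma(v)$.

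The engine is an auxiliary vanishing statement: if $x_1,\dots,x_m$ are all good, then $\omega(1_E\ot\ov{\bx}_{1m}\ot 1_E)=0$. I would prove this by a parallel induction from $\omega_{m+1}(\bz)=\xi_{m+1}\bigl(\phi\psi(\bz)-\bz-\omega\xcirc b'(\bz)\bigr)$. By items~(1) and~(2) of Proposition~\ref{proposition A8} (applied with $R=A$, so the $R$-conditions are vacuous) the term $\phi\psi(\bz)$ ends in $\jmath_\nu(K)$; the term $\bz$ ends in $1_E$; and in $\omega\xcirc b'(\bz)$ every $b'$-summand except the last ends in $1_E$, hence so does its $\omega$-image, while the last summand $1_E\ot\ov{\bx}_{1,m-1}\ot x_m$ has $\omega$-image $\omega(1_E\ot\ov{\bx}_{1,m-1}\ot 1_E)\xcdot x_m=0$ by the inductive hypothesis. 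Thus the bracket ends in $\jmath_\nu(K)$ and $\xi_{m+1}$ annihilates it.

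The summand $\xi\xcirc\phi\xcirc\psi(\bx)$ I would control directly by Proposition~\ref{proposition A8}: the arrangement of the good factors together with the position of the mixed factor selects one of its six cases, and in each $\phi\psi(\bx)$ is a sum of shuffle tensors ending in $\jmath_\nu(K)$ or ending in $\gamma(V)$, modulo a strictly smaller filtration layer. By Remark~\ref{imagen de Sh} the shuffle factors lie in $F^{\bullet}_{A,0}$; the $\jmath_\nu(K)$-terminated terms are killed by $\xi$, while on the $\gamma(V)$-terminated terms $\xi$ turns the trailing $\gamma(V)$ into a new good bar factor and appends $1_E$. The bookkeeping point to check case by case is that these $\gamma(V)$-terminated contributions occur at filtration level $i-1$, so that after $\xi$ raises the count of non-$\jmath_\nu(A)$ entries by one they land exactly in $\jmath_\nu(A)\bar{\ot} F^i_{A,0}(\ov{E}^{\ot^{n+1}})\bar{\ot}\jmath_\nu(K)$.

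For the second summand I peel $x_n$ by right $E$-linearity: $\omega(1_E\ot\ov{\bx}_{1,n-1}\ot x_n)=\omega(1_E\ot\ov{\bx}_{1,n-1}\ot 1_E)\xcdot x_n$. If $x_n$ is the mixed factor, then $x_1,\dots,x_{n-1}$ are good and the auxiliary vanishing forces $\omega(1_E\ot\ov{\bx}_{1,n-1}\ot 1_E)=0$, so this summand disappears and the resolution of the mixed factor is carried entirely by the first summand. If instead $x_n$ is good, the inductive hypothesis places $\omega(1_E\ot\ov{\bx}_{1,n-1}\ot 1_E)$ in the target for degree $n-1$ with last slot exactly $1_E$; multiplying by $x_n$ replaces that $1_E$ by $x_n$, and $\xi$ then promotes $\ov{x_n}\in\jmath_\nu(A)\cup\gamma(V)$ to a bar factor, the count matching because deleting $x_n$ lowered the number of non-$\jmath_\nu(A)$ entries by exactly $[x_n\in\gamma(V)]$. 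The main obstacle is precisely this mixed-$x_n$ case: naive peeling would leave a forbidden mixed bar factor in the output, and it is the auxiliary vanishing lemma — itself resting on the trailing-unit property of $\omega$ and on Proposition~\ref{proposition A8}(1),(2) guaranteeing $\jmath_\nu(K)$-termination of $\phi\psi$ on good inputs — that removes this term and reconciles the two summands.
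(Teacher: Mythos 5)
Your proof is correct and takes essentially the same route as the paper's: the same two-stage induction (first the auxiliary vanishing of $\omega$ on tensors with all factors in $\jmath_{\nu}(A)\cup\gamma(V)$, then the general case via the recursion of Remark~\ref{simp formula w}), driven by items~(1)--(2) resp.\ (3)--(6) of Proposition~\ref{proposition A8}, the right $E$-linearity of $\omega$ for peeling off $x_n$, and the fact that $\xi$ annihilates tensors ending in $\jmath_{\nu}(K)$. Your explicit linearity reduction to a single mixed factor $a\xcdot\gamma(v)$ (left implicit in the paper, where items~(3)--(5) of Proposition~\ref{proposition A8} are already phrased for such factors) and your inline re-derivation of Remark~\ref{simp formula w} from the defining recursion of $\omega$ are presentational differences only.
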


\begin{proof} We first claim that if $\# \{j:x_j\notin \jmath_{\nu}(A)\cup\gamma(V)\} =0$, then $\omega(\bx)=0$. We proceed by induction on $n$. The case $n=1$ is trivial, since $\omega_1 = 0$ by definition. Assume that $n>1$ and the claim holds for $n-1$. Then,
$$
\omega(\bx) = \xi\bigl(\phi\xcirc \psi(\bx) - (-1)^n \omega(1_E\ot\ov{\bx}_{1,n-1}\ot x_n)\bigr) = \xi\xcirc \phi\xcirc \psi(\bx) = 0,
$$
where the first equality holds by Remark~\ref{simp formula w}; the second one, by the inductive hypothesis;~and the last one, by the facts that $\jmath_{\nu}(A) \bar{\ot} F_{\! A,0}^i(\ov{E}^{\ot n})\bar{\ot} \jmath_{\nu}(K)\! \subseteq\! \ker(\xi)$ and, by items~(1) and~(2) of Pro\-position~\ref{proposition A8},
$$
\phi\xcirc \psi(\bx)\in \jmath_{\nu}(A) \bar{\ot} F_{\! A,0}^i(\ov{E}^{\ot n})\bar{\ot} \jmath_{\nu}(K).
$$
We now assume that $\# \{j:x_j\notin \jmath_{\nu}(A)\cup\gamma(V)\} =1$ and we prove the proposition by induction on $n$. This is trivial for $n=1$ since $w_1 = 0$. Suppose that $n>1$ and the proposition is true for $n-1$. Since by Remark~\ref{simp formula w}
$$
\omega(\bx) = \xi\bigl(\phi\xcirc \psi(\bx) - (-1)^n \omega(1_E\ot \ov{\bx}_{1,n-1}\ot x_n)\bigr),
$$
and, by items~(3)--(6) of Proposition~\ref{proposition A8}, Remark~\ref{imagen de Sh}, Proposition~\ref{propiedad de phi} and the definition of $\xi$,
\begin{align*}
\xi\xcirc \phi\xcirc \psi(\bx) & \in \xi\Bigl(\jmath_{\nu}(A) \bar{\ot} F_{\! A,0}^i(\ov{E}^{\ot n})\bar{\ot} \jmath_{\nu}(K) + \jmath_{\nu}(A) \bar{\ot} F_{\! A,0}^{i-1}(\ov{E}^{\ot n})\bar{\ot} \jmath_{\nu}(K)\xcdot \gamma(V)\Bigr)\\
& \subseteq  \jmath_{\nu}(A) \bar{\ot} F_{\! A,0}^i(\ov{E}^{\ot {n+1}})\bar{\ot} \jmath_{\nu}(K),
\end{align*}
in order to finish the proof it suffices to check that
\begin{equation}\label{pepe2}
\xi\xcirc\omega(1_E\ot \ov{\bx}_{1,n-1}\ot x_n)\in \jmath_{\nu}(A) \bar{\ot} F_{\! A,0}^i(\ov{E}^{\ot {n+1}})\bar{\ot} \jmath_{\nu}(K).
\end{equation}
But, since by the inductive hypothesis and the claim,

\begin{itemize}

\smallskip

\item[-] if $x_n\in \jmath_{\nu}(A)$, then $\omega(1_E\ot \ov{\bx}_{1,n-1}\ot x_n)\in F_{\hs A,0}^i(E\ot \ov{E}^{\ot n}\ot E)\xcdot \jmath_{\nu}(A)$,

\smallskip

\item[-] if $x_n\in \gamma(V)$, then $\omega(1_E\ot \ov{\bx}_{1,n-1}\ot x_n)\in F_{\hs A,0}^{i-1}(E\ot \ov{E}^{\ot n}\ot E)\xcdot \gamma(V)$,

\smallskip

\item[-] if $x_n\notin \jmath_{\nu}(A)\cup \gamma(V)$, then $\omega(1_E\ot\ov{\bx}_{1,n-1}\ot x_n)=0$,

\smallskip

\end{itemize}
the condition~\eqref{pepe2} is satisfied.
\end{proof}

\printindex

\begin{bibdiv}
\begin{biblist}

\bib{AK}{article}{
   author={Akbarpour, R.},
   author={Khalkhali, M.},
   title={Hopf algebra equivariant cyclic homology and cyclic homology of
   crossed product algebras},
   journal={J. Reine Angew. Math.},
   volume={559},
   date={2003},
   pages={137--152},
   issn={0075-4102},
   review={\MR{1989648}},
   doi={10.1515/crll.2003.046},
}

\bib{AFGR}{article}{
 author={Alonso {\'A}lvarez, J. N.},
 author={Fern{\'a}ndez Vilaboa, J. M.},
 author={Gonz{\'a}lez Rodr{\'{\i}}guez, R.},
 author={Rodr{\'{\i}}guez Raposo, A. B.},
 title={Crossed products in weak contexts},
 journal={Appl. Categ. Structures},
 volume={18},
 date={2010},
 number={3},
 pages={231--258},
 issn={0927-2852},
 review={\MR{2640214 (2011d:18009)}},
}

\bib{ABDP}{article}{
  title={Twisted partial actions of Hopf algebras},
  author={Alves, Marcelo Muniz S},
  author={Batista, Eleizer},
  author={Dokuchaev, Michael},
  author={Paques, Antonio},
  journal={Israel Journal of Mathematics},
  volume={197},
  number={1},
  pages={263--308},
  year={2013},
  publisher={Springer}
}

\bib{B}{article}{
 author={Burghelea, Dan},
 title={Cyclic homology and the algebraic $K$-theory of spaces. I},
 conference={
 title={ theory, Part I, II},
 date={1983},
 },
 book={
 series={Contemp. Math.},
 volume={55},
 publisher={Amer. Math. Soc.},
 place={Providence, RI},
 },
 date={1986},
 pages={89--115},
 review={\MR{862632 (88i:18009a)}},
 doi={10.1090/conm/055.1/862632},
}

\bib{Br}{article}{
 author={Brzezi{\'n}ski, Tomasz},
 title={Crossed products by a coalgebra},
 journal={Communications in Algebra},
 volume={25},
 date={1997},
 number={11},
 pages={3551--3575},
 issn={0092-7872},
 review={\MR{1468823 (98i:16034)}},
}

\bib{CGG}{article}{
 author={Carboni, Graciela},
 author={Guccione, Jorge A.},
 author={Guccione, Juan J.},
 title={Cyclic homology of Hopf crossed products},
 journal={Adv. Math.},
 volume={223},
 date={2010},
 number={3},
 pages={840--872},
 issn={0001-8708},
 review={\MR{2565551 (2010m:16015)}},
 doi={10.1016/j.aim.2009.09.008},
}

\bib{CGGV}{article}{
   author={Carboni, Graciela},
   author={Guccione, Jorge A.},
   author={Guccione, Juan J.},
   author={Valqui, Christian},
   title={Cyclic homology of Brzezi\'{n}ski's crossed products and of braided
   Hopf crossed products},
   journal={Adv. Math.},
   volume={231},
   date={2012},
   number={6},
   pages={3502--3568},
   issn={0001-8708},
   review={\MR{2980507}},
}

\bib{C}{article}{
author={Crainic, Marius},
 title={On the perturbation lemma, and deformations},
 eprint={arXiv:Math. AT/0403266},
 date={2004}
 }

\bib{FT}{article}{
   author={Fe\u{\i}gin, B. L.},
   author={Tsygan, B. L.},
   title={Additive $K$-theory},
   conference={
      title={$K$-theory, arithmetic and geometry},
      address={Moscow},
      date={1984--1986},
   },
   book={
      series={Lecture Notes in Math.},
      volume={1289},
      publisher={Springer, Berlin},
   },
   date={1987},
   pages={67--209},
   review={\MR{923136}},
   doi={10.1007/BFb0078368},
}

\bib{FGR}{article}{
 author={Fern{\'a}ndez Vilaboa, J. M.},
 author={Gonz{\'a}lez Rodr{\'{\i}}guez, R.},
 author={Rodr{\'{\i}}guez Raposo, A. B.},
 title={Preunits and weak crossed products},
 journal={Journal of Pure and Applied Algebra},
 volume={213},
 date={2009},
 pages={2244--2261},
 issn={0022-4049},
}

\bib{GS}{article}{
 author={Gerstenhaber, Murray},
 author={Schack, Samuel D.},
 title={Relative Hochschild cohomology, rigid algebras, and the Bockstein},
 journal={J. Pure Appl. Algebra},
 volume={43},
 date={1986},
 number={1},
 pages={53--74},
 issn={0022-4049},
 review={\MR{862872 (88a:16045)}},
 doi={10.1016/0022-4049(86)90004-6},
}

\bib{GJ}{article}{
   author={Getzler, Ezra},
   author={Jones, John D. S.},
   title={The cyclic homology of crossed product algebras},
   journal={J. Reine Angew. Math.},
   volume={445},
   date={1993},
   pages={161--174},
   issn={0075-4102},
   review={\MR{1244971}},
   doi={10.1515/crll.1995.466.19},
}

\bib{GG}{article}{
 author={Guccione, Jorge A.},
 author={Guccione, Juan J.},
 title={Hochschild (co)homology of Hopf crossed products},
 journal={K-Theory},
 volume={25},
 date={2002},
 pages={138--169},
 issn={0920-3036},
}

\bib{GGV1}{article}{
   author={Guccione, Jorge A.},
   author={Guccione, Juan J.},
   author={Valqui, Christian},
   title={(Co)homology of crossed products by weak Hopf algebras},
   status={preprint}
}

\bib{JS}{article}{
   author={Jara, P.},
   author={\c{S}tefan, D.},
   title={Hopf-cyclic homology and relative cyclic homology of Hopf-Galois
   extensions},
   journal={Proc. London Math. Soc. (3)},
   volume={93},
   date={2006},
   number={1},
   pages={138--174},
   issn={0024-6115},
   review={\MR{2235945}},
   doi={10.1017/S0024611506015772},
}

\bib{K}{article}{
 author={Kassel, Christian},
 title={Cyclic homology, comodules, and mixed complexes},
 journal={J. Algebra},
 volume={107},
 date={1987},
 number={1},
 pages={195--216},
 issn={0021-8693},
 review={\MR{883882 (88k:18019)}},
 doi={10.1016/0021-8693(87)90086-X},
}

\bib{KR}{article}{
   author={Khalkhali, M.},
   author={Rangipour, B.},
   title={On the cyclic homology of Hopf crossed products},
   conference={
      title={Galois theory, Hopf algebras, and semiabelian categories},
   },
   book={
      series={Fields Inst. Commun.},
      volume={43},
      publisher={Amer. Math. Soc., Providence, RI},
   },
   date={2004},
   pages={341--351},
   review={\MR{2075593}},
}

\bib{N}{article}{
   author={Nistor, V.},
   title={Group cohomology and the cyclic cohomology of crossed products},
   journal={Invent. Math.},
   volume={99},
   date={1990},
   number={2},
   pages={411--424},
   issn={0020-9910},
   review={\MR{1031908}},
   doi={10.1007/BF01234426},
}

\bib{V}{article}{
   author={Voigt, Christian},
   title={Equivariant periodic cyclic homology},
   journal={J. Inst. Math. Jussieu},
   volume={6},
   date={2007},
   number={4},
   pages={689--763},
   issn={1474-7480},
   review={\MR{2337312}},
}

\bib{ZH}{article}{
   author={Zhang, Jiao},
   author={Hu, Naihong},
   title={Cyclic homology of strong smash product algebras},
   journal={J. Reine Angew. Math.},
   volume={663},
   date={2012},
   pages={177--207},
   issn={0075-4102},
   review={\MR{2889710}},
   doi={10.1515/CRELLE.2011.098},
}

\end{biblist}
\end{bibdiv}

\end{document}